\newtheorem{thm}{Theorem}
\newtheorem{lem}[thm]{Lemma}
\newtheorem{prop}[thm]{Proposition}
\newtheorem{corollary}[thm]{Corollary}
\numberwithin{equation}{chapter}
\numberwithin{thm}{chapter}
\numberwithin{section}{chapter}
\providecommand\noopsort[1]{}
\newcommand{\secref}[1]{Section~\ref{#1}}
\newcommand{\firstmention}[1]{\textbf{#1}}
\newcommand{\R}{\mathbb{R}}
\newcommand{\N}{\mathbb{N}}
\newcommand{\C}{\mathbb{C}}
\newcommand{\im}{\operatorname{Im}}
\newcommand{\re}{\operatorname{Re}}
\newcommand{\mellin}[1]{\mathcal M\left[#1\right]}
\newcommand{\loglog}{\log\log}
\newcommand{\gausshyper}{{}_2F_1}
\newcommand{\onefone}{{}_1F_1}
\newlength{\negph@wd}
\DeclareRobustCommand{\negphantom}[1]{%
  \ifmmode
    \mathpalette\negph@math{#1}%
  \else
    \negph@do{#1}%
  \fi
}
\newcommand{\negph@math}[2]{\negph@do{$\m@th#1#2$}}
\newcommand{\negph@do}[1]{%
  \settowidth{\negph@wd}{#1}%
  \hspace*{-\negph@wd}%
}
\begin{document}

\frontmatter

\title{Orthogonal polynomial expansions for the Riemann xi function}

\author{Dan Romik 
}

\address{Department of Mathematics, University of California, Davis, One Shields Ave, Davis CA 95616, USA}
\curraddr{}

\email{romik@math.ucdavis.edu}

\thanks{This material is based upon work supported by the National Science Foundation under Grant No. DMS-1800725.}

\date{}

\subjclass[2010]{Primary 11M06, 33C45}
%
%

\keywords{Riemann xi function, Riemann zeta function, Riemann hypothesis, orthogonal polynomials, De Bruijn-Newman constant, asymptotic analysis}

\begin{abstract}

We study infinite series expansions for the Riemann xi function $\Xi(t)$ in three specific families of orthogonal polynomials: (1) the Hermite polynomials; (2) the symmetric Meixner-Pollaczek polynomials $P_n^{(3/4)}(x;\pi/2)$; and (3) the continuous Hahn polynomials $p_n\left(x; \frac34,\frac34,\frac34,\frac34\right)$. The first expansion was discussed in earlier work by Tur\'an, and the other two expansions are new. For each of the three expansions, we derive formulas for the coefficients, show that they appear with alternating signs, derive formulas for their asymptotic behavior, and derive additional interesting properties and relationships. We also apply some of the same techniques to prove a new asymptotic formula for the Taylor coefficients of the Riemann xi function.

Our results continue and expand the program of research initiated in the 1950s by Tur\'an, who proposed using the Hermite expansion of the Riemann xi function as a tool to gain insight into the location of the Riemann zeta zeros. We also uncover a connection between Tur\'an's ideas and the separate program of research involving the so-called De Bruijn--Newman constant. Most significantly, the phenomena associated with the new expansions in the Meixner-Pollaczek and continuous Hahn polynomial families suggest that those expansions may be even more natural tools than the Hermite expansion for approaching the Riemann hypothesis and related questions.
\end{abstract}

\maketitle

\tableofcontents

\mainmatter

\chapter{Introduction}

\label{ch:introduction}

\section{Background}

\label{sec:intro-background}

This paper concerns the study of certain infinite series expansions for the Riemann xi function $\xi(s)$. Recall that $\xi(s)$ is defined in terms of Riemann's zeta function $\zeta(s)$ by
\begin{equation}
\xi(s) = \frac12 s (s-1) \pi^{-s/2} \Gamma\left(\frac{s}{2}\right) \zeta(s) \qquad (s\in\C).
\end{equation}
$\xi(s)$ is an entire function and satisfies the functional equation
\begin{equation}
\label{eq:xi-functional-eq}
\xi(1-s) = \xi(s).
\end{equation}
It is convenient and customary to perform a change of variables, denoting
\begin{equation}
\Xi(t) = \xi\left(\frac12+i t\right) \qquad (t\in\C),
\end{equation}
a function that (in keeping with convention) will also be referred to as the Riemann xi function. The functional equation \eqref{eq:xi-functional-eq} then becomes the statement that $\Xi(t)$ is an even function.
The xi function has been a key tool in the study of the complex-analytic properties of $\zeta(s)$ and, crucially, the Riemann Hypothesis (RH). Two additional standard properties of $\Xi(t)$ are that it takes real values on the real line, and that RH can be stated as the claim that all the zeros of $\Xi(t)$ are real. \cite{titschmarsh}

\subsection{Some well-known representations of the Riemann xi function}

Much research on the zeta function has been based on studying various series and integral representations of $\zeta(s)$, $\xi(s)$ and $\Xi(t)$, in the hope that this might provide information about the location of their zeros. For example, it is natural to investigate the sequence of coefficients in the Taylor expansion
\begin{equation}
\label{eq:riemannxi-taylor}
\xi(s) = \sum_{n=0}^\infty a_{2n} \left(s-\frac12\right)^{2n}.
\end{equation}
Riemann himself derived in his seminal 1859 paper a formula for the coefficients $a_{2n}$ \cite[p.~17]{edwards}, which in our notation reads as
\begin{equation}
\label{eq:riemannxi-taylorcoeff-int}
a_{2n} = \frac{1}{2^{2n-1}(2n)!} \int_1^\infty \omega(x)x^{-3/4}(\log x)^{2n}\,dx,
\end{equation}
(where $\omega(x)$ is defined below in \eqref{eq:omegax-def}),
and which plays a small role in the theory.
The study of the numbers $a_{2n}$ remains an active area of research \cite{coffey, csordas-norfolk-varga, griffin-etal, pustylnikov1, pustylnikov2, pustylnikov3}---we will also prove a result of our own about them in \secref{sec:xi-taylor-asym}---but, disappointingly, the Taylor expansion \eqref{eq:riemannxi-taylor} has not provided much insight into the location of the zeros of $\zeta(s)$.

Another important way to represent $\xi(s)$, also considered by Riemann, is as a Mellin transform, or---which is equivalent through a standard change of variables---as a Fourier transform. Specifically, define functions $\theta(x), \omega(x), \Phi(x)$ by
\begin{align}
\label{eq:jactheta-def}
\theta(x) &
= \sum_{n=-\infty}^\infty e^{-\pi n^2 x} = 1+2\sum_{n=1}^\infty e^{-\pi n^2 x} & (x>0),
\\
\label{eq:omegax-def}
\omega(x) &= \frac12\left(2x^2 \theta''(x) + 3 x \theta'(x)\right)  = \sum_{n=1}^\infty (2\pi^2 n^4 x^2 - 3\pi n^2 x)e^{-\pi n^2 x} & (x>0),
\\
\label{eq:phix-def}
\Phi(x) & = 2 e^{x/2} \omega(e^{2x}) 
= 2\sum_{n=1}^\infty \left(2\pi^2 n^4 e^{9x/2}-3\pi n^2 e^{5x/2}\right) \exp\left(-\pi n^2 e^{2x}\right) & (x\in\R).
\end{align}
Then it is well-known that $\theta(x), \omega(x), \Phi(x)$ are positive functions, satisfy the functional equations (all equivalent to each other, as well as to \eqref{eq:xi-functional-eq})
\begin{equation}
\label{eq:theta-functional-equation}
\ \ 
\theta\left(\frac{1}{x}\right) = \sqrt{x}\, \theta(x),
\qquad 
\omega\left(\frac{1}{x}\right) = \sqrt{x}\, \omega(x),
\qquad
\Phi\left(-x\right) = \Phi(x),
\end{equation}
and that $\xi(s)$ has the Mellin transform representation
\begin{equation}
\label{eq:riemannxi-mellintrans}
\xi(s) = \int_0^\infty \omega(x)x^{s/2-1}\,dx,
\end{equation}
and the Fourier transform representation
\begin{equation}
\label{eq:riemannxi-fouriertrans}
\Xi(t) = \int_{-\infty}^\infty \Phi(x) e^{i t x}\,dx.
\end{equation}
The right-hand side of \eqref{eq:riemannxi-fouriertrans} is also frequently written in equivalent form as a cosine transform, that is, replacing the $e^{itx}$ term with $\cos(tx)$, which is valid since $\Phi(x)$ is an even function. For additional background, see \cite{edwards, titschmarsh}.

\subsection{P\'olya's attack on RH and its offshoots by De Bruijn, Newman and others}

P\'olya in the 1920s began an ambitious line of attack on RH in a series of papers \cite{polya1923, polya1926a, polya1926b, polya1927} (see also \cite[Ch.~X]{titschmarsh}) in which he investigated sufficient conditions for an entire function represented as the Fourier transform of a positive even function to have all its zeros lie on the real line. P\'olya's ideas have been quite influential and found important applications in areas such as statistical physics (see \cite{lee-yang, newman-wu}, \cite[pp.~424--426]{polya-collected}). One particular result that proved consequential is P\'olya's discovery that the factor $e^{\lambda x^2}$, where $\lambda>0$ is constant, is (to use a term apparently coined by De Bruijn \cite{debruijn}) a so-called \firstmention{universal factor}. That is to say, P\'olya's theorem states that if an entire function $G(z)$ is expressed as the Fourier transform of a function $F(x)$ of a real variable, and all the zeros of $G(z)$ are real, then, under certain assumptions of rapid decay on $F(x)$ (see \cite{debruijn} for details), the zeros of the Fourier transform of $F(x)e^{\lambda x^2}$ are also all real. This discovery spurred much follow-up work by De Bruijn \cite{debruijn}, Newman \cite{newman} and others 
\cite{csordas-norfolk-varga1988, csordas-odlyzko-etal, csordas-ruttan-varga, csordas-smith-varga, ki-kim-lee, norfolk-ruttan-varga, odlyzko, polymath15, rodgers-tao, saouter-etal, teriele}
on the subject of what came to be referred to as the \firstmention{De Bruijn-Newman constant}; the rough idea is to launch an attack on RH by generalizing the Fourier transform \eqref{eq:riemannxi-fouriertrans} through the addition of the ``universal factor'' $e^{\lambda x^2}$ inside the integral, and to study the set of real $\lambda$'s for which the resulting entire function has only real zeros. See \secref{sec:hermite-poissonpolya} where some additional details are discussed, and see \cite[Ch.~5]{broughan}, \cite{newman-wu} for accessible overviews of the subject.

\subsection{Tur\'an's approach} 

Next, we survey another attack on RH that is the closest one conceptually to our current work, proposed by P\'al Tur\'an. In a 1950 address to the Hungarian Academy of Sciences \cite{turan1952} and follow-up papers \cite{turan1954, turan1959}, Tur\'an took a novel look at the problem, starting by re-examining the idea of looking at the Taylor expansion \eqref{eq:riemannxi-taylor} and then analyzing why it fails to lead to useful insights and how one might try to improve on it. He argued that the coefficients in the Taylor expansion of an entire function provide the wrong sort of information about the zeros of the function, being in general well-suited for estimating the distance of the zeros from the origin, but poorly adapted for the purpose of telling whether the zeros lie on the real line. As a heuristic explanation, he pointed out that the level curves of the power functions $z\mapsto z^n$ are concentric circles, and argued that one must therefore look instead for series expansions of the Riemann xi function in functions whose level curves approximate straight lines running parallel to the real axis. He then argued that the Hermite polynomials
\begin{equation*}
H_n(x) = (-1)^n e^{x^2} \frac{d^n}{dx^n}\left( e^{-x^2}\right)
\end{equation*}
are such a family of functions, and proceeded to prove several results demonstrating his main thesis that the coefficients in the Fourier series expansion of a function in Hermite polynomials can in many cases provide useful information about the distance of the zeros of the function from the real line.

Tur\'an also made the important observation that the expansion
of $\Xi(t)$ in Hermite polynomials has a rather nice structure, being expressible in the form
\begin{equation}
\label{eq:hermite-expansion-intro}
\Xi(t) = \sum_{n=0}^\infty (-1)^n b_{2n} H_{2n}(t)
\end{equation}
in which, he pointed out, the coefficients $b_{2n}$ are given by the formula\footnote{Actually Tur\'an's formula in \cite{turan1959} appears to contain a small numerical error, differing from \eqref{eq:turan-coeff-formula} by a factor of $\frac{\pi}{2}$.}
\begin{equation}
\label{eq:turan-coeff-formula}
b_{2n} = \frac{1}{2^{2n} (2n)!} \int_{-\infty}^\infty x^{2n} e^{-\frac{x^2}{4}} \Phi(x)\,dx,
\end{equation}
and in particular are positive numbers.

Note that the Hermite polynomials have the symmetry $H_n(-x) = (-1)^n H_n(x)$, so, as with the case of the Taylor expansion \eqref{eq:riemannxi-taylor}, the presence of only even-indexed coefficients in \eqref{eq:hermite-expansion-intro} is a manifestation of the functional equation \eqref{eq:theta-functional-equation}, and hence serves as another indication that the expansion \eqref{eq:hermite-expansion-intro} is a somewhat natural one to consider. (Of course, the same would be true for any other family of even functions; this is obviously a weak criterion for naturalness.)

Tur\'an focused most of his attention on Hermite expansions of polynomials rather than of entire functions like $\Xi(t)$. His ideas on locating polynomial zeros using knowledge of the coefficients in their Hermite expansions appear to have been quite influential, and have inspired many subsequent fruitful investigations into the relationship between the expansion of a polynomial in Hermite polynomials and other orthogonal polynomial families, and the location of the zeros of the polynomial.
See the papers \cite{bates, bleecker-csordas, iserles-norsett1987, iserles-norsett1988, iserles-norsett1990, iserles-saff, piotrowski, schmeisser}.

By contrast, Tur\'an's specific observation about the expansion \eqref{eq:hermite-expansion-intro} of $\Xi(t)$ does not seem to have led to any meaningful follow-up work. We are not aware of any studies of the behavior of the coefficients $b_{2n}$, nor of any attempts to determine whether the Hermite polynomials are the only---or even the most natural---family of polynomials in which it is worthwhile to expand the Riemann xi function (but see \secref{sec:intro-relatedwork} for discussion of some related literature).

\section[Our new results]{Our new results: Tur\'an's program revisited and extended; expansion of $\Xi(t)$ in new orthogonal polynomial bases}

This paper can be thought of as a natural continuation of the program of research initiated by Tur\'an in his 1950 address.
One full chapter---Chapter~\ref{ch:hermite}---is dedicated to the study of the Hermite expansion \eqref{eq:hermite-expansion-intro}, answering several questions that arise quite naturally from Tur\'an's work and that have not yet been addressed in the literature. For example, in Theorem~\ref{thm:hermite-coeff-asym} we derive an asymptotic formula for the coefficients $b_{2n}$.

It is however in later chapters that it will be revealed that Tur\'an's vision of understanding the Riemann xi function by studying its expansion in Hermite polynomials was too narrow in its scope, since it turns out that there is a wealth of new and interesting results related to the notion of expanding $\Xi(t)$ in \emph{different} families of orthogonal polynomials. Two very specific orthogonal polynomial families appear to suggest themselves as being especially natural and possessing of excellent properties, and it is those that are conceptually the main focus of this paper, being the subject of Chapters~\ref{ch:fn-expansion}--\ref{ch:gn-expansion}. These families are the \firstmention{Meixner-Pollaczek polynomials} $P_n^{(\lambda)}(x;\phi)$ with the specific parameter values $\phi=\pi/2$, $\lambda=\frac34$; and the \firstmention{continuous Hahn polynomials} $p_n(x;a,b,c,d)$ with the specific parameter values $a=b=c=d=\frac34$. We denote these families of polynomials by $(f_n)_{n=0}^\infty$ and $(g_n)_{n=0}^\infty$, respectively; they are given explicitly by the hypergeometric formulas
\begin{align}
\label{eq:fn-def-intro}
f_n(x) &= \frac{(3/2)_n}{n!} 
i^n \gausshyper\left(-n, \frac34+ix; \frac32; 2 \right),
\\
\label{eq:gn-def-intro}
g_n(x) &=
i^n (n+1) \  {}_3F_2\left(-n,n+2,\frac34+ix;\frac32,\frac32; 1\right)
\end{align}
(where $(3/2)_n$ is a Pochhammer symbol),
and form systems of polynomials that are orthogonal with respect to the weight functions
$\left|\Gamma\left(\frac34+ix\right)\right|^2$
and
$\left|\Gamma\left(\frac34+ix\right)\right|^4$ on $\R$, respectively.

As our analysis will show, the expansions of $\Xi(t)$ in the polynomial families $(f_n)_{n=0}^\infty$ and $(g_n)_{n=0}^\infty$ have forms that are pleasingly similar to the Hermite expansion \eqref{eq:hermite-expansion-intro}, namely
\begin{align}
\label{eq:fn-expansion-intro}
\Xi(t) &= \sum_{n=0}^\infty (-1)^n c_{2n} f_{2n}\left(\frac{t}{2}\right),
\\
\label{eq:gn-expansion-intro}
\Xi(t) &= \sum_{n=0}^\infty (-1)^n d_{2n} g_{2n}\left(\frac{t}{2}\right),
\end{align}
where, importantly, the coefficients $c_{2n}$ and $d_{2n}$ again turn out to be positive numbers. Much more than this can be said, and in Chapters~\ref{ch:fn-expansion}--\ref{ch:gn-expansion} we undertake a comprehensive analysis of the meaning of the expansions \eqref{eq:fn-expansion-intro}--\eqref{eq:gn-expansion-intro}, the relationship between them, and the behavior of the coefficients $c_{2n}$ and $d_{2n}$. Among other results, we will prove that the coefficients satisfy the two asymptotic formulas
\begin{align}
\label{eq:intro-fncoeff-asym}
c_{2n} &\sim 16 \sqrt{2} \pi^{3/2}\, \sqrt{n} \,\exp\left(-4\sqrt{\pi n}\right),
\\
\label{eq:intro-gncoeff-asym}
 d_{2n} & \sim
\left(\frac{128\times 2^{1/3} \pi^{2/3}e^{-2\pi/3}}{\sqrt{3}}\right)
n^{4/3}  \exp\left(-3(4\pi)^{1/3} n^{2/3}\right)
\end{align}
as $n\to\infty$.
See Theorems~\ref{THM:FN-COEFF-ASYM} and~\ref{thm:gn-coeff-asym} for precise statements, including explicit rate of convergence estimates.

There are many other results. What follows is a brief summary of the main results proved in each chapter.

\begin{itemize}

\item \textbf{Chapter~\ref{ch:hermite}:}

\begin{itemize}

\item We prove a theorem (Theorem~\ref{THM:HERMITE-EXPANSION} in \secref{sec:hermite-mainresults}) on the existence of the Hermite expansion, including the fact that the expansion converges throughout the complex plane and an effective rate of convergence estimate.

\item We prove an asymptotic formula for the coefficients $b_{2n}$ (Theorem~\ref{thm:hermite-coeff-asym} in \secref{sec:hermite-asym}).

\item We prove a theorem (Theorem~\ref{thm:hermite-poisson-polya} in \secref{sec:hermite-poissonpolya}) that reveals a connection between Tur\'an's ideas on the Hermite expansion and the separate thread of research on the topic of the De Bruijn-Newman constant described in the previous section. The idea is that the so-called P\'olya-De Bruijn flow---the one-parameter family of approximations to the Riemann xi function obtained by introducing the factor $e^{\lambda x^2}$ to the Fourier transform in \eqref{eq:riemannxi-fouriertrans}---shows up in a natural way also when taking the Hermite expansion \eqref{eq:hermite-expansion-intro} and using it to separately construct a family of approximations inspired by the standard construction of Poisson kernels in the theory of orthogonal polynomials.

\end{itemize}

\bigskip
\item \textbf{Chapter~\ref{ch:fn-expansion}:}
\begin{itemize}
\item We develop the basic theory of the expansion \eqref{eq:fn-expansion-intro} of $\Xi(t)$ in the polynomials $f_n$, deriving formulas for the coefficients, showing that they alternate in sign, and proving that the expansion converges throughout the complex plane, including an effective rate of convergence estimate (Theorem~\ref{THM:FN-EXPANSION} in \secref{sec:fnexp-mainresults}).

\item We prove the asymptotic formula given in \eqref{eq:intro-fncoeff-asym} above for the coefficients $c_{2n}$ (Theorem~\ref{THM:FN-COEFF-ASYM} in \secref{sec:fnexp-mainresults}).

\item We study the Poisson flow associated with the $f_n$-expansion, by analogy with the results of Chapter~\ref{ch:hermite}, and show that this flow is the Fourier transform of a family of functions with compact support; that it evolves according to an interesting dynamical law---a differential difference equation; and that, in contrast to the Poisson flow associated with the Hermite expansion, this flow does not preserve the reality of zeros of a polynomial in either  direction of the time parameter.

\end{itemize}

\bigskip
\item \textbf{Chapter~\ref{ch:radial}:}
\begin{itemize}

\item We develop an alternative point of view that reinterprets the $f_n$-expansion \eqref{eq:fn-expansion-intro} developed in Chapter~\ref{ch:fn-expansion} as arising (through the action of the Mellin transform) from an expansion of the elementary function
\begin{equation*}
\qquad \frac{d^2}{dr^2}\left( \frac{r}{4}\coth(\pi r)\right) =
-\frac{\pi}{2}\frac{1}{\sinh^2(\pi r)} + \frac{\pi^2 r}{2} \frac{\cosh(\pi r)}{\sinh^3(\pi r)},
\end{equation*}
in an orthogonal basis of eigenfunctions of the radial Fourier transform in $\R^3$, a family of functions which can be defined in terms of the Laguerre polynomials $L_n^{1/2}(x)$.

\item We introduce and study the properties of several more special functions, including a function $\tilde{\nu}(t)$, defined as a certain integral transform of the function $\omega(x)$, that is shown to be a generating function for the coefficient sequence $c_n$, and will later play a key role in Chapter~\ref{ch:gn-expansion}.

\end{itemize}

\bigskip
\item \textbf{Chapter~\ref{ch:gn-expansion}:}
\begin{itemize}
\item We develop the basic theory of the expansion of $\Xi(t)$ in the polynomials $g_n$, deriving formulas for the coefficients, showing that they alternate in sign, and proving that the expansion converges throughout the complex plane, including an effective rate of convergence estimate. (Theorem~\ref{THM:GN-EXPANSION} in \secref{sec:gnexp-mainresults}).

\item We prove the asymptotic formula given in \eqref{eq:intro-gncoeff-asym} above for the coefficients $d_{2n}$ (Theorem~\ref{thm:gn-coeff-asym} in \secref{sec:gnexp-mainresults}).

\item We show in Sections~\ref{sec:gnexp-chebyshev}--\ref{sec:gnexp-mellin} that, analogously to the results of Chapter~\ref{ch:radial}, the $g_n$-expansion also affords a reinterpretation as arising, through the Mellin transform, from the expansion of the function $\tilde{\nu}(t)$ introduced in Chapter~\ref{ch:radial} in yet another family of orthogonal polynomials, the Chebyshev polynomials of the second kind.

\end{itemize}

\bigskip
\item \textbf{Chapter~\ref{ch:misc}:}

This chapter contains a few additional results that enhance and supplement the developments in the earlier chapters.

\medskip
\begin{itemize}

\item We apply the asymptotic analysis techniques we developed in Chapter~\ref{ch:hermite} to prove an asymptotic formula for the Taylor coefficients $a_{2n}$ of the Riemann xi function (Theorem~\ref{thm:riemannxi-taylorcoeff-asym} in \secref{sec:xi-taylor-asym}).

\item We study the function $\tilde{\omega}(x)$, a ``centered'' version of the function $\omega(x)$ that is first introduced in \secref{sec:rad-centered-recbal}. We show that $\tilde{\omega}(x)$ relates to the expansion \eqref{eq:fn-expansion-intro} in several interesting ways, and give an explicit description of its sequence of Taylor coefficient
(Theorem~\ref{thm:omega-tilde-taylor} in \secref{sec:omega-tilde-properties}) in terms of a recently studied integer sequence.
\end{itemize}

\bigskip
\item \textbf{Appendix~\ref{appendix:orthogonal}:}

This appendix contains a summary of mostly known properties of several families of orthogonal polynomials. In \secref{sec:orth-fngn-relation} we prove two new summation identities relating the two polynomial families $(f_n)_{n=0}^\infty$ and $(g_n)_{n=0}^\infty$.

\end{itemize}

\section{Previous work involving the polynomials $f_n$}

\label{sec:intro-relatedwork}

Our work on the Hermite expansion of the Riemann xi function is, as mentioned above, a natural continuation of Tur\'an's work, and also relates to the existing literature on the De Bruijn-Newman constant. By contrast, our results on the expansion of the Riemann xi function in the polynomial families $f_n$ and $g_n$ in Chapters~\ref{ch:fn-expansion}--\ref{ch:gn-expansion} do not appear to follow up on any established line of research. It seems worth mentioning however that the polynomials $f_n$ did in fact make an appearance in a few earlier works in contexts involving the Riemann zeta and xi functions. 

The earliest such work we are aware of is the paper by Bump and Ng \cite{bump-ng}, which discusses polynomials that are (up to a trivial reparametrization) the polynomials $f_n$ in connection with some Mellin transform calculations related to the zeta function. The follow-up papers by Bump et al.~\cite{bump-etal} and Kurlberg \cite{kurlberg} discuss these polynomials further, in particular interpreting their property of having only real zeros in terms of a phenomenon that the authors term the ``local Riemann hypothesis.'' The idea of using these polynomials as a basis in which to expand the Riemann xi function (or any other function) does not appear in these papers, but they seem nonetheless to be the first works that contain hints that the polynomials $f_n$ may hold some significance for analytic number theory.

In another paper \cite{kuznetsov2} (see also \cite{kuznetsov1}), Kuznetsov actually does consider an expansion in the polynomial basis $f_n(t/2)$---the same basis we use for our expansion of $\Xi(t)$---of a modified version of the Riemann xi function, namely the function $e^{-\pi t/4}\Xi(t)$, and finds formulas for the coefficients in the expansion in terms of the Taylor coefficients of an elementary function. Kuznetsov's result gives yet more clues as to the special role played in the theory of the Riemann xi function by the polynomials $f_n$. It is however unclear to us how his results relate to ours.

Finally, in a related direction, Inoue, apparently motivated by the work of Kuznetsov, studies in a recent preprint \cite{inoue} the expansion of the completed zeta function $\pi^{-s/2}\Gamma(s/2)\zeta(s)$ in the polynomials $f_n(t/2)$, and proves convergence of the expansion in the critical strip.

\section{How to read this paper}

The main part of this paper consists of Chapters~\ref{ch:hermite}--\ref{ch:gn-expansion}. These chapters are arranged in two conceptually distinct parts: Chapter~\ref{ch:hermite}, which deals with the Hermite expansion of the Riemann xi function and its connection to the De Bruijn-Newman constant, forms the first part; and Chapters~\ref{ch:fn-expansion}--\ref{ch:gn-expansion}, which develop the theory of the expansion of the Riemann xi function in the orthogonal polynomial families $(f_n)_{n=0}^\infty$ and $(g_n)_{n=0}^\infty$, form the second. The second part is largely independent of the first, so it would be practical for a reader to start reading directly from Chapter~\ref{ch:fn-expansion} and only refer back to Chapter~\ref{ch:hermite} as needed on a few occasions.

Following those chapters, we prove some additional results in Chapter~\ref{ch:misc}, and conclude in Chapter~\ref{ch:summary} with some final remarks.

The work makes heavy use of known properties of several classical, and less classical, families of orthogonal polynomials: the Chebyshev polynomials of the second kind, Hermite polynomials, Laguerre polynomials, Meixner-Pollaczek polynomials, and continuous Hahn polynomials. Appendix~\ref{appendix:orthogonal} contains reference sections summarizing the relevant properties of each of these families, and ends with a section in which we prove a new pair of identities relating the polynomial families $(f_n)_{n=0}^\infty$ and $(g_n)_{n=0}^\infty$.

We assume the reader is familiar with the basic theory of orthogonal polynomials, as described, e.g., in Chapters~2--3 of Szeg\H{o}'s classical book \cite{szego} on the subject. We also assume familiarity with standard special functions such as the \nobreak{Euler} gamma function $\Gamma(s)$ and Gauss hypergeometric function $\gausshyper(a,b;c;z)$ (see~\cite{andrews-askey-roy}), and of course with basic results and facts about the Riemann zeta function~\cite{edwards}. For background on Mellin transforms, of which we make extensive use, the reader is invited to refer to \cite{paris-kaminski}.

\section{Acknowledgements}

The author is grateful to Jim Pitman for many helpful comments and references, and for pointing out a simpler approach to proving Proposition~\ref{prop:stieltjes-from-laplace} than the one used in an earlier version of this paper.

\newpage

\chapter{The Hermite expansion of $\Xi(t)$}

\label{ch:hermite}

The goal of this chapter is to expand on Tur\'an's work in \cite{turan1952, turan1954, turan1959} on the series expansion of $\Xi(t)$ in Hermite polynomials. In \secref{sec:hermite-mainresults} we state a precise version of Tur\'an's claims about the existence of the expansion, showing that it holds on the entire complex plane and giving a quantitative rate of convergence estimate. This is proved in \secref{sec:hermite-proofmain}. In \secref{sec:hermite-asym} we prove an asymptotic formula for the coefficients $b_{2n}$ appearing in the expansion. In \secref{sec:hermite-poissonpolya} we show how the Hermite expansion leads naturally to a one-parameter family of approximations to the Riemann xi function, which we will show is (up to a trivial transformation) the same family studied in the works of De Bruijn, Newman and subsequent authors on what came to be known as the De Bruijn-Newman constant.

\section{The basic convergence result for the Hermite expansion}

\label{sec:hermite-mainresults}

Following Tur\'an \cite{turan1959}, we define numbers $(b_n)_{n=0}^\infty$ by
\begin{equation}
\label{eq:hermite-coeffs-def}
b_n = \frac{1}{2^n n!} \int_{-\infty}^\infty x^n e^{-\frac{x^2}{4}} \Phi(x)\,dx
\end{equation}
with $\Phi(x)$ defined in \eqref{eq:phix-def}.
Since $\Phi(x)$ is even and positive, we see that $b_{2n+1} = 0$ and $b_{2n} >0$ for all $n\ge0$. The following result is a more precise version of Tur\'an's remarks in \cite{turan1952} about the expansion of $\Xi(t)$ in Hermite polynomials.

\begin{thm}[Hermite expansion of $\Xi(t)$]
\label{THM:HERMITE-EXPANSION}
The Riemann xi function has the infinite series representation
\begin{equation}
\label{eq:hermite-expansion}
\Xi(t) = \sum_{n=0}^\infty (-1)^n b_{2n} H_{2n}(t),
\end{equation}
which converges uniformly on compacts for all $t\in\C$. More precisely, for any compact set $K\subset \C$ there exist constants $C_1, C_2>0$ depending on $K$ such that
\begin{equation}
\label{eq:hermite-expansion-errorbound}
\left|\Xi(t) - \sum_{n=0}^N (-1)^n b_{2n} H_{2n}(t) \right| \leq C_1 e^{-C_2 N \log N}
\end{equation}
holds for all $N\ge1$ and $t\in K$.
\end{thm}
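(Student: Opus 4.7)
The plan is to derive the expansion \eqref{eq:hermite-expansion} by integrating a Hermite generating-function identity against $\Phi(x)$ (with the exchange of summation and integration justified by Fubini's theorem), and then to obtain the effective bound \eqref{eq:hermite-expansion-errorbound} by combining a crude decay estimate on $b_{2n}$ with a standard Cauchy-integral bound on $|H_{2n}(t)|$ for $t$ in a compact set.

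First I record the identity
\begin{equation*}
e^{x^2/4}\cos(tx) \;=\; \sum_{n=0}^{\infty} \frac{(-1)^n x^{2n}}{2^{2n}(2n)!}\,H_{2n}(t) \qquad (x\in\R,\ t\in\C),
\end{equation*}
obtained by setting $z=ix/2$ in the Hermite generating function $e^{2tz-z^2}=\sum_{n\ge 0}H_n(t)z^n/n!$ (producing $e^{itx+x^2/4}$ on the left) and taking the $x$-even part. Multiplying through by $\Phi(x)e^{-x^2/4}$ and integrating over $\R$ produces, on the left-hand side, $\int\Phi(x)\cos(tx)\,dx=\Xi(t)$ via \eqref{eq:riemannxi-fouriertrans} and the evenness of $\Phi$, and on the right-hand side the Hermite series $\sum_n(-1)^n b_{2n}H_{2n}(t)$ via \eqref{eq:hermite-coeffs-def}. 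The exchange of sum and integral is justified by Fubini's theorem: the double-exponential bound $\Phi(x)\le C\exp(-\tfrac12\pi e^{2|x|})$ for $|x|$ large (visible from \eqref{eq:phix-def}), combined with the Cauchy-estimate bound $|H_{2n}(t)|\le C_K\,2^n\sqrt{(2n)!}\,e^{O(\sqrt{n})}$ on a compact $K\subset\C$ (obtained by integrating the generating function on a circle of optimal radius $\rho\sim\sqrt{n}$), implies a majorant $\sum_n|x|^{2n}|H_{2n}(t)|/(2^{2n}(2n)!)\le Ce^{c|x|^2}$, which after multiplication by $\Phi(x)e^{-x^2/4}$ is integrable.

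For the quantitative tail estimate, the key input is a decay bound on $b_{2n}$. Using $e^{-x^2/4}\le 1$ in \eqref{eq:hermite-coeffs-def} reduces the problem to bounding the even moments of $\Phi$. The substitution $u=e^{2x}$, together with the $\omega$-functional equation from \eqref{eq:theta-functional-equation}, folds the half-line moment integral onto $(1,\infty)$ and reproduces exactly Riemann's integral \eqref{eq:riemannxi-taylorcoeff-int}, yielding the clean identity $\int_{-\infty}^{\infty}x^{2n}\Phi(x)\,dx=(2n)!\,a_{2n}$ and hence $b_{2n}\le a_{2n}/2^{2n}$. A Laplace/saddle-point analysis of \eqref{eq:riemannxi-taylorcoeff-int}---with the saddle of $\log\omega(u)-\tfrac34\log u+2n\loglog u$ located at $u_0\sim 2n/(\pi\log n)$---then gives the crude bound $a_{2n}\le C(\log n/n)^{2n}e^{O(n)}$. (A precise asymptotic for $a_{2n}$ is established separately as Theorem~\ref{thm:riemannxi-taylorcoeff-asym}, but only the crude bound is needed here.) Combining all estimates via Stirling's formula produces $|b_{2n}H_{2n}(t)|\le C_K\exp(-n\log n+O(n\log\log n))$ uniformly on $K$; summing the geometrically dominated tail over $n>N$ then delivers \eqref{eq:hermite-expansion-errorbound}.

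The main technical obstacle is the saddle-point bound on $a_{2n}$. The integrand in \eqref{eq:riemannxi-taylorcoeff-int} is atypical in that the large parameter $n$ enters through $(\log u)^{2n}$, placing the saddle at the unusual location $u_0\sim 2n/(\pi\log n)$ rather than at a polynomial position; one must verify that the Laplace-type contribution from a small neighborhood of $u_0$ dominates the rest of the integral. Beyond this, the remaining ingredients---the generating-function identity, the Fubini interchange, the Cauchy-type Hermite bound, and the Stirling bookkeeping---are all routine.
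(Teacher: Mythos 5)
Your proposal is correct, and its skeleton is the paper's own: expand the Fourier kernel $e^{itx}$ in Hermite polynomials via the generating function (your even-part identity is exactly \eqref{eq:fourier-kernel-hermite} specialized to real $x$), interchange sum and integral against $\Phi$, and bound the tail termwise by a coefficient estimate times a growth estimate for $H_{2n}$ on $K$, as in the chain \eqref{eq:hermite-expansion-proofchain}. The differences are at the lemma level, and both of your substitutes work. For polynomial growth you use a Cauchy-integral estimate $|H_{2n}(t)|\le C_K\,2^n\sqrt{(2n)!}\,e^{O(\sqrt{n})}=\exp\left(n\log n+O(n)\right)$, which is actually sharper than the inductive bound $\exp\left(\tfrac34\cdot 2n\log(2n)\right)$ of Lemma~\ref{lem:hermite-easybound}; either suffices. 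For the coefficients, the paper bounds $\int_0^\infty \Phi(x)e^{-x^2/4}x^n\,dx$ directly through Lemma~\ref{lem:hermite-easybound2} (Laplace analysis of $\int_0^\infty x^ne^{-Be^x}\,dx$ with maximizer $x_n=W(n/B)$), whereas you drop $e^{-x^2/4}$, invoke $\int_{-\infty}^\infty x^{2n}\Phi(x)\,dx=(2n)!\,a_{2n}$ (the paper's \eqref{eq:a2n-qn-rnprime}) and run a saddle-point bound on Riemann's integral \eqref{eq:riemannxi-taylorcoeff-int}; under the substitution $u=e^{2x}$ these are the \emph{same} computation, your saddle equation $u\log u=2n/\pi$ being $xe^x=2n/\pi$ in exponential coordinates, so $u_0\sim 2n/(\pi\log n)$ is precisely $e^{x_{2n}}$ with $x_{2n}=W(2n/\pi)$. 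Your resulting bound $b_{2n}\le \exp\left(-2n\log n+2n\loglog n+O(n)\right)$ matches what the paper extracts, the product with the Hermite bound is $\exp\left(-n\log n+O(n\loglog n)\right)$, and the tail sums to $O(e^{-cN\log N})$, giving \eqref{eq:hermite-expansion-errorbound}; your Fubini majorant $\sum_n |x|^{2n}|H_{2n}(t)|/(2^{2n}(2n)!)=O\left(e^{x^2/4+O(|x|)}\right)$ is likewise harmless against the double-exponential decay of $\Phi$. What each route buys: the paper's Lemma~\ref{lem:hermite-easybound2} is self-contained and reused verbatim later (Corollary~\ref{cor:hermite-coeff-innerproduct}, Theorem~\ref{thm:hermite-coeff-asym}, Theorem~\ref{thm:riemannxi-taylorcoeff-asym}), while your detour through $a_{2n}$ makes the nice structural point that the Hermite tail is controlled by the Taylor coefficients themselves; the one piece you would still owe in full is the verification that Riemann's integrand contributes negligibly away from $u_0$ (concavity/tangent-line control of the exponent), which is exactly the content of the splitting $I_n=I_n^{(1)}+I_n^{(2)}$ in the paper's proof of Lemma~\ref{lem:hermite-easybound2}.
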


We note for the record the unsurprising fact that the coefficients $b_{2n}$ can also be computed as Fourier coefficients of $\Xi(t)$ associated with the orthonormal basis of Hermite polynomials in the function space $L^2(\R,e^{-t^2}\,dt)$.

\begin{corollary}
\label{cor:hermite-coeff-innerproduct}
An alternative expression for the coefficients $b_{2n}$ is
\begin{equation}
\label{eq:hermite-coeff-innerproduct}
b_{2n} = \frac{(-1)^n}{\sqrt{\pi}2^{2n}(2n)!}
\int_{-\infty}^\infty \Xi(t) e^{-t^2} H_{2n}(t)\,dt.
\end{equation}
\end{corollary}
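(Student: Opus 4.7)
The plan is to prove \eqref{eq:hermite-coeff-innerproduct} by a direct calculation that bypasses Theorem~\ref{THM:HERMITE-EXPANSION} entirely, using instead the Fourier transform representation \eqref{eq:riemannxi-fouriertrans} of $\Xi(t)$ together with the classical formula for the Fourier transform of $H_{2n}(t)e^{-t^2}$. This is preferable to the superficially more natural approach of inserting the expansion \eqref{eq:hermite-expansion} into the integral and invoking term-by-term orthogonality, since the bound \eqref{eq:hermite-expansion-errorbound} gives only uniform convergence on compact sets, and upgrading to convergence in $L^2(\R,e^{-t^2}\,dt)$ --- which is what that approach really needs --- is strictly more work than the direct route.

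First I would substitute \eqref{eq:riemannxi-fouriertrans} into the right-hand side of \eqref{eq:hermite-coeff-innerproduct} and swap the order of integration:
\begin{equation*}
\int_{-\infty}^\infty \Xi(t) e^{-t^2} H_{2n}(t)\,dt = \int_{-\infty}^\infty \Phi(x) \left(\int_{-\infty}^\infty e^{itx} e^{-t^2} H_{2n}(t)\,dt\right) dx.
\end{equation*}
The double-exponential decay of $\Phi(x)$ as $|x|\to\infty$ apparent from \eqref{eq:phix-def}, combined with the Gaussian decay of $e^{-t^2}H_{2n}(t)$ in $t$, shows that the underlying double integrand is absolutely integrable on $\R^2$, so Fubini's theorem applies.

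Next I would use the classical Fourier-Hermite identity
\begin{equation*}
\int_{-\infty}^\infty e^{itx} H_{2n}(t) e^{-t^2}\,dt = (-1)^n \sqrt{\pi}\, x^{2n}\, e^{-x^2/4},
\end{equation*}
which one derives in a few lines from the generating function $\sum_{k=0}^\infty H_k(t)\,s^k/k! = e^{2ts - s^2}$ by completing the square in $t$ inside the resulting Gaussian integral and matching coefficients of $s^{2n}/(2n)!$ on both sides. Plugging this evaluation back into the displayed identity above yields
\begin{equation*}
\int_{-\infty}^\infty \Xi(t) e^{-t^2} H_{2n}(t)\,dt = (-1)^n \sqrt{\pi} \int_{-\infty}^\infty x^{2n} e^{-x^2/4} \Phi(x)\,dx = (-1)^n \sqrt{\pi}\cdot 2^{2n}(2n)!\, b_{2n}
\end{equation*}
by the defining formula \eqref{eq:hermite-coeffs-def} for $b_{2n}$, and solving for $b_{2n}$ recovers \eqref{eq:hermite-coeff-innerproduct} exactly (noting that $(-1)^n\cdot(-1)^n=1$).

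The only step in this argument that requires any real thought is the justification of Fubini's theorem, and that is routine given the rapid decay of $\Phi$; beyond that, the proof is a straightforward combination of two standard integral evaluations, so there is no substantive obstacle. One could alternatively view \eqref{eq:hermite-coeff-innerproduct} conceptually as the statement that $b_{2n}$ coincides with the Fourier coefficient of $\Xi$ relative to the orthonormal Hermite basis of $L^2(\R,e^{-t^2}\,dt)$, but as noted above, making that viewpoint rigorous is longer than the direct calculation.
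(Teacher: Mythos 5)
Your proof is correct, and it takes a genuinely different route from the paper's. The paper derives \eqref{eq:hermite-coeff-innerproduct} by upgrading the already-established expansion \eqref{eq:hermite-expansion} to convergence in $L^2(\R,e^{-t^2}\,dt)$: it bounds $b_{2n}$ via Lemma~\ref{lem:hermite-easybound2}, compares against the Hermite norms from \eqref{eq:hermite-orthogonality} to get $L^2$-convergence of the series, identifies the $L^2$-limit with $\Xi$ through a.e.\ convergence along a subsequence, and then extracts the coefficients as inner products by orthogonality --- i.e., it makes rigorous exactly the ``conceptual viewpoint'' you mention at the end and set aside. You instead bypass Theorem~\ref{THM:HERMITE-EXPANSION} entirely: you insert \eqref{eq:riemannxi-fouriertrans} into the inner product, justify Fubini by the decay of $\Phi$ (the double integrand is a product of two absolutely integrable factors, so this is indeed routine), and invoke the Fourier--Hermite eigenfunction identity
\begin{equation*}
\int_{-\infty}^\infty e^{itx} H_{2n}(t) e^{-t^2}\,dt = (-1)^n \sqrt{\pi}\, x^{2n} e^{-x^2/4},
\end{equation*}
which is correct and follows from the generating function \eqref{eq:hermite-genfun} exactly as you describe; it is in effect the adjoint of the kernel identity \eqref{eq:fourier-kernel-hermite} that drives the paper's informal derivation \eqref{eq:hermiteexp-informal-der}. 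Your sign bookkeeping ($i^{2n}=(-1)^n$, and $(-1)^n$ moving to the other side) matches \eqref{eq:hermite-coeffs-def} and yields \eqref{eq:hermite-coeff-innerproduct} exactly. As for what each approach buys: yours is self-contained, needing only absolute integrability and two standard evaluations, while the paper's argument --- which is itself only a few lines because Lemma~\ref{lem:hermite-easybound2} and \eqref{eq:hermite-orthogonality} are already in hand --- delivers the stronger, reusable fact that \eqref{eq:hermite-expansion} holds in the $L^2$ sense, i.e., that the $b_{2n}$ genuinely are the Fourier--Hermite coefficients of $\Xi$. For that reason your claim that the $L^2$ route is ``strictly more work'' is somewhat overstated in context, but as a freestanding proof of the corollary your calculation is complete and arguably more elementary.
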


We give the easy proof of Corollary~\ref{cor:hermite-coeff-innerproduct} at the end of the next section following the proof of Theorem~\ref{THM:HERMITE-EXPANSION}.

\section{Preliminaries}

Recall the easy fact that the series \eqref{eq:omegax-def}--\eqref{eq:phix-def} defining $\omega(x)$ and $\Phi(x)$ are asymptotically dominated by their first summands as $x\to\infty$, and that this remains true if the series are summed starting at $m=2$. This leads to the following standard estimates (with the second one also relying on \eqref{eq:theta-functional-equation}), which will be used several times in this and the following chapters.

\begin{lem}
The functions $\omega(x)$ and $\Phi(x)$ satisfy the asymptotic estimates
\begin{align}
\label{eq:omegax-asym-xinfty}
\omega(x) &= 
O\left(x^2 e^{-\pi x}\right) & \textrm{as }x\to\infty,
\\
\label{eq:omegax-asym-xzero}
\omega(x) &= 
O\left(x^{-5/2} e^{-\pi/x}\right) & \textrm{as }x\to 0+,\negphantom{,}
\\
\label{eq:omegaxdiff-asym-xinfty}
\omega(x) - 
(2\pi^2 x^2 - 3\pi x)e^{-\pi x}
& = 
O\left(x^2 e^{-4\pi x}\right) & \textrm{as }x\to\infty,
\\
\label{eq:phix-asym-xinfty}
\Phi(x) & =
O\left(\exp\left(\frac{9x}{2}-\pi e^{2x}\right)\right)
& \textrm{as }x\to \infty,
\end{align}
and
\begin{align}
\label{eq:phixdiff-asym-xinfty}
\Phi(x) 
- 2\left(2\pi^2 e^{9x/2}-3\pi e^{5x/2}\right) & \exp\left(-\pi e^{2x}\right)
\\ & =
O\left(\exp\left(\frac{9x}{2}-4\pi e^{2x}\right)\right)
& \textrm{as }x\to \infty,
\nonumber
\end{align}
\end{lem}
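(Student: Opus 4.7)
The plan is to prove all five estimates directly from the series definitions \eqref{eq:omegax-def} and \eqref{eq:phix-def}, using the functional equation \eqref{eq:theta-functional-equation} to handle the one claim about $x \to 0+$. Each estimate is an ``isolate the dominant term and bound the tail'' calculation.

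For \eqref{eq:omegax-asym-xinfty} and \eqref{eq:omegaxdiff-asym-xinfty}, I would write
\[
\omega(x) = (2\pi^2 x^2 - 3\pi x)e^{-\pi x} + R(x), \qquad R(x) = \sum_{n=2}^\infty (2\pi^2 n^4 x^2 - 3\pi n^2 x) e^{-\pi n^2 x},
\]
so that \eqref{eq:omegaxdiff-asym-xinfty} is the claim $R(x)=O(x^2 e^{-4\pi x})$. For $x\ge 1$ each summand of $R(x)$ is positive and bounded above by $2\pi^2 n^4 x^2 e^{-\pi n^2 x}$, so factoring out $e^{-4\pi x}$ (the exponential rate at $n=2$) and using $x\ge 1$ to replace $x$ by $1$ inside the remaining exponentials gives
\[
R(x) \le 2\pi^2 x^2 e^{-4\pi x} \sum_{n=2}^\infty n^4 e^{-\pi(n^2-4)x} \le 2\pi^2 x^2 e^{-4\pi x} \sum_{n=2}^\infty n^4 e^{-\pi(n^2-4)},
\]
where the final series converges to an absolute constant. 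This proves \eqref{eq:omegaxdiff-asym-xinfty}, and \eqref{eq:omegax-asym-xinfty} then follows by the triangle inequality together with the trivial bound $(2\pi^2 x^2 - 3\pi x)e^{-\pi x} = O(x^2 e^{-\pi x})$.

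For \eqref{eq:omegax-asym-xzero}, I would invoke the functional equation, rewriting $\omega(1/x) = \sqrt{x}\,\omega(x)$ as $\omega(x) = x^{-1/2}\omega(1/x)$. Since $1/x \to \infty$ as $x \to 0+$, applying the already-established estimate \eqref{eq:omegax-asym-xinfty} yields
\[
\omega(x) = x^{-1/2} \cdot O\!\left(x^{-2} e^{-\pi/x}\right) = O\!\left(x^{-5/2} e^{-\pi/x}\right),
\]
which is the claimed bound.

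The estimates \eqref{eq:phix-asym-xinfty} and \eqref{eq:phixdiff-asym-xinfty} are obtained in precisely the same manner from the series expansion of $\Phi(x)$ in \eqref{eq:phix-def}: split off the $n=1$ summand, bound each remaining term in absolute value by $4\pi^2 n^4 e^{9x/2}\exp(-\pi n^2 e^{2x})$ (valid for $x$ sufficiently large, where $2\pi^2 n^2 e^{9x/2}$ dominates $3\pi e^{5x/2}$), factor out $\exp(-4\pi e^{2x})$, and dominate the residual tail series $\sum_{n=2}^\infty n^4 \exp\bigl(-\pi(n^2-4)e^{2x}\bigr)$ by its value at any fixed threshold $x_0$, which is a finite constant. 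Estimate \eqref{eq:phix-asym-xinfty} then follows from \eqref{eq:phixdiff-asym-xinfty} by the triangle inequality. There is no real obstacle to the proof; the only mild subtlety is ensuring that the implicit constants are uniform on the ranges of $x$ stipulated by each asymptotic, which is routine.
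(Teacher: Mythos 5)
Your proposal is correct and follows exactly the route the paper intends: the paper states these as ``standard estimates'' resulting from the observation that each series is asymptotically dominated by its first summand (even when summation starts at $n=2$), with \eqref{eq:omegax-asym-xzero} obtained via the functional equation \eqref{eq:theta-functional-equation}, and your split-off-the-first-term, factor-out-$e^{-4\pi x}$ (resp.\ $\exp(-4\pi e^{2x})$), and dominate-the-tail-by-a-convergent-constant-series argument is precisely the fleshed-out version of that sketch. All your intermediate bounds (positivity of the summands for $x\ge 1$, monotonicity of the tail series in $x$, and the rewriting $\omega(x)=x^{-1/2}\omega(1/x)$) check out.
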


\section{Proof of Theorem~\ref{THM:HERMITE-EXPANSION}}

\label{sec:hermite-proofmain}

We start by deriving an easy (and far from sharp, but sufficient for our purposes) bound on the rate of growth of $H_n(t)$ as a function of $n$.

\begin{lem} 
\label{lem:hermite-easybound}
The Hermite polynomials satisfy the bound
\begin{equation}
\label{eq:hermite-easybound}
|H_n(t)|\leq C \exp\left(\frac34 n \log n\right)
\end{equation}
for all $n\ge 1$, uniformly as $t$ ranges over any compact set $K\subset \C$, with $C>0$ being a constant that depends on $K$ but not on $n$.
\end{lem}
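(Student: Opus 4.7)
The plan is to derive the bound directly from the generating function identity $\sum_{n=0}^\infty H_n(t)\, z^n/n! = e^{2tz - z^2}$, which realizes $H_n(t)/n!$ as the $n$th Maclaurin coefficient of an entire function of $z$. First I would apply Cauchy's integral formula on the circle $|z|=r$ (with $r>0$ to be chosen) to write
\[ H_n(t) = \frac{n!}{2\pi i} \oint_{|z|=r} \frac{e^{2tz - z^2}}{z^{n+1}}\,dz, \]
and pass to moduli to obtain $|H_n(t)| \leq n!\, r^{-n}\, e^{2Rr + r^2}$, where $R := \max_{t \in K} |t|$ is finite by compactness of $K$.

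Next I would make the choice $r = \sqrt{n}$, which is within a constant factor of the exact minimizer $r \sim \sqrt{n/2}$. Combined with Stirling's estimate $n! \leq C' n^{n+1/2} e^{-n}$, this yields $|H_n(t)| \leq C'\, n^{n/2+1/2}\, e^{2R\sqrt{n}}$, so that $\log|H_n(t)| \leq \tfrac{1}{2} n \log n + O(\sqrt{n}\,)$ with an implicit constant depending on $K$ only through $R$. Since $\tfrac{1}{2} n \log n + O(\sqrt{n}\,) \leq \tfrac{3}{4} n \log n$ for all sufficiently large $n$, the bound \eqref{eq:hermite-easybound} holds for $n \geq n_0 = n_0(K)$, and the remaining finitely many values of $n$ are absorbed by enlarging the constant $C$.

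No genuine obstacle is anticipated: the generating function identity does the real work, and the weaker exponent $\tfrac{3}{4}$ (rather than the sharper $\tfrac{1}{2}$ that the method actually produces) leaves ample slack to accommodate the Stirling correction without careful bookkeeping. An equally elementary alternative would be to bound $|H_n(t)|$ term by term via the explicit formula $H_n(t) = n! \sum_{k=0}^{\lfloor n/2 \rfloor} (-1)^k (2t)^{n-2k}/(k!(n-2k)!)$, using $n!/(n-2k)! \leq n^{2k}$ and applying Stirling to the dominant term near $k \sim n/2$; this leads to the same asymptotic exponent.
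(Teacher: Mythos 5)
Your proof is correct, and it takes a genuinely different route from the paper's. The paper proves Lemma~\ref{lem:hermite-easybound} by induction on $n$ using the three-term recurrence \eqref{eq:hermite-recurrence}: with $M=2\max_{t\in K}|t|$ one bounds $|H_{n+1}(t)|\le 2|t|\,|H_n(t)|+2n|H_{n-1}(t)|$, and for $n\ge N_0=\max(128,\lceil(2M)^{4/3}\rceil)$ the two resulting terms each fit inside half of $C\exp\left(\frac34(n+1)\log(n+1)\right)$; the choice of exponent $\frac34$ works because the two-step increment of the exponent, roughly $\frac32\log n$, dominates the factor $\log(4n)$ coming from the recurrence --- note that any exponent strictly greater than $\frac12$ would serve equally well, which is consistent with the sharp constant your method produces. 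Your argument instead extracts $H_n(t)/n!$ as a Taylor coefficient of the entire function $z\mapsto e^{2tz-z^2}$ via Cauchy's formula on $|z|=\sqrt{n}$, and the estimates check out: $|H_n(t)|\le n!\,n^{-n/2}e^{2R\sqrt{n}+n}\le C' n^{n/2+1/2}e^{2R\sqrt{n}}$ by Stirling, giving $\log|H_n(t)|\le\frac12 n\log n+O(\sqrt{n})$, which is strictly stronger than \eqref{eq:hermite-easybound}, with the finitely many small $n$ absorbed into $C$. As for what each approach buys: yours is shorter, dispenses with inductive bookkeeping, and yields the sharp exponent $\frac12$ (which would marginally improve the constant in \eqref{eq:hermite-expansion-errorbound}, though $\frac34$ already suffices downstream since the factor $\frac{1}{2^n n!}$ in \eqref{eq:hermite-expansion-proofchain} contributes $e^{-n\log n+O(n)}$); the paper's recurrence-induction, by contrast, uses no Stirling and no contour integration, and --- more to the point for this paper --- serves as a template that transfers essentially verbatim to the families $f_n$ and $g_n$ (Lemmas~\ref{lem:fn-easybound} and~\ref{lem:gn-easybound}), whose generating functions have finite radius of convergence, so a Cauchy-coefficient estimate there would require a more delicate choice of contour. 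Your term-by-term alternative via the explicit sum for $H_n$ is also sound: the dominant term near $k\sim n/2$ gives $\frac12 n\log n+O(n)$, again with ample slack against $\frac34 n\log n$.
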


\begin{proof}
Fix the compact set $K$, and denote $M=2\max_{t\in K} |t|$. Let $N_0$ be a positive integer whose value will be fixed shortly. Let $C>0$ be a constant for which \eqref{eq:hermite-easybound} holds for all $t\in K$ and $1\le n\le N_0$. We prove by induction that the inequality holds for all $n\ge 1$, using as the induction base the case $n=N_0$. For the inductive step, let $n\ge N_0$ and assume that we have proved all cases up to the $n$th case. Then for $t\in K$ we can bound $|H_{n+1}(t)|$ using the recurrence relation \eqref{eq:hermite-recurrence} for the Hermite polynomials, which, together with the inductive hypothesis, gives that
\begin{align*}
|H_{n+1}(t)| &\leq
2 |t| \cdot|H_n(t)| + 2n |H_{n-1}(t)|
\\ & \leq
MC  \exp\left(\frac34 n \log n \right)
+ 2C n \exp\left(\frac34 (n-1)\log (n-1)\right)
\\ & =
C \exp\left(\frac34 n \log n + \log (M) \right)
+
C \exp\left(\frac34 (n-1) \log (n-1) + \log(2n) \right).
\end{align*}
We see that it is easy to complete the induction by fixing $N_0$ to be large enough as a function of $M$, specifically setting, say,
$N_0 = \max(128,\lceil(2M)^{4/3}\rceil) $. With this definition we then get (remembering the assumption $n\ge N_0$) that
\begin{align*}
|H_{n+1}(t)| & \leq
C \exp\left(\frac34 (n+1) \log n -\log 2 \right)
+
C \exp\left(\frac34 (n+1) \log (n-1) -\log 2 \right)
\\ & \leq
\frac{C}{2} \exp\left( \frac34 (n+1)\log (n+1) \right)
+ 
\frac{C}{2} \exp\left( \frac34 (n+1)\log (n+1) \right)
\\ & =
C \exp\left( \frac34 (n+1)\log (n+1) \right),
\end{align*}
which finishes the proof.
\end{proof}

Define the Lambert $W$-function to be the unique increasing function $W:[0,\infty)\to [0,\infty)$ satisfying the equation
\begin{equation*}
W(xe^x) = x.
\end{equation*}

In what follows, we will make use of the following asymptotic formula for $W(x)$ for large $x$. The result is a weaker version of eq.~(4.19) in \cite{corless-etal}.
\begin{thm}[Corless et al. \cite{corless-etal}]
The asymptotic behavior of $W(x)$ as $x\to\infty$ is given by
\begin{equation}
\label{eq:lambert-w-asym}
W(x) = \log x - \loglog x + \frac{\loglog x}{\log x}
+ O\left( \left(\frac{\loglog x}{\log x}\right)^2 \right).
\end{equation}
\end{thm}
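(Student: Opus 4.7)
The plan is to derive the expansion by iterating the defining relation $W(x) e^{W(x)} = x$, which upon taking logarithms becomes $y + \log y = \log x$ where $y = W(x)$. Since $W$ is increasing and unbounded on $[0,\infty)$, we have $y \to \infty$ as $x \to \infty$. Writing $L_1 = \log x$ and $L_2 = \loglog x$, the goal is to show that $y = L_1 - L_2 + L_2/L_1 + O\bigl((L_2/L_1)^2\bigr)$.

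First I would establish the leading-order estimate $y \sim L_1$. From $y + \log y = L_1$ together with the fact that $\log y = o(y)$ as $y \to \infty$, it follows that $y = L_1(1 + o(1))$, and hence $\log y = L_2 + o(1)$. Rearranging the defining equation as $y = L_1 - \log y$ and substituting this first-order estimate yields the two-term asymptotic $y = L_1 - L_2 + o(1)$.

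Next I would refine this through a second bootstrap. Writing $y = L_1 - L_2 + \delta$ with $\delta = o(1)$, I would compute
\begin{equation*}
\log y = \log\bigl(L_1(1 - L_2/L_1 + \delta/L_1)\bigr) = L_2 + \log(1 - L_2/L_1 + \delta/L_1).
\end{equation*}
Applying the Taylor expansion $\log(1-u) = -u + O(u^2)$, valid since $L_2/L_1 \to 0$ and $\delta/L_1 = o(1/L_1)$, gives $\log y = L_2 - L_2/L_1 + O\bigl((L_2/L_1)^2\bigr)$, where the $\delta/L_1$ contribution is absorbed into the error term. Substituting back into $y = L_1 - \log y$ produces the claimed three-term expansion.

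The main obstacle will be to verify the uniformity of the error and track implicit constants through each stage of the bootstrap, in particular making sure that the second-stage correction $\delta$ is itself $O(L_2/L_1)$ rather than merely $o(1)$, so that its contribution to $\log y$ sits inside the announced error. Since the statement is merely a weakened form of equation~(4.19) of \cite{corless-etal}, an alternative is simply to invoke their asymptotic expansion, truncate to three explicit terms, and collapse all subsequent terms into the $O\bigl((\loglog x/\log x)^2\bigr)$ remainder.
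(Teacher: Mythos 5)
Your proposal is correct, but it takes a genuinely different route from the paper: the paper does not prove this statement at all, instead quoting it as a weakened form of eq.~(4.19) of Corless et al.\ \cite{corless-etal}, so your second option (cite, truncate, absorb the tail into the error) is exactly what the paper does, while your bootstrap argument is a self-contained alternative the paper never carries out. The bootstrap is sound: from $y+\log y = L_1$ one gets $y\sim L_1$, then $y = L_1 - L_2 + o(1)$, and writing $y = L_1 - L_2 + \delta$ the identity $\delta = -\log\bigl(1 - L_2/L_1 + \delta/L_1\bigr) = O(L_2/L_1) + O(\delta/L_1)$ self-improves to $\delta = O(L_2/L_1)$, after which one more substitution into $y = L_1 - \log y$ yields the three-term expansion with error $O\bigl((L_2/L_1)^2\bigr)$. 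One caution: as literally written, your middle paragraph absorbs a $\delta/L_1 = o(1/L_1)$ term into $O\bigl((L_2/L_1)^2\bigr)$, which is not valid on its face, since $(L_2/L_1)^2 = o(1/L_1)$ as well (e.g.\ $1/(L_1 L_2)$ is $o(1/L_1)$ but dominates $(L_2/L_1)^2$); you correctly flag this yourself in the closing paragraph, and the fix is precisely the extra iteration above showing $\delta = O(L_2/L_1)$, whence $\delta/L_1 = O(L_2/L_1^2) = O\bigl((L_2/L_1)^2\bigr)$. What each approach buys: the citation is shorter and inherits the full asymptotic series of Corless et al.\ (useful since the paper also invokes the stronger version of (4.19) later, in the discussion after Theorem~\ref{thm:hermite-coeff-asym}), while your elementary argument makes the memoir self-contained at the cost of half a page of bookkeeping.
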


The Lambert $W$-function and its asymptotics will be quite important for our analysis. A hint of why this is so can already be glimpsed in the proof of the following technical lemma.

\begin{lem}
\label{lem:hermite-easybound2}
For any number $B\ge 1$ there is a constant $C>0$ such that 
\begin{align}
\label{eq:hermite-easybound2}
\int_0^\infty x^n & \exp\left(-B e^x\right)\,dx
\leq 
\exp\left[
n\loglog n - \frac{n \loglog n}{\log n} - (\log B+1) \frac{n}{\log n}
+ C 
\frac{n (\log\log n)^2}{(\log n)^2}
\right]
\end{align}
for all $n\ge 3$.
\end{lem}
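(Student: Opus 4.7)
The plan is to apply Laplace's method to the integral, exploiting the fact that $x\mapsto n\log x - Be^x$ is strictly concave with unique maximum at $x^* = W(n/B)$, where the defining identity $x^* e^{x^*} = n/B$ gives the convenient substitution $Be^{x^*} = n/x^*$ and hence a peak value of $n\log x^* - n/x^*$. Expanding \eqref{eq:lambert-w-asym} to sufficient order will then translate this peak value into the expression appearing in \eqref{eq:hermite-easybound2}.

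Concretely, I would shift variables by $y = x - x^*$ and use the estimate $(1 + y/x^*)^n \leq e^{ny/x^*}$ (valid for $y > -x^*$ since $\log(1+u) \leq u$) to obtain
\begin{equation*}
\int_0^\infty x^n \exp(-Be^x)\,dx \leq (x^*)^n \int_{-x^*}^\infty \exp\!\left(\tfrac{ny}{x^*}\right)\exp\!\left(-\tfrac{n}{x^*} e^y\right)\,dy.
\end{equation*}
Substituting $z = (n/x^*) e^y$ then turns the right-hand side into an incomplete gamma integral; bounding that by $\Gamma(n/x^*)$ and applying Stirling's approximation yields
\begin{equation*}
\int_0^\infty x^n \exp(-Be^x)\,dx \leq C' \sqrt{x^*/n}\,\exp(n\log x^* - n/x^*)
\end{equation*}
for some absolute constant $C'$.

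For the second step, set $L = \log(n/B) = \log n - \log B$ and apply \eqref{eq:lambert-w-asym} to obtain $x^* = L - \log L + (\log L)/L + O((\log L/L)^2)$. Expanding $\log x^* = \log L + \log(1 - (\log L)/L + \cdots)$ to two orders, and expanding $n/x^* = (n/L)(1 + (\log L)/L + \cdots)$ to leading order, yields after combination
\begin{equation*}
n\log x^* - n/x^* = n\loglog n - \frac{n\loglog n}{\log n} - (\log B + 1)\frac{n}{\log n} + O\!\left(\frac{n(\loglog n)^2}{(\log n)^2}\right).
\end{equation*}
The coefficient $\log B + 1$ in front of $n/\log n$ assembles from a $-\log B/\log n$ piece produced by expanding $L = \log n - \log B$ inside $\log L$ and a $-1/\log n$ piece coming from the leading term of $n/x^*$. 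The extra $\tfrac12 \log(x^*/n)$ and $\log C'$ contributions from Stirling are $O(\log n)$, hence dominated by $n(\loglog n)^2/(\log n)^2$ once $n$ is sufficiently large.

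The main obstacle is tracking the arithmetic in the asymptotic expansion: two distinct mechanisms contribute to the $1/\log n$ coefficient, so the Lambert $W$ asymptotic must be carried to sufficient order and the resulting pieces combined carefully to recover the exact $(\log B + 1)$ factor. Once the asymptotic is established for $n$ beyond some threshold $n_0 = n_0(B)$, enlarging $C$ to cover the finitely many cases $3 \leq n < n_0$ completes the proof.
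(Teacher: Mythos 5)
Your proposal is correct, and it splits into the same two halves as the paper's proof with the second half essentially identical: you locate the maximizer $x^*=W(n/B)$, exploit the defining relation $x^*e^{x^*}=n/B$ to write the peak value as $n\log x^* - n/x^*$ (the paper writes this as $n(\log n-\log B-x_n-1/x_n)$, the same quantity), and expand via \eqref{eq:lambert-w-asym}; your accounting of the $(\log B+1)$ coefficient---the $-\log B/\log n$ piece from expanding $\log L$ with $L=\log n-\log B$, and the $-1/\log n$ piece from the leading term of $n/x^*$---is exactly the bookkeeping the paper performs in \eqref{eq:lambertw-value-asym}--\eqref{eq:an-asym-largedevconst}. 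Where you genuinely diverge is in bounding the integral itself. The paper splits the domain at $2\log n$, bounds the bulk trivially by $2\log n\cdot e^{A_n}$ using that $\psi_n\le A_n$, and kills the tail by concavity (the tangent line at $2\log n$), yielding an $O(1)$ remainder. You instead dominate globally via $(1+y/x^*)^n\le e^{ny/x^*}$, which after the substitution $z=(n/x^*)e^y$ collapses the whole integral into
\begin{equation*}
(x^*)^n\left(\frac{x^*}{n}\right)^{n/x^*}\int_B^\infty z^{n/x^*-1}e^{-z}\,dz
\le (x^*)^n\left(\frac{x^*}{n}\right)^{n/x^*}\Gamma\!\left(\frac{n}{x^*}\right),
\end{equation*}
and Stirling then delivers $C'\sqrt{x^*/n}\,\exp(n\log x^*-n/x^*)$ as you state (note the lower limit of the $z$-integral is exactly $B$, since $(n/x^*)e^{-x^*}=B$, though positivity lets you extend to $0$ regardless). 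Your route is arguably cleaner and gives a slightly \emph{stronger} prefactor ($\sqrt{x^*/n}\to 0$ versus the paper's $2\log n$), though both are swallowed by the $Cn(\loglog n)^2/(\log n)^2$ error term; the paper's split-interval machinery has the separate advantage that it is reused nearly verbatim in the proof of Theorem~\ref{thm:hermite-coeff-asym}, where a two-sided Laplace analysis is needed and a pure upper-bound trick like yours would not suffice. Your closing remark about absorbing the finitely many cases $3\le n<n_0$ by enlarging $C$ is sound, since $n(\loglog n)^2/(\log n)^2>0$ already at $n=3$.
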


\begin{proof}
Denote the integral on the left-hand side of \eqref{eq:hermite-easybound2} by $I_n$. It is convenient to rewrite this integral as
\begin{equation*}
I_n = \int_0^\infty \exp(\psi_n(x))\,dx,
\end{equation*}
where we denote
\begin{equation}
\label{eq:hermite-psindef}
\psi_n(x) = n\log x - B e^x.
\end{equation}
To obtain an effective bound on this integral, it is natural to seek the point where $\psi_n(x)$ is maximized. 
Examining its derivative $\psi_n'(x) = \frac{n}{x}-B e^x$, we see that is positive for $x$ positive and close to $0$, negative for large values of $x$, and crosses zero when
\begin{equation*}
\frac{n}{x}-B e^x = 0
\quad \iff \quad
x e^x = \frac{n}{B},
\end{equation*}
an equation that has a unique solution, which we denote $x_n$, that is expressible in terms of the Lambert $W$-function, namely as
\begin{equation*}
x_n = W\left(\frac{n}{B}\right).
\end{equation*}
Thus $x_n$ is the unique global maximum point of $\psi_n(x)$. 
By \eqref{eq:lambert-w-asym}, the asymptotic behavior of $x_n$ for large $n$ (with $B$ fixed) is given by
\begin{align}
\label{eq:lambertw-value-asym}
x_n &=
\log\left(\frac{n}{B}\right) - \loglog\left(\frac{n}{B}\right) + \frac{\loglog\left(\frac{n}{B}\right)}{\log\left(\frac{n}{B}\right)} + 
O\left(
\left( \frac{\loglog\left(\frac{n}{B}\right)}{\log\left(\frac{n}{B}\right)} \right)^2
\right)
\\ \nonumber & =
(\log n - \log B) - 
\left(\loglog n
- \frac{\log B}{\log n} + 
O\left( \left(\frac{\log B}{\log n}\right)^2\right)
\right)
\\ \nonumber & \qquad\qquad +
\frac{
\left(\loglog n
- \frac{\log B}{\log n} + 
O\left( \left(\frac{\log B}{\log n}\right)^2\right)
\right)
}{\log n - \log B}
+ O\left(
\left( \frac{\loglog n}{\log n} \right)^2
\right)
\\ \nonumber & =
\log n - \loglog n - \log B + \frac{\log B}{\log n}
+ \frac{\loglog n}{\log n}
+ O\left( \left( \frac{\loglog n}{\log n}\right)^2 \right)
\quad (n\to\infty).
\end{align}
Denote $A_n = \psi_n(x_n)$, and observe that we can use the defining relation $x_n e^{x_n} = \frac{n}{B}$ for $x_n$ to rewrite $A_n$ in the form
\begin{align}
\label{eq:psin-maxvalue-lambert}
A_n &= n \log x_n - B e^{x_n}
=
n \log \left(x_n e^{x_n}\right) - n x_n - \frac{B}{x_n} \left(x_n e^{x_n}\right)
\\ \nonumber & =
n\log\left(\frac{n}{B}\right) - n x_n - \frac{n}{x_n}
=
n\left(\log n - \log B - x_n - \frac{1}{x_n} \right).
\end{align}
This form for $A_n$ makes it straightforward to derive an asymptotic formula for $A_n$: first, estimate the term $1/x_n$ separately as 
\begin{align}
\label{eq:lambertw-valuerecip-asym}
\frac{1}{x_n} &=
\frac{1}{\log n - \loglog n -\log B + O\left(\frac{\loglog n}{\log n}\right)}
\\ \nonumber &=
\frac{1}{\log n}\left(1-\frac{\loglog n}{\log n} - \frac{\log B}{\log n} + O\left(\frac{\loglog n}{\log n}\right) \right)^{-1}
=
\frac{1}{\log n} + O\left(\frac{\loglog n}{(\log n)^2}\right).
\end{align}
Then inserting \eqref{eq:lambertw-value-asym} and \eqref{eq:lambertw-valuerecip-asym} into \eqref{eq:psin-maxvalue-lambert} gives that
\begin{equation}
\label{eq:an-asym-largedevconst}
A_n =
n\loglog n - \frac{n \loglog n}{\log n} - (\log B+1)\frac{n}{\log n} +
O\left( \frac{n (\loglog n)^2}{(\log n)^2}
\right)
\quad (n\to\infty).
\end{equation}
We can now use these estimates to bound the integral $I_n$. First, split it into two parts, writing it as $I_n = I_n^{(1)} + I_n^{(2)}$, where we denote
\begin{align*}
I_n^{(1)} 
=
\int_0^{2\log n} \exp(\psi_n(x))\,dx, \qquad \quad
I_n^{(2)} 
=
\int_{2\log n}^\infty \exp(\psi_n(x))\,dx.
\end{align*}
Since $\psi_n(x)\leq A_n$ for all $x> 0$, for the first integral we have the trivial bound
\begin{align}
\label{eq:int1bound-hermiteeasy}
I_n^{(1)} &\leq 2\log n \cdot e^{A_n}
=
\exp\left[
n\loglog n - \frac{n \loglog n}{\log n} - (\log B+1) \frac{n}{\log n}
+ O\left(
\frac{n (\log\log n)^2}{(\log n)^2}
\right)
\right].
\end{align}
To bound the second integral, observe that $\psi_n(x)$ is a concave function (since its second derivative is everywhere negative), so in particular it is bounded from above by its tangent line at $x=2\log n$; that is, we have the inequality
\begin{equation*}
\psi_n(x) \leq \psi_n(2\log n) + \psi_n'(2\log n)(x-2\log n) \qquad (x>0).
\end{equation*}
The constants $\psi_n(2\log n)$, $\psi_n'(2\log n)$ in this inequality satisfy, for $n$ large enough,
\begin{align*}
\psi_n(2\log n) & =
n\log(2\log n) - B n^2 \leq -\frac{B}{2} n^2
\leq -\frac12 n^2,
\\
\psi_n'(2\log n) & =
\frac{n}{2\log n} - B n^2 \leq -\frac{B}{2} n^2 
\leq -\frac12 n^2.
\end{align*}
This then implies that, again for large $n$, we have
\begin{align}
\label{eq:int2bound-hermiteeasy}
I_n^{(2)} & \leq
\int_{2\log n}^\infty \exp\left(
\psi_n(2\log n) + \psi_n'(2\log n)(x-2\log n)
\right)\,dx
\\ \nonumber & =
\exp\left(\psi_n(2\log n)\right) \int_0^\infty
\exp\left(\psi_n'(2\log n)t\right)\,dt
\\ \nonumber  & =
\frac{1}{-\psi_n'(2\log n)} \exp\left(\psi_n(2\log n)\right)
\leq \frac{2}{n^2} e^{-n^2/2}
= O(1).
\end{align}
Combining the two bounds \eqref{eq:int1bound-hermiteeasy} and \eqref{eq:int2bound-hermiteeasy} gives the claimed bound \eqref{eq:hermite-easybound2}.
\end{proof}

We are ready to prove \eqref{eq:hermite-expansion-errorbound}. First, consider the following slightly informal calculation that essentially explains how the expansion \eqref{eq:hermite-expansion} arises out of the definition \eqref{eq:hermite-coeffs-def} of the coefficients $b_n$. Recalling the formula \eqref{eq:hermite-genfun} for the generating function for the Hermite polynomials, we have that
\begin{align}
\label{eq:hermiteexp-informal-der}
\sum_{n=0}^\infty (-1)^n b_{2n} H_{2n}(t)
& = \sum_{n=0}^\infty i^n b_n H_n(t)
=
\sum_{n=0}^\infty \frac{i^n}{2^n n!} \int_{-\infty}^\infty x^n e^{-\frac{x^2}{4}} \Phi(x)\,dx \cdot H_n(t)
\\ \nonumber &=
\int_{-\infty}^\infty \left(\sum_{n=0}^\infty \frac{i^n}{2^n n!}  x^n H_n(t)\right) e^{-\frac{x^2}{4}} \Phi(x)\,dx
\\ \nonumber &=
\int_{-\infty}^\infty \exp\left(2t\cdot \frac{ix}{2} - \left(\frac{i x}{2}\right)^2\right) e^{-\frac{x^2}{4}} \Phi(x)\,dx
=
\int_{-\infty}^\infty e^{itx} \Phi(x) \,dx = \Xi(t),
\end{align}
which is \eqref{eq:hermite-expansion}.
Note that at the heart of this calculation is the simple identity
\begin{equation}
\label{eq:fourier-kernel-hermite}
e^{itx} = e^{-\frac{x^2}{4}} \sum_{n=0}^\infty \frac{i^n x^n}{2^n n!} H_n(t),
\end{equation}
a trivial consequence of \eqref{eq:hermite-genfun}, which expands the Fourier transform integration kernel $e^{itx}$ as an infinite series in the Hermite polynomials. Thus, to get the more precise statement \eqref{eq:hermite-expansion-errorbound}, all that's left to do is to perform the same calculation a bit more carefully, using the results of Lemmas~\ref{lem:hermite-easybound} and~\ref{lem:hermite-easybound2} to get more explicit error bounds when summing this infinite series and integrating. Namely, using~\eqref{eq:fourier-kernel-hermite} we can estimate the left-hand side of \eqref{eq:hermite-expansion-errorbound} as
\begin{align}
\label{eq:hermite-expansion-proofchain}
\Bigg| \Xi(t) - & \sum_{n=0}^N (-1)^n b_{2n} H_{2n}(t) \Bigg|
=
\left| \Xi(t) - \sum_{n=0}^{2N} i^n b_n H_n(t) \right|
\\ \nonumber & =
\left| 
\int_{-\infty}^\infty
\Phi(x) \left(e^{itx}
- e^{-\frac{x^2}{4}}\sum_{n=0}^{2N}
\frac{i^n x^n}{2^n n!} H_n(t)
\right)\,dx
\right|
\\ \nonumber & 
=
\left| 
\int_{-\infty}^\infty
\Phi(x) e^{-\frac{x^2}{4}} \sum_{n=2N+1}^\infty
\frac{i^n x^n}{2^n n!} H_n(t)\,dx
\right|
\\ \nonumber & \leq
\sum_{n=2N+1}^\infty
\frac{1}{2^n n!}
\left(\int_{-\infty}^\infty
\Phi(x) e^{-\frac{x^2}{4}}
|x|^n \,dx
\right)|H_n(t)|
\\ \nonumber & =
\sum_{n=2N+1}^\infty
\frac{1}{2^{n-1} n!}
\left(\int_0^\infty
\Phi(x) e^{-\frac{x^2}{4}}
x^n \,dx
\right)|H_n(t)|
\\ \nonumber & \leq
\sum_{n=2N+1}^\infty
\frac{1}{2^{n-1} n!}
C \exp\left(\frac34 n \log n\right)
\int_0^\infty
\Phi(x) e^{-\frac{x^2}{4}}
x^n \,dx,
\end{align}
for all $t$ ranging over some fixed compact set $K \subset \C$, and where in the last step we invoked Lemma~\ref{lem:hermite-easybound}, with $C$ denoting the positive constant given by that lemma (depending on the compact set $K$).

Now, since $\Phi(x) = O\left(\exp\left(-3 e^{2x}\right)\right)$ as $x\to\infty$ by \eqref{eq:phix-asym-xinfty},
we can use Lemma~\ref{lem:hermite-easybound2} with $B=3$ to bound the integral in the last sum, and therefore conclude that this sum in \eqref{eq:hermite-expansion-proofchain} is bounded from above by
\begin{equation*}
C \sum_{n=2N+1}^\infty \frac{1}{2^n n!}
\exp \left(\frac34 n \log n\right)
\times \frac{1}{2^n}\exp\left( n \loglog n - \frac{n\loglog n}{\log n} + O\left(\frac{n}{\log n}\right)
\right).
\end{equation*}
By Stirling's formula this is $O\left(\exp\left(-\frac{1}{5} N \log N \right)\right)$, which is the bound we need. 
The proof of Theorem~\ref{THM:HERMITE-EXPANSION} is complete.
\qed

\begin{proof}[Proof of Corollary~\ref{cor:hermite-coeff-innerproduct}]

The Hermite polynomials form an orthogonal basis of the Hilbert space $L^2(\R,e^{-t^2}\,dt)$. By 
Lemma~\ref{lem:hermite-easybound2} we also get an upper bound for the coefficients $b_{2n}$ (which will be superseded by a more precise asymptotic result in the next section, but is still useful), namely the statement that 
\begin{equation*}
b_{2n} \le \frac{C}{2^{2n} (2n)!}\exp\left(2n \loglog (2n)\right)
\end{equation*}
for some constant $C>0$ and all $n\ge3$. Together with the fact that the squared $L^2$-norm of $H_n(t)$ is $\sqrt{\pi}2^n n!$ (see \eqref{eq:hermite-orthogonality}),
this implies that the infinite series on the right-hand side of \eqref{eq:hermite-expansion} converges in the sense of the function space $L^2(\R,e^{-t^2}\,dt)$ to an element of this space. Since $L^2$-convergence implies almost everywhere convergence along a subsequence, the $L^2$-limit must be equal to the pointwise limit, that is, the function $\Xi(t)$. Thus, the relation \eqref{eq:hermite-expansion} holds in the sense of $L^2$, and it follows that the coefficients in the expansion can be extracted in the standard way as inner products in the $L^2$-space, which (again because of \eqref{eq:hermite-orthogonality}) leads to the formula \eqref{eq:hermite-coeff-innerproduct}.
\end{proof}

\section{An asymptotic formula for the coefficients $b_{2n}$}

\label{sec:hermite-asym}

We now refine our analysis of the Hermite expansion by deriving an asymptotic formula for the coefficients $b_{2n}$. These asymptotics are most simply expressed in terms of the Lambert $W$-function.

\begin{thm}[Asymptotic formula for the coefficients $b_{2n}$]
\label{thm:hermite-coeff-asym}
The coefficients $b_{2n}$ satisfy the asymptotic formula
\begin{align}
\label{eq:hermite-coeff-asym}
b_{2n} & =
\left(1+O\left(\frac{\loglog n}{\log n}\right)\right)
\frac{\pi^{1/4}}{2^{4n-\frac52} (2n)!} \left(\frac{2n}{\log (2n)}\right)^{7/4}
\\ \nonumber & \qquad\qquad \times \exp\left[
2n\left(\log \left(\frac{2n}{\pi}\right) - W\left(\frac{2n}{\pi}\right) - \frac{1}{W\left(\frac{2n}{\pi}\right)} \right)
-\frac{1}{16} W\left(\frac{2n}{\pi}\right)^2\right]
\end{align}
as $n\to\infty$.
\end{thm}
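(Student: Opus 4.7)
The plan is to apply Laplace's method to the integral representation
$$b_{2n} = \frac{2}{2^{2n}(2n)!}\int_0^\infty x^{2n}\,e^{-x^2/4}\,\Phi(x)\,dx,$$
obtained from \eqref{eq:hermite-coeffs-def} using the evenness of $\Phi$. Since the target has relative error $O(\loglog n/\log n)$, rather crude bounds will suffice at every step.

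First I would replace $\Phi(x)$ by its leading asymptotic $4\pi^2 e^{9x/2-\pi e^{2x}}$ via \eqref{eq:phixdiff-asym-xinfty}. The discarded $-6\pi e^{5x/2}$ term contributes a relative factor $1+O(W/n)$ near the eventual saddle point, the higher-order error in \eqref{eq:phixdiff-asym-xinfty} is doubly exponentially smaller, and the contribution of $[0,\loglog n]$ (where the asymptotic fails) is swamped by the $x^{2n}$ factor. The task reduces to estimating $\int_0^\infty e^{\psi_n(x)}\,dx$ with
$$\psi_n(x) := 2n\log x - x^2/4 + 9x/2 - \pi e^{2x}.$$

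Next, locate the saddle point. The equation $\psi_n'(x)=0$ reads $2n/x = 2\pi e^{2x} + x/2 - 9/2$; dropping the lower-order terms gives $2n/x = 2\pi e^{2x}$, equivalent to $(2x)e^{2x} = 2n/\pi$, whose unique positive solution is $x_n := \tfrac12 W(2n/\pi)$ by the defining property of the Lambert $W$-function. A single perturbative step shows the true maximizer of $\psi_n$ is within $O(W^{-2})$ of $x_n$. Since $\psi_n''(x)<0$ everywhere, $\psi_n$ is strictly concave, so $x_n$ is a sharp global maximum. Computing $\psi_n''(x_n) = -2n/x_n^2 - 1/2 - 4\pi e^{2x_n} = -8n/W + O(n/W^2)$ (using $4\pi e^{2x_n} = 4n/x_n = 8n/W$) and bounding the cubic via $\psi_n'''(x_n) = O(n/W)$, the standard Laplace argument produces
$$\int_0^\infty e^{\psi_n(x)}\,dx = \bigl(1+O\bigl(\sqrt{W/n}\bigr)\bigr)\sqrt{\frac{\pi W}{4n}}\,e^{\psi_n(x_n)}.$$

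Justifying this rigorously requires splitting $[0,\infty)$ into a central zone $|x-x_n|\le \delta_n$ (for instance $\delta_n = (n/W)^{-2/5}$) where the quadratic Taylor expansion of $\psi_n$ controls the cubic remainder, plus tail regions where concavity of $\psi_n$ gives super-exponentially small contributions. I expect this tail analysis and the bookkeeping of accumulated errors to be the main technical obstacle, though the very loose target error provides ample slack.

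Finally, evaluate $\psi_n(x_n)$ explicitly. Using $\pi e^{2x_n} = n/x_n = 2n/W$ and the identity $\log W = \log(2n/\pi) - W$ (from $W e^W = 2n/\pi$), one obtains
$$\psi_n(x_n) = 2n\bigl[\log(2n/\pi) - W - W^{-1}\bigr] - 2n\log 2 - W^2/16 + 9W/4.$$
Reassembling: the $-2n\log 2$ combines with the prefactor $2^{-2n}$ to give $2^{-4n}$; the factor $e^{9W/4}$ becomes $(2n/(\pi W))^{9/4}$ via $e^W = (2n/\pi)/W$; and the remaining constants (including $\sqrt{\pi W/(4n)}$ from Laplace and $4\pi^2\cdot 2/(2n)!$ from the prefactor) collapse, using $(2n)^{9/4}/\sqrt{n} = \sqrt{2}\,(2n)^{7/4}$ and $\pi^{5/2-9/4}=\pi^{1/4}$, into $\frac{\pi^{1/4}}{2^{4n-5/2}(2n)!}\,(2n/W)^{7/4}$. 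The replacement of $W=W(2n/\pi)$ by $\log(2n)$ in this polynomial prefactor, justified by \eqref{eq:lambert-w-asym}, introduces the dominant relative factor $1+O(\loglog n/\log n)$, yielding the claimed formula \eqref{eq:hermite-coeff-asym}.
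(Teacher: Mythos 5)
Your proof is correct and follows essentially the paper's own route: the paper likewise splits $\Phi$ into its first summand plus the $m\ge 2$ tail (the tail term $r_n$ being absorbed via Lemma~\ref{lem:hermite-easybound2} with $B=3\pi$), applies Laplace's method with saddle point $\frac12 x_{2n}$ where $x_{2n}=W\!\left(\frac{2n}{\pi}\right)$, and incurs the dominant $1+O\!\left(\frac{\loglog n}{\log n}\right)$ relative error exactly where you do, namely in trading $W\!\left(\frac{2n}{\pi}\right)$ for $\log(2n)$ in the $7/4$-power prefactor. The one bookkeeping difference is that the paper keeps $e^{-x^2/4}e^{5x/2}\bigl(e^{2x}-\frac{3}{2\pi}\bigr)$ as a slowly varying amplitude, so that its phase $2n\log x-\pi e^{x}$ has $x_{2n}$ as its \emph{exact} maximum and no perturbation of the critical point is needed; since you fold everything into the phase, note that your stated localization $O(W^{-2})$ of the true maximizer is too weak on its own (it would only bound the value shift by order $n/W^{5}$), whereas the Newton step you invoke actually yields $|\psi_n'(x_n)|/|\psi_n''(x_n)|=O(W^{2}/n)$, hence a value shift $O(W^{3}/n)$, which is what the argument requires.
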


The appearance of the non-elementary, implicitly-defined function $W(x)$ in the asymptotic formula \eqref{eq:hermite-coeff-asym} may make it somewhat difficult to use or gain intuition from, but with the help of the asymptotic formula \eqref{eq:lambert-w-asym} for the Lambert $W$-function, or its stronger version \cite[eq.~(4.19)]{corless-etal} mentioned above, we can extract the asymptotically dominant terms from inside the exponential to get an asymptotic formula involving more familiar functions (unfortunately, at a cost of having a much larger error term---but this seems like an unavoidable tradeoff that comes about as a result of the unusual asymptotic expansion of the Lambert $W$-function). For example, as an immediate corollary we get the following more explicit, but weaker, result.

\begin{corollary}[Asymptotic formula for the logarithm of the coefficients $b_{2n}$]
We have the relation
\begin{align}
\label{eq:hermite-coeffasym-simplified}
\log b_{2n} = 
-2n \log (2n) + 2n \loglog \left( \frac{2n}{\pi} \right)
+ O\left( n \right)
\end{align}
as $n\to\infty$.
\end{corollary}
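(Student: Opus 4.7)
The plan is to derive the Corollary as a direct logarithmic consequence of Theorem~\ref{thm:hermite-coeff-asym}, combined with Stirling's formula and the asymptotic expansion \eqref{eq:lambert-w-asym} for the Lambert $W$-function. First I would take the natural logarithm of both sides of \eqref{eq:hermite-coeff-asym}; the ``$1+O(\loglog n/\log n)$'' prefactor contributes only $O(1)$, so the task reduces to estimating the various algebraic factors and the explicit exponent separately and checking that everything other than the two displayed main terms is absorbed into $O(n)$.

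Next I would handle the ``easy'' factors. Stirling's formula gives
\begin{equation*}
-\log\bigl((2n)!\bigr) = -2n \log (2n) + 2n + O(\log n),
\end{equation*}
while $-\log(2^{4n-5/2}) = -4n\log 2 + O(1)$ and $\log(\pi^{1/4}) = O(1)$, so these combine to $-2n\log(2n) + O(n)$. The polynomial-in-$n$ correction $(7/4)\log(2n/\log(2n))$ contributes only $O(\log n)$. This already produces the first main term $-2n\log(2n)$, with total error $O(n)$.

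The heart of the argument is the contribution from the exponential in \eqref{eq:hermite-coeff-asym}, which I would treat by plugging in \eqref{eq:lambert-w-asym} with $x = 2n/\pi$. Writing $W = W(2n/\pi)$ for brevity, \eqref{eq:lambert-w-asym} yields
\begin{equation*}
W = \log(2n/\pi) - \loglog(2n/\pi) + O\!\left(\frac{\loglog n}{\log n}\right),
\end{equation*}
so the decisive cancellation occurs in
\begin{equation*}
2n\log(2n/\pi) - 2nW = 2n\loglog(2n/\pi) + O\!\left(\frac{n\loglog n}{\log n}\right) = 2n\loglog(2n/\pi) + O(n).
\end{equation*}
This is exactly the second main term of the Corollary. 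It then remains to observe that the two leftover pieces inside the exponential are harmless: since $W = \Theta(\log n)$, we have $2n/W = O(n/\log n) = O(n)$ and $W^{2}/16 = O((\log n)^{2}) = O(n)$.

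Summing all contributions gives \eqref{eq:hermite-coeffasym-simplified}. There is no real obstacle here beyond bookkeeping; the only point that requires a moment of care is verifying that each absorbed term is genuinely $O(n)$ rather than larger, which follows immediately from the crude bounds $\loglog n/\log n = o(1)$, $\log n = o(n)$, and $(\log n)^{2} = o(n)$.
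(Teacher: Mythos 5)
Your proposal is correct and follows exactly the route the paper intends: the paper presents the corollary as an immediate consequence of Theorem~\ref{thm:hermite-coeff-asym} obtained by extracting the dominant terms via the Lambert-$W$ asymptotics \eqref{eq:lambert-w-asym}, which is precisely your computation, with the decisive cancellation $2n\log(2n/\pi)-2nW(2n/\pi)=2n\loglog(2n/\pi)+O\left(n\loglog n/\log n\right)$ carried out correctly and all remaining factors (Stirling, the power of $2$, the term $2n/W$, and $W^2/16$) properly absorbed into $O(n)$. No gaps.
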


\begin{proof}[Proof of Theorem~\ref{thm:hermite-coeff-asym}]
Define numbers $Q_n, r_n$ by
\begin{align}
\label{eq:qn-int-def}
Q_n & =
\int_0^\infty x^{2n} e^{-\frac{x^2}{4}} e^{\frac{5x}{2}}\left(e^{2x}-\frac{3}{2\pi}\right) \exp\left(-\pi e^{2x}\right)\,dx, \\
\label{eq:rn-int-def}
r_n & =
\int_0^\infty x^{2n} e^{-\frac{x^2}{4}} 
e^{\frac{5x}{2}} \sum_{m=2}^\infty \left(m^4 e^{2x}-\frac{3m^2}{2\pi}\right) \exp\left(-\pi m^2 e^{2x}\right)\,dx,
\end{align}
so that, by \eqref{eq:phix-def} and~\eqref{eq:hermite-coeffs-def}, the relation
\begin{equation}
\label{eq:bof2n-qn-rn}
b_{2n} = \frac{\pi^2}{2^{2n-3} (2n)!}( Q_n + r_n )
\end{equation}
holds. 
We will analyze the asymptotic behavior of $Q_n$ and then show that the contribution of $r_n$ is asymptotically negligible relative to that of $Q_n$.

\medskip
\textbf{Part 1: analysis of $Q_n$ using Laplace's method.}
Define a function
\begin{equation*}
f(x) = e^{-\frac{x^2}{4}} e^{\frac{5x}{2}}\left(e^{2x}-\frac{3}{2\pi}\right).
\end{equation*}
Then $Q_n$ can be rewritten in the form
\begin{equation}
\label{eq:qnint-rewritten}
Q_n = \frac{1}{2^{2n}}\int_0^\infty f(x) \exp\left( \psi_{2n}(2x)\right)\,dx,
\end{equation}
where $\psi_{2n}(x)$ is defined in \eqref{eq:hermite-psindef}, with the specific parameter value $B=\pi$.
This representation makes it possible to use Laplace's method to understand the asymptotic behavior of $Q_n$ as $n$ grows large. Proceeding as in the proof of Lemma~\ref{lem:hermite-easybound2}, we recall our observation that the function $\psi_{2n}(x)$ has a unique global maximum point at 
\begin{equation*}
x_{2n} = W\left(\frac{2n}{\pi}\right).
\end{equation*}
Now let quantities $\alpha_n$, $\beta_n$, $\gamma_n$ be defined by
\begin{align}
\label{eq:alphan-hermite-def}
\alpha_n & = \psi_{2n}(x_{2n}), \\
\label{eq:betan-hermite-def}
\beta_n & = -\psi_{2n}''(x_{2n}), \\
\label{eq:gamman-hermite-def}
\gamma_n & = f(x_{2n}/2).
\end{align}
Examining these quantities a bit more closely, note that $\alpha_n = A_{2n}$ in the notation used in the proof of Lemma~\ref{lem:hermite-easybound2} (again with the parameter $B=\pi$), so that, as in \eqref{eq:psin-maxvalue-lambert}, we have
\begin{equation}
\label{eq:alphan-simplified}
\alpha_n =
2n\left(\log (2n) - \log \pi - x_{2n} - \frac{1}{x_{2n}} \right).
\end{equation}
For $\beta_n$ we have that
\begin{equation}
\label{eq:betan-simplified-asym}
\beta_n = 
\frac{2n}{x_{2n}^2} + \pi e^{x_{2n}}
= 
\frac{2n}{x_{2n}^2} + \frac{2n}{x_{2n}}
= \left(1+O\left(\frac{1}{\log n}\right)\right) \frac{2n}{x_{2n}},
\end{equation}
and for $\gamma_n$ we can write
\begin{align}
\label{eq:gamman-asym}
\gamma_n & = 
\left(1+ O\left(
e^{-x_{2n}}\right)\right)\exp\left(-\frac{1}{16} x_{2n}^2 + \frac94 x_{2n} \right) 
=
\left(1+ O\left( \frac{\log n}{n}\right)\right)
\left( \frac{2n}{\pi x_{2n}} \right)^{9/4}
\exp\left(-\frac{1}{16} x_{2n}^2 \right).
\end{align}
With these preparations, Laplace's method in its heuristic form predicts that the integral on the right-hand side of \eqref{eq:qnint-rewritten} is given approximately for large $n$ by the expression
\begin{equation}
\label{eq:laplacemethod-heuristic}
\frac{\sqrt{\pi}}{\sqrt{-2\psi_{2n}''(x_{2n})}} f(x_{2n}/2) \exp\left(\psi_{2n}(x_n)\right) =
\frac{\sqrt{\pi}}{\sqrt{2\beta_n}} \gamma_n\exp(\alpha_n).
\end{equation}
Our goal is to establish this rigorously, with a precise rate of convergence estimate; substituting the formulas \eqref{eq:alphan-simplified}--\eqref{eq:gamman-asym} into \eqref{eq:laplacemethod-heuristic} will then give the desired formula for~$Q_n$.

It will be convenient to split up the integral defining $Q_n$ into three parts and estimate each part separately. Denote $\mu_n = n^{-2/5}$, and denote by $J_n$ the interval $\left[\frac12x_{2n}-\mu_n,\frac12x_{2n}+\mu_n\right]$. Now let
\begin{align*}
Q_n^{(1)} & =
\int_0^{\frac12 x_{2n}- \mu_n} f(x)\exp(\psi_{2n}(2x))\,dx,
\\
Q_n^{(2)} & =
\int_{J_n} f(x)\exp(\psi_{2n}(2x))\,dx,
\\
Q_n^{(3)} & =
\int_{\frac12 x_{2n}+ \mu_n}^\infty f(x)\exp(\psi_{2n}(2x))\,dx,
\end{align*}
so that $Q_n = \frac{1}{2^{2n}}\left(Q_n^{(1)}+Q_n^{(2)}+Q_n^{(3)}\right)$. Our estimates will rely on the following useful calculus observations (the first two of which were already noted in the proof of Lemma~\ref{lem:hermite-easybound2}).
\begin{enumerate}
\item The function $\psi_{2n}(x)$ is increasing on $(0,x_{2n})$ and decreasing on $(x_{2n},\infty)$.

\smallskip
\item The function $\psi_{2n}(x)$ is concave.

\smallskip
\item 
\label{item:phin-third-deriv}
$\sup_{x\in J_n} |\psi_{2n}'''(2x)| = 
O(n)$ as $n\to\infty$. 

\smallskip \noindent \textbf{Proof.}
$\psi_{2n}'''(2x) = \frac{n}{2x^3}-\pi e^{2x}$, so, for $x\in J_n$, using the fact that (by \eqref{eq:lambertw-value-asym}) for $n$ large enough we have the relation
$J_n \subseteq \left[ \frac14 \log n, \frac12 \log n\right]$,
we get that
\begin{align*}
|\psi_{2n}'''(2x)|
& \leq 
\frac{n}{2x^3} + \pi e^{2x}
\leq \frac{4^3 n}{2\log^3 n}
+ \pi e^{\log n} = O(n).
\end{align*}

\smallskip
\item 
As a consequence of the last observation, the Taylor expansion of $\psi_{2n}(2x)$ around $x=\frac12 x_{2n}$ in the interval $J_n$ has the form
\begin{equation}
\label{eq:psi2n-taylor-jn}
\psi_{2n}(2x) =
\alpha_n - 2 \beta_n \left(x-\frac{x_{2n}}{2} \right)^2 + O\left(n \left|x-\frac{x_{2n}}{2}\right|^3
\right) \qquad (x\in J_n),
\end{equation}
where the constant implicit in the big-$O$ term does not depend on $n$ or $x$.

\smallskip
\item $\sup_{x\in J_n} \left| \frac{f(x)}{f(x_{2n}/2)}
-1\right| = O\left(\frac{\log n}{n^{2/5}}\right)$.

\smallskip \noindent \textbf{Proof.}
Noting that $x_n\to\infty$ as $n\to\infty$, so that
$f(x_{2n}/2) \geq \frac12 e^{-\frac{x_{2n}^2}{16}+\frac94 x_{2n}}$ if $n$ is large, we have that
\begin{align}
\label{eq:fofx-messy}
\left| \frac{f(x)}{f(x_{2n}/2)}-1\right|
& =
\left| 
\frac{e^{-\frac{x^2}{4}+\frac92 x}-\frac{3}{2\pi}e^{-\frac{x^2}{4}+\frac52 x}}{e^{-\frac{x_{2n}^2}{16}+\frac94 x_{2n}}-\frac{3}{2\pi}e^{-\frac{x_{2n}^2}{16}+\frac54 x_{2n}}}
-1
\right|
\\ \nonumber
& =
\left| 
\frac{
\left(
e^{-\frac{x^2}{4}+\frac92 x}-\frac{3}{2\pi}e^{-\frac{x^2}{4}+\frac52 x}
\right)
-\left(
e^{-\frac{x_{2n}^2}{16}+\frac94 x_{2n}}-\frac{3}{2\pi}e^{-\frac{x_{2n}^2}{16}+\frac54 x_{2n}}
\right)
}{e^{-\frac{x_{2n}^2}{16}+\frac94 x_{2n}}-\frac{3}{2\pi}e^{-\frac{x_{2n}^2}{16}+\frac54 x_{2n}}}
\right|
\\ \nonumber & \leq
\left|
\frac{
e^{-\frac{x^2}{4}+\frac92 x} -
e^{-\frac{x_{2n}^2}{16}+\frac94 x_{2n}}
}{
\frac12 e^{-\frac{x_{2n}^2}{16}+\frac94 x_{2n}}
}
\right|
+
\frac{3}{2\pi}
\left|
\frac{
e^{-\frac{x^2}{4}+\frac52 x} -
e^{-\frac{x_{2n}^2}{16}+\frac54 x_{2n}}
}{
\frac12 e^{-\frac{x_{2n}^2}{16}+\frac94 x_{2n}}
}
\right|
\\ \nonumber & =
2 \left|
e^{(-\frac{x^2}{4}+\frac92 x)-(-\frac{x_{2n}^2}{16}+\frac94 x_{2n})}
-1
\right|
+ \frac{6}{2\pi}
e^{-x_{2n}} \left|
e^{(-\frac{x^2}{4}+\frac52 x)-(-\frac{x_{2n}^2}{16}+\frac54 x_{2n})}
-1
\right|.
\end{align}
Now observe that $e^{-x_{2n}} = O(1)$ and that for $x\in J_n$ we have that
\begin{align*}
\Bigg|
\left(-\frac{x^2}{4}+\frac92 x\right)- & \left(-\frac{x_{2n}^2}{16}+\frac94 x_{2n}\right)
\Bigg|
\leq \frac92 \left|x-\frac{x_{2n}}{2}\right| + \frac14 \left|x-\frac{x_{2n}}{2}\right|\cdot
\left|x+\frac{x_{2n}}{2}\right|
 = O\left(\frac{\log n}{n^{2/5}}\right)
\end{align*}
(with a uniform constant implicit in the big-$O$ term),
and similarly
\begin{equation*}
\left|
\left(-\frac{x^2}{4}+\frac52 x\right)-\left(-\frac{x_{2n}^2}{16}+\frac54 x_{2n}\right)
\right| = O\left(\frac{\log n}{n^{2/5}}\right),
\end{equation*}
so that \eqref{eq:fofx-messy} implies the claimed bound.

\end{enumerate}

We are ready to evaluate the integrals $Q_n^{(1)}$, $Q_n^{(2)}$, $Q_n^{(3)}$, starting with the middle integral $Q_n^{(2)}$, which is the one that is the most significant asymptotically. The standard idea is that the exponential term $\exp(\psi_{2n}(2x))$ can be approximated by a Gaussian centered around the point $\frac12 x_{2n}$. This follows from the Taylor expansion \eqref{eq:psi2n-taylor-jn}.
Indeed, making the change of variables
$u=\sqrt{\beta_n}\left(x-\frac12x_{2n}\right)$ in the integral, we have that
\begin{align}
\label{eq:qn2-longcalc}
Q_n^{(2)} & =
\int_{J_n} f(x) \exp(\psi_{2n}(2x))\,dx 
\\ \nonumber & =
\int_{-\mu_n\sqrt{\beta_n}}^{\mu_n\sqrt{\beta_n}}
f\left(\frac{x_{2n}}{2} + \frac{u}{\sqrt{\beta_n}}\right)
\exp\left(\psi_{2n}\left(
x_{2n} + \frac{2u}{\sqrt{\beta_n}}
\right)
\right)\, \frac{du}{\sqrt{\beta_n}}
\\ \nonumber & =
\frac{1}{\sqrt{\beta_n}} 
\int_{-\mu_n\sqrt{\beta_n}}^{\mu_n\sqrt{\beta_n}}
\left(1+O\left(\frac{\log n}{n^{2/5}}\right)\right)f\left(\frac{x_{2n}}{2}\right)
\exp\left[
\alpha_n - 2 u^2 + O\left(n \frac{|u|^3}{\beta_n^{3/2}} \right)
\right] \,du
\\ \nonumber & =
\left(1+O\left(\frac{\log n}{n^{2/5}}\right)\right) 
\left(1+O\left(\frac{1}{n^{1/5}}\right)\right)
\frac{1}{\sqrt{\beta_n}}
e^{\alpha_n} \gamma_n
\int_{-\mu_n\sqrt{\beta_n}}^{\mu_n\sqrt{\beta_n}} e^{-2 u^2}\,du
\\ \nonumber & =
\left(1+O\left(\frac{1}{n^{1/5}}\right)\right) \frac{1}{\sqrt{\beta_n}}
\gamma_n e^{\alpha_n} \left[\sqrt{\frac{\pi}{2}} -
O\left(
\frac{1}{\mu_n \sqrt{\beta_n}} \exp\left(-2 \mu_n^2 \beta_n\right)
\right)
 \right]
\\ \nonumber & =
\left(1+O\left(\frac{1}{n^{1/5}}\right)\right) \frac{\sqrt{\pi}}{\sqrt{2\beta_n}}
\gamma_n e^{\alpha_n}, 
\end{align}
where in the penultimate step we used the standard inequality 
\begin{equation}
\label{eq:gaussian-tail-bound}
\int_t^\infty e^{-u^2/2}\,du \leq \frac1t e^{-t^2/2}, \qquad (t>0).
\end{equation}

Next, to estimate $Q_n^{(1)}$, we use the fact that $\psi_{2n}(2x)$ is increasing on the interval of integration, bounding the integral by the length of the integration interval multiplied by an upper bound for $f(x)$ and the value of $\exp(\psi_{2n}(2x))$ at the rightmost end of the interval. Note that $f(x)$ is bounded from above by the numerical constant
\begin{equation*}
K_0 := \left(\sup_{x\ge 0} e^{-\frac{x^2}{4}+\frac92 x}
+ \frac{3}{2\pi}
\sup_{x\ge 0} e^{-\frac{x^2}{4}+\frac52 x}\right)
= 
e^{81/4} + \frac{3}{2\pi} e^{25/4}.
\end{equation*}
Thus, using \eqref{eq:betan-simplified-asym}, \eqref{eq:gamman-asym} and \eqref{eq:psi2n-taylor-jn}, we get that
\begin{align}
\label{eq:qn1-longcalc}
Q_n^{(1)} & 
\leq
K_0 \left(\frac{x_{2n}}{2}-\mu_n\right)
\times \exp\left(\psi_{2n}\left(x_{2n}-2\mu_n\right)\right)
\leq
\frac{K_0}{2} x_{2n} \exp\left(\alpha_n - 2 \beta_n \mu_n^2 + O(n \mu_n^3) \right)
\\ \nonumber & = 
O(\log n) e^{\alpha_n} O\left(\exp\left(-n^{1/10}\right) \right)
\left(1+O\left(\frac{1}{n^{1/5}}\right)\right)
=
O\left(
e^{-\frac12 n^{1/10}}
\frac{\sqrt{\pi}}{\sqrt{2 \beta_n}} \gamma_n e^{\alpha_n}
\right).
\end{align}

Next, to estimate $Q_n^{(3)}$, note that, as in the proof of Lemma~\ref{lem:hermite-easybound2}, by the concavity of $\psi_{2n}(2x)$ the graph of $\psi_{2n}(2x)$ is bounded from above by the tangent line to the graph at $x=\frac{x_{2n}}{2}+\mu_n$. In other words, we have the inequality
\begin{equation*}
\psi_{2n}(2x) \le \psi_{2n}\left(x_{2n}+2\mu_n\right) + 2\psi_{2n}'\left(x_{2n}+2\mu_n\right)\left(x-\frac{x_{2n}}{2}-\mu_n\right) \qquad (x> 0).
\end{equation*}
Moreover, it is useful to note that the derivative value $\psi_{2n}'(x_{2n}+2\mu _n)$ satisfies
\begin{align*}
\psi_{2n}'(x_{2n}+2\mu _n)
& = 
\frac{2n}{x_{2n}+2\mu_n} - \pi e^{x_{2n}+2\mu_n} 
= 
\frac{2n}{x_{2n}+2\mu_n} - e^{2\mu_n}\frac{2n}{x_{2n}}
\leq 
\frac{2n}{x_{2n}}\left(1 - e^{2\mu_n}\right)
\leq -\frac{4n \mu_n}{x_{2n}},
\end{align*}
so in particular $\psi_{2n}'(x_{2n}+2\mu _n) \leq -1$ if $n$ is large enough. These observations imply that as $n\to\infty$, $Q_n^{(3)}$ satisfies the bound
\begin{align}
\label{eq:qn3-longcalc}
Q_n^{(3)} 
&\leq
K_0\int_{\frac12 x_{2n}+\mu_n}^\infty 
\exp\left(\psi_{2n}\left(x_{2n}+2\mu_n\right)
+ 2\psi_{2n}'\left(x_{2n}+2\mu_n\right) \left(x-\frac{x_{2n}}{2}-\mu_n\right)\right)\,dx
\\ \nonumber & \leq
K_0 \int_{\frac12x_{2n}+\mu_n}^\infty \exp\left(\alpha_n - 2 \beta_n \mu_n^2 + O(n \mu_n^3)\right) 
\exp\left(
-\left(x-\frac{x_{2n}}{2}-\mu_n\right)
\right)\,dx
\\ \nonumber & =
K_0 \left(1+O\left(\frac{1}{n^{1/5}}\right)\right) e^{\alpha_n}
O\left(\exp\left(-n^{1/10}\right)\right) \int_0^\infty
\exp\left(-u\right)\,du
= O\left(e^{-n^{1/10}} \frac{\sqrt{\pi}}{\sqrt{2 \beta_n}} \gamma_n e^{\alpha_n}\right).
\end{align}
Combining \eqref{eq:qn2-longcalc}, \eqref{eq:qn1-longcalc} and \eqref{eq:qn3-longcalc}, we have finally that
\begin{equation}
\label{eq:qn-asym-preformula}
Q_n =
\left(1+O\left(\frac{1}{n^{1/5}}\right)\right)
\frac{\sqrt{\pi}}{2^{2n}\sqrt{2\beta_n}} \gamma_n e^{\alpha_n}.
\end{equation}
Using \eqref{eq:alphan-simplified}--\eqref{eq:gamman-asym} we now get the asymptotic formula
\begin{align}
\label{eq:qn-asym-formula}
Q_n &= 
\left(1+O\left(\frac{1}{\log n}\right)\right)
\frac{1}{2^{2n+\frac12}} \left(\frac{2n}{\pi x_{2n}}\right)^{7/4}
\exp\left[
2n\left(\log (2n) - \log \pi - x_{2n} - \frac{1}{x_{2n}} \right)
-\frac{1}{16} x_{2n}^2\right]
\end{align}
that holds as $n\to\infty$.
In particular, for the purpose of comparing $Q_n$ to $r_n$, it is useful to note that the exponential factors $\frac{1}{2^{2n}}$ and $e^{\alpha_n}$ are asymptotically the most significant ones in~\eqref{eq:qn-asym-preformula}. More precisely, recalling 
\eqref{eq:lambertw-value-asym}, \eqref{eq:an-asym-largedevconst}, \eqref{eq:betan-simplified-asym} and \eqref{eq:gamman-asym}, we get that
\begin{align}
\label{eq:qofn-exponential-approx}
Q_n &=
\frac{1}{2^{2n}}\exp\left(\alpha_n + O((\log n)^2)\right) 
\\ \nonumber &=
\frac{1}{2^{2n}}\exp\Bigg[
2n\loglog (2n) - \frac{2n \loglog (2n)}{\log (2n)} 
- (\log \pi+1)\frac{2n}{\log (2n)}
+O\left( \frac{n (\loglog n)^2}{(\log n)^2}
\right)
\Bigg]
\end{align}
as $n\to\infty$.

\medskip
\textbf{Part 2: estimating $r_n$.}
We proceed with asymptotically bounding $r_n$.
Observe that, by \eqref{eq:phixdiff-asym-xinfty}, $r_n$ satisfies
\begin{equation*}
|r_n| \leq
C_1 \int_0^\infty x^{2n} \exp\left(-3\pi e^{2x}\right)\,dx
=
\frac{C_1}{2^{2n+1}} \int_0^\infty u^{2n} \exp\left(-3\pi e^u\right)\,du
\end{equation*}
for some constant $C_1>0$.
We can therefore once again apply Lemma~\ref{lem:hermite-easybound2}, this time with the parameter $B=3\pi$, to get that, for all $n\ge3$ and some constant $C_2>0$,
\begin{align*}
|r_n| & \leq
\frac{1}{2^{2n}} \exp\Bigg[
2n\loglog (2n) - \frac{2n \loglog (2n)}{\log (2n)} 
- (\log (3\pi)+1) \frac{2n}{\log (2n)}
+ C_2 
\frac{2n (\log\log (2n))^2}{(\log (2n))^2}
\Bigg].
\end{align*}
Comparing this to \eqref{eq:qofn-exponential-approx}, we see that for large $n$ the relation
\begin{equation}
\label{eq:rn-asym-bound}
|r_n| \leq \exp\left(-\frac12 \log(3) \frac{2n}{\log (2n)}\right)Q_n 
\end{equation}
holds. Thus $r_n$ is indeed asymptotically negligible compared to $Q_n$. 

\medskip
\textbf{Finishing the proof.}
Combining \eqref{eq:bof2n-qn-rn}, \eqref{eq:qn-asym-formula} and \eqref{eq:rn-asym-bound} we arrive finally at the desired formula for $b_{2n}$:
\begin{align*}
b_{2n} & =
\frac{\pi^2}{2^{2n-3} (2n)!} 
\left(
1+O\left(\exp\left(-\log(3) \frac{n}{\log (2n)}\right)\right)
\right) Q_n 
\\ & =
\left(1+O\left(\frac{1}{\log n}\right)\right)
\frac{\pi^2}{2^{2n-3} (2n)!} \times
\frac{1}{2^{2n+\frac12}} \left(\frac{2n}{\pi x_{2n}}\right)^{7/4}
\exp\left[
2n\left(\log (2n) - \log \pi - x_{2n} - \frac{1}{x_{2n}} \right)
-\frac{1}{16} x_{2n}^2\right].
\end{align*}
Since $x_{2n} = W\left(\frac{2n}{\pi}\right) = \left(1+O\left(\frac{\loglog n}{\log n}\right)\right)\log(2n)$, this gives \eqref{eq:hermite-coeff-asym} and completes the proof of Theorem~\ref{thm:hermite-coeff-asym}.
\end{proof}

\section[The Poisson flow and the P\'olya-De Bruijn flow]{The Poisson flow, P\'olya-De Bruijn flow and the De Bruijn-Newman constant}

\label{sec:hermite-poissonpolya}

One recurring theme in the study of the Riemann hypothesis is the idea that in order to understand the zeros of the Riemann xi (or zeta) function, one might start by looking at suitable approximations to it that have a simpler structure---for example, being polynomials instead of entire or meromorphic functions---and trying to understand the location of the zeros of those approximations first. The hope is that there exists some good approximation that would have the feature that the zeros of the approximating functions can be understood, and, in an ideal scenario, shown to all lie on the real line (or on the critical line $\re(s)=1/2$, depending on the coordinate system used). In the setting of a discrete sequence of approximations, this approach has been applied for example  to the partial sums of the Taylor series of $\Xi(t)$ \cite{jenkins-mclaughlin} and to the partial sums of the Dirichlet series of $\zeta(s)$ \cite{haselgrove, spira, turan1948}. While those attacks can involve the use of some rather ingenious and sophisticated tools, they have not resulted in any easily quantifiable progress on the original question of RH. 

Instead of looking at a discrete sequence of approximations, certain other contexts naturally suggest instead a family of approximations indexed by a continuous parameter. We refer to such a family informally, especially in the case when the family satisfies a partial differential equation or some other sort of dynamical evolution law (all the approximation families we consider will be of this type) as a \firstmention{flow}.

One very natural and well-studied example of such a flow is the family of functions
\begin{equation}
\label{eq:polya-flow-def}
\Xi_\lambda(t) = \int_{-\infty}^\infty e^{\lambda x^2} \Phi(x) e^{itx}\,dx \qquad (\lambda \in \R).
\end{equation}
For $\lambda=0$, we have $\Xi_0\equiv\Xi$, so the family $\Xi_\lambda$ is a flow passing through the Riemann xi function at $\lambda=0$. We refer to it as the \firstmention{P\'olya-De Bruijn flow} associated with the xi function; this term seems appropriate in view of P\'olya's discovery of universal factors described in \secref{sec:intro-background} and its extension by De Bruijn. Specifically, P\'olya's result described in the introduction to the effect that $e^{\lambda x^2}$ is a universal factor implies that the P\'olya-De Bruijn flow preserves \firstmention{hyperbolicity} (the property of an entire function of having no non-real zeros) in the positive direction of the ``time'' parameter $\lambda$: that is, if $\Xi_\lambda$ is hyperbolic for some specific value of $\lambda$, then so is $\Xi_\mu$ for any $\mu>\lambda$, and in particular, if it could be shown that $\Xi_\lambda$ is hyperbolic for some \emph{negative} value $\lambda<0$, the Riemann hypothesis would follow. Moreover, showing that $\Xi_\lambda$ is hyperbolic for \emph{positive} values of $\lambda$ (which by the same logic ought to be both more likely to be true, and easier to prove if true) may still be beneficial, since if for instance it could somehow be shown that $\Xi_\lambda$ were hyperbolic for \emph{all} $\lambda>0$, once again RH would follow by a straightforward approximation argument.

De Bruijn extended P\'olya's work in an important way when he showed that in fact $\Xi_\lambda$ is hyperbolic for all $\lambda \geq 1/8$. His result was later strengthened slightly by Ki, Kim and Lee \cite{ki-kim-lee}, who showed that the same would be true for $\lambda\geq 1/8-\epsilon$ for some (fixed, but non-explicit) $\epsilon>0$. In the negative direction, Newman \cite{newman} proved that $\Xi_\lambda$ is \emph{not} hyperbolic if $\lambda$ is a negative number of sufficiently large magnitude, and conjectured that the same statement holds true for \emph{all} $\lambda < 0$---this is usually formulated as the statement that the \firstmention{De Bruijn-Newman constant}, denoted by $\Lambda$ and defined as four times the greatest lower bound of the set of $\lambda$'s for which $\Xi_\lambda$ is hyperbolic,\footnote{The multiplication by four is a quirk associated with the different notational conventions used by different authors in the literature on the subject. See p.~63 and Table 5.2 on p.~68 of \cite{broughan} for further discussion and a comparison of the different conventions.} is nonnegative. An explicit numerical lower bound of $-50$ for the De Bruijn-Newman constant was later established by Csordas, Norfolk and Varga \cite{csordas-norfolk-varga1988}. The lower bound was pushed upwards further in a succession of papers \cite{csordas-odlyzko-etal, csordas-ruttan-varga, csordas-smith-varga, norfolk-ruttan-varga, odlyzko, saouter-etal, teriele}, with the established bounds gradually growing extremely close to $0$ on the negative side. Most recently Rodgers and Tao \cite{rodgers-tao} succeeded in proving Newman's conjecture that $\Lambda\ge 0$, and recent work by the Polymath15 project \cite{polymath15} strengthened the result of Ki, Kim and Lee mentioned above by proving the sharper upper bound $\Lambda \le 0.22$.

We now come to a key idea that relates the above discussion to our theme of expansions of the Riemann xi function in families of orthogonal polynomials, and the Hermite polynomials in particular. Specifically, it is the idea that any series expansion of the Riemann xi function in a system of orthogonal polynomials comes equipped with its own flow based on the standard construction of the so-called \firstmention{Poisson kernel} in the theory of orthogonal polynomials. We call this the \firstmention{Poisson flow}.

To define the Poisson flow, recall that the Poisson kernel for a family $\phi=(\phi_n)_{n=0}^\infty$ of polynomials that are orthogonal with respect to a weight function $w(x)$ is defined by
\begin{equation}
\label{eq:poisson-ker-def}
p_r^\phi(x,y) = \sum_{n=0}^\infty \frac{r^n}{\int_\R \phi_n(u)^2 w(u)\,du} \phi_n(x)\phi_n(y) \qquad (|r|<1).
\end{equation}
Its essential feature is the equation
\begin{equation*}
\int_{-\infty}^\infty p_r^\phi(x,y) \phi_n(y) w(y)\,dy = 
r^n \phi_n(x),
\end{equation*}
which is trivial to verify by evaluating the integral termwise.
That is, the associated integral kernel operator $\Pi_r^\phi:f\mapsto 
\int_{\R} p_r^\phi(x,y) f(y) w(y)\,dy$
acting on $L^2(\R, w(x)\,dx)$
sends a function with Fourier expansion $f(x) = \sum_n \gamma_n \phi_n$ to $\sum_n \gamma_n r^n \phi_n$, with the $n$th ``harmonic'' in the expansion being attenuated by a factor $r^n$. Note that one can also consider the limiting case $r\to 1$, in which case the definition \eqref{eq:poisson-ker-def} of the Poisson kernel $p_r^\phi$ no longer makes sense, but the operator $\Pi_1^\phi$ can be defined simply as the identity operator, which is clearly the limit of the $\Pi_r$'s (and $p_1^\phi(x,y)$ can be thought of as the distribution $\delta(x-y)$).

We can now define the \firstmention{Poisson flow associated with the Riemann xi function for the orthogonal polynomial sequence $\phi=(\phi_n)$} to be the family of functions
\begin{equation}
\label{eq:poisson-flow-def}
X_r^{\phi}(t) =
\Pi_r^\phi(\Xi)(t)
=
\begin{cases}
\int_{-\infty}^\infty p_r(t,\tau)\Xi(\tau)w(\tau)\,d\tau & \textrm{if }0<r<1,
\\
\Xi(t) & \textrm{if }r=1
\end{cases}
\qquad (0 < r \le 1).
\end{equation}
Alternatively, if $\Xi(t)$ is expressed in terms of its Fourier series expansion $\Xi(t) = \sum_{n=0}^\infty \gamma_n \phi_n(t)$ in the orthogonal polynomial family $(\phi_n)_{n=0}^\infty$  (in the sense of the function space $L^2(\R,w(x)\,dx)$), we can write the Poisson flow equivalently as
\begin{equation}
\label{eq:poisson-flow-expansion}
X_r^\phi(t) = \sum_{n=0}^\infty r^n \gamma_n \phi_n(t).
\end{equation}

Denote the family of Hermite polynomials by $\mathcal{H}=(H_n)_{n=0}^\infty$, so that $p_r^{\mathcal{H}}(x,y)$ and $\Pi_r^{\mathcal{H}}$ now denote the Poisson kernel and integral operator associated with the Hermite polynomials, respectively, and $X_r^{\mathcal{H}}(t)$ denotes the corresponding flow associated with the Riemann xi function. Our main result for this section, relating the different concepts we introduced above, is the following.

\begin{thm}[Connection between the P\'olya-De Bruijn and Poisson flows]
\label{thm:hermite-poisson-polya}
The Poisson flow for the Hermite polynomials is related to the P\'olya-De Bruijn flow \eqref{eq:polya-flow-def} via
\begin{equation}
\label{eq:hermite-poisson-polya}
X_r^{\mathcal{H}}(t) =
\Xi_{(r^2-1)/4}(rt) \qquad (0<r\le 1).
\end{equation}
\end{thm}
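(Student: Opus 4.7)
\bigskip
\noindent\textbf{Proof plan.} The plan is to compute $X_r^{\mathcal{H}}(t)$ directly from its series definition \eqref{eq:poisson-flow-expansion} and massage it into the integral form \eqref{eq:polya-flow-def} via the Hermite generating function, in a calculation that mirrors the informal derivation in \eqref{eq:hermiteexp-informal-der}. First I would identify the coefficients $\gamma_n$ in the Hermite expansion of $\Xi(t)$. Because $b_{2n+1}=0$, the expansion \eqref{eq:hermite-expansion} can be rewritten as $\Xi(t)=\sum_{n=0}^\infty i^n b_n H_n(t)$, so $\gamma_n=i^n b_n$ and
\begin{equation*}
X_r^{\mathcal{H}}(t)=\sum_{n=0}^\infty r^n \gamma_n H_n(t)=\sum_{n=0}^\infty (ir)^n b_n H_n(t).
\end{equation*}

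Next I would substitute the definition \eqref{eq:hermite-coeffs-def} of $b_n$ and interchange the order of summation and integration, yielding
\begin{equation*}
X_r^{\mathcal{H}}(t)=\int_{-\infty}^\infty e^{-x^2/4}\Phi(x)\sum_{n=0}^\infty \frac{(irx/2)^n}{n!}H_n(t)\,dx.
\end{equation*}
The interchange is justified for $0<r\le 1$ by the rapid decay of $\Phi(x)$ from \eqref{eq:phix-asym-xinfty} together with the bound on $H_n(t)$ from Lemma~\ref{lem:hermite-easybound}, which together make Fubini applicable exactly as in the chain of estimates \eqref{eq:hermite-expansion-proofchain}.

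Now I would apply the Hermite generating function \eqref{eq:hermite-genfun} (which asserts $\sum_{n=0}^\infty \frac{u^n}{n!}H_n(t)=e^{2tu-u^2}$) with $u=irx/2$. This gives
\begin{equation*}
\sum_{n=0}^\infty \frac{(irx/2)^n}{n!}H_n(t)=\exp\!\left(irtx+\tfrac{r^2 x^2}{4}\right),
\end{equation*}
so that combining the $e^{-x^2/4}$ prefactor with this exponential produces $\exp((r^2-1)x^2/4)$. The identity \eqref{eq:hermite-poisson-polya} now falls out by recognizing the remaining integral as $\Xi_{(r^2-1)/4}(rt)$ in the notation of \eqref{eq:polya-flow-def}. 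The boundary case $r=1$ collapses to $\Xi_0(t)=\Xi(t)$ and is consistent with the definition \eqref{eq:poisson-flow-def}.

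The only step requiring real care is the Fubini swap, since for $r$ close to $1$ the Gaussian factor $e^{(r^2-1)x^2/4}$ loses its damping effect; however, $\Phi(x)$ decays like $\exp(-\pi e^{2|x|})$, which utterly dominates any polynomial-times-Gaussian factor, so there is no genuine obstacle. I expect the main (mild) subtlety to be recording the justification cleanly enough to cover the entire range $0<r\le 1$ uniformly; the substantive algebraic content is the one-line generating-function manipulation.
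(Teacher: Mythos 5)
Your proposal is correct and follows essentially the same route as the paper: the paper's proof is exactly this direct calculation—write $X_r^{\mathcal{H}}(t)=\sum_n i^n r^n b_n H_n(t)$, substitute \eqref{eq:hermite-coeffs-def}, swap sum and integral, and apply the generating function \eqref{eq:hermite-genfun} with $u=irx/2$ to produce $\Phi(x)e^{(r^2-1)x^2/4}e^{irtx}$ under the integral. Your explicit attention to the Fubini step (via the doubly exponential decay of $\Phi$ and Lemma~\ref{lem:hermite-easybound}, noting $r^n\le 1$ for $0<r\le 1$) is a small amount of added rigor the paper leaves implicit, but it is not a different argument.
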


\begin{proof}
This is a straightforward calculation that generalizes \eqref{eq:hermiteexp-informal-der} by weighting each of the summands in the expansion by the factor $r^n$. Once again using the generating function formula \eqref{eq:hermite-genfun}, we have that
\begin{align}
\label{eq:hermite-poissonpolya-calc}
X_r^{\mathcal{H}}(t) & =
\sum_{n=0}^\infty i^n r^n b_n H_n(t)
=
\sum_{n=0}^\infty \frac{i^n r^n}{2^n n!}
\int_{-\infty}^\infty x^n e^{-\frac{x^2}{4}} \Phi(x)\,dx \cdot
H_n(t)
\\ \nonumber &=
\int_{-\infty}^\infty \left( \sum_{n=0}^\infty \frac{1}{n!} \left(\frac{i r x}{2} \right)^n H_n(t)\right) e^{-\frac{x^2}{4}} \Phi(x)\,dx
=
\int_{-\infty}^\infty 
\exp\left( 2t\cdot \frac{ir x}{2}-\left(\frac{irx}{2}\right)^2\right)
e^{-\frac{x^2}{4}}\Phi(x)\,dx
\\ \nonumber &=
\int_{-\infty}^\infty \Phi(x) \exp\left((r^2-1)\frac{x^2}{4}\right)e^{ir t x}\,dx
= \Xi_{(r^2-1)/4}(r t),
\end{align}
as claimed.
\end{proof}

Theorem~\ref{thm:hermite-poisson-polya} ties together in an interesting way the different threads of research into RH begun with the work of P\'olya on universal factors (and continued with the extensive subsequent investigations into the De Bruijn-Newman constant by De Bruijn, Newman and others) on the one hand, and Tur\'an's ideas on the Hermite expansion on the other hand. Incidentally, hints of this connection seem to have already been noted in a less explicit way in the literature; see in particular \cite[Section~3]{bleecker-csordas}.

One key point to take away from this discussion is that the Poisson flow appears to be a natural device with which to try to approximate the Riemann xi function. And while Theorem~\ref{thm:hermite-poisson-polya} shows that the Poisson flow associated with the Hermite polynomials is equivalent to an already well-studied construction, the point is that Poisson flows are a method of approximation that allows us a considerable freedom in choosing the system of orthogonal polynomials to use, and it is conceivable that this might lead to new families of approximations with useful properties. Indeed, in Chapter~\ref{ch:fn-expansion}, when we consider the expansion of $\Xi(t)$ in the family of Meixner-Pollaczek orthogonal polynomials $f_n$, we will revisit the Poisson flow in the context of this new expansion and show that it has some quite natural and interesting properties in that setting.

As a final remark, we recall that one of several notable features of the P\'olya-De Bruijn flow, first pointed out in \cite{csordas-smith-varga}, is that it satisfies the time-reversed heat equation
\begin{equation}
\label{eq:timerev-heat-equation}
\frac{\partial \Xi_\lambda(t)}{\partial \lambda}
=
- \frac{\partial^2 \Xi_\lambda(t)}{\partial t^2},
\end{equation}
a fact that follows immediately from the representation \eqref{eq:polya-flow-def} by differentiating under the integral sign, and which played a useful role in the study of the De Bruijn-Newman constant (see \cite[Sec.~5.5]{broughan}, \cite{csordas-smith-varga}, \cite{rodgers-tao}).
It is of some interest to note that the same result can also be obtained by using the relation \eqref{eq:hermite-poisson-polya} interpreting the P\'olya-De Bruijn flow as a reparametrized version of the Poisson flow, together with basic properties of the Hermite polynomials. To see this, start by inverting \eqref{eq:hermite-poisson-polya} to express $\Xi_\lambda(t)$ in terms of the Poisson flow as
\begin{equation*}
\Xi_\lambda(t) =
X_{\sqrt{1+4\lambda}}^\mathcal{H}\left(\frac{t}{\sqrt{1+4\lambda}}\right).
\end{equation*}
Now expanding the Poisson flow as in \eqref{eq:poisson-flow-expansion}, we differentiate and then use the classical ordinary differential equation \eqref{eq:hermite-ode} satisfied by the Hermite polynomials, to get that
\begin{align*}
\frac{\partial \Xi_\lambda(t)}{\partial \lambda}
& =
\frac{\partial}{\partial \lambda}
\left(
X_{\sqrt{1+4\lambda}}^\mathcal{H}\left(\frac{t}{\sqrt{1+4\lambda}}\right) \right)
\\
&=
\frac{\partial}{\partial \lambda}
\left(
\sum_{n=0}^\infty i^n b_n (1+4\lambda)^{\frac{n}{2}} H_n\left(\frac{t}{\sqrt{1+4\lambda}}\right)
\right)
\\ & =
\sum_{n=0}^\infty i^n b_n
\Bigg[
4\frac{n}{2} (1+4\lambda)^{\frac{n}{2}-1} 
H_n\left(\frac{t}{\sqrt{1+4\lambda}}\right)
- (1+4\lambda)^{\frac{n}{2}} 
\frac{4t}{2(1+4\lambda)^{3/2}}
H_n'\left(\frac{t}{\sqrt{1+4\lambda}}\right)
\Bigg]
\\ & =
\sum_{n=0}^\infty i^n b_n (1+4\lambda)^{\frac{n}{2}}
\Bigg[
\frac{1}{1+4\lambda}
\left(
- H_n''\left(\frac{t}{\sqrt{1+4\lambda}}\right)
+ \frac{2t}{\sqrt{1+4\lambda}} 
H_n'\left(\frac{t}{\sqrt{1+4\lambda}}\right)
\right)
\\ & \hspace{240pt}
-\frac{2t}{(1+4\lambda)^{3/2}} 
H_n'\left(\frac{t}{\sqrt{1+4\lambda}}\right)
\Bigg]
\\ & =
-\sum_{n=0}^\infty i^n b_n (1+4\lambda)^{\frac{n}{2}}
\frac{\partial^2}{\partial t^2}
\left( H_n\left(\frac{t}{\sqrt{1+4\lambda}}\right) \right)
= 
-\frac{\partial^2}{\partial t^2}
\left( X_{\sqrt{1+4\lambda}}^\mathcal{H}\left(\frac{t}{\sqrt{1+4\lambda}}\right) \right)
 =
-\frac{\partial^2 \Xi_\lambda(t)}{\partial t^2},
\end{align*}
recovering \eqref{eq:timerev-heat-equation} as expected. (Incidentally, at the heart of this calculation is the simple observation that each of the two-variable functions $h_n(\tau,x) = \tau^{n/2} H_n\left(-\frac{x}{\sqrt{\tau}}\right)$ solves the time-reversed heat equation $(h_n)_\tau = -\frac14 (h_n)_{xx}$. With a bit of hindsight, this fact coupled with knowledge of \eqref{eq:timerev-heat-equation} could have been seen as yet another clue foreshadowing the connection we made explicit in Theorem~\ref{thm:hermite-poisson-polya}.)

One reason why the above derivation was worth noting is that it has a nice analogue in the context of the expansion of the Riemann xi function in the orthogonal polynomial family $(f_n)_{n=0}^\infty$---see Theorem~\ref{thm:poissonflow-dde} in \secref{sec:fn-poissonflow}.

\chapter{Expansion of $\Xi(t)$ in the polynomials $f_n$}

\label{ch:fn-expansion}

Recall that in the Introduction we discussed a family of polynomials $(f_n)_{n=0}^\infty$ defined as
\begin{equation*}
f_n(x) = P_n^{(3/4)}(x;\pi/2)
=
\frac{(3/2)_n}{n!} 
i^n \gausshyper\left(-n, \frac34+ix; \frac32; 2 \right),
\end{equation*}
where $P_n^{(\lambda)}(x;\phi)$ denotes the Meixner-Pollaczek polynomials with parameters $\lambda, \phi$. The polynomials $f_n$ form a family of orthogonal polynomials with respect to the weight function $\left|\Gamma\left(\frac34+ix\right)\right|^2$ on $\R$. Their properties are summarized in \secref{sec:orth-fn}.
Our main goal in this chapter is to derive the expansion \eqref{eq:fn-expansion-intro} for $\Xi(t)$ in the (trivially rescaled) orthogonal polynomials $f_n(t/2)$, which we refer to as the $f_n$-expansion, and to investigate some of its key properties. After proving two main results about the existence of the expansion and the asymptotic behavior of the coefficients, we will see that thinking about the $f_n$-expansion leads to a natural family of approximations to $\Xi(t)$ arising out of the Poisson flow of the orthogonal polynomial family $(f_n)_{n=0}^\infty$. The ideas in this chapter will also prepare the ground for much additional theory developed in Chapters~\ref{ch:radial} and~\ref{ch:gn-expansion}.

\section{Main results}

\label{sec:fnexp-mainresults}

We start by identifying the numbers $c_{2n}$ that will play the role of the coefficients in the $f_n$-expansion. We define more generally numbers $(c_n)_{n=0}^\infty$ by
\begin{equation}
\label{eq:def-fn-coeffs}
c_n = 2\sqrt{2}\int_0^\infty \frac{\omega(x)}{(x+1)^{3/2}} \left(\frac{x-1}{x+1}\right)^n\,dx.
\end{equation}
The integral converges absolutely, by \eqref{eq:omegax-asym-xinfty}--\eqref{eq:omegax-asym-xzero}.
Moreover, the functional equation \eqref{eq:theta-functional-equation} implies through a trivial change of variables $u=1/x$ that
\begin{equation} \label{eq:omega-int-powersn-symmetry}
\int_0^1 \frac{\omega(x)}{(x+1)^{3/2}} \left(\frac{x-1}{x+1}\right)^n\,dx
= (-1)^n \int_1^\infty \frac{\omega(u)}{(u+1)^{3/2}} \left(\frac{u-1}{u+1}\right)^n\,du.
\end{equation}
It follows that $c_{2n+1} = 0$ for all $n$, and that the even-indexed numbers $c_{2n}$ can be alternatively expressed as
\begin{equation}
\label{eq:c2n-formula}
c_{2n} = 4\sqrt{2}\int_1^\infty \frac{\omega(x)}{(x+1)^{3/2}} \left(\frac{x-1}{x+1}\right)^{2n}\,dx.
\end{equation}
Since the integrand in \eqref{eq:c2n-formula} is positive on $(1,\infty)$, the numbers $c_{2n}$ are positive.

With these preliminary remarks, we can formulate the main result on the expansion \eqref{eq:fn-expansion-intro}.

\begin{thm}[Infinite series expansion for $\Xi(t)$ in the polynomials $f_n$]
\label{THM:FN-EXPANSION}
The Riemann xi function has the infinite series representation
\begin{equation}
\label{eq:fn-expansion}
\Xi(t) = \sum_{n=0}^\infty (-1)^n c_{2n} f_{2n}\left(\frac{t}{2}\right),
\end{equation}
which converges uniformly on compacts for all $t\in\C$. More precisely, for any compact set $K\subset \C$ there exist constants $C_1, C_2>0$ depending on $K$ such that
\begin{equation}
\label{eq:fn-expansion-errorbound}
\left|\Xi(t) - \sum_{n=0}^N (-1)^n c_{2n} f_{2n}\left(\frac{t}{2}\right) \right| \leq C_1 e^{-C_2 \sqrt{N}}
\end{equation}
holds for all $N\ge0$ and $t\in K$.
\end{thm}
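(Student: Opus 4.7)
The plan follows the template used for Theorem~\ref{THM:HERMITE-EXPANSION}: derive an identity expressing the Mellin kernel $x^{it/2-3/4}$ as an infinite series in the polynomials $f_n(t/2)$, then integrate against $\omega(x)$ using the Mellin representation \eqref{eq:riemannxi-mellintrans}. The required identity comes from the standard generating function for the Meixner-Pollaczek polynomials (summarized in Appendix~\ref{appendix:orthogonal}); specialized to the parameters $\lambda=3/4$, $\phi=\pi/2$ defining $f_n$, it reads
\begin{equation*}
\sum_{n=0}^\infty f_n(x)\, z^n = (1-iz)^{-3/4+ix}(1+iz)^{-3/4-ix} \qquad (|z|<1).
\end{equation*}
I would substitute $z=iu$ and then $u=(x-1)/(x+1)$ (which ranges over $(-1,1)$ as $x$ ranges over $(0,\infty)$); using $1+u=2x/(x+1)$ and $1-u=2/(x+1)$, the right-hand side simplifies to $(x+1)^{3/2}\,x^{it/2-3/4}/(2\sqrt{2})$. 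Rearranging yields
\begin{equation*}
x^{it/2-3/4} = \frac{2\sqrt{2}}{(x+1)^{3/2}} \sum_{n=0}^\infty i^n \left(\frac{x-1}{x+1}\right)^n f_n(t/2) \qquad (x>0),
\end{equation*}
and substituting into \eqref{eq:riemannxi-mellintrans} and interchanging sum and integral gives the formal chain
\begin{equation*}
\Xi(t) = \sum_{n=0}^\infty i^n c_n f_n(t/2) = \sum_{n=0}^\infty (-1)^n c_{2n} f_{2n}(t/2),
\end{equation*}
where the last equality uses the vanishing $c_{2n+1}=0$ established at \eqref{eq:omega-int-powersn-symmetry}.

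The quantitative statement \eqref{eq:fn-expansion-errorbound} will then follow from bounding the tail $\sum_{n>N} c_{2n}|f_{2n}(t/2)|$, via two ingredients. \emph{First}, a polynomial-in-$n$ upper bound $|f_n(t/2)| \le C(K)\, n^{\alpha(K)}$ uniform in $t\in K$: applying Cauchy's coefficient formula to the generating function above on a circle $|w|=r<1$, and exploiting the fact that its only singularities in $w$ lie at $w=\pm i$, yields (after optimizing in $r\to 1$) a polynomial bound. \emph{Second}, a stretched-exponential decay estimate of the form $c_{2n} \le C\exp(-4\sqrt{\pi n} + O(\log n))$, obtained by Laplace's method applied to \eqref{eq:c2n-formula}: using $\omega(x) = O(x^2 e^{-\pi x})$ from \eqref{eq:omegax-asym-xinfty} and the inequality $((x-1)/(x+1))^{2n} \le \exp(-4n/(x+1))$, the integrand's exponent behaves like $-\pi x - 4n/(x+1)$, with a sharp interior maximum near $x \sim 2\sqrt{n/\pi}$ attaining value $-4\sqrt{\pi n}+O(1)$. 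Combining these ingredients,
\begin{equation*}
\sum_{n>N} c_{2n}\, |f_{2n}(t/2)| \le C \sum_{n>N} n^{\alpha} e^{-4\sqrt{\pi n}} \le C_1 e^{-C_2\sqrt{N}},
\end{equation*}
which simultaneously justifies the interchange of sum and integral above and delivers \eqref{eq:fn-expansion-errorbound} for any constant $C_2 < 4\sqrt{\pi}$.

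The main obstacle will be executing the Laplace-method analysis of $c_{2n}$ cleanly. In contrast to the Hermite case, the peak of the integrand is interior to $(0,\infty)$, but its location $x\sim 2\sqrt{n/\pi}$ drifts to infinity with $n$, so the standard form of Laplace's method must be adapted by rescaling, e.g., via a change of variables $x+1 = 2\sqrt{n/\pi}\,y$, after which the analysis should proceed along familiar lines (and in fact should yield the sharper asymptotic \eqref{eq:intro-fncoeff-asym}). A secondary bookkeeping task is to confirm via the symmetry \eqref{eq:omega-int-powersn-symmetry} that the $x\in(0,1)$ portion of the integral contributes commensurately with the $x\in(1,\infty)$ portion, as recorded in \eqref{eq:c2n-formula}. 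A minor technical point is the precise justification of the Fubini interchange on all of $(0,\infty)$, which will combine the Cauchy bound on $f_n$ with the decay estimates \eqref{eq:omegax-asym-xinfty}--\eqref{eq:omegax-asym-xzero} on $\omega$ near the endpoints.
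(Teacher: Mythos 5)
Your proposal is correct and follows the same overall architecture as the paper's proof: the identical generating-function identity yielding the kernel expansion \eqref{eq:int-kernel-expansion} (your substitution $z=iu$, $u=(x-1)/(x+1)$ and the simplification via $1+u = 2x/(x+1)$, $1-u=2/(x+1)$ is exactly the paper's computation), followed by a tail estimate combining a uniform-on-compacts bound for $f_n$ with stretched-exponential decay of the coefficient integrals. Where you genuinely diverge is in the two ingredient lemmas. For the growth of $f_n$, the paper's Lemma~\ref{lem:fn-easybound} proves only $|f_n(x)|\le C_1 e^{C_2 n^{1/3}}$, by induction on the three-term recurrence \eqref{eq:fn-recurrence}; your Cauchy-coefficient estimate on a circle $|z|=r=1-1/n$, exploiting the algebraic singularities of \eqref{eq:fn-genfun} at $z=\pm i$, gives the sharper polynomial bound $|f_n(x)|=O\bigl(n^{3/2+2\max_K|\im x|}\bigr)$ and is a legitimately different (and in this respect stronger) lemma, provided you track the uniformity of the constant $e^{O(|\re x|)}$ from the argument factors over $K$. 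For the coefficient decay, the paper's Lemma~\ref{lem:fn-easybound2} is a deliberately crude bound (maximum point plus tangent-line/concavity estimate) giving $Ce^{-2\sqrt{Bn}}$, which is all the convergence theorem needs; your plan to run the full moving-peak Laplace analysis near $x\sim 2\sqrt{n/\pi}$ to extract the sharp rate $e^{-4\sqrt{\pi n}}$ essentially duplicates the paper's separate proof of Theorem~\ref{THM:FN-COEFF-ASYM} in \secref{sec:fnexp-proofasym}, so you are paying for precision the theorem does not require (any exponent $C_2>0$ in \eqref{eq:fn-expansion-errorbound} suffices). One bookkeeping point to make explicit: once you push absolute values inside the integral to bound the tail, the odd-indexed terms no longer vanish, so the quantity you must control is $c_n' = \int_0^\infty \frac{\omega(x)}{(x+1)^{3/2}}\left|\frac{x-1}{x+1}\right|^n dx$ for \emph{all} $n>2N$, reduced to an integral over $(1,\infty)$ via the symmetry \eqref{eq:omega-int-powersn-symmetry}---you flagged this, and it is exactly how the paper proceeds.
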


We will also prove a formula describing the asymptotic behavior of the coefficient sequence $c_{2n}$.

\begin{thm}[Asymptotic formula for the coefficients $c_{2n}$]
\label{THM:FN-COEFF-ASYM}
The asymptotic behavior of $c_{2n}$ for large $n$ is given by
\begin{equation}
\label{eq:fn-coeff-asym}
c_{2n} = \left(1+O\left(n^{-1/10}\right)\right) 16 \sqrt{2} \pi^{3/2}\, \sqrt{n} \,\exp\left(-4\sqrt{\pi n}\right)
\end{equation}
as $n\to\infty$.
\end{thm}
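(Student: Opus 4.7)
The plan is to apply Laplace's method to the integral representation \eqref{eq:c2n-formula}. The first step is the substitution $u = 2/(x+1)$, which maps $(1,\infty)$ bijectively onto $(0,1)$ and converts the formula into
\begin{equation*}
c_{2n} = 4\int_0^1 u^{-1/2}\,\omega(2/u-1)\,(1-u)^{2n}\,du.
\end{equation*}
This form is well-suited for saddle-point analysis: near $u=0$, the factor $(1-u)^{2n} = \exp\bigl(2n\log(1-u)\bigr)$ decays sharply while $\omega(2/u-1)$ grows like $e^{-2\pi/u}$, and the competition between these two exponentials produces an interior saddle $u_n$ tending to $0$ like $\sqrt{\pi/n}$.

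Next I would replace $\omega(2/u-1)$ by its leading large-argument approximation provided by \eqref{eq:omegaxdiff-asym-xinfty}, namely $(2\pi^2(2/u-1)^2 - 3\pi(2/u-1))e^{-\pi(2/u-1)}$ plus a remainder of order $(2/u-1)^2 e^{-4\pi(2/u-1)}$ whose saddle-point contribution is negligible (its exponential decay is governed by a factor of the form $e^{-c\sqrt{n}}$ with $c > 4\sqrt{\pi}$). After extracting the constant $e^{\pi}$ from $e^{-\pi(2/u-1)} = e^{\pi}e^{-2\pi/u}$, the main integrand takes the form $P(u)\,e^{\psi(u)}$ where
\begin{equation*}
\psi(u) = -\frac{2\pi}{u} + 2n\log(1-u), \qquad P(u) \sim 8\pi^2\,u^{-5/2} \text{ as } u \to 0^+.
\end{equation*}
Solving $\psi'(u) = 2\pi/u^2 - 2n/(1-u) = 0$ gives $u_n = (A-\pi)/(2n)$ with $A = \sqrt{\pi^2+4\pi n}$, and the identity $(A-\pi)(A+\pi) = 4\pi n$ yields the closed form $\psi(u_n) = -(A+\pi) + 2n\log\bigl(1-(A-\pi)/(2n)\bigr)$; expanding $A = 2\sqrt{\pi n}+O(1/\sqrt{n})$ gives $\psi(u_n) = -4\sqrt{\pi n} - \pi + O(n^{-1/2})$, and $|\psi''(u_n)| = 4\pi/u_n^3 + 2n/(1-u_n)^2 \sim 4n^{3/2}/\sqrt{\pi}$.

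The third step is standard Laplace's method: split the integration at $|u - u_n| \le \delta_n$ with $\delta_n = n^{-\alpha}$ for a suitable $\alpha$, replace $P(u)$ by $P(u_n)$ on the central interval, Taylor-expand $\psi$ to second order, and evaluate the Gaussian. This yields
\begin{equation*}
c_{2n} \sim 4e^{\pi}\,P(u_n)\sqrt{\frac{2\pi}{|\psi''(u_n)|}}\,e^{\psi(u_n)} = 16\sqrt{2}\,\pi^{3/2}\sqrt{n}\,e^{-4\sqrt{\pi n}}
\end{equation*}
after inserting $P(u_n)\sim 8\pi^2(n/\pi)^{5/4}$, $|\psi''(u_n)|\sim 4n^{3/2}/\sqrt{\pi}$, and $e^{\psi(u_n)}\sim e^{-\pi}e^{-4\sqrt{\pi n}}$. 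On the tails, concavity of $\psi$ on its monotone portion together with a uniform bound on $u^{-1/2}\omega(2/u-1)$ for $u$ bounded away from $0$ gives an exponentially smaller contribution, analogous to the endpoint estimates used in Lemma~\ref{lem:hermite-easybound2}.

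The main obstacle will be the simultaneous bookkeeping of four error sources---the replacement of $\omega$ by its leading asymptotic, the replacement of $P(u)$ by $P(u_n)$, the cubic Taylor remainder of $\psi$, and the Gaussian tail cutoff---and the optimization of $\alpha$ so that each contributes at most $O(n^{-1/10})$ relative to the main term. Overall the analysis parallels that of Theorem~\ref{thm:hermite-coeff-asym} in spirit, but is conceptually simpler because the saddle-point equation admits a closed-form solution via the elementary function $A = \sqrt{\pi^2 + 4\pi n}$ rather than the Lambert $W$-function.
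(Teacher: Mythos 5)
Your proposal is correct and takes essentially the same route as the paper's proof: the paper likewise applies Laplace's method to \eqref{eq:c2n-formula} after replacing $\omega(x)$ by its leading term via \eqref{eq:omegaxdiff-asym-xinfty} (remainder bounded through Lemma~\ref{lem:fn-easybound2} as $O(e^{-2\sqrt{6\pi n}})$), working directly in the variable $x$ on $(1,\infty)$ with the function $\psi_{2n}(x)=-\pi x+2n\log\frac{x-1}{x+1}$ and saddle $\tau_n=\sqrt{4n/\pi+1}$, which is exactly your $u_n$ under your substitution $u=2/(x+1)$. Your saddle value $\psi(u_n)=-4\sqrt{\pi n}-\pi+O(n^{-1/2})$, Hessian $|\psi''(u_n)|\sim 4n^{3/2}/\sqrt{\pi}$, resulting constant $16\sqrt{2}\,\pi^{3/2}$, and error bookkeeping all check out (your admissible window, e.g.\ $\delta_n\approx n^{-7/10}$ in the $u$-variable, corresponds precisely to the paper's $x$-window of half-width $n^{3/10}$ and reproduces the $O(n^{-1/10})$ relative error).
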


A corollary of the above results,
analogous to Corollary~\ref{cor:hermite-coeff-innerproduct},
is the following.

\begin{corollary}
\label{cor:fn-coeff-innerproduct}
The coefficients $c_n$ can be alternatively expressed as
\begin{equation}
\label{eq:fn-coeff-innerproduct}
c_n = (-i)^n \frac{\sqrt{2} \, n!}{\pi^{3/2} (3/2)_n}
\int_{-\infty}^\infty \Xi(t) f_n\left(\frac{t}{2}\right)\left|\Gamma\left(\frac34+\frac{it}{2}\right)\right|^2\,dt.
\end{equation}
\end{corollary}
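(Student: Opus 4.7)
The proof will mirror the argument given for Corollary~\ref{cor:hermite-coeff-innerproduct}, with the Hermite polynomials replaced by the Meixner--Pollaczek polynomials $f_n$ and the Gaussian weight $e^{-t^2}$ replaced by $\left|\Gamma\!\left(\tfrac34+\tfrac{it}{2}\right)\right|^2$. The plan is to show that the expansion \eqref{eq:fn-expansion} converges not only pointwise, as asserted in Theorem~\ref{THM:FN-EXPANSION}, but also in the Hilbert space $L^2\!\bigl(\R, \left|\Gamma(\tfrac34+\tfrac{it}{2})\right|^2 dt\bigr)$, and then extract the coefficients as standard Fourier coefficients with respect to the orthonormal basis obtained by normalizing the $f_n(t/2)$'s.

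First I would recall the orthogonality relation for the Meixner--Pollaczek polynomials with parameters $\lambda=3/4$, $\phi=\pi/2$ from \secref{sec:orth-fn} in the appendix. A short computation (with change of variable $x=t/2$) yields the squared norm
\begin{equation*}
\int_{-\infty}^\infty f_n\!\left(\tfrac{t}{2}\right)^2 \left|\Gamma\!\left(\tfrac34+\tfrac{it}{2}\right)\right|^2 dt
= \frac{\pi^{3/2}\,(3/2)_n}{\sqrt{2}\, n!}.
\end{equation*}
In particular the sequence of polynomials $t\mapsto f_n(t/2)$, rescaled by the inverse square roots of these norms, forms an orthonormal basis of the weighted $L^2$ space.

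Next I would verify that the partial sums of \eqref{eq:fn-expansion} converge in this $L^2$ space. By orthogonality, the squared $L^2$-norm of the tail of the series past index $N$ equals
\begin{equation*}
\sum_{n>N} c_{2n}^2 \cdot \frac{\pi^{3/2}\,(3/2)_{2n}}{\sqrt{2}\,(2n)!}.
\end{equation*}
Using Theorem~\ref{THM:FN-COEFF-ASYM}, namely $c_{2n} = O\bigl(\sqrt{n}\,e^{-4\sqrt{\pi n}}\bigr)$, together with $(3/2)_{2n}/(2n)! = \Gamma(2n+3/2)/\bigl(\Gamma(3/2)(2n)!\bigr) = O\bigl(\sqrt{n}\bigr)$ by Stirling, the general term is $O\bigl(n^{3/2} e^{-8\sqrt{\pi n}}\bigr)$, which is summable. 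Hence the series converges to some element of the weighted $L^2$ space. Since $L^2$-convergence implies pointwise convergence almost everywhere along a subsequence, this $L^2$-limit must coincide with the pointwise limit provided by Theorem~\ref{THM:FN-EXPANSION}, which is $\Xi(t)$.

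Once $L^2$-convergence is established, the formula \eqref{eq:fn-coeff-innerproduct} follows by the standard extraction of Fourier coefficients. Writing \eqref{eq:fn-expansion} in the equivalent form $\Xi(t)=\sum_{n\ge 0} i^n c_n f_n(t/2)$ (valid since $c_{2n+1}=0$, so that $i^{2n}c_{2n}=(-1)^n c_{2n}$), taking the inner product of both sides against $f_n(t/2)\left|\Gamma(\tfrac34+\tfrac{it}{2})\right|^2$, interchanging sum and integral (justified by the $L^2$-convergence just proved), and using the orthogonality relation above, I obtain
\begin{equation*}
\int_{-\infty}^\infty \Xi(t)\, f_n\!\left(\tfrac{t}{2}\right)\left|\Gamma\!\left(\tfrac34+\tfrac{it}{2}\right)\right|^2 dt
= i^n c_n \cdot \frac{\pi^{3/2}\,(3/2)_n}{\sqrt{2}\, n!},
\end{equation*}
which rearranges immediately into \eqref{eq:fn-coeff-innerproduct}. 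The only nontrivial step is the $L^2$ bookkeeping in the second paragraph; everything else is essentially dictated by the orthogonality relation, and there is no real obstacle given the already-proved Theorems~\ref{THM:FN-EXPANSION} and~\ref{THM:FN-COEFF-ASYM}.
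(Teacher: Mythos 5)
Your proposal is correct and follows exactly the route the paper intends: the paper's proof of Corollary~\ref{cor:fn-coeff-innerproduct} simply says it is analogous to that of Corollary~\ref{cor:hermite-coeff-innerproduct}, and your argument is precisely that analogue spelled out, with the correct rescaled norm $\frac{\pi^{3/2}(3/2)_n}{\sqrt{2}\,n!}$ from \eqref{eq:fn-orthogonality} and a valid (non-circular) use of Theorem~\ref{THM:FN-COEFF-ASYM} to get $L^2$-summability of the tail. The coefficient extraction, the identification of the $L^2$-limit with the pointwise limit $\Xi(t)$, and the final rearrangement via $i^{-n}=(-i)^n$ all check out.
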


\begin{proof}
This is analogous to the proof of Corollary~\ref{cor:hermite-coeff-innerproduct}.
\end{proof}

\section{Proof of Theorem~\ref{THM:FN-EXPANSION}}

\label{sec:fnexp-proofexpansion}

The next two lemmas establish technical bounds that will be useful for our analysis and play a similar role to the one played in the previous chapter by Lemmas~\ref{lem:hermite-easybound} and~\ref{lem:hermite-easybound2}.

\begin{lem} 
\label{lem:fn-easybound}
The polynomials $f_n(x)$ satisfy the bound
\begin{equation}
\label{eq:fn-easybound}
|f_n(x)|\leq C_1 e^{C_2 n^{1/3}}
\end{equation}
for all $n\ge 0$, uniformly as $x$ ranges over any compact set $K\subset \C$, with $C_1,C_2>0$ being constants that depend on $K$ but not on $n$.
\end{lem}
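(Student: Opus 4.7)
The plan is to argue by induction on $n$ using the three-term recurrence relation for the Meixner-Pollaczek polynomials. With the specific parameter values $\lambda = 3/4$ and $\phi = \pi/2$, the standard recurrence (which will be recorded in Appendix~\ref{appendix:orthogonal}, \secref{sec:orth-fn}) collapses to the particularly gentle form
\begin{equation*}
(n+1) f_{n+1}(x) = 2x f_n(x) - \left(n+\tfrac12\right) f_{n-1}(x),
\end{equation*}
because $\sin(\pi/2) = 1$ and $\cos(\pi/2) = 0$ kill the only term in the recurrence that grows linearly with $n$.

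Fix a compact set $K \subset \C$ and let $M = \sup_{x \in K} |x|$. Dividing the recurrence by $n+1$ and applying the triangle inequality yields
\begin{equation*}
|f_{n+1}(x)| \;\leq\; \frac{2M}{n+1} |f_n(x)| + |f_{n-1}(x)| \qquad (x \in K),
\end{equation*}
since $(n+\tfrac12)/(n+1) < 1$. Setting $U_n = \max(|f_n(x)|, |f_{n-1}(x)|)$ (with the maximum also taken over $x \in K$), I would deduce $U_{n+1} \leq (1 + 2M/(n+1)) U_n$, and hence, by telescoping,
\begin{equation*}
U_n \;\leq\; U_1 \prod_{k=2}^{n} \left(1 + \frac{2M}{k}\right) \;=\; O(n^{2M}),
\end{equation*}
the last estimate following from $\prod_{k=1}^n (1+2M/k) \sim n^{2M}/\Gamma(1+2M)$. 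This polynomial bound is much stronger than the claimed bound $C_1 e^{C_2 n^{1/3}}$, which follows trivially from it by choosing $C_1, C_2$ large enough in terms of $K$ (any $C_2 > 0$ will do, with $C_1$ absorbing the implicit constant).

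There is no real obstacle here; the main point is simply noticing how favorable the recurrence becomes at $\phi = \pi/2$, so that the proof is actually much simpler than the analogous Lemma~\ref{lem:hermite-easybound} for Hermite polynomials, where the linearly growing coefficient $2n$ forced the $\exp(\tfrac34 n \log n)$ bound. The only care needed is verifying the base cases $n = 0, 1$ (for which $f_0 \equiv 1$ and $f_1$ is an explicit affine function of $x$, hence trivially bounded on $K$) to start the induction.
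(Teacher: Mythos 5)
Your proof is correct, and while it starts from the same tool as the paper---the three-term recurrence \eqref{eq:fn-recurrence}---the execution is genuinely different and in fact stronger. The paper inducts directly on the target bound $C_1 e^{C_2 n^{1/3}}$: from $|f_n(x)| \leq \frac{2|x|}{n}|f_{n-1}(x)| + \left(1-\frac{1}{2n}\right)|f_{n-2}(x)|$ it checks that the relative increment of the stretched exponential between indices $n-2$ and $n$, controlled via the auxiliary inequality $e^{-C_2(n^{1/3}-(n-2)^{1/3})} \leq 1-\frac{1}{3n^{2/3}}$, absorbs the $O(M/n)$ perturbation once $n \geq N_0 = \max(128,\lceil(2M)^{4/3}\rceil)$-style thresholds are imposed (here $N_0\ge\max(4,(3M)^3)$). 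Your telescoping of $U_{n+1} \leq \left(1+\frac{2M}{n+1}\right)U_n$ sidesteps all of that bookkeeping and yields the strictly stronger polynomial bound $U_n = O(n^{2M})$, which is of the correct order of magnitude: singularity analysis of the generating function \eqref{eq:fn-genfun}, whose singularities at $z=\pm i$ sit on the unit circle with exponents $\frac34 \mp ix$, shows $|f_n(x)| = O\left(n^{|\im x|-1/4}\right)$, so polynomial growth of $K$-dependent degree really is the truth here. What the paper's weaker $e^{C_2 n^{1/3}}$ formulation buys is a uniform template: the identical statement and proof skeleton are reused for the continuous Hahn polynomials in Lemma~\ref{lem:gn-easybound} and parallel the Hermite case of Lemma~\ref{lem:hermite-easybound}, and the stretched-exponential form is exactly what gets played off against the $e^{-2\sqrt{Bn}}$ decay of Lemma~\ref{lem:fn-easybound2} in the convergence proof, so nothing sharper is needed downstream. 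Note that your argument transfers verbatim to $g_n$ as well, since \eqref{eq:gn-recurrence} likewise has an $O(1/n)$ middle coefficient and trailing coefficient $\frac{2n+1}{2n+3} < 1$, so the $n^{1/3}$ exponent is an artifact of uniformity of presentation rather than a reflection of actual growth in either family.
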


\begin{proof}
Fix the compact $K\subset \C$, and denote $M=2\max_{x\in K} |x|$.
Fix an integer $N_0\ge \max(4,(3M)^3)$. Let $C_1,C_2>0$ be constants for which \eqref{eq:fn-easybound} holds for all $x\in K$ and $0\le n\le N_0$, and such that $C_2\ge 1$. Note that for all $n\ge N_0$ we have the inequality
$
n^{1/3} - (n-2)^{1/3} \geq \frac{2}{3n^{2/3}},
$
which implies that
\begin{equation}
\label{eq:fn-easybound-auxineq}
e^{-C_2 (n^{1/3} - (n-2)^{1/3})} \leq e^{-(n^{1/3} - (n-2)^{1/3})} \leq 1-\frac{1}{3n^{2/3}}.
\end{equation}
(since $e^{-x}\leq 1-x/2$ if $0\le x\le 1$).
Then, assuming by induction that we have proved the bound \eqref{eq:fn-easybound} for all cases up to $n-1$, in the $n$th case (where $n> N_0$) we can use the recurrence \eqref{eq:fn-recurrence} and \eqref{eq:fn-easybound-auxineq} to write that, for all $x\in K$,
\begin{align*}
|f_n(x)| &\leq \frac{2|x|}{n} |f_{n-1}(x)| + \left(1-\frac{1}{2n}\right)|f_{n-2}(x)|
\leq 
\frac{M}{n} C_1 e^{C_2 (n-1)^{1/3}} + \left(1-\frac{1}{2n}\right) C_1 e^{C_2 (n-2)^{1/3}}
\\ &\leq C_1 e^{C_2 n^{1/3}}
\left[
\frac{M}{n} e^{-C_2 (n^{1/3}-(n-1)^{1/3})}
+ \left(1-\frac{1}{2n}\right) e^{-C_2 (n^{1/3}-(n-2)^{1/3})}
\right]
\\ &\leq C_1 e^{C_2 n^{1/3}}
\left[
\frac{M}{n} 
+ \left(1-\frac{1}{2n}\right) \left(1-\frac{1}{3n^{2/3}}\right)
\right]
\\ &\leq C_1 e^{C_2 n^{1/3}}
\left[
\frac{1}{3n^{2/3}} 
+ \left(1-\frac{1}{3n^{2/3}}\right)
\right] = C_1 e^{C_2 n^{1/3}}.
\end{align*}
This completes the inductive step.
\end{proof}

\begin{lem}
\label{lem:fn-easybound2}
For any number $B\ge1$ there is a constant $C>0$ such that 
\begin{equation}
\label{eq:fn-easybound2}
\int_1^\infty e^{-B x} \left( \frac{x-1}{x+1}\right)^n\,dx \leq C e^{-2 \sqrt{B n}}
\end{equation}
for all $n\ge 0$.
\end{lem}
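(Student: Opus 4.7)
\medskip

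\noindent\textbf{Proof plan.} My plan is to reduce the integral in two stages: first eliminate the awkward factor $\left(\frac{x-1}{x+1}\right)^n$ by bounding it by an exponential, and then control the resulting purely exponential integrand by a clean AM--GM argument calibrated to produce exactly the rate $e^{-2\sqrt{Bn}}$.

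For the first step, I would use the elementary inequality $\log(1-u)\le -u$ (valid for $u\in[0,1)$). Writing $\frac{x-1}{x+1} = 1-\frac{2}{x+1}$ with $\frac{2}{x+1}\in(0,1)$ for $x>1$, this gives
\begin{equation*}
\left(\frac{x-1}{x+1}\right)^{\!n} \leq \exp\!\left(-\frac{2n}{x+1}\right) \qquad (x\ge 1),
\end{equation*}
and hence reduces the problem to establishing $\int_1^\infty \exp\!\left(-Bx-\frac{2n}{x+1}\right)dx \leq C e^{-2\sqrt{Bn}}$.

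For the second step, the key observation is that AM--GM applied to the two positive quantities $\frac{B(x+1)}{2}$ and $\frac{2n}{x+1}$ (whose product is exactly $Bn$) yields $\frac{B(x+1)}{2}+\frac{2n}{x+1}\ge 2\sqrt{Bn}$. The linear term $Bx$ in the exponent needs to be split so that half of it is absorbed into this AM--GM bound and half is retained to provide integrability on $[1,\infty)$. Writing
\begin{equation*}
Bx+\frac{2n}{x+1} \;=\; \frac{Bx}{2} + \left(\frac{B(x+1)}{2}+\frac{2n}{x+1}\right) - \frac{B}{2} \;\geq\; \frac{Bx}{2} + 2\sqrt{Bn} - \frac{B}{2},
\end{equation*}
I would conclude
\begin{equation*}
\int_1^\infty \exp\!\left(-Bx-\frac{2n}{x+1}\right)dx \;\leq\; e^{B/2}\,e^{-2\sqrt{Bn}} \int_1^\infty e^{-Bx/2}\,dx \;=\; \frac{2}{B}\,e^{-2\sqrt{Bn}},
\end{equation*}
which, together with $B\ge 1$, gives the claimed bound with the explicit constant $C=2$.

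I do not expect any significant obstacle: the only real ``choice'' in the argument is how to allocate the linear term $Bx$ between the AM--GM pairing and the tail, and the split $Bx = \frac{Bx}{2}+\frac{Bx}{2}$ together with pairing against $\frac{2n}{x+1}$ is dictated by the requirement that the exponent in the final bound be $2\sqrt{Bn}$ (rather than a suboptimal $\sqrt{2Bn}$ or similar). Everything else is a one-line integration. No use of Laplace's method or saddle-point analysis is needed here, which is fitting since the lemma only asks for a uniform upper bound rather than an asymptotic formula.
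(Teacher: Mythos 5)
Your proof is correct, and it takes a genuinely different route from the paper's. The paper treats the integral by a Laplace-method style analysis: writing the integrand as $\exp(\psi_n(x))$ with $\psi_n(x) = -Bx + n\log\left(\frac{x-1}{x+1}\right)$, locating the unique maximizer $x_n=\sqrt{2n/B+1}$, computing $\psi_n(x_n) = -2\sqrt{2Bn}+O(n^{-1/2})$, and splitting the integral at $2x_n$ (trivial bound on $[1,2x_n]$, pure exponential tail beyond). Your argument replaces all of this with two elementary pointwise inequalities — $\left(\frac{x-1}{x+1}\right)^n\le e^{-2n/(x+1)}$ from $\log(1-u)\le -u$, then AM--GM on the pair $\frac{B(x+1)}{2}$ and $\frac{2n}{x+1}$, whose product is exactly $Bn$ — and a one-line integration; I checked the algebra of your split $Bx+\frac{2n}{x+1}\ge \frac{Bx}{2}+2\sqrt{Bn}-\frac{B}{2}$ and the resulting explicit constant $C=2/B\le 2$, and both are right, including the degenerate case $n=0$. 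What each approach buys: yours is shorter, fully non-asymptotic, and yields an explicit constant uniform in $B\ge 1$, whereas the paper's saddle-point analysis is sharper in the exponent — the true maximum of the integrand sits at height $e^{-2\sqrt{2Bn}(1+o(1))}$, a gain of $\sqrt{2}$ over the stated rate, which your $\log(1-u)\le -u$ step and the half-and-half allocation of $Bx$ necessarily give away (though you could recover $2\sqrt{2\theta B n}$ for any $\theta<1$ by retaining only $(1-\theta)Bx$ for integrability). More importantly, the paper's proof is not gratuitously heavy: the function $\psi_n$ of \eqref{eq:largedev-function} and its maximizer are reused verbatim in the asymptotic analysis of the coefficients $c_{2n}$ in the proof of Theorem~\ref{THM:FN-COEFF-ASYM}, where the precise location $\tau_n = x_{2n}$ and the value $\psi_{2n}(\tau_n)$ are essential; your method, while cleaner for the lemma in isolation, does not set up that shared infrastructure.
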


\begin{proof}
The integral can be expressed as 
\begin{align*}
\int_1^\infty \exp\left(\psi_n(x)\right)\, dx,
\end{align*}
where we define
\begin{equation}
\label{eq:largedev-function}
\psi_n(x) = -B x + n \log \left( \frac{x-1}{x+1}\right).
\end{equation}
By solving the equation $\psi_n'(x) = 0$, it is easy to check that $\psi_n(x)$ has a unique global maximum point $x_n$ in $[1,\infty)$, namely
\begin{equation*}
0 = \psi_n'(x_n) = -B + \frac{2n}{x_n^2-1} \quad \iff \quad 
x_n = \sqrt{\frac{2n}{B}+1}, 
\end{equation*}
which asymptotically as $n\to\infty$ behaves as
\begin{equation*}
x_n = \sqrt{\frac{2n}{B}} + O\left(\frac{1}{\sqrt{n}}\right).
\end{equation*}
The value at the maximum point is
\begin{align*}
\psi_n(x_n) &= -B x_n + n \log\left(\frac{x_n-1}{x_n+1}\right)
= 
-B x_n + n \log\left(\frac{1-1/x_n}{1+1/x_n}\right)
\\ &= - \sqrt{2B n} +O\left(\frac{1}{\sqrt{n}}\right) 
+ n\left(-2\cdot\frac{1}{x_n} + O\left(\frac{1}{x_n^3}\right)\right)
=
-2 \sqrt{2B n} +O\left(\frac{1}{\sqrt{n}}\right)
\end{align*}
as $n\to\infty$.
We conclude that
\begin{align*}
\int_1^\infty \exp\left(\psi_n(x)\right)\, dx
&= 
\int_1^{2x_n} \exp\left(\psi_n(x)\right)\, dx
+ \int_{2x_n}^\infty \exp\left(\psi_n(x)\right)\, dx
\\ &\leq
2x_n \exp\left(\psi_n(x_n)\right) + \int_{2x_n}^\infty e^{-B x}\,dx
\\ &=
\left(1+O\left(\frac{1}{\sqrt{n}}\right)\right) 
2\sqrt{\frac{2n}{B}}
\exp\left(-2\sqrt{2B n}\right)
+ \frac{1}{B} e^{-2 B x_n}
= O\left( e^{-2\sqrt{B n}}\right)
\end{align*}
as $n\to\infty$,
as claimed.
\end{proof}

Denote $s=\frac12+it$, and observe that with this substitution the Mellin transform representation \eqref{eq:riemannxi-mellintrans} for $\xi(s)$ becomes the statement that 
\begin{equation*}
\Xi(t) =\int_0^\infty \omega(x)x^{-\frac34+\frac{it}{2}}\,dx.
\end{equation*} 
The idea behind the expansion \eqref{eq:fn-expansion} is the simple yet powerful fact that the integration kernel $x^{-\frac34+\frac{it}{2}}$ can be expanded in a very specific way in an infinite series related to the generating function \eqref{eq:fn-genfun}. More precisely, for any $x>0$ we have that
\begin{align}
\label{eq:int-kernel-expansion}
x^{s/2-1} & =
x^{-\frac34+\frac{it}{2}} = \frac{2\sqrt{2}}{(x+1)^{3/2}}
\left(\frac{2x}{x+1}\right)^{-\frac34+\frac{it}{2}}
\left(\frac{2}{x+1}\right)^{-\frac34-\frac{it}{2}}
\\ &=
\frac{2\sqrt{2}}{(x+1)^{3/2}}
\cdot \left((1-iz)^{-\frac34+\frac{it}{2}}
(1+iz)^{-\frac34-\frac{it}{2}}
\right)_{\Big| z=i\frac{x-1}{x+1}}
\nonumber \\ &=
\frac{2\sqrt{2}}{(x+1)^{3/2}}
\sum_{n=0}^\infty 
f_n\left(\frac{t}{2}\right) \left(i \,\frac{x-1}{x+1}\right)^n 
=
\frac{2\sqrt{2}}{(x+1)^{3/2}}
\sum_{n=0}^\infty 
i^n f_n\left(\frac{t}{2}\right) \left(\,\frac{x-1}{x+1}\right)^n. 
\nonumber
\end{align}
One can now get \eqref{eq:fn-expansion} as a formal identity by multiplying the first and last expressions in this chain of equations by $\omega(x)$ and integrating both sides over $(0,\infty)$, then using the fact that the odd-indexed coefficients $c_{2n+1}$ vanish. 

To rigorously justify this formal calculation and obtain the more precise rate of convergence estimate \eqref{eq:fn-expansion-errorbound}, 
we now make use of the technical bounds from Lemmas~\ref{lem:fn-easybound} and \ref{lem:fn-easybound2}. Using the above infinite series representation of the kernel $x^{-\frac34+\frac{it}{2}}$, we see that
\begin{align*}
\Bigg|\Xi(t) - \sum_{n=0}^N & (-1)^{2n}c_{2n} f_{2n}\left(\frac{t}{2}\right)\Bigg|
=
\left|\Xi(t) - \sum_{n=0}^{2N} i^n c_n f_n\left(\frac{t}{2}\right)\right|
\\ &=
\left|\int_0^\infty
\omega(x)
\left(x^{-\frac34+\frac{it}{2}} - 
\frac{2\sqrt{2}}{(x+1)^{3/2}}
\sum_{n=0}^{2N} f_n\left(\frac{t}{2}\right)\left(
i \frac{x-1}{x+1}\right)^n
\right)
\,dx\right|
\\ &\leq
\int_0^\infty
\omega(x)
\left|x^{-\frac34+\frac{it}{2}} - 
\frac{2\sqrt{2}}{(x+1)^{3/2}}
\sum_{n=0}^{2N} f_n\left(\frac{t}{2}\right)\left(
i \frac{x-1}{x+1}\right)^n
\right|
\,dx
\\ &=
2\sqrt{2} \int_0^\infty
\frac{\omega(x)}{(x+1)^{3/2}}
\left|
\sum_{n=2N+1}^{\infty} f_n\left(\frac{t}{2}\right)
\left(i \frac{x-1}{x+1}
\right)^n
\right|
\,dx
\\ &\leq
2\sqrt{2} \int_0^\infty
\frac{\omega(x)}{(x+1)^{3/2}}
\sum_{n=2N+1}^{\infty} \left|f_n\left(\frac{t}{2}\right)\right| \cdot\left|
i \frac{x-1}{x+1}
\right|^n
\,dx
\\ &\leq
2\sqrt{2} \int_0^\infty
\frac{\omega(x)}{(x+1)^{3/2}}
\sum_{n=2N+1}^{\infty} C_1 e^{C_2 n^{1/3}} \left|
\frac{x-1}{x+1}
\right|^n
\,dx
\\ &=
2\sqrt{2} \sum_{n=2N+1}^{\infty} C_1  e^{C_2 n^{1/3}} \int_0^\infty
\frac{\omega(x)}{(x+1)^{3/2}}
\left|
\frac{x-1}{x+1}
\right|^n
\,dx
\\ &
=
4\sqrt{2} \sum_{n=2N+1}^{\infty} C_1  e^{C_2 n^{1/3}} \int_1^\infty
\frac{\omega(x)}{(x+1)^{3/2}}
\left(
\frac{x-1}{x+1}
\right)^n
\,dx,
\end{align*}
where $C_1,C_2$ are the constants from Lemma~\ref{lem:fn-easybound} (associated with the compact set $K$ over which we are allowing $t$ to range); the last step follows from \eqref{eq:omega-int-powersn-symmetry}. Now note that, by \eqref{eq:omegax-asym-xinfty}, $\frac{\omega(x)}{(x+1)^{3/2}} = O\left(\sqrt{x}e^{-\pi x}\right) = O\left(e^{-\pi x/2}\right)$ as $x\to\infty$, so we can apply Lemma~\ref{lem:fn-easybound2} (with $B=\pi/2$) to the integrals, to get that the last expression in the above chain of inequalities is bounded by
$$
4\sqrt{2} \sum_{n=2N+1}^{\infty} C_1  e^{C_2 n^{1/3}} \cdot C e^{-2 \sqrt{\pi n/2}},
$$
and this is easily seen to be $O\left(e^{-\sqrt{\pi N}}\right)$ as $N\to\infty$. This proves \eqref{eq:fn-expansion-errorbound} and completes the proof of Theorem~\ref{THM:FN-EXPANSION}.
\qed

\section{Proof of Theorem~\ref{THM:FN-COEFF-ASYM}}

\label{sec:fnexp-proofasym}

Define a function $\phi(x)$, and numbers $Z_n$ and $\varepsilon_n$, by
\begin{align*}
\phi(x) & = \frac{\pi x(2\pi x-3)}{(x+1)^{3/2}}, \\
Z_n & = \int_1^\infty \phi(x)e^{-\pi x}
\left(\frac{x-1}{x+1}\right)^{2n}\,dx, \\
\varepsilon_n & = 
\int_1^\infty \left(\frac{\omega(x)}{(x+1)^{3/2}}-\phi(x)e^{-\pi x}\right)
\left(\frac{x-1}{x+1}\right)^{2n}\,dx,
\end{align*}
so that $c_{2n}$ in \eqref{eq:c2n-formula} can be rewritten as $c_{2n} = 4\sqrt{2}(Z_n + \varepsilon_n)$. We consider separately the asymptotic behavior of $Z_n$ and $\varepsilon_n$. For $\varepsilon_n$, note that we have
\begin{equation}
\label{eq:omegax-firstorder-asym}
\left|\frac{\omega(x)}{(x+1)^{3/2}} - \phi(x)e^{-\pi x}\right| = 
O\left(e^{-3\pi x}\right)
\quad \textrm{as }x\to\infty,
\end{equation}
by \eqref{eq:omegaxdiff-asym-xinfty}.
Thus, Lemma~\ref{lem:fn-easybound2} implies that
\begin{equation} \label{eq:asym-estimate-epsilonn}
|\varepsilon_n| = O\left(e^{-2\sqrt{6\pi n}}\right) \quad \textrm{as }n\to\infty.
\end{equation}
The main asymptotic contribution to $c_{2n}$ comes from $Z_n$, and can be found using Laplace's method. 
Start by rewriting $Z_n$ as 
\begin{equation*}
Z_n = \int_1^\infty \phi(x) \exp(\psi_{2n}(x))\,dx,
\end{equation*}
where $\psi_n(x)$ is the function defined in \eqref{eq:largedev-function} with $B=\pi$. Noting that, as was discussed in the proof of Lemma~\ref{lem:fn-easybound2}, $\psi_{2n}(x)$ has a unique global maximum point at
\begin{equation*} 
\tau_n := x_{2n} = \sqrt{\frac{4n}{\pi}+1} = \sqrt{\frac{4n}{\pi}} + O\left(\frac{1}{\sqrt{n}}\right) \qquad \textrm{as }n\to\infty,
\end{equation*}
we further split this integral up into three parts, by writing
$Z_n = Z_n^{(1)} + Z_n^{(2)} + Z_n^{(3)}$, with
\begin{align}
Z_n^{(1)} &= \int_1^{\tau_n-n^{3/10}}
\phi(x) \exp(\psi_{2n}(x))\,dx,
\label{eq:int-z1}
\\
Z_n^{(2)} &=
\int_{I_n}
\phi(x) \exp(\psi_{2n}(x))\,dx,
\label{eq:int-z2}
\\
Z_n^{(3)} &=
\int_{\tau_n+n^{3/10}}^\infty
\phi(x) \exp(\psi_{2n}(x))\,dx,
\label{eq:int-z3}
\end{align}
where $I_n$ denotes the interval $[\tau_n-n^{3/10},\tau_n+n^{3/10}]$.

The following calculus facts are straightforward to check; their verification is left to the reader:
\begin{enumerate}
\item $\phi(x)$ is monotone increasing on $[1,\infty)$.
\item $\psi_{2n}(x)$ is monotone increasing on $[1,\tau_n]$ and monotone decreasing on $[\tau_n,\infty)$.
\item $\psi_{2n}(x)$ is concave on $[1,\infty)$.
\item We have the asymptotic relations
\begin{align*}
V_n &:= \psi_{2n}(\tau_n) = -\pi \tau_n + 2n 
\log\left( \frac{\tau_n-1}{\tau_n+1}\right)
= -4\sqrt{\pi n} + O\left(\frac{1}{\sqrt{n}}\right), \\
D_n &:= -\psi_{2n}''(\tau_n) = \frac{\pi^2}{2n} \tau_n
= \pi^{3/2} \frac{1}{\sqrt{n}} + O\left(\frac{1}{n^{3/2}}\right),
\\
E_n &:= \phi(\tau_n) 
= 2\sqrt{2} \pi^{7/4} n^{1/4} + O\left(\frac{1}{n^{1/4}}\right),
\\
K_n &:= \psi_{2n}'(\tau_n + n^{3/10}) = -\pi^{3/2} n^{-1/5} + O\left(\frac{1}{n^{2/5}}\right)
\end{align*}
as $n\to\infty$.

\item We have the relation
$
\psi_{2n}'''(x) = \frac{8n(3x^2+1)}{(x^2-1)^3}. 
$
Consequently
\begin{equation*}
\sup_{x\in I_n} |\psi_{2n}'''(x)|
= O\left(\frac{1}{n}\right) \qquad
\textrm{as }n\to\infty,
\end{equation*}
which implies that the Taylor expansion of $\psi_{2n}(x)$ around $x=\tau_n$ can be written as
\begin{align*}
\psi_{2n}(x) = V_n -\frac12 D_n (x-\tau_n)^2 + O\left(\frac{|x-\tau_n|^3}{n}\right), \qquad (x\in I_n)
\end{align*}
where the constant implicit in the big-$O$ term does not depend on $x$ or $n$.

\item We have
\begin{equation*}
\sup_{x\in I_n} \left| \frac{\phi(x)}{E_n}-1\right| = O\left(\frac{1}{n^{1/5}}\right) \qquad \textrm{as }n\to\infty.
\end{equation*}

\end{enumerate}

We now estimate the integrals \eqref{eq:int-z1}--\eqref{eq:int-z3}. For $Z_n^{(1)}$, since $\phi(x)$ and $\psi_{2n}(x)$ are increasing on $[1,\tau_n]$, we have that
\begin{align}
\label{eq:asym-estimate-z1}
|Z_n^{(1)}| &\leq 
(\tau_n-n^{3/10}-1) \phi(\tau_n-n^{3/10}) \exp\left(
\psi_{2n}(\tau_n-n^{3/10}) \right)
\\ &\leq O\left(n^{3/4}\right) \exp\left(V_n
-\frac12 D_n n^{3/5} + 
O\left( \frac{n^{9/10}}{n}
\right)
\right)
= O\left( \frac{1}{n^2} e^{-4\sqrt{\pi n}} \right)
\nonumber
\end{align}
as $n\to\infty$.

For $Z_n^{(3)}$, we use the fact that 
\begin{equation*}
\psi_{2n}(x) \leq \psi_{2n}(\tau_n+n^{3/10}) + K_n (x-\tau_n-n^{3/10})
\end{equation*} 
for all $x\ge \tau_n+n^{3/10}$ (which follows from the concavity of $\psi_{2n}(x)$) to
write 
\begin{align}
\label{eq:asym-estimate-z3}
|Z_n^{(3)}| &\leq
\int_{\tau_n+n^{3/10}}^\infty
\phi(x) \exp\Bigg[ \psi_{2n}(\tau_n+n^{3/10}) 
+ K_n
(x-\tau_n-n^{3/10})\Bigg]
\,dx
\\ \nonumber &\leq
\exp\left[
V_n - \frac12 D_n n^{3/5} + O\left(\frac{n^{9/10}}{n}\right)
\right]
\\ \nonumber &\ \ \ \quad\qquad \times 
\int_{\tau_n+n^{3/10}}^\infty O\left( \sqrt{x} \right)
\exp\Bigg[
-\left(1+O\left(\frac{1}{n^{1/5}}\right)\right)
\pi^{3/2} n^{-1/5} (x-\tau_n-n^{3/10})
\Bigg]
\,dx
\\ \nonumber &=
e^{-4\sqrt{\pi n}} O\left(e^{-n^{1/20}}\right)
=
O\left(\frac{1}{n^2}e^{-4\sqrt{\pi n}}\right).
\end{align}

Finally, to obtain the asymptotics of $Z_n^{(2)}$, we make the change of variables $u=\sqrt{D_n}(x-\tau_n)$ in the integral \eqref{eq:int-z2}, to get that
\begin{align}
\label{eq:asym-estimate-z2}
Z_n^{(2)} &=
\int_{-\sqrt{D_n} n^{3/10}}^{\sqrt{D_n} n^{3/10}} \phi\left(\tau_n + \frac{u}{\sqrt{D_n}}
\right)
\exp\left( \psi_{2n}\left(\tau_n + \frac{u}{\sqrt{D_n}}
\right) \right)\,\frac{du}{\sqrt{D_n}}
\\ &=
\frac{1}{\sqrt{D_n}}\int_{-\sqrt{D_n} n^{3/10}}^{\sqrt{D_n} n^{3/10}} \phi\left(\tau_n + \frac{u}{\sqrt{D_n}}
\right)
\exp\left[ V_n - \frac12 u^2 + O\left(
\frac{|u|^3}{n D_n^{3/2}}
\right)
\right]
\,du
\nonumber 
\\ &=
\left(1+O\left(\frac{1}{n^{1/4}}\right)\right) \pi^{-3/4} n^{1/4} 
\times \left(1+O\left(\frac{1}{n^{1/5}}\right)\right) E_n
\nonumber \\ & \qquad 
\times \left(1+O\left(\frac{1}{\sqrt{n}}\right)\right) e^{-4\sqrt{\pi n}}
\times \int_{-\sqrt{D_n} n^{3/10}}^{\sqrt{D_n} n^{3/10}}
\exp\left[-u^2/2 + O\left(\frac{1}{n^{1/10}}\right)\right]\,du
\nonumber \\ &=
\left(1+O\left(\frac{1}{n^{1/10}}\right)\right) 
\pi^{-3/4} n^{1/4} 
\times 2\sqrt{2}\pi^{7/4} n^{1/4}
\times e^{-4\sqrt{\pi n}}
\left(\sqrt{2\pi}-
O\left(\exp\left(-\frac12 D_n n^{3/5}\right)\right)\right)
\nonumber \\ &= 
\left(1+O\left(\frac{1}{n^{1/10}}\right)\right) 
4 \pi^{3/2} n^{1/2} e^{-4\sqrt{\pi n}},
\nonumber
\end{align}
where we once again used \eqref{eq:gaussian-tail-bound} to account for the error arising from adding the tails of the Gaussian integral.

Since $c_{2n} = 4\sqrt{2}\left(\varepsilon_n + Z_n^{(1)}+Z_n^{(2)}+Z_n^{(3)}\right)$, combining \eqref{eq:asym-estimate-epsilonn}, \eqref{eq:asym-estimate-z1}, \eqref{eq:asym-estimate-z3} and \eqref{eq:asym-estimate-z2} gives the asymptotic formula
\eqref{eq:fn-coeff-asym}. The proof of Theorem~\ref{THM:FN-COEFF-ASYM} is complete.
\qed

\section{The Poisson flow associated with the $f_n$-expansion}

\label{sec:fn-poissonflow}

Motivated by the developments of \secref{sec:hermite-poissonpolya}, we now consider the Poisson flow 
\eqref{eq:poisson-flow-def} associated with the family $(f_n)_{n=0}^\infty$ of orthogonal polynomials, which in this section we will denote by $\mathcal{F}$. Recall that in the case of the Hermite expansion, we showed that the Poisson flow could be understood as the family of Fourier transforms of functions obtained from the function $\Phi(x)$ by performing a simple operation (refer to \eqref{eq:hermite-poissonpolya-calc}). One might wonder if something similar (or perhaps even more interesting) happens in the case of the Poisson flow associated with the family $\mathcal{F}$. The answer is given in the following result.

\begin{thm}[Mellin transform representation of the Poisson flow]
\label{thm:fn-poisson-mellin}
For $0<r<1$,
the function $X_r^{\mathcal{F}}(t)$ has the Mellin transform representation
\begin{equation}
\label{eq:fn-poisson-mellin}
X_r^{\mathcal{F}}(t) 
=
\int_0^\infty \omega_r(x) x^{-\frac34+\frac{i t}{2}}\,dx
\qquad (t\in \C),
\end{equation}
where we define 
\begin{equation}
\label{eq:fn-omega-compressed}
\omega_r(x) = \begin{cases}
\frac{1+\eta}{\sqrt{1-\eta}} \frac{1}{\sqrt{1-\eta x}}
\omega\left(\frac{x-\eta}{1-\eta x}\right) & \textrm{if }\eta < x<1/\eta, \\
0 & \textrm{otherwise,}
\end{cases}
\end{equation}
making use of the notation
\begin{equation}
\label{eq:compression-eta-notation}
\eta = \frac{1-r}{1+r}.
\end{equation}
\end{thm}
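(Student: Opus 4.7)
The plan is to compute $X_r^{\mathcal{F}}(t)$ directly from its series definition and then reverse the Mellin-integral identity used to derive the $f_n$-expansion in the first place. By \eqref{eq:poisson-flow-expansion}, writing the $f_n$-expansion as $\Xi(t)=\sum_n i^n c_n f_n(t/2)$ (valid since $c_{2n+1}=0$), we have
\begin{equation*}
X_r^{\mathcal{F}}(t)=\sum_{n=0}^\infty i^n r^n c_n f_n\!\left(\tfrac{t}{2}\right).
\end{equation*}
I substitute the defining integral \eqref{eq:def-fn-coeffs} for $c_n$ and interchange sum and integral. The interchange will be justified by dominated convergence: on the half-line the tails of the inner sum are controlled using the polynomial-size bound of Lemma~\ref{lem:fn-easybound} together with the geometric factor $r^n |(x-1)/(x+1)|^n$ (for $r<1$ strictly) and the decay of $\omega(x)/(x+1)^{3/2}$ at $0$ and $\infty$ supplied by \eqref{eq:omegax-asym-xinfty}--\eqref{eq:omegax-asym-xzero}; this is routine and parallels the estimates in \secref{sec:fnexp-proofexpansion}.

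After the interchange, the key step is to recognize the inner sum as the generating identity \eqref{eq:int-kernel-expansion} applied \emph{not} at $z=(x-1)/(x+1)$ but at $rz$. Concretely,
\begin{equation*}
\sum_{n=0}^\infty i^n f_n\!\left(\tfrac{t}{2}\right)\left(r\,\frac{x-1}{x+1}\right)^{\!n}
=(1+rz)^{-3/4+it/2}(1-rz)^{-3/4-it/2}\bigg|_{z=(x-1)/(x+1)}.
\end{equation*}
Expanding $1\pm rz$ in terms of $x$ gives $1+rz=((1+r)x+(1-r))/(x+1)$ and $1-rz=((1-r)x+(1+r))/(x+1)$, so the factor $(x+1)^{-3/2}$ in the integrand cancels cleanly and I am left with
\begin{equation*}
X_r^{\mathcal{F}}(t)=2\sqrt{2}\int_0^\infty \omega(x)\,[(1+r)x+(1-r)]^{-3/4+it/2}[(1-r)x+(1+r)]^{-3/4-it/2}\,dx.
\end{equation*}

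The remaining step is a change of variable designed to put the integrand into the standard Mellin form $y^{-3/4+it/2}$. I would set
\begin{equation*}
y=\frac{(1+r)x+(1-r)}{(1-r)x+(1+r)},
\end{equation*}
an orientation-preserving M\"obius map sending $(0,\infty)$ onto $(\eta,1/\eta)$ with $\eta=(1-r)/(1+r)$ as in \eqref{eq:compression-eta-notation}; its inverse is $x=(y-\eta)/(1-\eta y)$. Pulling the denominator into a $(3/2)$-power and combining with the Jacobian $dx=(1-\eta^2)(1-\eta y)^{-2}dy$ gives, after elementary algebra exploiting the relations $(1+r)\eta=1-r$ and $1-\eta^2=(1-\eta)(1+\eta)$, the expression
\begin{equation*}
X_r^{\mathcal{F}}(t)=\int_\eta^{1/\eta}\frac{1+\eta}{\sqrt{1-\eta}}\,\frac{1}{\sqrt{1-\eta y}}\,\omega\!\left(\frac{y-\eta}{1-\eta y}\right) y^{-3/4+it/2}\,dy,
\end{equation*}
which is exactly \eqref{eq:fn-poisson-mellin} with $\omega_r$ defined by \eqref{eq:fn-omega-compressed}.

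The main obstacle is bookkeeping rather than conceptual: one must verify that all the constants collapse as claimed (in particular that the large combinatorial prefactor $2\sqrt{2}\cdot(1-\eta^2)/(2(1-\eta))^{3/2}$ simplifies to $(1+\eta)/\sqrt{1-\eta}$), and that the generating-function identity \eqref{eq:int-kernel-expansion} can be invoked termwise for all $r\in(0,1)$. Both amount to routine algebra and an application of dominated convergence, so no new idea is needed beyond the ones already present in \secref{sec:fnexp-proofexpansion}.
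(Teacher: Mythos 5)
Your proposal is correct and follows essentially the same route as the paper's own proof: expand the Poisson flow via \eqref{eq:poisson-flow-expansion}, insert the integral formula \eqref{eq:def-fn-coeffs} for $c_n$, sum the generating function \eqref{eq:fn-genfun} at $z=ir\frac{x-1}{x+1}$, and conclude with the M\"obius substitution $x=(u-\eta)/(1-\eta u)$, with your Jacobian and constant bookkeeping (in particular the collapse to $(1+\eta)/\sqrt{1-\eta}$) checking out exactly. The only difference is cosmetic: you make explicit the dominated-convergence justification for the sum--integral interchange via Lemma~\ref{lem:fn-easybound} and the factor $r^n$ with $0<r<1$, a step the paper performs without comment.
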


Note that the map $x\mapsto \frac{x-\eta}{1-\eta x}$ maps the interval $(\eta,1/\eta)$ bijectively onto $(0,\infty)$, so the function $\omega_r(x)$ contains the same ``frequency information'' as $\omega(x)$, but compressed into a finite interval. In particular, 
a notable feature of this result, which stands in contrast to  what we saw in the case of the Poisson flow associated with the Hermite polynomials, is that for $r<1$ the function $X_r^{\mathcal{F}}(t)$ is now the Fourier transform of a function with bounded support; that is, $X_r^{\mathcal{F}}(t)$ is an entire function of exponential type. It is intriguing to speculate that this might make the problem of understanding where the zeros of $X_r^{\mathcal{F}}(t)$ are located easier than for the case of the original xi function $\Xi(t)$. 

\begin{proof}
The derivation starts with the formula \eqref{eq:poisson-flow-expansion}. Specializing this to the expansion \eqref{eq:fn-expansion} and substituting the defining formula \eqref{eq:def-fn-coeffs} for the coefficients $c_n$, we have that
\begin{align*}
X_r^{\mathcal{F}}(t) &= \sum_{n=0}^\infty i^n c_n r^n f_n\left(\frac{t}{2}\right)
=
2\sqrt{2} \sum_{n=0}^\infty \int_0^\infty \frac{\omega(x)}{(x+1)^{3/2}} \left(\frac{x-1}{x+1}\right)^n\,dx \cdot r^n f_n\left(\frac{t}{2}\right)
\\&= 
2\sqrt{2} \int_0^\infty \frac{\omega(x)}{(x+1)^{3/2}} 
\sum_{n=0}^\infty f_n\left(\frac{t}{2}\right) \left(i r \frac{x-1}{x+1}\right)^n\,dx 
\\&
= 
2\sqrt{2} \int_0^\infty \frac{\omega(x)}{(x+1)^{3/2}} 
\left(\sum_{n=0}^\infty f_n\left(\frac{t}{2}\right) {z^n}\right)_{\raisebox{3pt}{$\Big|$}z=i r \frac{x-1}{x+1}}\,dx.
\end{align*}
Inside the integrand we have an expression involving the generating function \eqref{eq:fn-genfun} of the polynomials $f_n(x)$. Substituting the formula for this generating function (as we did in \eqref{eq:int-kernel-expansion}, which is essentially the special case $r=1$ of the current computation) gives that
\begin{align*}
X_r^{\mathcal{F}}(t) &=
2\sqrt{2} \int_0^\infty \frac{\omega(x)}{(x+1)^{3/2}} 
\left((1-iz)^{-\frac34+\frac{it}{2}} (1+iz)^{-\frac34+\frac{it}{2}} \right)
_{\raisebox{3pt}{$\Big|$}z=i r \frac{x-1}{x+1}}
\,dx 
\\ &=
2\sqrt{2} \int_0^\infty \frac{\omega(x)}{(x+1)^{3/2}} 
\left(
\frac{1-r+x(1+r)}{x+1}
\right)^{-\frac34+\frac{it}{2}}
\left(
\frac{1+r+x(1-r)}{x+1}
\right)^{-\frac34-\frac{it}{2}}
\,dx 
\\ &=
2\sqrt{2} \int_0^\infty \omega(x)
(1+r)^{-\frac34+\frac{it}{2}} \left(x+\frac{1-r}{1+r}\right)^{-\frac34+\frac{it}{2}}
(1+r)^{-\frac34-\frac{it}{2}} \left(1+\frac{1-r}{1+r}x\right)^{-\frac34-\frac{it}{2}}
\,dx 
\\ &=
\frac{2\sqrt{2}}{(1+r)^{3/2}} \int_0^\infty \omega(x) 
\frac{1}{(1+\eta x)^{3/2}} \left( \frac{x+\eta}{1+\eta x}\right)^{-\frac34+\frac{it}{2}}\,dx
\\ &=
(1+\eta)^{3/2} \int_0^\infty \omega(x) 
\frac{1}{(1+\eta x)^{3/2}} \left( \frac{x+\eta}{1+\eta x}\right)^{-\frac34+\frac{it}{2}}\,dx.
\end{align*}
We have thus expressed $X_r^{\mathcal{F}}(t)$ as a sort of modified Mellin transform of $\omega(x)$. But this last integral formula can be transformed to an ordinary Mellin transform by  making the change of variables 
$x=\frac{u-\eta}{1-\eta u}$ in the last integral. The reader can verify without difficulty that this yields the Mellin transform \eqref{eq:fn-poisson-mellin} of the function given in \eqref{eq:fn-omega-compressed}.
\end{proof}

In the next result we show that the Poisson flow satisfies an interesting dynamical evolution law, analogous to the time-reversed heat equation \eqref{eq:timerev-heat-equation} satisfied by the P\'olya-De Bruijn flow. In this case the evolution law is not a partial differential equation, but rather a \firstmention{differential difference equation (DDE)}. To make the equation homogeneous in the ``time'' variable, it is most convenient to perform a change of variables, reparametrizing the time variable $r$ by denoting $r = e^{-\tau}$ (with $\tau\ge0$).

\begin{thm}[Differential difference equation for the Poisson flow]
\label{thm:poissonflow-dde}
The function $M(\tau,t):=X_{e^{-\tau}}^{\mathcal{F}}(t)$ satisfies the differential difference equation
\begin{equation}
\label{eq:poissonflow-dde}
\frac{\partial M}{\partial \tau} = 
\frac34 M(\tau,t) 
- \frac12\left(\frac34-\frac{it}{2}\right) M(\tau,t+2i)
- \frac12\left(\frac34+\frac{it}{2}\right) M(\tau,t-2i)
\quad (\tau >0, t\in \C).
\end{equation}
\end{thm}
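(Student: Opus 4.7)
The plan is to reduce the identity \eqref{eq:poissonflow-dde} to a purely algebraic difference relation for the Meixner-Pollaczek polynomials $f_n$, and then derive that relation from the generating function \eqref{eq:fn-genfun}.

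First I would expand both sides of \eqref{eq:poissonflow-dde} using the series representation
\[M(\tau, t) = \sum_{n=0}^\infty i^n c_n e^{-n\tau} f_n(t/2),\]
obtained by combining \eqref{eq:poisson-flow-expansion} with \eqref{eq:fn-expansion}. For any $\tau > 0$ and any compact $K \subset \C$, the bound $|f_n(y)| \le C_1 e^{C_2 n^{1/3}}$ from Lemma~\ref{lem:fn-easybound} (applied with $y$ ranging over $\tfrac12 K$ and $\tfrac12 K \pm i$), the trivial boundedness of the $c_n$'s, and the decaying factor $e^{-n\tau}$ together ensure that this series, its termwise $\tau$-derivative, and its shifts $t \mapsto t \pm 2i$ all converge absolutely and uniformly on $K$. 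Termwise manipulation is therefore legal, and \eqref{eq:poissonflow-dde} reduces to the coefficientwise identity, to be verified for each $n \ge 0$ and each $y \in \C$:
\[(\star)\qquad \left(\tfrac34 - iy\right) f_n(y + i) + \left(\tfrac34 + iy\right) f_n(y - i) = \left(2n + \tfrac{3}{2}\right) f_n(y).\]
Indeed, once $(\star)$ is available with $y = t/2$, multiplying through by $-\tfrac12 i^n c_n e^{-n\tau}$ and summing over $n$ turns the left-hand side into $-\tfrac12 (3/4 - it/2) M(\tau, t+2i) - \tfrac12(3/4 + it/2) M(\tau, t-2i)$, and the right-hand side into $\partial_\tau M(\tau,t) - \tfrac34 M(\tau,t)$; rearranging gives exactly \eqref{eq:poissonflow-dde}.

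To prove $(\star)$ I would work with the generating function
\[G(z, y) := \sum_{n=0}^\infty f_n(y)\, z^n = (1 - iz)^{-3/4 + iy}(1 + iz)^{-3/4 - iy}\]
supplied by \eqref{eq:fn-genfun}. Its factored form immediately gives the shift relations $G(z, y + i) = \tfrac{1 + iz}{1 - iz}\, G(z, y)$ and $G(z, y - i) = \tfrac{1 - iz}{1 + iz}\, G(z, y)$. Substituting these into the generating-function form of the left-hand side of $(\star)$ and simplifying using the algebraic identity $(3/4 - iy)(1 + iz)^2 + (3/4 + iy)(1 - iz)^2 = (3/2)(1 - z^2) + 4yz$ yields
\[\left(\tfrac34 - iy\right) G(z, y+i) + \left(\tfrac34 + iy\right) G(z, y-i) = \frac{(3/2)(1 - z^2) + 4yz}{1 + z^2}\, G(z, y).\]
On the other hand, an elementary computation of $\partial_z \log G(z, y) = \tfrac{2y - (3/2)z}{1 + z^2}$ gives
\[\left(2z \partial_z + \tfrac32\right) G(z, y) = \left[\frac{4yz - 3z^2}{1 + z^2} + \tfrac32\right] G(z, y) = \frac{(3/2)(1 - z^2) + 4yz}{1 + z^2}\, G(z, y),\]
matching the previous expression. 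Since $2z\partial_z + 3/2$ acts coefficientwise as multiplication by $2n + 3/2$, comparison of the coefficients of $z^n$ yields $(\star)$.

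The main obstacle is $(\star)$ itself, but it is really just the specialisation $\lambda = 3/4$, $\phi = \pi/2$ of the classical three-term difference (``spectral'') equation for the Meixner-Pollaczek polynomials $P_n^{(\lambda)}(x;\phi)$, which is presumably already recorded in Appendix~\ref{appendix:orthogonal} and could simply be quoted from there. Beyond that, the argument is routine bookkeeping, and the whole strategy parallels the alternative derivation of \eqref{eq:timerev-heat-equation} sketched at the end of \secref{sec:hermite-poissonpolya}.
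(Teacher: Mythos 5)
Your proposal is correct and follows essentially the same route as the paper: expand $M(\tau,t)$ via \eqref{eq:poisson-flow-expansion}, differentiate termwise in $\tau$, and absorb the factor $-n$ using the three-term difference relation for $f_n$ --- your identity $(\star)$ is precisely the difference equation \eqref{eq:fn-diffeq}, which the paper simply quotes from Appendix~\ref{appendix:orthogonal}. The only differences are additions on your side: you make the convergence justification explicit (the paper leaves it implicit), and you give a correct self-contained generating-function derivation of \eqref{eq:fn-diffeq} from \eqref{eq:fn-genfun}, which, as you yourself note, could instead just be cited.
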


\begin{proof}
The computation is analogous to the derivation of the time-reversed heat equation at the end of \secref{sec:hermite-poissonpolya}, except that instead of using the differential equation satisfied by the Hermite polynomials we use the difference equation \eqref{eq:fn-diffeq} satisfied by the polynomials $f_n(x)$. We have, again starting with \eqref{eq:poisson-flow-expansion} with the substitution $r=e^{-\tau}$,
\begin{align*}
\frac{\partial M}{\partial \tau}
&=
\frac{\partial}{\partial \tau}  
\left(
\sum_{n=0}^\infty i^n c_n e^{-n\tau} f_n\left(\frac{t}{2}\right)
\right)
=
\sum_{n=0}^\infty i^n c_n (-n) e^{-n\tau} f_n\left(\frac{t}{2}\right)
\\ &=
\sum_{n=0}^\infty i^n c_n e^{-n\tau}
\bigg(
\frac34 f_n\left(\frac{t}{2}\right)
-\frac12\left(\frac34+\frac{it}{2}\right) f_n\left(\frac{t}{2}-i\right)
-\frac12\left(\frac34-\frac{it}{2}\right) f_n\left(\frac{t}{2}+i\bigg)
\right)
\\ &=
\frac34 \sum_{n=0}^\infty i^n c_n e^{-n\tau} f_n\left(\frac{t}{2}\right)
-\frac12\left(\frac34+\frac{it}{2}\right)
\sum_{n=0}^\infty i^n c_n e^{-n\tau} f_n\left(\frac{t}{2}-i\right)
-\frac12\left(\frac34-\frac{it}{2}\right)
\sum_{n=0}^\infty i^n c_n e^{-n\tau} f_n\left(\frac{t}{2}+i\right)
\\ &=
\frac34 M(\tau,t) 
- \frac12\left(\frac34-\frac{it}{2}\right) M(\tau,t+2i)
- \frac12\left(\frac34+\frac{it}{2}\right) M(\tau,t-2i).
\end{align*}

\end{proof}

\section{Evolution of the zeros under the Poisson flow}

\label{sec:zeros-evolution}

The differential difference equation \eqref{eq:poissonflow-dde} opens up the way to an analysis of the dynamical evolution of the zeros of the functions $X_r^{\mathcal{F}}(t)$ as a function of $r$, in a manner analogous to how the time-reversed heat equation \eqref{eq:timerev-heat-equation} made it possible to write a system of coupled ODEs satisfied by the P\'olya-De Bruijn flow, which played a useful role in the investigations of the De Bruijn-Newman constant (see \cite[Lemma~5.18, p.~83]{broughan}). Our next goal is to derive this evolution law, again using the more convenient time parameter $\tau$. To avoid technicalities involving the behavior of entire functions (and to generalize the question slightly, which also seems potentially useful), we switch in this section from the Riemann xi function to the simpler setting of polynomials.

Let $z_1,\ldots,z_n \in \C$ be distinct complex numbers.
Let 
\begin{equation}
\label{eq:poly-initial-condition}
p(t) = \prod_{k=1}^n (t-z_k),
\end{equation} 
and consider the function $M_p(\tau,t)$ defined as the solution to the DDE \eqref{eq:poissonflow-dde} with initial condition $M_p(0,t)=p(t)$. To see that such an object exists, write the expansion
\begin{equation*}
p(t) = \sum_{k=0}^n \gamma_k f_k\left(\frac{t}{2}\right)
\end{equation*}
in the linear basis of polynomials $(f_k(t/2))_{k=0}^n$. Then $M_p(\tau,t)$ is given by
\begin{equation}
\label{eq:poly-poisson-evolution}
M_p(\tau,t) = \sum_{k=0}^n \gamma_k e^{-k \tau} f_k\left(\frac{t}{2}\right)
\end{equation}
(the proof is a repetition of the calculation in the proof of Theorem~\ref{thm:poissonflow-dde} above, with both proofs being based on the simple observation that each of the functions $m_k(\tau,t)= e^{-k\tau} f_k(t/2)$ for $k\ge 0$ is a solution to \eqref{eq:poissonflow-dde}). Proving uniqueness is left as an exercise. We refer to the function $M_p(\tau,t)$ as the \firstmention{Poisson flow (associated with the polynomial family $\mathcal{F}$) with initial condition $p$}.

For any fixed $\tau \in \R$, the function $t\mapsto M_p(\tau,t)$ is a polynomial of degree $n$ with leading coefficient $e^{-n\tau}$ (to see this, compare \eqref{eq:poly-poisson-evolution} at times $\tau$ and $0$, taking into account \eqref{eq:poly-initial-condition}). Denote its zeros by $Z_1(\tau),\ldots,Z_n(\tau)$, and note that while they are defined only up to ordering, in the neighborhood of any fixed time $\tau_0$ for which the zeros are distinct one can pick the ordering so that $Z_k(\tau)$ are smooth functions of~$\tau$.

\begin{thm}[Evolution equations for the zeros under the Poisson flow]
In the neighborhood of any $\tau_0$ as above, 
the functions $(Z_k(\tau))_{k=1}^n$ satisfy the system of coupled ordinary differential equations
\begin{align*}
\frac{dZ_k}{d\tau} &= 
\frac12 \Bigg[
\left( Z_k + \frac{3i}{2} \right)\prod_{1\le j\le n \atop j\neq k} \left(1+\frac{2i}{Z_k-Z_j}\right)
+
\left( Z_k - \frac{3i}{2} \right)\prod_{1\le j\le n \atop j\neq k} \left(1-\frac{2i}{Z_k-Z_j}\right)
\Bigg]
\qquad (1\le k\le n).
\end{align*}
\end{thm}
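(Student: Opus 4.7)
The plan is to obtain the ODEs by standard implicit differentiation of the defining relations $M_p(\tau, Z_k(\tau)) = 0$, combined with the DDE \eqref{eq:poissonflow-dde}, exploiting the factorized form of $M_p$ as a polynomial in $t$ with known roots.

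First I would record the product representation. For each $\tau$ near $\tau_0$, the function $t \mapsto M_p(\tau, t)$ is a polynomial of degree $n$ with leading coefficient $e^{-n\tau}$, so
\begin{equation*}
M_p(\tau, t) = e^{-n\tau} \prod_{j=1}^n (t - Z_j(\tau)).
\end{equation*}
Since the zeros are distinct at $\tau_0$, $\partial_t M_p(\tau_0, Z_k(\tau_0)) = e^{-n\tau_0}\prod_{j \neq k}(Z_k(\tau_0) - Z_j(\tau_0)) \neq 0$, so the implicit function theorem guarantees smooth local branches $Z_k(\tau)$, and differentiating the identity $M_p(\tau, Z_k(\tau)) \equiv 0$ in $\tau$ yields
\begin{equation*}
\frac{dZ_k}{d\tau} = -\frac{\partial_\tau M_p(\tau, Z_k(\tau))}{\partial_t M_p(\tau, Z_k(\tau))}.
\end{equation*}

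Next I would substitute \eqref{eq:poissonflow-dde} into the numerator. Evaluating at $t = Z_k(\tau)$ kills the $\tfrac34 M_p(\tau, t)$ term, leaving
\begin{equation*}
\partial_\tau M_p(\tau, Z_k) = -\tfrac12\left(\tfrac34 - \tfrac{iZ_k}{2}\right) M_p(\tau, Z_k + 2i) - \tfrac12\left(\tfrac34 + \tfrac{iZ_k}{2}\right) M_p(\tau, Z_k - 2i).
\end{equation*}
Using the product form, $M_p(\tau, Z_k \pm 2i) = e^{-n\tau} (\pm 2i) \prod_{j \neq k}(Z_k - Z_j \pm 2i)$, and $\partial_t M_p(\tau, Z_k) = e^{-n\tau} \prod_{j \neq k}(Z_k - Z_j)$. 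The factor $e^{-n\tau}$ cancels, giving
\begin{equation*}
\frac{dZ_k}{d\tau} = \frac{i(\tfrac34 - \tfrac{iZ_k}{2})\prod_{j\neq k}(Z_k - Z_j + 2i) - i(\tfrac34 + \tfrac{iZ_k}{2})\prod_{j\neq k}(Z_k - Z_j - 2i)}{\prod_{j\neq k}(Z_k - Z_j)}.
\end{equation*}

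Finally, the algebraic simplifications $i(\tfrac34 - \tfrac{iZ_k}{2}) = \tfrac12(Z_k + \tfrac{3i}{2})$ and $-i(\tfrac34 + \tfrac{iZ_k}{2}) = \tfrac12(Z_k - \tfrac{3i}{2})$, followed by absorbing the denominator factor by factor into the two products, yield exactly the stated formula
\begin{equation*}
\frac{dZ_k}{d\tau} = \frac12 \Bigl[(Z_k + \tfrac{3i}{2})\prod_{j\neq k}\bigl(1 + \tfrac{2i}{Z_k - Z_j}\bigr) + (Z_k - \tfrac{3i}{2})\prod_{j\neq k}\bigl(1 - \tfrac{2i}{Z_k - Z_j}\bigr)\Bigr].
\end{equation*}
There is no serious obstacle here: the whole argument is a direct application of implicit differentiation to the DDE, and the only nontrivial ingredient is the observation that $M_p$ has an explicit factorization making $M_p(\tau, Z_k \pm 2i)$ computable in closed form. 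The mild care needed is the local smoothness of the $Z_k$, which is immediate from the distinct-zeros hypothesis at $\tau_0$.
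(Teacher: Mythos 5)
Your proposal is correct and follows essentially the same route as the paper's proof: implicit differentiation of $M_p(\tau,Z_k(\tau))=0$, substitution of the DDE (with the $\tfrac34 M_p$ term vanishing at the zero), evaluation of $M_p(\tau,Z_k\pm 2i)$ and $\partial_t M_p(\tau,Z_k)$ via the factorization $M_p(\tau,t)=e^{-n\tau}\prod_j(t-Z_j(\tau))$, and the same algebraic simplifications $i\left(\tfrac34-\tfrac{iZ_k}{2}\right)=\tfrac12\left(Z_k+\tfrac{3i}{2}\right)$ and $-i\left(\tfrac34+\tfrac{iZ_k}{2}\right)=\tfrac12\left(Z_k-\tfrac{3i}{2}\right)$. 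Your explicit appeal to the implicit function theorem for local smoothness of the $Z_k$ is a small touch the paper only asserts informally, but it does not change the argument.
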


\begin{proof}
The fundamental relation defining the $k$th zero $Z_k$ is
\begin{equation*}
M_p(\tau, Z_k(\tau)) = 0.
\end{equation*}
Differentiating this with respect to $\tau$ gives
\begin{align*}
0 &= \frac{d}{d\tau}\Big(M_p(\tau,Z_k(\tau))\Big) = \frac{\partial M_p}{\partial \tau}(\tau,Z_k(\tau)) + \frac{\partial M_p}{\partial t}(\tau,Z_k(\tau))\frac{dZ_k}{d\tau}
\end{align*}
(where $\frac{\partial M_p}{\partial t}$ refers to the partial derivative with respect to the second variable). 
By \eqref{eq:poissonflow-dde}, this expands to
\begin{align*}
0 &= \frac34 M_p(\tau,Z_k)
- \frac12\left(\frac34-\frac{iZ_k}{2}\right) M_p(\tau,Z_k+2i)
- \frac12\left(\frac34+\frac{iZ_k}{2}\right) M_p(\tau,Z_k-2i)
+ \frac{\partial M_p}{\partial t}(\tau,Z_k(\tau))\frac{dZ_k}{d\tau}
\\ &
=
- \frac12\left(\frac34-\frac{iZ_k}{2}\right) M_p(\tau,Z_k+2i)
- \frac12\left(\frac34+\frac{iZ_k}{2}\right) M_p(\tau,Z_k-2i)
+
\frac{\partial M_p}{\partial t}(\tau,Z_k(\tau))\frac{dZ_k}{d\tau}.
\end{align*}
Now, $M_p(\tau,t) = e^{-n \tau} \prod_{j=1}^n (t-Z_j(\tau))$, so
\begin{equation*}
\frac{\partial M_p}{\partial t}(\tau,Z_k(\tau))
= e^{-n\tau} \prod_{1\le j\le n \atop j\neq k} (Z_k-Z_j).
\end{equation*}
It follows that
\begin{align*}
\frac{dZ_k}{d\tau}
&= \frac12 \frac{1}{\frac{\partial M_p}{\partial t}(\tau,Z_k(\tau))}
\left[
\left(\frac34-\frac{iZ_k}{2}\right) M_p(\tau,Z_k+2i)
+ \left(\frac34+\frac{iZ_k}{2}\right) M_p(\tau,Z_k-2i)
\right]
\\ &= \frac12 \prod_{1\le j\le n \atop j\neq k} (Z_k-Z_j)^{-1}
\left[
\left(\frac34-\frac{iZ_k}{2}\right) \prod_{j=1}^n (Z_k+2i-Z_j)
+ \left(\frac34+\frac{iZ_k}{2}\right) \prod_{j=1}^n (Z_k-2i-Z_j)
\right]
\\ &= \frac12 
\Bigg[
2i\left(\frac34-\frac{iZ_k}{2}\right) \prod_{1\le j\le n\atop j\neq k} \frac{Z_k+2i-Z_j}{Z_k-Z_j}
+
(-2i)\left(\frac34+\frac{iZ_k}{2}\right) \prod_{1\le j\le n\atop j\neq k} \frac{Z_k-2i-Z_j}{Z_k-Z_j}
\Bigg]
\\
&= \frac12 \Bigg[
\left( Z_k + \frac{3i}{2} \right)\prod_{1\le j\le n \atop j\neq k} \left(1+\frac{2i}{Z_k-Z_j}\right)
+
\left( Z_k - \frac{3i}{2} \right)\prod_{1\le j\le n \atop j\neq k} \left(1-\frac{2i}{Z_k-Z_j}\right)
\Bigg],
\end{align*}
as claimed.
\end{proof}

Our final result for this section is of a negative sort, illustrating another way in which the Poisson flow associated with the family $\mathcal{F}$ of orthogonal polynomials behaves differently from the P\'olya-De Bruijn flow.
Specifically, it was mentioned in \secref{sec:hermite-poissonpolya} that the P\'olya-De Bruijn flow preserves the property of hyperbolicity.
Our result shows that the Poisson flow associated with the family $\mathcal{F}$ does \emph{not}.

\begin{prop}
\label{prop:poisson-notpreshyp}
There exists a polynomial
\begin{equation*}
P(t) = \sum_{k=0}^n \gamma_k f_k\left(\frac{t}{2}\right),
\end{equation*}
and numbers $\tau_1>0$ and $\tau_2<0$, such that
$P(t)$ has only real zeros, but the polynomials $t\mapsto M_P(\tau_1,t)$ and $t\mapsto M_P(\tau_2,t)$ both have non-real zeros.
\end{prop}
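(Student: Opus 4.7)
The plan is to prove the proposition by exhibiting an explicit hyperbolic polynomial of degree four. Since the formula \eqref{eq:poly-poisson-evolution} writes $M_P(\tau,t)$ as a polynomial in $t$ whose coefficients are elementary functions of $\tau$, verifying non-reality of zeros at any given $\tau$ reduces to a finite calculation. The candidate I propose is
$$P(t) = t^2\bigl(t^2 - \tfrac{1}{3}\bigr),$$
whose zeros $0,0,\pm 1/\sqrt 3$ are all real.

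The first step is to expand $P$ in the $f_k$-basis. Using the recurrence \eqref{eq:fn-recurrence} one computes $f_2(x) = 2x^2 - 3/4$ and $f_4(x) = (2/3)x^4 - (17/6)x^2 + 21/32$, whence
$$P(t) = \tfrac{37}{4}\, f_0(t/2) + \tfrac{100}{3}\, f_2(t/2) + 24\, f_4(t/2),$$
so by \eqref{eq:poly-poisson-evolution},
$$M_P(\tau,t) = \tfrac{37}{4} + \tfrac{100}{3}\, e^{-2\tau} f_2(t/2) + 24\, e^{-4\tau} f_4(t/2).$$
Substituting $u = t^2$ and $q = e^{2\tau}$, the polynomial $e^{4\tau} M_P(\tau,t)$ becomes the quadratic in $u$
$$Q_q(u) := u^2 + \bigl(\tfrac{50}{3}q - 17\bigr)u + \bigl(\tfrac{37}{4}q^2 - 25q + \tfrac{63}{4}\bigr),$$
and $M_P(\tau,\cdot)$ is hyperbolic in $t$ if and only if both roots of $Q_q$ are real and nonnegative.

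Two elementary checks on $Q_q$ then produce the required $\tau_1,\tau_2$. First, the constant term of $Q_q$ factors as $\tfrac{37}{4}(q-1)(q - 63/37)$, which is strictly negative on $1 < q < 63/37$; combined with the positivity of the $Q_q$-discriminant on this interval, this forces one positive and one negative real $u$-root, and hence a pair of purely imaginary $t$-zeros of $M_P(\tau,\cdot)$. So any $\tau_1 \in \bigl(0, \tfrac12\log(63/37)\bigr)$ works. Second, the $u$-discriminant of $Q_q$ equals $\tfrac19(2167 q^2 - 4200 q + 2034)$, a quadratic whose two real roots $q_\pm$ both lie in $(0,1)$ (numerically $q_- \approx 0.947$ and $q_+ \approx 0.991$). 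For $q \in (q_-,q_+)$ the roots of $Q_q$ form a complex-conjugate pair, so $M_P(\tau,\cdot)$ has non-real zeros; any $\tau_2 \in \bigl(\tfrac12 \log q_-, \tfrac12 \log q_+\bigr) \subset (-\infty,0)$ suffices.

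The main obstacle is the choice of $P$. For a generic hyperbolic quartic $(t^2 - a^2)(t^2 - b^2)$ the same analysis typically produces a $u$-discriminant whose real roots in $q$ (if any) lie in $q > 1$, so the flow preserves hyperbolicity throughout $\tau < 0$ and only loses it for some $\tau > 0$. The example above is engineered so that one zero of $P$ sits at the origin and the other is small, which pushes the roots of the $u$-discriminant below $q = 1$ and produces the failure also on the $\tau < 0$ side. If one prefers $P$ to have distinct zeros, the small perturbation $(t^2 - \varepsilon^2)(t^2 - 1/3)$ with $\varepsilon > 0$ sufficiently small continues to work by continuity of the zeros of $M_P(\tau,\cdot)$ in the coefficients of $P$.
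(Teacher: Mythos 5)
Your proposal is correct, and I verified the computations: the expansion $P(t)=\tfrac{37}{4}f_0(t/2)+\tfrac{100}{3}f_2(t/2)+24 f_4(t/2)$ checks out against the explicit forms $f_2(x)=2x^2-\tfrac34$, $f_4(x)=\tfrac23 x^4-\tfrac{17}{6}x^2+\tfrac{21}{32}$; the constant term of $Q_q$ does factor as $\tfrac{37}{4}(q-1)\left(q-\tfrac{63}{37}\right)$ (note that its negativity on $1<q<\tfrac{63}{37}$ already forces the discriminant to be positive, so that clause of your argument is automatic); and the $u$-discriminant $\tfrac19\left(2167q^2-4200q+2034\right)$ has value $\tfrac19$ at $q=1$, vertex at $q=\tfrac{2100}{2167}<1$, and minimum value $-\tfrac{2322}{9\cdot 2167}<0$, so both of its roots indeed lie in $(0,1)$, exactly as you claim. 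Your route differs from the paper's in the verification step: the paper takes the cubic $(t-2)(t-2.01)(t-4)$, expands it in the $f_k$-basis with explicit rational coefficients, fixes $\tau_1=0.1$ and $\tau_2=-0.05$, and settles the matter by direct (in practice numerical) computation of the zeros of $M_P(\tau_i,t)$. Your even quartic is engineered so that $M_P(\tau,\cdot)$ is a quadratic in $u=t^2$ with coefficients polynomial in $q=e^{2\tau}$, which turns the whole verification into exact rational arithmetic checkable by hand, and moreover produces open intervals of admissible $\tau_1$ and $\tau_2$ rather than two spot values; the price is that $P$ has a repeated zero at the origin, which the proposition's statement permits (it only requires real zeros), and which your closing perturbation remark $(t^2-\varepsilon^2)(t^2-\tfrac13)$ removes in any case, since the zeros of $M_P(\tau_i,\cdot)$ depend continuously on the coefficients of $P$ and have imaginary parts bounded away from zero at $\varepsilon=0$.
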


\begin{proof}
Take
\begin{equation*}
P(t) = (x-2)(x-2.01)(x-4)
= \sum_{k=0}^4 \sigma_k f_k\left(\frac{t}{2}\right),
\end{equation*}
where
\begin{equation*}
(\sigma_0, \sigma_1, \sigma_2, \sigma_3) =
\left( -\frac{5619}{1600},
\frac{83}{25},
- \frac{801}{400},
\frac34 \right),
\end{equation*}
and $\tau_1 = 0.1$ and $\tau_2 = -0.05$. Direct calculation of the zeros of $M_P(\tau_1,t)$ and $M_P(\tau_2,t)$ verifies the claim.
\end{proof}

One conclusion from Proposition~\ref{prop:poisson-notpreshyp} is that there does not seem to be an obvious way to define an analogue of the De Bruijn-Newman constant in the context of the $f_n$-expansion of the Riemann xi function.

\chapter{Radial Fourier self-transforms}

\label{ch:radial}

In this chapter we continue to probe deeper into the theory of the $f_n$-expansion of the Riemann xi function, by developing what will turn out to be an entirely new way of thinking about the expansion as arising out of the expansion of an elementary function $A(r)$ (described below in \eqref{eq:aofr-explicit}) in a natural orthogonal basis of functions related to the Laguerre polynomials $L_n^{1/2}(x)$. Along the way we will encounter several interesting new special functions and develop some new ideas, which are of independent interest, related to radial functions that are eigenfunctions of the Fourier transform, and their connections to a class of functions satisfying a symmetry property similar to (but weaker than) that satisfied by modular forms.

\section{Radial Fourier self-transforms on $\R^d$ and their construction from balanced functions}

\label{sec:rad-selftransforms}

A function $F:\R^d\to\R$ is called a \firstmention{radial function} if $F(\mathbf{x})$ depends only on the Euclidean norm $|\mathbf{x}|$. Given a radial function $F$, it is common to abuse notation slightly and write $F(\mathbf{x})=F(|x|)$, that is, we use the same symbol to denote the function on $\R^d$ and the function (on $[0,\infty)$) of the norm through which the original radial function can be computed. Conversely, given a function $F:[0,\infty)\to\R$ it will sometimes be convenient to regard $F$ as a radial function on $\R^d$ for some specified value of $d$.

Let $\mathcal{F}_d$ denote the Fourier transform on $\R^d$, with the normalization
\begin{equation*}
\mathcal{F}_d(F)(\mathbf{y}) = \int_{\R^d} F(\mathbf{x}) e^{-2 \pi i\langle \mathbf{y}, \mathbf{x}\rangle} \, d\mathbf{x}.
\end{equation*}
It is well-known that the $d$-dimensional Fourier transform $\mathcal{F}_d(F)$ of a radial function $F$ is also a radial function, and can be expressed as a Hankel transform, namely as
\begin{equation}
\label{eq:radial-fourier-d}
\mathcal{F}_d(F)(\rho) = 2\pi \rho^{1-d/2} \int_0^\infty F(r) r^{d/2}
J_{d/2-1}(2\pi r \rho)\, dr
\qquad (\rho \ge0),
\end{equation}
where 
\begin{equation*}
J_\alpha(z) = 
\sum_{n=0}^\infty \frac{(-1)^n}{n! \, \Gamma(n+\alpha+1)} \left(\frac{z}{2}\right)^{2n+\alpha}
\end{equation*}
denotes the Bessel function; see \cite[Sec.~B.5]{grafakos}. The cases $d=1$ and $d=3$ of \eqref{eq:radial-fourier-d} are particularly simple (and of relevance to us, as we shall see). In those cases, the standard identities
\begin{equation*}
J_{1/2}(x) = \sqrt{\frac{2}{\pi}} \frac{\sin(x)}{\sqrt{x}}, 
\qquad
J_{-1/2}(x) = \sqrt{\frac{2}{\pi}} \frac{\cos(x)}{\sqrt{x}} 
\end{equation*}
mean that \eqref{eq:radial-fourier-d} can be rewritten as
\begin{align}
\label{eq:radial-fourier-1d}
\mathcal{F}_1(F)(\rho) & = 
2 \int_0^\infty F(r) \cos(2\pi r\rho)\,dr,
\\
\label{eq:radial-fourier-3d}
\mathcal{F}_3(F)(\rho) & = 
\frac{2}{\rho} \int_0^\infty F(r) r \sin(2\pi r\rho)\,dr.
\end{align}
Note that the case $d=1$ is simply a cosine transform; indeed, a radial function on $\R^d$ for $d=1$ is the same as an even function.

A function $F:\R^d\to\R$ is called a \firstmention{(Fourier) self-transform} if $\mathcal{F}_d(F) = F$. The Gaussian $F(r) = e^{-\pi r^2}$ is an important example of a self-transform (in any dimension!) which is also a radial function. More generally, through a trivial rescaling operation we see that the Fourier transform of a \emph{scaled} Gaussian $e^{-\pi c r^2}$ (with $c>0$) is given by
\begin{equation*}
\mathcal{F}_d(e^{-\pi c r^2})(\rho) = c^{-d/2} e^{-\pi \rho^2/c}.
\end{equation*}
This relation provides a general means for constructing a large class of radial self-transforms in $\R^d$ by taking a weighted average of scaled Gaussians (or a ``scale mixture'' of Gaussians, in probabilistic language), using a weighting function in which the contribution of the Gaussian scaled by a given scalar $c$ is suitably matched by that coming from the reciprocal scalar $1/c$. This sort of construction can be found for example in works by Hardy and Titschmarsh \cite{hardy-titschmarsh} and Barndorff-Nielsen et al \cite{barndorff-nielsen}. As discussed by Cohn \cite{cohn}, the same construction in the case where the weighting functions are modular forms motivated recent progress on the sphere packing problem (see also \cite{viazovska}).

For our purposes, the weighting functions we will consider are related to modular forms but are more general. Let $\alpha\ge 0$. If a function $f:(0,\infty)\to\R$ satisfies the relation
\begin{equation*}
f\left(\frac{1}{x}\right) = x^\alpha f(x) \qquad (x>0),
\end{equation*}
we say that $f$ is a \firstmention{reciprocally balanced function of weight $\alpha$}. (Usually, for convenience we will omit the adverb ``reciprocally'' and simply refer to $f$ as a balanced function of weight $\alpha$.) The following  result is a trivial variant of the observation made in \cite[Eq.~(2.3)]{barndorff-nielsen}.

\begin{lem}[Constructing self-transforms from balanced functions]
\label{lem:recbal-selftrans}
Let $d\in\N$. Let $f(x)$ be a reciprocally balanced function of weight $2-d/2$, and define an associated function
\begin{equation}
\label{eq:gauss-trans}
F(r) = \int_0^\infty f(x) e^{-\pi x r^2}\,dx
\qquad (r>0).
\end{equation}
Then $F$, considered as a radial function on $\R^d$, is a Fourier self-transform, assuming its Fourier transform is well-defined.
\end{lem}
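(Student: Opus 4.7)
The plan is to apply the Fourier transform directly to the integral representation \eqref{eq:gauss-trans}, exchange the order of integration (justified under the hypothesis that $\mathcal{F}_d(F)$ is well-defined), and use the Gaussian self-transform scaling identity $\mathcal{F}_d(e^{-\pi c r^2})(\rho) = c^{-d/2} e^{-\pi\rho^2/c}$ quoted just before the lemma. This will give
\begin{equation*}
\mathcal{F}_d(F)(\rho) = \int_0^\infty f(x) \, x^{-d/2} \, e^{-\pi \rho^2/x}\,dx.
\end{equation*}

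Next I would perform the change of variables $u = 1/x$, which sends $(0,\infty)$ to itself and transforms $dx$ to $-du/u^2$, yielding
\begin{equation*}
\mathcal{F}_d(F)(\rho) = \int_0^\infty f(1/u) \, u^{d/2 - 2} \, e^{-\pi \rho^2 u}\,du.
\end{equation*}
At this point the balanced property enters: since $f$ has weight $\alpha = 2 - d/2$, we have $f(1/u) = u^{2-d/2} f(u)$, and the algebraic factors collapse as $u^{2-d/2} \cdot u^{d/2-2} = 1$. What remains is exactly $\int_0^\infty f(u) e^{-\pi\rho^2 u}\,du = F(\rho)$, proving the self-transform property.

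The only real subtlety is the justification of the interchange of the $\mathbf{x}$-integration (implicit in $\mathcal{F}_d$) with the $x$-integration in \eqref{eq:gauss-trans}; this is handled either by Fubini's theorem under the standing assumption that the Fourier transform is well-defined (and absolute integrability can be checked in the radial variable via the Hankel formula \eqref{eq:radial-fourier-d}), or, as is done in the cited reference \cite{barndorff-nielsen}, by noting that for the Gaussian $e^{-\pi x r^2}$ viewed as a radial function on $\R^d$ the Fourier transform in $\R^d$ acts in the expected way and the resulting integrand is positive, so Tonelli's theorem applies. Beyond that, the proof is essentially a two-line computation, so no genuine obstacle is anticipated.
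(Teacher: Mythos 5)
Your proof is correct and follows essentially the same route as the paper's: push $\mathcal{F}_d$ under the integral, apply the Gaussian scaling identity, substitute $u=1/x$, and let the weight $2-d/2$ cancel the Jacobian and scaling factors exactly. Your added remark on justifying the interchange via Fubini/Tonelli is a sound refinement of a step the paper carries out silently under its standing well-definedness assumption.
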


\begin{proof}
\begin{align*}
\mathcal{F}_d(F)(\rho) & = 
\int_0^\infty f(x) \mathcal{F}_d\left(e^{-\pi x r^2}\right)(\rho) \,dx
= \int_0^\infty f(x) x^{-d/2} e^{-\pi \rho^2/x}\,dx
\\ & =
\int_0^\infty f(1/y)
y^{d/2} e^{-\pi y\rho^2}\, \frac{dy}{y^2}
=
\int_0^\infty f(y)
e^{-\pi y\rho^2} \,dy = F(\rho).
\end{align*}

\end{proof}

Note that the relationship between $f(x)$ and $F(r)$ in \eqref{eq:gauss-trans} is simply that $F(r)$ is the Laplace transform $(\mathcal{L} F)(u)$ of $f(x)$, with the change of coordinates $u=\pi r^2$. It can also be interpreted as a group-theoretic convolution operation of the Gaussian function $r\mapsto e^{-\pi r^2}$ with the function $x\mapsto x^{-1}f(x^{-1})$ with respect to the multiplicative group structure on $\R_+$ equipped with the multiplicative Haar measure $\frac{dx}{x}$.

\section{The radial function $A(r)$ associated to $\omega(x)$}

\label{sec:rad-aofr}

We have encountered two balanced functions that play an important role in the study of the Riemann xi function: the function $\theta(x)$ (which is in fact a modular form), and the function $\omega(x)$ derived from it; both of those functions are balanced of weight $1/2$. We are mainly interested in $\omega(x)$, because it has better integrability properties and because the Riemann xi function is its Mellin transform. 
Define 
\begin{equation*}
A(r) = \int_0^\infty \omega(x) e^{-\pi x r^2}\,dx.
\end{equation*}
Since $\omega(x)$ is balanced of weight $1/2$, Lemma~\ref{lem:recbal-selftrans} implies that $A(r)$ is a Fourier self-transform when considered as a radial function on $\R^3$. The next result gives an explicit formula for $A(r)$.

\begin{prop}
\label{prop:aofr-explicit}
$A(r)$ is given explicitly by
\begin{equation}
\label{eq:aofr-explicit}
A(r) = \frac{d^2}{dr^2}\left( \frac{r}{4}\coth(\pi r)\right) =
-\frac{\pi}{2}\frac{1}{\sinh^2(\pi r)} + \frac{\pi^2 r}{2} \frac{\cosh(\pi r)}{\sinh^3(\pi r)}.
\end{equation}
\end{prop}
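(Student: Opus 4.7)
The plan is to evaluate $A(r)$ by expanding $\omega(x)$ using its series representation \eqref{eq:omegax-def}, interchanging sum and integral (justified by absolute convergence, given the rapid decay established in \eqref{eq:omegax-asym-xinfty} and the exponential factor), and then reducing the result to a Mittag-Leffler-type series that can be recognized as the second derivative of $\frac{r}{4}\coth(\pi r)$.

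Concretely, the first step is to use the elementary identities $\int_0^\infty x^k e^{-\pi(n^2+r^2)x}\,dx = k!/(\pi(n^2+r^2))^{k+1}$ for $k=1,2$, applied to each summand of $\omega(x) = \sum_{n\ge1}(2\pi^2 n^4 x^2 - 3\pi n^2 x)e^{-\pi n^2 x}$. This gives
\begin{equation*}
A(r) = \sum_{n=1}^\infty \left[\frac{4n^4}{\pi(n^2+r^2)^3} - \frac{3n^2}{\pi(n^2+r^2)^2}\right]
= \frac{1}{\pi}\sum_{n=1}^\infty \frac{n^4 - 3n^2 r^2}{(n^2+r^2)^3},
\end{equation*}
where the last equality is pure algebra (combining over a common denominator: $4n^4 - 3n^2(n^2+r^2) = n^4 - 3n^2 r^2$).

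The second step is to match this series with the second derivative of $\frac{r}{4}\coth(\pi r)$. Starting from the classical Mittag-Leffler expansion $\pi\coth(\pi r) = \frac{1}{r} + 2r\sum_{n\ge1}\frac{1}{n^2+r^2}$, one obtains
\begin{equation*}
\frac{r}{4}\coth(\pi r) = \frac{1}{4\pi} + \frac{1}{2\pi}\sum_{n=1}^\infty \frac{r^2}{n^2+r^2}.
\end{equation*}
A direct computation (differentiating $r^2/(n^2+r^2)$ twice) yields $\frac{d^2}{dr^2}\bigl(r^2/(n^2+r^2)\bigr) = 2(n^4 - 3n^2 r^2)/(n^2+r^2)^3$, so term-by-term differentiation (which is legitimate since the differentiated series converges uniformly on compact subsets of $r>0$) produces precisely the series for $\pi A(r)$ above.

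Finally, the explicit $\sinh$/$\cosh$ expression in \eqref{eq:aofr-explicit} is obtained by differentiating $r\coth(\pi r)$ twice directly using $(\coth(\pi r))' = -\pi/\sinh^2(\pi r)$ and the product rule. The only non-routine issue is justifying the interchanges of sum with integral (step one) and of sum with differentiation (step two); both are standard but worth writing out briefly, and they constitute the main (mild) technical obstacle in an otherwise mechanical verification.
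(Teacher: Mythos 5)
Your proposal is correct and takes essentially the same route as the paper's first proof: termwise Laplace transforms of the series \eqref{eq:omegax-def} for $\omega(x)$, recognition of the resulting sum as $\frac{1}{2\pi}\sum_{n\ge1}\frac{d^2}{dr^2}\left(\frac{r^2}{r^2+n^2}\right)$, and the classical partial-fraction expansion of $\pi\coth(\pi r)$. The only differences are cosmetic --- you combine the two rational terms over a common denominator and verify the second-derivative identity explicitly, and you flag the sum--integral and sum--differentiation interchanges that the paper leaves implicit --- so no gap to report.
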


We give two short proofs of Proposition~\ref{prop:aofr-explicit}. As we remarked in the last paragraph of the previous section, this result has an obvious interpretation as a calculation of the Laplace transform of $\omega(x)$. Several closely related calculations have appeared in the literature; see \cite[pp.~23--24]{bellman}, \cite[eq.~(2.17)]{biane-pitman-yor}, \cite[pp.~168]{chung}, and especially eq.~(91) of \cite{pitman-yor-ejp}, which can be seen using the results of \cite{chung} to be equivalent to \eqref{eq:aofr-explicit}.

\begin{proof}[First proof of Proposition~\ref{prop:aofr-explicit}]
We have directly from the definitions that
\begin{align*}
\int_0^\infty \omega(x) e^{-\pi r^2 x}\,dx
& =
\sum_{n=1}^\infty \Bigg[
2\pi^2 n^4 \int_0^\infty x^2 e^{-\pi (r^2+n^2) x} \, dx
- 3\pi n^2 \int_0^\infty x e^{-\pi (r^2+n^2) x} \, dx
\Bigg]
\\ & =
\sum_{n=1}^\infty \left(\frac{4\pi^2 n^4}{\pi^3(r^2+n^2)^3}
- \frac{3\pi n^2}{\pi^2(r^2+n^2)^2}\right)
\\& =
\frac{1}{\pi}\sum_{n=1}^\infty
\left(
\frac{4n^4}{(r^2+\pi^2)^3}
-
\frac{3n^2}{(r^2+\pi^2)^2}
\right)
=
\frac{1}{2\pi}
\sum_{n=1}^\infty
\frac{d^2}{dr^2} \left(
\frac{r^2}{r^2+n^2}
\right)
\\ & =
\frac{d^2}{dr^2}\left(\frac{1}{4\pi}+\frac{1}{2\pi}\sum_{n=1}^\infty \frac{r^2}{r^2+n^2}\right)
=
\frac{d^2}{dr^2}\left( \frac{r}{4}\coth(\pi r)\right).
\end{align*}
Here, the last equality follows from the classical identity
\begin{equation*}
\pi \coth(\pi r) =  \frac{1}{r}+\sum_{n=1}^\infty \frac{2r}{r^2+n^2},
\end{equation*}
the partial fraction decomposition of the hyperbolic cotangent function \cite[p.~12]{andrews-askey-roy}. This proves the first equality in \eqref{eq:aofr-explicit}; the second equality is a trivial verification, which we leave to the reader.
\end{proof}

An alternative proof of Proposition~\ref{prop:aofr-explicit} is based on a calculation of the moments of $\omega(x)$, which seems worth recording separately.

\begin{lem}
\label{lem:omega-moments}
For $n\ge 0$ we have the relation
\begin{equation}
\label{eq:omega-moments}
\int_0^\infty \omega(x) x^n\,dx 
=
\frac{(-1)^n (4\pi)^{n+1} n!}{4(2n)!}B_{2n+2},
\end{equation}
where $(B_k)_{k=0}^\infty$ denotes the Bernoulli numbers.
\end{lem}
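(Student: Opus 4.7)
The plan is to recognize the moment integral as a special value of the Riemann xi function and then apply the classical closed-form formula for $\zeta$ at positive even integers.

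First I would set $s=2n+2$ in the Mellin transform representation \eqref{eq:riemannxi-mellintrans}, which immediately gives
\begin{equation*}
\int_0^\infty \omega(x) x^n\,dx = \xi(2n+2).
\end{equation*}
The absolute convergence of the integral for every $n\ge 0$ is justified by the estimates \eqref{eq:omegax-asym-xinfty}--\eqref{eq:omegax-asym-xzero}, so no analytic-continuation issue arises: the identity holds as stated for integer $n\ge 0$. Alternatively, if one wishes to avoid quoting \eqref{eq:riemannxi-mellintrans}, the same identity drops out by expanding $\omega(x)$ via \eqref{eq:omegax-def}, integrating termwise using $\int_0^\infty x^m e^{-\pi k^2 x}dx = m!/(\pi k^2)^{m+1}$, and summing to produce a factor of $\zeta(2n+2)$.

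Next, I would substitute into the definition $\xi(s) = \tfrac12 s(s-1)\pi^{-s/2}\Gamma(s/2)\zeta(s)$ with $s=2n+2$, which yields
\begin{equation*}
\xi(2n+2) = (n+1)(2n+1)\pi^{-(n+1)} n!\, \zeta(2n+2),
\end{equation*}
and then apply Euler's classical formula
\begin{equation*}
\zeta(2n+2) = \frac{(-1)^{n}(2\pi)^{2n+2}}{2(2n+2)!}\,B_{2n+2}.
\end{equation*}
The combinatorial simplification that finishes the calculation is the telescoping identity $(n+1)(2n+1)/(2n+2)! = 1/\bigl(2(2n)!\bigr)$, which reduces the resulting prefactor to $(-1)^n(4\pi)^{n+1} n!/\bigl(4(2n)!\bigr)$, yielding exactly \eqref{eq:omega-moments}.

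There is no real obstacle here; the only minor care needed is to ensure that the interchange of sum and integral in the termwise computation (or equivalently the validity of \eqref{eq:riemannxi-mellintrans} at $s=2n+2$) is justified by the decay of $\omega(x)$, which it is, and to keep the bookkeeping of signs straight when invoking Euler's formula.
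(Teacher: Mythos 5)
Your proposal is correct and follows essentially the same route as the paper: both set $s=2n+2$ in the Mellin transform representation \eqref{eq:riemannxi-mellintrans} to identify the integral with $\xi(2n+2)$, then expand via the definition of $\xi(s)$ and Euler's formula for $\zeta(2n+2)$, and the arithmetic simplification checks out. Your added remarks on absolute convergence and the optional termwise computation are harmless supplements to the same argument.
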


The relation \eqref{eq:omega-moments} is equivalent to the bottom-right entry in Table~1 of \cite[p.~442]{biane-pitman-yor} (see also \cite{pitman-yor-cjm} where several analogous formulas are derived).

\begin{proof}
Recalling Euler's formula
\begin{equation*}
\zeta(2m) = \frac{(-1)^{m-1}(2\pi)^{2m}}{2(2m)!}B_{2m},
\end{equation*}
we observe that for integer $n\ge 0$,
\begin{align*}
\int_0^\infty \omega(x) x^n\,dx 
& = \left[\int_0^\infty \omega(x) x^{s/2-1}\,dx \right]_{\raisebox{3pt}{$\Big|$}s=2n+2}
= \xi(2n+2) 
= \frac12(2n+2)(2n+1) \pi^{-n-1} \Gamma(n+1)\zeta(2n+2)
\\ &= \frac{n!}{2\pi^{n+1}}(2n+2)(2n+1)\frac{(-1)^n(2\pi)^{2n+2}}{2(2n+2)!}B_{2n+2} 
=
\frac{(-1)^n (4\pi)^{n+1} n!}{4(2n)!}B_{2n+2},
\end{align*}
as claimed.
\end{proof}

Another easy fact that we record is the Taylor expansion of the function on the right-hand side of \eqref{eq:aofr-explicit}.

\begin{lem}
We have the Taylor expansion
\begin{equation*}
\frac{d^2}{dr^2} \left(
\frac{r}{4}\coth\left(\pi r\right)
\right)
=
\sum_{n=0}^\infty \frac{(2\pi)^{2n+1} B_{2n+2}}{2(2n)!} r^{2n}
\qquad (|r|<1).
\end{equation*}
\end{lem}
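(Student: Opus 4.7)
The plan is to derive the Taylor expansion directly from the classical Bernoulli number generating function. Recall that
\begin{equation*}
\frac{x}{e^{x}-1} = \sum_{n=0}^{\infty} \frac{B_{n}}{n!}\, x^{n}, \qquad |x|<2\pi,
\end{equation*}
from which, by adding $\tfrac{x}{2}$ to symmetrize and using the fact that $B_{2n+1}=0$ for $n\ge 1$, one obtains the standard identity
\begin{equation*}
x \coth(x) = \sum_{n=0}^{\infty} \frac{2^{2n}\, B_{2n}}{(2n)!}\, x^{2n}, \qquad |x|<\pi,
\end{equation*}
whose radius of convergence is $\pi$ because the nearest singularities of $x\coth(x)$ are the simple poles of $\coth$ at $\pm i\pi$.

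Next, I would substitute $x=\pi r$, which converts the convergence region to $|r|<1$, yielding
\begin{equation*}
\pi r\,\coth(\pi r) = \sum_{n=0}^{\infty} \frac{(2\pi)^{2n}\, B_{2n}}{(2n)!}\, r^{2n}.
\end{equation*}
Dividing by $4\pi$ gives the Taylor expansion of $\tfrac{r}{4}\coth(\pi r)$. Since power series may be differentiated term by term inside their disk of convergence, differentiating twice with respect to $r$ sends $r^{2n}$ to $(2n)(2n-1)\, r^{2n-2}$ (which vanishes for $n=0,1$), and reindexing $n\mapsto n+1$ then produces
\begin{equation*}
\frac{d^2}{dr^2}\!\left(\frac{r}{4}\coth(\pi r)\right) = \frac{1}{4\pi}\sum_{n=0}^{\infty} \frac{(2\pi)^{2n+2}\,(2n+2)(2n+1)\,B_{2n+2}}{(2n+2)!}\, r^{2n} = \sum_{n=0}^{\infty}\frac{(2\pi)^{2n+1}\,B_{2n+2}}{2\,(2n)!}\, r^{2n},
\end{equation*}
after simplifying the prefactor using $(2\pi)^{2n+2}/(4\pi) = (2\pi)^{2n+1}/2$. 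This is exactly the claimed expansion, and the series converges for $|r|<1$ as required.

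There is essentially no obstacle; the only book-keeping is in tracking the powers of $2\pi$ correctly and in noting that the two missing terms (from $n=0,1$ in the original series, both killed by two derivatives) are precisely the constant and linear terms that do not appear on the right-hand side. As a sanity check, one could alternatively derive the same expansion from the integral representation $A(r)=\int_0^\infty \omega(x)e^{-\pi r^2 x}\,dx$ of Proposition~\ref{prop:aofr-explicit} by expanding $e^{-\pi r^2 x}$ as a power series in $r^2$, interchanging sum and integral (justified by the rapid decay of $\omega$), and applying the moment formula \eqref{eq:omega-moments} of Lemma~\ref{lem:omega-moments}; the $(-1)^n$ factors cancel and the numerical constants line up to give the same answer, providing an independent consistency check.
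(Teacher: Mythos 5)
Your proof is correct and follows essentially the same route as the paper, which likewise quotes the Bernoulli generating function $\frac{z}{2}\coth\left(\frac{z}{2}\right)=\sum_{n=0}^\infty \frac{B_{2n}}{(2n)!}z^{2n}$ for $|z|<2\pi$, substitutes $z=2\pi r$, and differentiates twice. One small prose slip: the $n=1$ term is \emph{not} killed by the two derivatives (only $n=0$ is), since $r^2\mapsto 2$; it survives as the constant term $\pi/6$ of the result, exactly as your own reindexed sum correctly shows.
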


\begin{proof}
Recall the standard generating function identity
\begin{equation*}
\frac{z}{2}\coth\left(\frac{z}{2}\right) =
\sum_{n=0}^\infty \frac{B_{2n}}{(2n)!}z^{2n} \qquad \left(|z|<2\pi\right)
\end{equation*}
(see \cite[p.~12]{andrews-askey-roy}). Making the substitution $z=2\pi r$ and differentiating twice gives the result.
\end{proof}

\begin{proof}[Second proof of Proposition~\ref{prop:aofr-explicit}]
From the above two lemmas we see that, calculating formally at least,
\begin{align*}
\int_0^\infty \omega(x) e^{-\pi r^2 x}\,dx 
& =
\int_0^\infty \omega(x) 
\sum_{n=0}^\infty \frac{(-1)^n \pi^n r^{2n}}{n!} x^n\,dx 
\\ & =
\sum_{n=0}^\infty \frac{(-1)^n \pi^n r^{2n}}{n!}
\int_0^\infty \omega(x) x^n\,dx 
=
\sum_{n=0}^\infty \frac{(2\pi)^{2n+1} B_{2n+2}}{2(2n)!} r^{2n}
=
\frac{d^2}{dr^2} \left(
\frac{r}{4}\coth\left(\pi r\right)
\right).
\end{align*}
To justify this rigorously, note that, by \eqref{eq:omegax-asym-xinfty}--\eqref{eq:omegax-asym-xzero}, the function $\omega(x)\exp(-\pi r^2 x)$ is absolutely integrable on $[0,\infty)$ for any \emph{complex} number $r$ satisfying $|r|<1$.
We have thus established the identity \eqref{eq:aofr-explicit} for those values of $r$, and, since $A(r)$ can be regarded as an analytic function of a complex variable $r$ on some open set containing the positive real axis, the result follows for general $r\ge 0$ by analytic continuation.
\end{proof}

\section{An orthonormal basis for radial self-transforms}

\label{sec:rad-orthradial}

Recall that the Laguerre polynomials $L_n^\alpha(x)$ are, for fixed $\alpha>-1$, a family of orthogonal polynomials with respect to the weight function $e^{-x} x^\alpha$ on $[0,\infty)$. Their main properties are summarized in 
\secref{sec:orth-laguerre}. 
We can use them to construct functions suitable for representing radial functions on $\R^d$ by defining
\begin{equation*}
G_n^{(d)}(r) = e^{-\pi r^2} L_n^{d/2-1}(2\pi r^2)
\qquad (r>0).
\end{equation*}
One main reason why this is a useful definition is that the $G_n^{(d)}$ satisfy the orthogonality relation
\begin{equation}
\label{eq:gnd-orthogonality}
\int_0^\infty G_m^{(d)}(r) G_n^{(d)}(r) r^{d-1}\,dr 
= \frac{\Gamma(n+d/2)}{2(2\pi)^{d/2} n!} \delta_{m,n},
\end{equation}
which, as the reader can verify, is immediate from the standard orthogonality relation \eqref{eq:laguerre-orthogonality} for the Laguerre polynomials, by a change of variables. Equivalently, recalling that we are thinking of the $G_n^{(d)}$ as functions on $\R^d$, we can write this as an orthogonality relation with respect to the ordinary Lebesgue measure on $\R^d$ by interpreting the integral on the left-hand side of \eqref{eq:gnd-orthogonality} as an integral in polar coordinates, which gives the equivalent relation
\begin{equation*}
\int_{\R^d} G_m^{(d)}(\mathbf{x}) G_n^{(d)}(\mathbf{x}) \, d\mathbf{x}
= \kappa_{d,n} \delta_{m,n},
\end{equation*}
where
\begin{equation*}
\kappa_{d,n} = d\cdot V_d \frac{\Gamma(n+d/2)}{2(2\pi)^{d/2} n!}
= \frac{d\cdot \Gamma(n+d/2)}{2^{1+d/2} n! \Gamma(1+d/2)},
\end{equation*}
and $V_d = \frac{\pi^{d/2}}{\Gamma\left(\frac{d}{2}+1\right)}$ denotes the volume of the unit ball in $\R^d$. 

The orthogonal family $(G_n^{(d)})_{n=0}^\infty$ is especially useful for representing radial \emph{self-transforms} such as the function $A(r)$, thanks to the following result.

\begin{thm}[{\cite[Secs.~4.20,~4.23]{lebedev}}]
The functions $G_n^{(d)}(r)$, considered as radial functions on $\R^d$, form an orthogonal basis of the subspace $L_{\textnormal{rad}}^2(\R^d)$ of $L^2(\R^d)$ consisting of square-integrable radial functions. 
Moreover, this orthogonal basis diagonalizes the radial Fourier transform \eqref{eq:radial-fourier-d}; more precisely, we have the property
\begin{equation*}
\mathcal{F}_d\left(G_n^{(d)}\right) = (-1)^n G_n^{(d)}.
\end{equation*}
\end{thm}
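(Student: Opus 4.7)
The plan is to establish the two assertions separately. The orthogonality half of the basis claim is essentially already in hand: equation \eqref{eq:gnd-orthogonality} gives
$$\int_0^\infty G_m^{(d)}(r) G_n^{(d)}(r) r^{d-1}\,dr = \frac{\Gamma(n+d/2)}{2(2\pi)^{d/2} n!} \delta_{m,n},$$
and this is what I would derive first, by unwinding the definition $G_n^{(d)}(r) = e^{-\pi r^2} L_n^{d/2-1}(2\pi r^2)$ and making the substitution $u = 2\pi r^2$ in the classical Laguerre orthogonality relation \eqref{eq:laguerre-orthogonality}.

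For completeness, I would transport the question to a weighted $L^2$ space on the half-line. A radial function $F$ on $\R^d$ lies in $L^2(\R^d)$ iff $F(r) r^{(d-1)/2} \in L^2([0,\infty),dr)$. After the change of variables $u = 2\pi r^2$ and writing $F(r) = e^{-\pi r^2} h(2\pi r^2)$, the map $F \mapsto h$ is a unitary isomorphism from $L^2_{\text{rad}}(\R^d)$ (up to a constant) onto $L^2([0,\infty), e^{-u} u^{d/2-1}\,du)$, under which $G_n^{(d)}$ corresponds to $L_n^{d/2-1}$. Since the Laguerre polynomials $L_n^{\alpha}$ form an orthogonal basis of $L^2([0,\infty),e^{-u} u^{\alpha}\,du)$ for every $\alpha>-1$ (a standard fact, provable from the density of polynomials in that weighted space via the moment problem for the gamma distribution), the completeness of $(G_n^{(d)})_{n=0}^\infty$ follows.

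The eigenfunction statement $\mathcal{F}_d(G_n^{(d)}) = (-1)^n G_n^{(d)}$ is the heart of the matter, and the main obstacle. I would start from the Hankel representation \eqref{eq:radial-fourier-d}:
$$\mathcal{F}_d(G_n^{(d)})(\rho) = 2\pi \rho^{1-d/2} \int_0^\infty e^{-\pi r^2} L_n^{d/2-1}(2\pi r^2)\, r^{d/2} J_{d/2-1}(2\pi r\rho)\,dr.$$
Writing $\alpha = d/2-1$ and substituting $u = 2\pi r^2$, $v = 2\pi\rho^2$, this reduces after routine manipulation to the classical Hankel--Laguerre identity
$$\int_0^\infty e^{-u/2} L_n^{\alpha}(u)\, u^{\alpha/2} J_{\alpha}\bigl(\sqrt{uv}\bigr)\,du = (-1)^n e^{-v/2} L_n^{\alpha}(v)\, v^{\alpha/2},$$
valid for $\alpha>-1$. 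This identity is where all the work lives; I would prove it either by expanding $J_\alpha$ as a power series, integrating term by term against $e^{-u/2} u^\alpha L_n^\alpha(u)$ using the moment formula $\int_0^\infty e^{-u} L_n^\alpha(u) u^{\alpha+k}\,du = \binom{k}{n}(-1)^n k!\,\Gamma(\alpha+k+1)$ for $k\ge n$, and then recognizing the resulting series as a hypergeometric representation of $L_n^\alpha(v)$; or, more slickly, by verifying it for the generating function $\sum_n L_n^\alpha(u) t^n = (1-t)^{-\alpha-1}\exp(-ut/(1-t))$, for which the Hankel transform on the left is a well-known Weber--Schafheitlin-type integral that evaluates in closed form to the corresponding generating function on the right with $t$ replaced by $-t$.

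The hard part is this generating-function identity and its justification via dominated convergence/Fubini; once it is in hand, substituting back $u = 2\pi r^2$, $v = 2\pi \rho^2$ returns $\mathcal{F}_d(G_n^{(d)})(\rho) = (-1)^n G_n^{(d)}(\rho)$ with no further effort, and combined with the first two parts completes the proof.
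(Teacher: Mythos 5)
Your plan is sound, and it is worth noting at the outset that the paper itself offers no proof of this statement at all: it is quoted with a citation to Lebedev \cite[Secs.~4.20,~4.23]{lebedev}. What you have reconstructed is essentially the standard argument that the citation points to. The three-part architecture is exactly right: (1) orthogonality by the substitution $u=2\pi r^2$ in \eqref{eq:laguerre-orthogonality}, which indeed reproduces \eqref{eq:gnd-orthogonality}; (2) completeness via the unitary (up to constant) identification of $L^2_{\textnormal{rad}}(\R^d)$ with $L^2([0,\infty),e^{-u}u^{d/2-1}\,du)$ sending $G_n^{(d)}\mapsto L_n^{d/2-1}$, reducing to the classical completeness of Laguerre polynomials (valid since the exponential weight makes the moment problem determinate); (3) the eigenfunction property via the classical Hankel--Laguerre identity, which is the content of the Hankel-transform eigenfunction theorem for the functions $s^{\alpha}e^{-s^2/2}L_n^{\alpha}(s^2)$. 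I checked that your reduction meshes with the paper's normalization: substituting $u=2\pi r^2$, $v=2\pi\rho^2$ in \eqref{eq:radial-fourier-d} with $\alpha=d/2-1$, every power of $2\pi$ and of $\rho$ cancels and one lands exactly on $\mathcal{F}_d(G_n^{(d)})=(-1)^nG_n^{(d)}$, so the ``routine manipulation'' you defer really is routine. Either of your two suggested proofs of the key identity (term-by-term integration of the Bessel series, or the Weber exponential integral applied to the generating function \eqref{eq:laguerre-genfun}) is the textbook route.

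Two constant-level slips you should repair before writing this up. First, with your chosen normalization the Hankel--Laguerre identity needs a factor $2$ on the right-hand side:
\begin{equation*}
\int_0^\infty e^{-u/2}\,L_n^{\alpha}(u)\,u^{\alpha/2}\,J_{\alpha}\bigl(\sqrt{uv}\bigr)\,du
= 2(-1)^n\,e^{-v/2}\,L_n^{\alpha}(v)\,v^{\alpha/2},
\end{equation*}
as one sees already at $n=0$, $\alpha=0$, where the left side evaluates to $2e^{-v/2}$; this factor is then absorbed by the Jacobian $du/(2\sqrt{2\pi u})$ in the change of variables, which is why the final eigenvalue still comes out to exactly $(-1)^n$. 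Second, your quoted moment formula carries a spurious $k!$: the correct statement (for the weight $e^{-u}$, via Rodrigues and $n$-fold integration by parts) is
\begin{equation*}
\int_0^\infty e^{-u}\,L_n^{\alpha}(u)\,u^{\alpha+k}\,du = (-1)^n\binom{k}{n}\Gamma(\alpha+k+1),
\end{equation*}
and note that in your identity the weight is $e^{-u/2}$, so the moments you actually need differ again by powers of $2$; if you prefer the clean $e^{-u}$ moments, rescale first or use the generating-function route, which sidesteps the bookkeeping entirely. Neither slip affects the validity of the strategy.
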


The theorem implies in particular that the even-indexed functions $G_{2n}^{(d)}(r)$ form an orthogonal basis for the subspace of $L_{\textnormal{rad}}^2(\R^d)$ consisting of square-integrable radial Fourier self-transforms. This gives a new way of representing radial self-transforms as linear combinations of the form $\sum_n \gamma_n G_{2n}^{(d)}(r)$. Thus, we have now shown two general ways to construct radial Fourier self-transforms: first, as weighted mixtures of scaled Gaussians, and second, as linear combinations of the basis elements $G_{2n}^{(d)}$. As the next result starts to illustrate, the interplay between these two approaches turns out to be very fruitful.

\begin{prop}
The $f_n$-expansion coefficients $c_n$ defined in~\eqref{eq:def-fn-coeffs} can be alternatively expressed as
\begin{equation}
\label{eq:fn-coeffs-radialform}
c_n = 
\frac{8\sqrt{2} \pi n!}{(3/2)_n}
\int_0^\infty A(r) r^2 G_n^{(3)}(r)\,dr.
\end{equation}
\end{prop}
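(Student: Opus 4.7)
The plan is to substitute the Laplace-transform representation of $A(r)$ into the right-hand side of \eqref{eq:fn-coeffs-radialform}, swap the order of integration, and evaluate the resulting inner integral via the classical Laplace transform formula for Laguerre polynomials,
\begin{equation*}
\int_0^\infty e^{-su} u^{\alpha} L_n^{\alpha}(u)\,du = \frac{\Gamma(n+\alpha+1)(s-1)^n}{n!\,s^{n+\alpha+1}} \qquad (\re s > 0),
\end{equation*}
specialized to $\alpha=1/2$. The result should reduce exactly to the definition \eqref{eq:def-fn-coeffs} of $c_n$.

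More concretely, first I would write
\begin{equation*}
\int_0^\infty A(r) r^2 G_n^{(3)}(r)\,dr
= \int_0^\infty \omega(x) \left( \int_0^\infty r^2 e^{-\pi (x+1) r^2} L_n^{1/2}(2\pi r^2)\,dr \right) dx,
\end{equation*}
where the interchange of integrations is justified by Tonelli/Fubini once one notes that $\omega(x)$ decays rapidly at both $0$ and $\infty$ by \eqref{eq:omegax-asym-xinfty}--\eqref{eq:omegax-asym-xzero}, and $L_n^{1/2}$ grows only polynomially, so the double integral is absolutely convergent. In the inner integral I would make the substitution $u = 2\pi r^2$ (so that $r^2\,dr = u^{1/2}\,du/(4\pi\sqrt{2\pi})$), converting it into
\begin{equation*}
\frac{1}{4\pi\sqrt{2\pi}}\int_0^\infty u^{1/2} e^{-(x+1)u/2} L_n^{1/2}(u)\,du.
\end{equation*}
Applying the Laguerre Laplace transform identity with $s=(x+1)/2$ gives, after simplification,
\begin{equation*}
\frac{\Gamma(n+3/2)}{2\pi^{3/2} n!}\cdot \frac{(x-1)^n}{(x+1)^{n+3/2}}
= \frac{(3/2)_n}{4\pi n!}\cdot \frac{(x-1)^n}{(x+1)^{n+3/2}},
\end{equation*}
using $\Gamma(n+3/2) = (3/2)_n \Gamma(3/2) = (3/2)_n\sqrt{\pi}/2$.

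Substituting this back and multiplying by the prefactor $\frac{8\sqrt{2}\pi n!}{(3/2)_n}$ produces exactly
\begin{equation*}
2\sqrt{2}\int_0^\infty \frac{\omega(x)}{(x+1)^{3/2}}\left(\frac{x-1}{x+1}\right)^n dx,
\end{equation*}
which is the defining formula \eqref{eq:def-fn-coeffs} for $c_n$. There is no real obstacle here: the proof is essentially a bookkeeping exercise, with the only substantive ingredient being the Laplace transform of $L_n^{1/2}$ (a standard entry in the Laguerre reference section of the appendix). The mild point worth attention is the Fubini justification, but this is immediate from the decay estimates already established for $\omega$.
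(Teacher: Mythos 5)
Your proof is correct, and the bookkeeping checks out: the substitution $u=2\pi r^2$ gives $r^2\,dr = u^{1/2}\,du/(4\sqrt{2}\,\pi^{3/2})$, the shifted Laplace parameter is $s=(x+1)/2$ with $s-1=(x-1)/2$, and the powers of $2$ combine to yield $\frac{(3/2)_n}{4\pi n!}\frac{(x-1)^n}{(x+1)^{n+3/2}}$ for the inner integral, after which multiplying by the prefactor $\frac{8\sqrt{2}\pi n!}{(3/2)_n}$ recovers \eqref{eq:def-fn-coeffs} exactly; the Tonelli step is indeed immediate since $\omega\ge 0$ and $L_n^{1/2}$ grows polynomially. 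The skeleton of your argument (substitute the Laplace representation of $A(r)$, swap integrals, evaluate the inner integral in closed form) coincides with the paper's, but you handle the key inner step by a different lemma: the paper never invokes the closed-form Laplace transform of $u^{1/2}L_n^{1/2}(u)$. Instead it first computes the moments $\int_0^\infty A(r)e^{-\pi r^2}r^{2m}\,dr$, then expands $L_n^{1/2}(2\pi r^2)$ termwise via \eqref{eq:laguerre-explicit} and collapses the resulting sum using $\frac{(3/2)_k}{k!}\binom{n+1/2}{n-k}=\frac{(3/2)_n}{n!}\binom{n}{k}$ together with the binomial theorem, $(-2+(x+1))^n=(x-1)^n$. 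Your route is shorter precisely because it outsources this expansion-and-resummation to the classical Laguerre transform formula; in effect, the paper's computation is an inline proof of that formula in the special case $\alpha=1/2$, $s=(x+1)/2$. One small correction: that transform formula is \emph{not} an entry in the paper's Laguerre reference section, as you claim --- \secref{sec:orth-laguerre} lists only the definition, orthogonality, recurrence, differential equation, generating function and Poisson kernel --- so you would need to cite it externally or derive it, which takes two lines from the generating function \eqref{eq:laguerre-genfun}: integrating $\sum_n z^n u^\alpha L_n^\alpha(u)e^{-su}$ over $u$ gives $\Gamma(\alpha+1)\bigl(s-(s-1)z\bigr)^{-(\alpha+1)}$, and expanding in powers of $z$ yields the stated identity. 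This is a citation slip, not a mathematical gap.
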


\begin{proof}
We start by evaluating a simpler integral, namely, for integer $m\ge1$,
\begin{align*}
\int_0^\infty A(r) e^{-\pi r^2} r^{2m}\,dr
& =
\int_0^\infty \left(\int_0^\infty \omega(x) e^{-\pi x r^2}\,dx \right) e^{-\pi r^2} r^{2m}\,dr
\\ & =
\int_0^\infty \omega(x) \left(\int_0^\infty e^{-\pi (x+1) r^2}r^{2m} \,dr \right) \,dx
\\ & =
\int_0^\infty \omega(x) \left(\int_0^\infty e^{-u} \left(\frac{\sqrt{u}}{\sqrt{\pi (x+1)}}\right)^{2m} \frac{du}{2\sqrt{\pi (x+1)u}}\right)\,dx
\\ & =
\frac{1}{2 \pi^{m+1/2}} \Gamma\left(m+\frac12\right)\int_0^\infty \omega(x)
\frac{1}{(x+1)^{m+1/2}}
\,dx
\\ &
=
\frac{(3/2)_{m-1}}{4 \pi^{m}} \int_0^\infty \omega(x)
\frac{1}{(x+1)^{m+1/2}}\,dx.
\end{align*}
Now using the formula \eqref{eq:laguerre-explicit} for the Laguerre polynomials, we have that
\begin{align*}
\int_0^\infty & A(r) r^2 G_n^{(3)}(r)\,dr
=
\int_0^\infty A(r) r^2 e^{-\pi r^2} L_n^{1/2}(2\pi r^2)\,dr
\\ & =
\int_0^\infty A(r) e^{-\pi r^2} \sum_{k=0}^n 
\frac{(-1)^k}{k!} \binom{n+1/2}{n-k} (2\pi)^k r^{2k+2}
\,dr
\\ & =
\sum_{k=0}^n \frac{(-2\pi)^k}{k!}\binom{n+1/2}{n-k}   \int_0^\infty A(r) e^{-\pi r^2} 
 r^{2k+2}
\,dr
\\ & =
\sum_{k=0}^n \frac{(-2\pi)^k}{k!}\binom{n+1/2}{n-k} 
\frac{(3/2)_k}{4\pi^{k+1}}
\int_0^\infty \omega(x) (x+1)^{-(k+3/2)}
\,dx
\\ & =
\int_0^\infty \omega(x) 
\left(
\sum_{k=0}^n \frac{(-2\pi)^k}{k!}
\binom{n+1/2}{n-k} \frac{(3/2)_k}{4\pi^{k+1}} (x+1)^{-(k+3/2)}
\right)\,
dx
\\ & =
\frac{1}{4\pi} \int_0^\infty \frac{\omega(x)}{(x+1)^{n+3/2}}
\left(
\sum_{k=0}^n \frac{(-2)^k}{k!}
\binom{n+1/2}{n-k} (3/2)_k (x+1)^{n-k}
\right)
dx.
\end{align*}
Noting the simple relation 
$\frac{(3/2)_k}{k!}\binom{n+1/2}{n-k} 
=
\frac{(3/2)_n}{n!} \binom{n}{k},
$
we see that 
the sum inside the integral simplifies as
\begin{align*}
\sum_{k=0}^n \frac{(3/2)_k}{k!}
\binom{n+1/2}{n-k} \cdot & (-2)^k (x+1)^{n-k}
=
\frac{(3/2)_n}{n!} \sum_{k=0}^n \binom{n}{k} (-2)^k (x+1)^{n-k}
\\ & =
\frac{(3/2)_n}{n!} (-2+(x+1))^n =
\frac{(3/2)_n}{n!} (x-1)^n.
\end{align*}
Thus, we get finally that
\begin{equation*}
\int_0^\infty A(r) r^2 G_n^{(3)}(r)\,dr
= \frac{(3/2)_n}{4\pi n!} 
\int_0^\infty \frac{\omega(x)}{(x+1)^{3/2}} \left(\frac{x-1}{x+1}\right)^n\,dx
= 
\frac{(3/2)_n}{8\sqrt{2} \pi n!}  c_n,
\end{equation*}
which proves \eqref{eq:fn-coeffs-radialform}.
\end{proof}

We have set the stage for one of the central results of this chapter.

\begin{thm}[Expansion of $A(r)$ in the orthogonal family $G_n^{(3)}(r)$]
\label{thm:aofr-selftrans-expansion}
The radial function $A(r)$ has the series expansion
\begin{equation}
\label{eq:aofr-selftrans-expansion}
A(r) = \sum_{n=0}^\infty c_n G_n^{(3)}(r),
\end{equation}
with $c_n$ given by \eqref{eq:def-fn-coeffs}, \eqref{cor:fn-coeff-innerproduct} and \eqref{eq:fn-coeffs-radialform}. The series in \eqref{eq:aofr-selftrans-expansion} converges pointwise and in $L^2(\R^3)$.
\end{thm}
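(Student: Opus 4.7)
The plan naturally divides into an $L^2$-convergence argument and a separate pointwise-convergence argument. For the $L^2$ part, I would first verify that $A \in L^2_{\textrm{rad}}(\R^3)$: the closed form of Proposition~\ref{prop:aofr-explicit} shows that $A(r)$ extends smoothly to $r=0$ (the two summands in \eqref{eq:aofr-explicit} have cancelling singularities) and that $A(r) \sim 2\pi^2 r\, e^{-2\pi r}$ as $r\to\infty$, so $\int_0^\infty A(r)^2 r^2\,dr < \infty$. The theorem of Lebedev cited above then guarantees that $(G_n^{(3)})_{n=0}^\infty$ is an orthogonal basis of $L^2_{\textrm{rad}}(\R^3)$, so $A$ admits a unique $L^2$-convergent expansion $A = \sum_n \alpha_n G_n^{(3)}$ with Fourier coefficient $\alpha_n = \langle A, G_n^{(3)}\rangle / \|G_n^{(3)}\|^2$. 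Specializing \eqref{eq:gnd-orthogonality} to $d=3$ gives $\int_0^\infty G_n^{(3)}(r)^2 r^2\,dr = (3/2)_n/(8\sqrt{2}\,\pi\, n!)$, and comparing this with the identity \eqref{eq:fn-coeffs-radialform} proved just above the theorem immediately yields $\alpha_n = c_n$.

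For pointwise convergence, my strategy mirrors the proof of Theorem~\ref{THM:FN-EXPANSION} in Section~\ref{sec:fnexp-proofexpansion}. The key ingredient is a Laguerre generating-function identity
\[ e^{-\pi x r^2} \;=\; \frac{2\sqrt{2}}{(x+1)^{3/2}} \sum_{n=0}^\infty \left(\frac{x-1}{x+1}\right)^{n} G_n^{(3)}(r) \qquad (x > 0,\ r \ge 0), \]
which follows from the classical Laguerre generating function $\sum_n L_n^{1/2}(u) t^n = (1-t)^{-3/2}\exp(-ut/(1-t))$ via the substitutions $t = (x-1)/(x+1)$, $u = 2\pi r^2$, followed by multiplication of both sides by $e^{-\pi r^2}$. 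Substituting this identity into $A(r) = \int_0^\infty \omega(x) e^{-\pi x r^2}\,dx$ and swapping sum and integral produces exactly $\sum_n c_n G_n^{(3)}(r)$ by the definition \eqref{eq:def-fn-coeffs} of the coefficients, so all that remains is to justify the interchange.

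To justify the interchange rigorously, I would estimate the tail
\[ \left|A(r) - \sum_{n=0}^N c_n G_n^{(3)}(r)\right| \;\le\; 2\sqrt{2}\int_0^\infty \frac{\omega(x)}{(x+1)^{3/2}} \sum_{n=N+1}^\infty |G_n^{(3)}(r)|\, \left|\frac{x-1}{x+1}\right|^n\,dx, \]
using a uniform-in-$n$ bound $|G_n^{(3)}(r)| \le C(r)$, valid for each fixed $r > 0$, which follows from Perron's asymptotic formula for Laguerre polynomials (giving $L_n^{1/2}(y) = O(1)$ as $n\to\infty$ for fixed $y > 0$). Pulling $C(r)$ outside the sum, invoking the symmetry \eqref{eq:omega-int-powersn-symmetry} together with the decay $\omega(x)/(x+1)^{3/2} = O(e^{-\pi x/2})$ coming from \eqref{eq:omegax-asym-xinfty}, and applying Lemma~\ref{lem:fn-easybound2} termwise with $B = \pi/2$, one bounds the remainder by $C(r)\sum_{n > N} e^{-\sqrt{\pi n}}$, which vanishes as $N \to \infty$. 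The hard part here is the uniform-in-$n$ estimate on $G_n^{(3)}(r)$; a cleaner alternative that avoids appealing to classical Laguerre asymptotics is to combine the coefficient estimate $c_{2n} = O(\sqrt{n}\,e^{-4\sqrt{\pi n}})$ from Theorem~\ref{THM:FN-COEFF-ASYM} with any sub-exponential-in-$n$ bound on $|G_n^{(3)}(r)|$ to obtain absolute and locally uniform convergence of the series on $(0,\infty)$, whose continuous pointwise limit must then agree with $A(r)$ by virtue of the $L^2$-identity established in the first paragraph.
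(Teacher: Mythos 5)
Your proposal is correct; its $L^2$ half coincides with the paper's argument, but its pointwise half takes a genuinely different route. The paper's proof is three sentences: $A$ is ``clearly square-integrable,'' the coefficients are identified as Fourier coefficients via \eqref{eq:gnd-orthogonality} and \eqref{eq:fn-coeffs-radialform} (exactly your normalization check $\frac{\Gamma(n+3/2)}{2(2\pi)^{3/2}n!}=\frac{(3/2)_n}{8\sqrt{2}\pi n!}$), and pointwise convergence is simply cited to the standard theorems on Laguerre expansions in \cite[p.~88]{lebedev}. You instead prove pointwise convergence directly from the generating-function identity $e^{-\pi x r^2}=\frac{2\sqrt{2}}{(x+1)^{3/2}}\sum_{n}\left(\frac{x-1}{x+1}\right)^n G_n^{(3)}(r)$, which is correct (set $t=\frac{x-1}{x+1}\in(-1,1)$, $u=2\pi r^2$ in \eqref{eq:laguerre-genfun}, so $1-t=\frac{2}{x+1}$ and $\frac{ut}{1-t}=\pi r^2(x-1)$) and is precisely the Gaussian-kernel analogue of the Mellin-kernel expansion \eqref{eq:int-kernel-expansion} driving Theorem~\ref{THM:FN-EXPANSION} — your argument thus makes explicit a structural parallel between the two expansions that the paper leaves implicit. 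Your tail estimate is sound: the symmetry \eqref{eq:omega-int-powersn-symmetry} survives insertion of absolute values, $\omega(x)/(x+1)^{3/2}=O(e^{-\pi x/2})$ feeds Lemma~\ref{lem:fn-easybound2} with $B=\pi/2$, and against the resulting $O(e^{-\sqrt{2\pi n}})$ decay any sub-exponential bound on $G_n^{(3)}$ suffices, so you do not even need Perron's sharp $O(1)$ estimate (which in any case fails at $r=0$, where $L_n^{1/2}(0)\asymp\sqrt{n}$); the crude bound $|L_n^{1/2}(y)|\le\frac{\Gamma(n+3/2)}{n!\,\Gamma(3/2)}e^{y/2}=O(\sqrt{n})\,e^{y/2}$, uniform for $y$ in compacts, already gives absolute and locally uniform convergence on $[0,\infty)$, and your closing device — identifying the continuous locally uniform limit with $A$ through the $L^2$ identity — is the cleanest way to finish, with no circularity since Theorem~\ref{THM:FN-COEFF-ASYM} and Lemma~\ref{lem:fn-easybound2} precede this chapter. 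The trade-off: the paper's citation is shorter but leaves the reader to verify the hypotheses of Lebedev's Laguerre-expansion theorem, whereas your route is self-contained, produces an effective tail bound of order $\sum_{n>N}n^{O(1)}e^{-\sqrt{2\pi n}}$, and upgrades pointwise to locally uniform convergence essentially for free.
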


\begin{proof}
The equation \eqref{eq:aofr-selftrans-expansion} is simply the Fourier expansion of the (clearly square-integrable) function $A(r)$, considered as a radial function on $\R^3$, in the orthogonal basis $G_n^{(3)}(r)$. The fact that the coefficients $c_n$ are the Fourier coefficients follows from \eqref{eq:gnd-orthogonality}~and~\eqref{eq:fn-coeffs-radialform} (together with the simple equality $\frac{(3/2)_n}{8\sqrt{2}\pi n!} = \frac{\Gamma(n+3/2)}{2(2\pi)^{3/2}n!}$ relating the normalization constants appearing in those two equations). The convergence in $L^2(\R^3)$ is immediate, and pointwise convergence follows from standard theorems about expansions of a function in Laguerre polynomials; see \cite[p.~88]{lebedev}.
\end{proof}

\section{Constructing new balanced functions from old}

\label{sec:rad-newrecbal}

Next, we show a simple operation that produces a balanced function of weight $2-\alpha$ starting with a balanced function of weight $\alpha$.

\begin{lem}
\label{lem:new-from-old}
Let $f:(0,\infty)\to \R$ be balanced of weight $0<\alpha<2$. Then the function $g:(0,\infty)\to\R$ defined by
\begin{equation}
\label{eq:gen-stieltjes}
g(x) = \int_0^\infty \frac{f(u)}{(u+x)^{2-\alpha}}\,du
\end{equation}
is a balanced function of weight $2-\alpha$.
\end{lem}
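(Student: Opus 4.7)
The plan is a direct computation: substitute $u = 1/v$ in the integral defining $g(1/x)$ and use the reciprocal balance property of $f$ to reduce the result back to $x^{2-\alpha} g(x)$.

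More concretely, I would start by writing
\begin{equation*}
g(1/x) = \int_0^\infty \frac{f(u)}{(u + 1/x)^{2-\alpha}} \, du,
\end{equation*}
and then apply the change of variables $u = 1/v$, $du = -dv/v^2$, which maps $(0,\infty)$ to itself (with reversed orientation). This transforms the integral into
\begin{equation*}
g(1/x) = \int_0^\infty \frac{f(1/v)}{(1/v + 1/x)^{2-\alpha}} \cdot \frac{dv}{v^2}.
\end{equation*}
Using the hypothesis $f(1/v) = v^\alpha f(v)$, and pulling out the factor $(vx)^{\alpha - 2}$ from $(1/v + 1/x)^{2-\alpha} = \left(\frac{x+v}{vx}\right)^{2-\alpha}$, the exponents of $v$ cancel exactly and a factor $x^{2-\alpha}$ is left over, yielding $g(1/x) = x^{2-\alpha} g(x)$, which is the desired balance of weight $2-\alpha$.

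The only non-algebraic point to address is that the manipulations above (in particular splitting and reassembling the integrand, and the change of variables) require absolute integrability so that Fubini/substitution are valid. Since the lemma is stated without explicit integrability hypotheses, I would simply remark at the outset that the identity is to be understood under the implicit assumption that the defining integral \eqref{eq:gen-stieltjes} for $g(x)$ converges absolutely for each $x > 0$ (this is the natural reading consistent with the later use of the lemma); under that assumption, the substitution $u = 1/v$ is an honest change of variables on $(0,\infty)$ and no further justification is needed. I do not anticipate any real obstacle here; the lemma is essentially a one-line computation dressed up in notation.
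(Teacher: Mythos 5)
Your proof is correct and is essentially identical to the paper's own argument: the same substitution $u=1/v$ followed by the balance relation $f(1/v)=v^\alpha f(v)$ and the factorization $(1/v+1/x)^{2-\alpha}=\bigl(\tfrac{v+x}{vx}\bigr)^{2-\alpha}$. Your added remark on implicit absolute convergence is a reasonable (and harmless) clarification that the paper leaves tacit.
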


\begin{proof}
\begin{align*}
g\left(\frac{1}{x}\right) 
& = \int_0^\infty \frac{f(u)}{(u+1/x)^{2-\alpha}}\,du
= 
\int_0^\infty \frac{f(1/v)}{\left(\frac{1}{v}+\frac{1}{x}\right)^{2-\alpha}}\,\frac{dv}{v^2}
\\ & =
\int_0^\infty \frac{(vx)^{2-\alpha} v^\alpha f(v)}{(v+x)^{2-\alpha}}\,\frac{dv}{v^2}
 =
x^{2-\alpha} \int_0^\infty \frac{f(v)}{(v+x)^{2-\alpha}}\,dv
= x^{2-\alpha} g(x).
\end{align*}
\end{proof}

The integral transform in \eqref{eq:gen-stieltjes} is known as a \firstmention{generalized Stieltjes transform}. Its properties are discussed in \cite{karp-prilepkina, schwarz}. Lemma~\ref{lem:new-from-old} appears to be new.

\section{The functions $\nu(x)$ and $B(r)$}

\label{sec:rad-bofr}

Applying the construction of Lemma~\ref{lem:new-from-old} to our balanced function $\omega(x)$ (with $\alpha=1/2$), we define
\begin{equation*}
\nu(x) = \int_0^\infty \frac{\omega(u)}{(u+x)^{3/2}}\,du,
\end{equation*}
and note that $\nu(x)$ is a balanced function of weight $3/2$. Now define (as in \eqref{eq:gauss-trans}) the associated radial function
\begin{equation*}
B(r) = \int_0^\infty \nu(x) e^{-\pi x r^2}\,dx,
\end{equation*}
We will have much more to say about these functions below. Among other results, in \secref{sec:rad-properties-nu} we will show that $B(r)$ is given by the explicit formula $B(r) = 1-2r \psi'(r) - r^2 \psi''(r)$, where $\psi(x)$ is the digamma function. The function $\nu(x)$ and a closely related function $\tilde{\nu}(t) = \frac{1}{\sqrt{1+t}}\nu\left(\frac{1-t}{1+t}\right)$ will play an important role in our understanding of the $g_n$-expansion \eqref{eq:gn-expansion-intro} of the Riemann xi function, which we will show in Chapter~\ref{ch:gn-expansion} arises naturally from the expansion of $\tilde{\nu}(t)$ in the Chebyshev polynomials in the second kind.
The same function $\tilde{\nu}(t)$ also has an interpretation as a generating function for the $f_n$-expansion coefficients $c_n$; see \secref{sec:rad-properties-nu}.

\section{Some Mellin transform computations}

\label{sec:rad-mellintransforms}

In this section we compute the Mellin transforms of the functions $A(r)$, $B(r)$, $\nu(x)$, and derive two separate Mellin transform representations for the polynomials $f_n$. This will set the ground for the alternative derivation of the $f_n$-expansion mentioned at the beginning of the chapter, which will be given in the next section, and for additional results in Sections~\ref{sec:rad-centered-recbal}--\ref{sec:rad-properties-nu} and Chapter~\ref{ch:gn-expansion} that will shed additional light on the significance of the new functions we introduced.

\begin{prop}
\label{prop:aofr-mellin}
The Mellin transform of $A(r)$ is given by
\begin{equation}
\label{eq:aofr-mellin}
\int_0^\infty A(r) r^{s-1}\,dr =
\frac{1}{2(2\pi)^{s-1}} (s-1)(s-2) \Gamma(s-1)\zeta(s-1)
\qquad (\re{s} > 0).
\end{equation}
\end{prop}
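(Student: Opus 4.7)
The plan is to start from the defining integral $A(r)=\int_0^\infty \omega(x)e^{-\pi x r^2}\,dx$, plug this into $\int_0^\infty A(r)r^{s-1}\,dr$, and swap the order of integration. Fubini is justified because $\omega(x)$ decays like $x^2 e^{-\pi x}$ at $\infty$ and like $x^{-5/2}e^{-\pi/x}$ at $0$ (see \eqref{eq:omegax-asym-xinfty}--\eqref{eq:omegax-asym-xzero}), so $\omega(x)e^{-\pi x r^2}r^{\re s-1}$ is absolutely integrable on $(0,\infty)^2$ whenever $\re s>0$; the constraint $\re s>0$ is exactly the condition needed to integrate $r^{s-1}$ near $r=0$.

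After swapping, the inner $r$-integral $\int_0^\infty r^{s-1}e^{-\pi x r^2}\,dr$ is a standard Gamma-type integral. I would evaluate it by the substitution $u=\pi x r^2$, which yields
\begin{equation*}
\int_0^\infty r^{s-1}e^{-\pi x r^2}\,dr \;=\; \frac{\Gamma(s/2)}{2(\pi x)^{s/2}}.
\end{equation*}
Substituting back gives
\begin{equation*}
\int_0^\infty A(r)r^{s-1}\,dr \;=\; \frac{\Gamma(s/2)}{2\pi^{s/2}}\int_0^\infty \omega(x)x^{-s/2}\,dx.
\end{equation*}

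The remaining $x$-integral is an instance of the Mellin representation \eqref{eq:riemannxi-mellintrans}: writing $-s/2 = s'/2-1$ with $s'=2-s$, it equals $\xi(2-s)$, which by the functional equation \eqref{eq:xi-functional-eq} equals $\xi(s-1)$. Expanding $\xi(s-1)=\tfrac12(s-1)(s-2)\pi^{-(s-1)/2}\Gamma((s-1)/2)\zeta(s-1)$, I am left with
\begin{equation*}
\int_0^\infty A(r)r^{s-1}\,dr \;=\; \frac{(s-1)(s-2)}{4\pi^{s-1/2}}\,\Gamma(s/2)\Gamma\!\left(\tfrac{s-1}{2}\right)\zeta(s-1).
\end{equation*}

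The last step is to collapse the two half-integer Gammas using Legendre's duplication formula $\Gamma(z)\Gamma(z+\tfrac12)=2^{1-2z}\sqrt{\pi}\,\Gamma(2z)$, applied with $z=(s-1)/2$, giving $\Gamma((s-1)/2)\Gamma(s/2)=2^{2-s}\sqrt{\pi}\,\Gamma(s-1)$. Arithmetic consolidation of the constant $\frac{2^{2-s}\sqrt{\pi}}{4\pi^{s-1/2}}=\frac{1}{2(2\pi)^{s-1}}$ then produces the claimed formula \eqref{eq:aofr-mellin}. There is no serious obstacle here; the main thing to watch is that the functional equation is needed to convert $\xi(2-s)$ into $\xi(s-1)$ (without this rewrite the duplication formula does not apply cleanly), and that the convergence strip $\re s>0$ comes solely from the behavior of $r^{s-1}$ at the origin.
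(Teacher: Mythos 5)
Your proof is correct and coincides with the paper's second proof of Proposition~\ref{prop:aofr-mellin}: substituting the Laplace-transform definition of $A(r)$, interchanging the order of integration, evaluating the Gaussian Mellin integral, invoking \eqref{eq:riemannxi-mellintrans} together with the functional equation to turn $\xi(2-s)$ into $\xi(s-1)$, and collapsing the two half-integer Gamma factors via the duplication formula \eqref{eq:duplication-formula}. (The paper also records a shorter first proof that integrates by parts twice against the explicit form $A(r)=F''(r)$ with $F(r)=\tfrac12\, r/(e^{2\pi r}-1)$, but your argument, Fubini justification included, is exactly its second proof.)
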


\begin{proof}[First proof]
Denote $F(r) = \frac{r}{4}(\coth(\pi r)-1) =
\frac12 \frac{r}{e^{2\pi r}-1}$. Using integration by parts twice, we have
\begin{align*}
\int_0^\infty A(r) r^{s-1}\,dr &=
\int_0^\infty F''(r)
r^{s-1}\,dr
=
F'(r) r^{s-1}\Big|_{r=0}^{r=\infty}
- (s-1)\int_0^\infty 
F'(r)  r^{s-2}\,dr 
\\ &=
0- (s-1)F(r)r^{s-2}\Big|_{r=0}^{r=\infty}
+ (s-1)(s-2)\int_0^\infty 
F(r)
 r^{s-3}\,dr 
\\ &=
0+0+\frac12(s-1)(s-2) \int_0^\infty
\frac{1}{e^{2\pi r}-1} r^{s-2}\,dr
\\ &= 
\frac12(s-1)(s-2) (2\pi)^{-(s-1)} \Gamma(s-1)\zeta(s-1).
\end{align*}
\end{proof}

\begin{proof}[Second proof]
\begin{align}
\label{eq:aofr-explicit-secondproof}
\int_0^\infty A(r) r^{s-1}\,dr &=
\int_0^\infty \int_0^\infty \omega(x)e^{-\pi x r^2}\,dx \, r^{s-1}\,dr 
\\ &=
\int_0^\infty \omega(x)
\int_0^\infty e^{-\pi x r^2}r^{s-1}\,dr \, \,dx
\nonumber \\ &=
\int_0^\infty
\omega(x) 
\left( \int_0^\infty e^{-u} \left(\frac{\sqrt{u}}{\sqrt{\pi x}}\right)^{s-1}\, \frac{du}{2 \sqrt{\pi x u}} \right)
\,dx
\nonumber \\ &=
\frac{1}{2\sqrt{\pi}} \pi^{-(s-1)/2} \Gamma\left(\frac{s}{2}\right)
\int_0^\infty \omega(x) x^{-s/2}\,dx
\nonumber \\ &=
\frac12 \pi^{-s/2} \Gamma\left(\frac{s}{2}\right)
\xi(2-s)
=
\frac12 \pi^{-s/2} \Gamma\left(\frac{s}{2}\right)
\xi(s-1)
\nonumber \\ &=
\frac12 \pi^{-s/2} \Gamma\left(\frac{s}{2}\right)
\cdot \frac12 (s-1)(s-2) \pi^{-(s-1)/2} \Gamma\left(\frac{s-1}{2}\right)\zeta(s-1)
\nonumber \\ & =
\frac{1}{2(2\pi)^{s-1}} (s-1)(s-2)
\left( 
\frac{2^{s-2}}{\sqrt{\pi}}\Gamma\left(\frac{s-1}{2}\right)
\Gamma\left(\frac{s}{2}\right)
\right) 
\zeta(s-1).
\nonumber
\end{align}
This coincides with the right-hand side of \eqref{eq:aofr-mellin}
by the duplication formula \cite[p.~22]{andrews-askey-roy}
\begin{equation}
\label{eq:duplication-formula}
\Gamma(z)\Gamma(z+1/2) = \sqrt{\pi} 2^{1-2z} \Gamma(2z).
\end{equation}
\end{proof}

\begin{prop}
\label{prop:mellin-nu}
The Mellin transform of $\nu(x)$ is given by
\begin{equation}
\label{eq:nu-mellin-trans}
\int_0^\infty \nu(x) x^{s/2-1}\,dx =
\frac{2}{\sqrt{\pi}} 
\Gamma\left(\frac{s}{2}\right)\Gamma\left(\frac32-\frac{s}{2}\right) \xi(s-1)
\qquad (0 < \re{s} < 3).
\end{equation}
(Here, as with known formulas such as \eqref{eq:riemannxi-mellintrans}, it seems more aesthetically pleasing to use $s/2$ as the Mellin transform variable instead of $s$.)
\end{prop}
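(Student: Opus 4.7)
The plan is to compute the Mellin transform by substituting the defining integral for $\nu(x)$ and swapping the order of integration via Fubini's theorem, reducing the problem to a beta function evaluation together with the Mellin transform representation \eqref{eq:riemannxi-mellintrans} of $\xi(s)$.

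Specifically, I would write
\begin{equation*}
\int_0^\infty \nu(x) x^{s/2-1}\,dx
= \int_0^\infty \omega(u) \left( \int_0^\infty \frac{x^{s/2-1}}{(u+x)^{3/2}}\,dx \right) du,
\end{equation*}
and evaluate the inner integral by the substitution $x = ut$:
\begin{equation*}
\int_0^\infty \frac{x^{s/2-1}}{(u+x)^{3/2}}\,dx
= u^{s/2 - 3/2} \int_0^\infty \frac{t^{s/2-1}}{(1+t)^{3/2}}\,dt
= u^{s/2 - 3/2}\, \frac{\Gamma(s/2)\,\Gamma(3/2 - s/2)}{\Gamma(3/2)},
\end{equation*}
using the standard beta integral $\int_0^\infty t^{a-1}(1+t)^{-(a+b)}\,dt = \Gamma(a)\Gamma(b)/\Gamma(a+b)$ with $a = s/2$ and $b = 3/2 - s/2$. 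Absolute convergence of this inner integral requires precisely $0 < \re(s) < 3$, matching the stated range.

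Substituting back and using $\Gamma(3/2) = \tfrac{\sqrt{\pi}}{2}$ together with the Mellin transform representation \eqref{eq:riemannxi-mellintrans}, namely $\int_0^\infty \omega(u)\, u^{(s-1)/2 - 1}\,du = \xi(s-1)$ (valid for all $s \in \C$ since $\xi$ is entire and the integral converges absolutely by the decay estimates \eqref{eq:omegax-asym-xinfty}--\eqref{eq:omegax-asym-xzero}), I obtain the claimed identity
\begin{equation*}
\int_0^\infty \nu(x)\, x^{s/2-1}\,dx
= \frac{2}{\sqrt{\pi}}\, \Gamma(s/2)\,\Gamma(3/2 - s/2)\, \xi(s-1).
\end{equation*}

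The only non-routine point is justifying the application of Fubini's theorem. For this, I would verify absolute integrability of $\omega(u)\, x^{s/2-1}(u+x)^{-3/2}$ on $(0,\infty)^2$ for $\sigma := \re(s) \in (0,3)$: the double integral with $s$ replaced by $\sigma$ factors exactly as above into $B(\sigma/2, 3/2 - \sigma/2)\int_0^\infty \omega(u) u^{\sigma/2 - 3/2}\,du$, and both factors are finite in this strip---the beta factor by our choice of $\sigma$, and the $\omega$-integral by \eqref{eq:omegax-asym-xinfty}--\eqref{eq:omegax-asym-xzero}, which guarantee superexponential decay of $\omega(u)$ at both endpoints. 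This is a straightforward check and constitutes the main (and only) subtlety in the proof.
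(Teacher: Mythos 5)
Your proposal is correct and follows essentially the same route as the paper's own proof: the same Fubini interchange, the same beta-integral evaluation of the inner integral (the paper's equation \eqref{eq:mellin-textbook-ex}), and the same justification of absolute convergence by repeating the computation with $s$ replaced by $\re(s)$. No gaps; the argument is complete as written.
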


\begin{proof}
A textbook Mellin transform computation is the result that
for $\alpha>0$,
\begin{equation}
\label{eq:mellin-textbook-ex}
\int_0^\infty \frac{t^{s-1}}{(1+t)^\alpha}\,dt
= \frac{\Gamma(s)\Gamma(\alpha-s)}{\Gamma(\alpha)}
\qquad (0 < \re(s) < \alpha),
\end{equation}
since the integral on the left transforms into a beta integal $\int_0^1 u^{s-1}(1-u)^{\alpha-s-1}\,du$ upon making the substitution $t=u/(1-u)$.
Using this fact, we write
\begin{align*}
\int_0^\infty \nu(x) x^{s/2-1}\,dx &=
\int_0^\infty \int_0^\infty \frac{\omega(u)}{(u+x)^{3/2}}\,du \, x^{s/2-1}\,dx
=
\int_0^\infty \omega(u) \left( \int_0^\infty \frac{x^{s/2-1}}{(u+x)^{3/2}} \,dx \right)\,du
\\ &=
\int_0^\infty \frac{\omega(u)}{u^{3/2}} \int_0^\infty \frac{t^{s/2-1}}{(1+t)^{3/2}}\,dt\,u^{s/2}\,du
=
\int_0^\infty \omega(u) u^{s/2-3/2}\,du \cdot \frac{\Gamma\left(\frac{s}{2}\right)
\Gamma\left(\frac32-\frac{s}{2}\right)}{\Gamma\left(\frac32\right)}
\\ &= \frac{2}{\sqrt{\pi}} 
\Gamma\left(\frac{s}{2}\right)\Gamma\left(\frac32-\frac{s}{2}\right) \xi(s-1).
\end{align*}
Note that the assumption $0<\re(s)<3$ ensures that the double integral following the first equality is absolutely convergent (as can be seen by repeating the same chain of equalities with $s$ replaced by $\re(s)$), which justifies the change in the order of integration.
\end{proof}

\begin{prop}
The Mellin transform of $B(r)$ is given by
\begin{equation}
\label{eq:bofr-mellin}
\int_0^\infty B(r) r^{s-1}\,dr =
\frac{(2\pi)^{1-s}}{2\sin(\pi s/2)}
 s(s-1) \Gamma(s) \zeta(s)
\qquad (0 < \re{s} < 2).
\end{equation}
\end{prop}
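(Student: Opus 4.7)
The plan is to proceed essentially in parallel with the second proof of Proposition~\ref{prop:aofr-mellin}, exploiting the fact that $B(r)$ is defined as $\int_0^\infty \nu(x)e^{-\pi x r^2}\,dx$ and that we already know the Mellin transform of $\nu(x)$ from Proposition~\ref{prop:mellin-nu}. So the first step is to write
\begin{equation*}
\int_0^\infty B(r) r^{s-1}\,dr = \int_0^\infty \nu(x)\left(\int_0^\infty e^{-\pi x r^2} r^{s-1}\,dr\right)dx,
\end{equation*}
with the interchange of order of integration to be justified by absolute convergence, which (using the bounds one obtains for $\nu(x)$ from the definition $\nu(x)=\int_0^\infty \omega(u)(u+x)^{-3/2}\,du$ and the asymptotics of $\omega$ in \eqref{eq:omegax-asym-xinfty}--\eqref{eq:omegax-asym-xzero}) will hold precisely in the strip $0<\re s<2$. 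The inner Gaussian integral evaluates via a change of variables $u=\pi x r^2$ to $\tfrac12(\pi x)^{-s/2}\Gamma(s/2)$, yielding
\begin{equation*}
\int_0^\infty B(r) r^{s-1}\,dr = \tfrac12 \pi^{-s/2}\Gamma\!\left(\tfrac{s}{2}\right)\int_0^\infty \nu(x) x^{-s/2}\,dx.
\end{equation*}

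Next I would recognize the remaining integral as the Mellin transform of $\nu$ evaluated at $2-s$ in the convention of Proposition~\ref{prop:mellin-nu} (since $x^{-s/2} = x^{(2-s)/2-1}$), which gives
\begin{equation*}
\int_0^\infty \nu(x) x^{-s/2}\,dx = \frac{2}{\sqrt{\pi}}\,\Gamma\!\left(1-\tfrac{s}{2}\right)\Gamma\!\left(\tfrac{s+1}{2}\right)\xi(1-s) = \frac{2}{\sqrt{\pi}}\,\Gamma\!\left(1-\tfrac{s}{2}\right)\Gamma\!\left(\tfrac{s+1}{2}\right)\xi(s),
\end{equation*}
where the last step uses the functional equation \eqref{eq:xi-functional-eq}. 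Combining with the previous display gives
\begin{equation*}
\int_0^\infty B(r) r^{s-1}\,dr = \pi^{-(s+1)/2}\,\Gamma\!\left(\tfrac{s}{2}\right)\Gamma\!\left(1-\tfrac{s}{2}\right)\Gamma\!\left(\tfrac{s+1}{2}\right)\xi(s).
\end{equation*}

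The last step is pure bookkeeping. I would apply the Euler reflection formula $\Gamma(s/2)\Gamma(1-s/2)=\pi/\sin(\pi s/2)$ to absorb the sine factor, substitute the definition $\xi(s)=\tfrac12 s(s-1)\pi^{-s/2}\Gamma(s/2)\zeta(s)$, and then collapse the product $\Gamma(s/2)\Gamma((s+1)/2)$ using the duplication formula \eqref{eq:duplication-formula} into $\sqrt{\pi}\,2^{1-s}\Gamma(s)$. After collecting the powers of $\pi$ and $2$ one arrives at $(2\pi)^{1-s}/(2\sin(\pi s/2))\cdot s(s-1)\Gamma(s)\zeta(s)$, which is the desired formula \eqref{eq:bofr-mellin}.

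I do not anticipate any substantive obstacle here: everything reduces to the already-established Mellin transform of $\nu$ together with standard gamma function identities. The only point requiring care is justifying the Fubini interchange and confirming that the strip of validity coming out of the computation is indeed $0<\re s<2$---the upper bound arises from the decay of $\nu(x)$ as $x\to\infty$ (which is only $O(x^{-3/2})$ coming from the outer part of the definition of $\nu$) together with the fact that Proposition~\ref{prop:mellin-nu} requires $0<\re(2-s)<3$.
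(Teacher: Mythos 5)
Your proof is correct and takes exactly the route the paper does: the paper's own proof is the one-line instruction to repeat the calculation \eqref{eq:aofr-explicit-secondproof} with $A(r)$ replaced by $B(r)$ and the Mellin transform of $\omega(x)$ replaced by that of $\nu(x)$ from Proposition~\ref{prop:mellin-nu}, which is precisely what you carry out (with the functional equation $\xi(1-s)=\xi(s)$ and the reflection and duplication formulas handling the bookkeeping correctly). One small quibble with your closing remark: the upper endpoint $\re s < 2$ of the strip actually comes from the behavior of $\int_0^\infty \nu(x)x^{-s/2}\,dx$ at $x\to 0$, where $\nu$ is bounded and nonzero (equivalently, from $B(r)=O(r^{-2})$ as $r\to\infty$), while the $O(x^{-3/2})$ decay of $\nu$ at infinity yields only the weaker constraint $\re s > -1$, which is subsumed by the condition $\re s > 0$ imposed by the inner Gaussian integral.
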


\begin{proof}
Repeat the calculation in \eqref{eq:aofr-explicit-secondproof}, replacing $A(r)$ by $B(r)$ and replacing the Mellin transform of $\omega(x)$ with the Mellin transform of $\nu(x)$. We omit the details.
\end{proof}

The following result appears to be known, although its origins and proof seem difficult to trace (it is implicit in the results of section~1 of \cite{bump-etal}, and a more explicit version is mentioned without proof in \cite[eq.~(1)]{kuznetsov1} and \cite[p.~829]{kuznetsov2}). We include it along with a short, self-contained proof.

\begin{prop}[First Mellin transform representation of $f_n$]
\label{prop:fn-mellintrans1}
The Mellin transform of $G_n^{(3)}(r)$ is given by
\begin{equation}
\label{eq:fn-mellintrans1}
\int_0^\infty G_n^{(3)}(r) r^{s-1}\,dr =
\frac12 (-i)^n \pi^{-s/2} \Gamma\left(\frac{s}{2}\right)
f_n\left(\frac{1}{2i}\left(s-\frac{3}{2}\right)\right)\qquad (\re{s} > 0)
\end{equation}
\end{prop}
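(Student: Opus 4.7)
The plan is a direct term-by-term Mellin transform calculation, using the explicit polynomial expansion of $L_n^{1/2}$ to reduce everything to Gaussian moments, and then repackaging the resulting finite sum as a Gauss hypergeometric series.

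First I would substitute the explicit formula \eqref{eq:laguerre-explicit} for the Laguerre polynomial, giving
\begin{equation*}
G_n^{(3)}(r) = e^{-\pi r^2} \sum_{k=0}^n \frac{(-1)^k}{k!} \binom{n+1/2}{n-k} (2\pi r^2)^k.
\end{equation*}
Multiplying by $r^{s-1}$ and integrating term by term against the Gaussian, the standard identity
\begin{equation*}
\int_0^\infty e^{-\pi r^2} r^{2k+s-1}\,dr = \tfrac12 \pi^{-(s/2+k)} \Gamma\!\left(k+\tfrac{s}{2}\right)
\end{equation*}
produces, after pulling out a factor $\tfrac12 \pi^{-s/2}\Gamma(s/2)$ and rewriting $\Gamma(k+s/2)=(s/2)_k \Gamma(s/2)$, the finite sum
\begin{equation*}
\int_0^\infty G_n^{(3)}(r) r^{s-1}\,dr = \tfrac12 \pi^{-s/2}\Gamma(s/2) \sum_{k=0}^n \frac{(-1)^k 2^k}{k!}\binom{n+1/2}{n-k} (s/2)_k.
\end{equation*}

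The remaining task is cosmetic: convert the sum into a $\gausshyper$. Using the identity $\binom{n+1/2}{n-k}=\dfrac{(3/2)_n}{(3/2)_k(n-k)!}$ and $\dfrac{1}{(n-k)!}=\dfrac{(-1)^k(-n)_k}{n!}$, the alternating signs cancel and the sum becomes
\begin{equation*}
\frac{(3/2)_n}{n!}\sum_{k=0}^n \frac{(-n)_k (s/2)_k}{(3/2)_k\,k!}\,2^k = \frac{(3/2)_n}{n!}\,\gausshyper\!\left(-n,\tfrac{s}{2};\tfrac{3}{2};2\right).
\end{equation*}
Finally, comparing with the hypergeometric definition \eqref{eq:fn-def-intro}, the substitution $x=\frac{1}{2i}(s-3/2)$ yields $\tfrac34+ix=\tfrac{s}{2}$, so
\begin{equation*}
(-i)^n f_n\!\left(\tfrac{1}{2i}(s-\tfrac32)\right) = \frac{(3/2)_n}{n!}\,\gausshyper\!\left(-n,\tfrac{s}{2};\tfrac{3}{2};2\right),
\end{equation*}
and substituting this back gives exactly \eqref{eq:fn-mellintrans1}.

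There is no real obstacle: the argument is a mechanical unwinding of the definitions, and the only mildly delicate step is keeping track of Pochhammer manipulations in order to see the ${}_2F_1(-n,\cdot;\tfrac32;2)$ emerge with the correct parameter. The condition $\re s>0$ is needed solely to guarantee that every Gaussian moment converges at $r=0$; no issue arises at $r=\infty$ thanks to the factor $e^{-\pi r^2}$.
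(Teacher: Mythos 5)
Your proof is correct and follows essentially the same route as the paper's: expand $L_n^{1/2}$ via \eqref{eq:laguerre-explicit}, integrate the Gaussian moments term by term, and factor out $\frac12\pi^{-s/2}\Gamma\left(\frac{s}{2}\right)$. The only cosmetic difference is in the final recognition step, where you identify the remaining finite sum with $\gausshyper\left(-n,\frac{s}{2};\frac32;2\right)$ and invoke the hypergeometric definition \eqref{eq:fn-def-intro}, while the paper matches the same sum against the equivalent explicit binomial-sum formula \eqref{eq:fn-explicit3} (itself just the expanded form of that definition).
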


\begin{proof}
Again using the explicit formula \eqref{eq:laguerre-explicit} for the Laguerre polynomials, we have that
\begin{align*}
\int_0^\infty G_n^{(3)}(r) r^{s-1}\,dr 
&=
\int_0^\infty e^{-\pi r^2} L_n^{1/2}(2\pi r^2) r^{s-1}\,dr 
\\ &=
\sum_{k=0}^n \frac{(-1)^k}{k!} \binom{n+1/2}{n-k} (2\pi)^k \int_0^\infty e^{-\pi r^2} r^{2k} r^{s-1}\,dr 
\\
&=
\sum_{k=0}^n \frac{(-1)^k}{k!} \binom{n+1/2}{n-k} (2\pi)^k 
\int_0^\infty e^{-u} \left(\frac{\sqrt{u}}{\sqrt{\pi}}\right)^{s+2k-1}\, \frac{du}{2\sqrt{\pi u}}
\\ &=
\frac12 \pi^{-s/2}\sum_{k=0}^n \frac{(-2)^k}{k!} \binom{n+1/2}{n-k} 
\Gamma\left(\frac{s}{2}+k\right)
\\ &=
\frac12 \pi^{-s/2} \Gamma\left(\frac{s}{2}\right)
\sum_{k=0}^n \frac{(-2)^k}{k!} \binom{n+1/2}{n-k} 
\frac{s}{2}
\left( \frac{s}{2} + 1 \right)
\cdots 
\left( \frac{s}{2} + k-1 \right)
\\ &=
\frac12 \pi^{-s/2} \Gamma\left(\frac{s}{2}\right)
\sum_{k=0}^n (-2)^k \binom{n+1/2}{n-k} 
\binom{s/2+k-1}{k}
\\ &=
\frac12 \pi^{-s/2} \Gamma\left(\frac{s}{2}\right)
(-i)^n f_n\left(\frac{1}{2i}\left(s-\frac{3}{2}\right)\right),
\end{align*}
where in the last step we used the formula \eqref{eq:fn-explicit4} for $f_n(x)$. This gives the claimed formula.
\end{proof}

If we replace $s$ with the variable $t$ where $s=2\left(\frac34+it\right)$, \eqref{eq:fn-mellintrans1} can be rewritten in the form
\begin{equation}
\label{eq:fn-mellintrans2}
\int_0^\infty G_n^{(3)}(r) r^{\frac12+2it}\,dr =
\frac12 (-i)^n \pi^{-\frac34-it} \Gamma\left(\frac34+it\right)
f_n\left(t\right)\qquad 
\end{equation}
which is a useful integral representation for $f_n(t)$. The next result, which we have not found in the literature, gives yet another representation for $f_n(t)$ in terms of a Mellin transform.

\begin{prop}[Second Mellin transform representation of $f_n$]
\label{prop:fn-mellintrans3}
We have the relation
\begin{align}
\label{eq:fn-mellintrans-scoord}
\int_0^\infty &
\frac{1}{(x+1)^{3/2}} 
\left(\frac{x-1}{x+1}\right)^n x^{s-1}\,dx
\\ \nonumber & =
i^n\frac{2 n!}{\sqrt{\pi} (3/2)_n}
\Gamma(s)\Gamma\left(\frac32-s\right)
 f_n\left(\frac{1}{i}\left(s-\frac34\right)\right)
 \qquad \left(0<\re(s)<\frac32 \right).
\end{align}
Equivalently, with the substitution $s=\frac34+it$, this can be written in the form
\begin{align}
\label{eq:fn-mellintrans-tcoord}
\int_0^\infty 
\frac{1}{(x+1)^{3/2}} &
\left(\frac{x-1}{x+1}\right)^n x^{-\frac14+it}\,dx
=
i^n \frac{2 n!}{\sqrt{\pi}(3/2)_n} \Gamma\left(\frac34+it\right)
\Gamma\left(\frac34-it\right)
f_n(t).
\end{align}
\end{prop}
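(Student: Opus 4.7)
The plan is a direct Mellin-type computation: reduce the integral $I_n(s) := \int_0^\infty (x+1)^{-3/2}\bigl(\tfrac{x-1}{x+1}\bigr)^n x^{s-1}\,dx$ to the Euler integral representation of the Gauss hypergeometric function $\gausshyper(-n, s; 3/2; 2)$, and then match with the defining formula \eqref{eq:fn-def-intro} for $f_n$. The hypothesis $0 < \re(s) < 3/2$ is used precisely to guarantee absolute convergence of the starting integral and the applicability of Euler's integral representation.

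First, I would apply the involution $y = (x-1)/(x+1)$, with inverse $x = (1+y)/(1-y)$, which maps $(0,\infty)$ bijectively onto $(-1,1)$. Using $x+1 = 2/(1-y)$ and $dx = 2(1-y)^{-2}\,dy$ and collecting the powers of $1-y$ and $1+y$, this transforms the integral into
\begin{equation*}
I_n(s) = \frac{1}{\sqrt{2}}\int_{-1}^{1} (1-y)^{1/2-s}(1+y)^{s-1}\, y^n\,dy.
\end{equation*}

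Second, I would invoke Euler's integral representation applied to $\gausshyper(-n,s;3/2;2)$:
\begin{equation*}
\gausshyper(-n, s; 3/2; 2) = \frac{\Gamma(3/2)}{\Gamma(s)\Gamma(3/2-s)}\int_0^1 t^{s-1}(1-t)^{1/2-s}(1-2t)^n\,dt.
\end{equation*}
The substitution $u = 1-2t$ converts the right-hand integral to one over $(-1,1)$, yielding $\tfrac{1}{\sqrt{2}}\int_{-1}^1 (1-u)^{s-1}(1+u)^{1/2-s} u^n\,du$ (up to the Gamma prefactor). A further reflection $u \mapsto -y$ then brings this into precisely the form appearing above for $I_n(s)$, at the cost of a factor $(-1)^n$ produced by $y^n = (-1)^n u^n$. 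Matching yields
\begin{equation*}
I_n(s) = \frac{(-1)^n\,\Gamma(s)\Gamma(3/2-s)}{\Gamma(3/2)}\,\gausshyper(-n,s;3/2;2).
\end{equation*}

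Finally, I would substitute $\xi = (s-3/4)/i$ into the defining formula \eqref{eq:fn-def-intro}, noting that $3/4 + i\xi = s$, which gives $f_n((s-3/4)/i) = \frac{(3/2)_n}{n!}\, i^n\, \gausshyper(-n, s; 3/2; 2)$. Solving for $\gausshyper$ and substituting into the expression for $I_n(s)$, then using $\Gamma(3/2) = \sqrt{\pi}/2$ together with the elementary identity $(-1)^n/i^n = i^n$ (since $i^{-1} = -i$), immediately produces the claimed formula \eqref{eq:fn-mellintrans-scoord}. The $t$-coordinate form \eqref{eq:fn-mellintrans-tcoord} then follows by setting $s = 3/4 + it$ and observing that $\Gamma(3/2 - (3/4+it)) = \Gamma(3/4 - it)$. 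There is no substantive obstacle: the entire argument is a chain of standard manipulations, and the only care needed is tracking the factors of $\sqrt{2}$ from the changes of variable and the sign $(-1)^n$ from the reflection $y\mapsto -y$, which must combine correctly with $1/i^n$ to give the $i^n$ prefactor in the proposition.
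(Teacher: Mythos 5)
Your proof is correct, but it follows a genuinely different route from the paper's. The paper expands $(x-1)^n$ by the binomial theorem, evaluates each resulting term with the beta-type Mellin transform \eqref{eq:mellin-textbook-ex}, and then resums the Pochhammer-laden expression using the explicit convolution formula \eqref{eq:fn-explicit4} together with the symmetry $f_n(-x)=(-1)^nf_n(x)$. You instead make the single M\"obius substitution $y=(x-1)/(x+1)$, which turns the integral into a beta-type integral over $(-1,1)$, and recognize it as Euler's integral representation of $\gausshyper\left(-n,s;\frac32;2\right)$, matching directly against the hypergeometric definition \eqref{eq:fn-def-intro}; I verified the bookkeeping ($x+1=2/(1-y)$, $dx=2(1-y)^{-2}dy$, the two factors of $2^{-1/2}$, and $(-1)^n i^{-n}=i^n$) and it all combines to the stated prefactor $i^n\frac{2n!}{\sqrt{\pi}(3/2)_n}$. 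Your approach is arguably cleaner: it avoids termwise resummation, explains conceptually why the Gauss function at argument $2$ appears, and makes the strip $0<\re(s)<\frac32$ manifest as exactly the convergence condition of the beta integral at both endpoints, whereas the paper's term-by-term route requires the explicit formula \eqref{eq:fn-explicit4} (itself extracted from the generating function). One point you should make explicit: Euler's representation is usually stated for $z$ in the cut plane $\C\setminus[1,\infty)$, and $z=2$ lies on the cut; since $a=-n$ is a non-positive integer, both sides are polynomials in $z$, so the identity valid for $|z|<1$ extends to $z=2$ by polynomial identity (equivalently, one can expand $(1-2t)^n$ and integrate termwise, which is how one proves the terminating case directly). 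With that one-line justification added, your argument is complete.
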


It is interesting to compare this result with Proposition~\ref{prop:gn-mellintrans} (page~\pageref{prop:gn-mellintrans}), which gives an analogous Mellin transform representation for the polynomials $g_n$.

\begin{proof}
Recalling~\eqref{eq:mellin-textbook-ex}, we have
\begin{align*}
\int_0^\infty & 
\frac{1}{(x+1)^{3/2}}
\left(\frac{x-1}{x+1}\right)^n
x^{s-1}\,dx
=
\int_0^\infty \frac{\sum_{k=0}^n (-1)^{n-k} \binom{n}{k} x^k}{(x+1)^{n+3/2}} x^{s-1}\,dx
\\ &
=
\sum_{k=0}^n (-1)^{n-k} \binom{n}{k}
\int_0^\infty \frac{x^{k+s-1}}{(x+1)^{n+3/2}} \,dx
=
\sum_{k=0}^n (-1)^{k-k} \binom{n}{k}
\mellin{\frac{1}{(x+1)^{n+3/2}}}(k+s)
\\& =
\sum_{k=0}^n (-1)^{n-k} \binom{n}{k}
\frac{\Gamma(k+s)\Gamma\left(n-k+\frac32-s\right)}{\Gamma\left(n+\frac32\right)}
\\& =
\frac{\Gamma(s)\Gamma\left(\frac32-s\right)}{\Gamma\left(n+\frac32\right)}
\sum_{k=0}^n (-1)^{n-k} \binom{n}{k}
\left( s(s+1)\cdots (s+k-1) \right)
\\ & \qquad\qquad\qquad\qquad\qquad \times
\left( \left(\frac32-s\right)\left(\frac32-s+1\right)\cdots \left(\frac32-s+n-k-1\right) \right)
\\ & =
\frac{\Gamma(s)\Gamma\left(\frac32-s\right)}{\Gamma\left(n+\frac32\right)}
\sum_{k=0}^n (-1)^{n-k}
\binom{n}{k} 
\times (-1)^k k!\binom{-s}{k}
\times (-1)^{n-k} (n-k)!\binom{s-3/2}{n-k}
\\ & =
\frac{\Gamma(s)\Gamma\left(\frac32-s\right)}{\Gamma\left(n+\frac32\right)}
n! \sum_{k=0}^n (-1)^k
\binom{-s}{k}
\binom{s-3/2}{n-k}
\\ & =
\frac{2 n!}{\sqrt{\pi} (3/2)_n}
\Gamma(s)\Gamma\left(\frac32-s\right)
\sum_{k=0}^n (-1)^k
\binom{-s}{k}
\binom{s-3/2}{n-k}
\\ & =
\frac{2 n!}{\sqrt{\pi} (3/2)_n}
\Gamma(s)\Gamma\left(\frac32-s\right)
i^n f_n\left(\frac{1}{i}\left(s-\frac34\right)\right),
\end{align*}
using \eqref{eq:fn-explicit4} and the symmetry $f_n(-x)= (-1)^n f_n(x)$ in the last step.
\end{proof}

\section{Alternative approach to the $f_n$-expansion of $\Xi(t)$}

\label{sec:rad-alt-fnexp}

The results of the preceding sections make it possible to conceive of a parallel approach to the development of the $f_n$-expansion of $\Xi(t)$ that is distinct from the approach taken in Chapter~\ref{ch:fn-expansion}, and relies entirely on the elementary function $A(r)$ rather than on its more sophisticated companion function $\omega(x)$ (or even the function $\theta(x)$ from which $\omega(x)$ is derived). 

The idea is to define $A(r)$ directly using \eqref{eq:aofr-explicit}, and then to use \eqref{eq:fn-coeffs-radialform} as the definition of the coefficients $c_n$. Theorem~\ref{thm:aofr-selftrans-expansion} and its proof remain valid. Propositions~\ref{prop:aofr-mellin}~and~\ref{prop:fn-mellintrans1} also remain valid (note that, conveniently, the first proof we gave for Proposition~\ref{prop:aofr-mellin} does not rely on the connection between $A(r)$ and $\omega(x)$). We now consider what happens when we formally take the Mellin transform of both sides of \eqref{eq:aofr-selftrans-expansion}; the result is the relation
\begin{align*}
\frac{1}{2(2\pi)^{s-1}} &(s-1)(s-2) \Gamma(s-1)\zeta(s-1)
=
\sum_{n=0}^\infty c_n \cdot
\frac12 (-i)^n \pi^{-s/2} \Gamma\left(\frac{s}{2}\right)
f_n\left(\frac{1}{2i}\left(s-\frac{3}{2}\right)\right).
\end{align*}
After substituting $s+1$ in place of $s$, the left-hand side starts to resemble $\xi(s)$, so, rearranging terms judiciously, we see that
\begin{align*}
\xi(s) & =
\frac12 s(s-1)\pi^{-s/2} \Gamma\left(\frac{s}{2}\right)
\zeta(s)
\\ & =
\left(\frac{\Gamma\left(\frac{s}{2}\right)}{\Gamma(s)}2^s \pi^{s/2} \right) \cdot \left(\frac{1}{2(2\pi)^{s}} s(s-1) \Gamma(s)\zeta(s)\right)
\\ & =
\left(\frac{\Gamma\left(\frac{s}{2}\right)}{\Gamma(s)}2^s \pi^{s/2} \right)
\sum_{n=0}^\infty c_n 
\cdot \frac12 
(-i)^n \pi^{-(s+1)/2} \Gamma\left(\frac{s+1}{2}\right)
f_n\left(\frac{1}{2i}\left(s-\frac{1}{2}\right)\right)
\\ & =
\sum_{n=0}^\infty
(-i)^n c_n
\left(
\frac{\Gamma\left(\frac{s}{2}\right)}{\Gamma(s)}2^s \pi^{s/2}
\times \frac12
\pi^{-(s+1)/2} \Gamma\left(\frac{s+1}{2}\right)
\right)
f_n\left(\frac{1}{2i}\left(s-\frac{1}{2}\right)\right)
\\ & =
\sum_{n=0}^\infty
(-i)^n c_n
f_n\left(\frac{1}{2i}\left(s-\frac{1}{2}\right)\right),
\end{align*}
with the cancellation in the last step following from the duplication formula \eqref{eq:duplication-formula}. Setting $s=\frac12+it$, we again recover the $f_n$-expansion \eqref{eq:fn-expansion}---a satisfying result.

Note that the above approach, while it is rather intuitive and provides useful insight into the meaning and significance of the $f_n$-expansion, nonetheless suffers from the drawback that using \eqref{eq:fn-coeffs-radialform} as the definition of the coefficients $c_n$ does not make the positivity property of the even-indexed coefficients evident, nor can we see a way to understand their asymptotic behavior directly from this definition. For this reason, the approach we originally took in Chapter~\ref{ch:fn-expansion} seems preferable as the initial basis for developing the theory of the $f_n$-expansion.

\section{Centered versions of balanced functions}

\label{sec:rad-centered-recbal}

If $f$ is a balanced function of weight $\alpha$, denote
\begin{equation}
\label{eq:balanced-centered-def}
\tilde{f}(t) = \frac{1}{(1+t)^\alpha} f\left(\frac{1-t}{1+t}\right) \qquad (|t|<1).
\end{equation}
We refer to $\tilde{f}$ as the \firstmention{centered version of $f$}.

\begin{lem}
The centered version $\tilde{f}$ of a balanced function of weight $\alpha$ is an even function.
\end{lem}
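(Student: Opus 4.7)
The plan is to prove evenness by direct computation starting from the definition \eqref{eq:balanced-centered-def}. First, I would write down $\tilde{f}(-t)$ explicitly by substituting $-t$ into the definition:
\[
\tilde{f}(-t) = \frac{1}{(1-t)^\alpha} f\left(\frac{1+t}{1-t}\right).
\]
The crucial observation to make at this stage is that the arguments $\frac{1-t}{1+t}$ and $\frac{1+t}{1-t}$ are multiplicative inverses of each other. Setting $x=\frac{1-t}{1+t}$, we have $1/x=\frac{1+t}{1-t}$, so this is exactly the kind of relationship on which the balanced-function hypothesis can act.

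Next I would invoke the defining functional equation $f(1/x) = x^\alpha f(x)$ of a balanced function of weight $\alpha$, applied at $x=\frac{1-t}{1+t}$, to rewrite
\[
f\left(\frac{1+t}{1-t}\right) = \left(\frac{1-t}{1+t}\right)^\alpha f\left(\frac{1-t}{1+t}\right).
\]
Substituting this back into the expression for $\tilde{f}(-t)$ and simplifying the algebraic prefactor, the $(1-t)^\alpha$ factors cancel and one is left with
\[
\tilde{f}(-t) = \frac{(1-t)^\alpha}{(1-t)^\alpha (1+t)^\alpha} f\left(\frac{1-t}{1+t}\right) = \frac{1}{(1+t)^\alpha} f\left(\frac{1-t}{1+t}\right) = \tilde{f}(t),
\]
which is the desired evenness.

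There is really no serious obstacle here; the proof is a one-line algebraic verification, and the only subtle point is the need to be careful about the domain (the map $t\mapsto \frac{1-t}{1+t}$ sends $(-1,1)$ bijectively to $(0,\infty)$, which is where the balanced identity is assumed to hold), and about choosing a consistent branch of $(1\pm t)^\alpha$ when $\alpha$ is not an integer, which is harmless for $|t|<1$ where both factors are positive.
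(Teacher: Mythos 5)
Your proof is correct and is exactly the verification the paper leaves implicit: the paper's proof simply asserts that $\tilde{f}(-t)=\tilde{f}(t)$ is algebraically equivalent to the balancedness equation $f(1/x)=x^\alpha f(x)$, and your computation at $x=\frac{1-t}{1+t}$ spells this out. Your closing remarks about the bijection $t\mapsto\frac{1-t}{1+t}$ from $(-1,1)$ to $(0,\infty)$ and the positivity of $1\pm t$ for $|t|<1$ are accurate and address the only points of care.
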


\begin{proof} It is trivial to verify that the equation $\tilde{f}(-t)=\tilde{f}(t)$ is algebraically equivalent to the functional equation \eqref{eq:balanced-centered-def}.
\end{proof}

We remark that, in the context of the modular form $\theta(x)$, the notion of its centered version was studied in \cite{romik} (see also \cite[Sec.~5.1]{zagier123} where a similar change of variables for modular forms was discussed). 

One illustration of the relevance of the above definition is the following simple fact concerning the centered version $\tilde{\omega}(t)$ of $\omega(x)$.

\begin{prop}
\label{prop:cn-as-moments}
The coefficients $c_n$ can be alternatively expressed as 
\begin{equation}
\label{eq:fn-coeffs-asmoments}
c_n = 2 \int_{-1}^1 \tilde{\omega}(u) u^n\,du
\qquad (n\ge 0).
\end{equation}
\end{prop}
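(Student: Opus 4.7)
The proof plan is a direct change of variables in the integral defining $c_n$. The key observation is that the substitution $u = \frac{x-1}{x+1}$ bijectively maps $(0,\infty)$ onto $(-1,1)$, sends the factor $\left(\frac{x-1}{x+1}\right)^n$ in the integrand of \eqref{eq:def-fn-coeffs} exactly to $u^n$, and (as we will see) converts the remaining factor $\frac{\omega(x)}{(x+1)^{3/2}}\,dx$ into a scalar multiple of $\tilde{\omega}(u)\,du$. Since $\omega(x)$ is balanced of weight $1/2$, its centered version is $\tilde{\omega}(t) = (1+t)^{-1/2}\omega\!\left(\frac{1-t}{1+t}\right)$, so the functional equation $\omega(1/x) = \sqrt{x}\,\omega(x)$ is precisely what makes $\tilde{\omega}$ an even function.

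First I would record that $x = \frac{1+u}{1-u}$, $x+1 = \frac{2}{1-u}$, and $dx = \frac{2}{(1-u)^2}\,du$, from which a short computation gives
\begin{equation*}
\frac{dx}{(x+1)^{3/2}} = \frac{\sqrt{x+1}}{2}\,du = \frac{1}{\sqrt{2}\,\sqrt{1-u}}\,du.
\end{equation*}
Next, to handle the $\omega(x)$ factor, I would use the evenness of $\tilde{\omega}$. Writing the definition of $\tilde{\omega}$ with $-u$ in place of $u$ yields
\begin{equation*}
\tilde{\omega}(-u) = \frac{1}{\sqrt{1-u}}\,\omega\!\left(\frac{1+u}{1-u}\right) = \frac{1}{\sqrt{1-u}}\,\omega(x),
\end{equation*}
and since $\tilde{\omega}(-u) = \tilde{\omega}(u)$, we conclude $\omega(x) = \sqrt{1-u}\,\tilde{\omega}(u)$.

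Combining the two displayed identities, the integrand transforms as
\begin{equation*}
\frac{\omega(x)}{(x+1)^{3/2}}\left(\frac{x-1}{x+1}\right)^n dx = \frac{1}{\sqrt{2}}\,\tilde{\omega}(u)\,u^n\,du,
\end{equation*}
so the factor $2\sqrt{2}$ in the definition of $c_n$ becomes the factor $2$ in \eqref{eq:fn-coeffs-asmoments}, yielding the claimed identity. There is no real obstacle here; the only point requiring any thought is the bookkeeping that invokes the evenness of $\tilde{\omega}$ (equivalently, the functional equation for $\omega$) to interpret the transformed weight as $\tilde{\omega}(u)$ rather than $\tilde{\omega}(-u)$. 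The absolute convergence needed to justify the substitution is already guaranteed by the estimates \eqref{eq:omegax-asym-xinfty}--\eqref{eq:omegax-asym-xzero} used in defining $c_n$.
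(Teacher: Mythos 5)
Your proposal is correct and takes exactly the same route as the paper, whose entire proof consists of the substitution $u=\frac{x-1}{x+1}$ applied to \eqref{eq:def-fn-coeffs}, with the verification left to the reader. Your computation fills in those details accurately, including the one genuinely needed ingredient: the evenness of $\tilde{\omega}$ (equivalently, the functional equation $\omega(1/x)=\sqrt{x}\,\omega(x)$) to identify the transformed weight as $\tilde{\omega}(u)$ rather than $\tilde{\omega}(-u)$.
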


\begin{proof}
This is a trivial reinterpretation of the defining formula
\eqref{eq:def-fn-coeffs} for $c_n$: simply make the change of variables $u=\frac{x-1}{x+1}$, and check that we get precisely \eqref{eq:fn-coeffs-asmoments}
\end{proof}

See \secref{sec:omega-tilde-properties} for further discussion of $\tilde{\omega}(t)$. The centered version $\tilde{\nu}(t)$ of $\nu(x)$ also has an important role to play in connection with both the $f_n$-expansion and the $g_n$-expansion of the Riemann xi function, as we shall see in the next section and later in Chapter~\ref{ch:gn-expansion}.

\section{Properties of $\nu(x)$ and $B(r)$}

\label{sec:rad-properties-nu}

\begin{prop}
\label{eq:nuofx-properties}
The function $\nu(x)$ has the following properties:

(i) $\nu(x)$ is positive and monotone decreasing.

(ii) $\nu(0)=\frac{\pi}{6}$.

(iii) $\nu(x)$ has the asymptotic expansions
\begin{align}
\nu(x) & =
\sum_{k=0}^n
\frac{(2k+1) \pi^{k+1} B_{2k+2}}{k!} x^k
+ O(x^{n+1})
\qquad\quad\,\ \ \qquad \textrm{as }x\to 0,
\label{eq:nutilde-asym1}
\\
\nu(x) & =
\sum_{k=0}^n
\frac{(2k+1) \pi^{k+1} B_{2k+2}}{k!} x^{-k-3/2}
+ O(x^{-n-5/2})
\qquad \textrm{as }x\to \infty,
\label{eq:nutilde-asym2}
\end{align}
\end{prop}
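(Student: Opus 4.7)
For (i), positivity of $\nu(x)$ is immediate from $\omega(u)>0$ on $(0,\infty)$ (cf.\ \eqref{eq:theta-functional-equation} and the surrounding discussion); differentiating under the integral sign---justified by the decay estimates \eqref{eq:omegax-asym-xinfty}--\eqref{eq:omegax-asym-xzero}---yields $\nu'(x) = -\tfrac{3}{2}\int_0^\infty \omega(u)(u+x)^{-5/2}\,du < 0$, hence strict monotonicity. For (ii), the integral $\int_0^\infty \omega(u)u^{-3/2}\,du$ obtained by setting $x=0$ is the value at $s=-1$ of the Mellin transform \eqref{eq:riemannxi-mellintrans} of $\omega$ (the integral converges absolutely by \eqref{eq:omegax-asym-xinfty}--\eqref{eq:omegax-asym-xzero}); the functional equation \eqref{eq:xi-functional-eq} then gives $\nu(0) = \xi(-1) = \xi(2) = \zeta(2)/\pi = \pi/6$.

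The crux of the proof is (iii), which I would attack via Mellin--Barnes inversion of the formula furnished by Proposition~\ref{prop:mellin-nu}. Writing $\tilde s$ in place of $s/2$ reshapes that formula as $\int_0^\infty \nu(x)x^{\tilde s - 1}\,dx = \tfrac{2}{\sqrt\pi}\Gamma(\tilde s)\Gamma(\tfrac{3}{2}-\tilde s)\xi(2\tilde s-1)$ for $0<\re\tilde s<3/2$, and the corresponding inverse representation reads
\begin{equation*}
\nu(x) = \frac{1}{2\pi i}\int_{\sigma-i\infty}^{\sigma+i\infty}\frac{2}{\sqrt{\pi}}\Gamma(\tilde{s})\Gamma\!\left(\tfrac{3}{2}-\tilde{s}\right)\xi(2\tilde{s}-1)\,x^{-\tilde{s}}\,d\tilde{s} \qquad (0<\sigma<3/2).
\end{equation*}
The integrand is meromorphic ($\xi$ is entire), with simple poles of $\Gamma(\tilde s)$ at $\tilde s=-k$ feeding the $x\to 0$ expansion and simple poles of $\Gamma(3/2-\tilde s)$ at $\tilde s=3/2+k$ feeding the $x\to\infty$ expansion. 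Shifting the contour past the first $n+1$ poles on the appropriate side and invoking Cauchy's theorem will produce the asserted expansions, with the shifted tail integral providing the remainder.

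The residue at $\tilde s=-k$ combines $\operatorname{Res}_{\tilde s=-k}\Gamma(\tilde s) = (-1)^k/k!$, the half-integer value $\Gamma(3/2+k) = (2k+1)!\sqrt\pi/(2^{2k+1}k!)$, the functional equation $\xi(-1-2k) = \xi(2k+2)$, and Euler's formula $\xi(2k+2) = (-1)^k 2^{2k}\pi^{k+1}k!\,B_{2k+2}/(2k)!$ (already used in the proof of Lemma~\ref{lem:omega-moments}); straightforward algebra collapses these to $(2k+1)\pi^{k+1}B_{2k+2}/k!$, which is precisely the $k$th summand of \eqref{eq:nutilde-asym1}. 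I would sidestep the parallel residue computation for \eqref{eq:nutilde-asym2} by invoking the balance identity $\nu(x) = x^{-3/2}\nu(1/x)$ (Lemma~\ref{lem:new-from-old} with $\alpha=1/2$) and substituting the already-established expansion of $\nu(1/x)$ near $1/x = 0$, which converts every $x^k$ term into the matching $x^{-k-3/2}$ term with the same coefficient.

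The hard part will be the technical justification of the contour shift. Stirling's formula yields $|\Gamma(\tilde s)\Gamma(3/2-\tilde s)| = O(e^{-\pi|\im\tilde s|})$ uniformly for $\re\tilde s$ in any compact interval, while the $\Gamma$-factor in the definition of $\xi$ combined with standard convexity bounds for $\zeta$ gives $|\xi(2\tilde s - 1)| = O(|\im\tilde s|^A e^{-\pi|\im\tilde s|/2})$ for some $A$; the resulting $e^{-\pi|\im\tilde s|/2}$ decay on vertical strips is ample to render the horizontal closing segments negligible in the limit and to bound the tail at the shifted contour by the claimed $O(x^{n+1})$ (respectively $O(x^{-n-5/2})$).
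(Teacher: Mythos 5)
Your proposal is correct and matches the paper's proof in all essentials: the paper likewise handles (i) and (ii) directly from the definition and the evaluation $\nu(0)=\xi(-1)=\xi(2)=\pi/6$, proves one expansion in (iii) by Mellin inversion of Proposition~\ref{prop:mellin-nu} with a contour shift to the left past the poles of $\Gamma(s)$ at $s=0,-1,\dots$ (computing the residue at $s=-k$ via $\xi(-2k-1)=\xi(2k+2)$ and Euler's formula, arriving at the same value $(2k+1)\pi^{k+1}B_{2k+2}/k!$), and obtains the companion expansion from the balancedness relation $\nu(1/x)=x^{3/2}\nu(x)$, with the same exponential-decay-on-vertical-lines argument justifying the shift. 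Your only deviations are cosmetic: you spell out $\nu'(x)<0$ explicitly where the paper says ``immediate,'' and you state the gamma-product bound as $O(e^{-\pi|\im \tilde s|})$ where a polynomial factor $|\im\tilde s|^{1/2}$ strictly requires $O(e^{-(\pi-\epsilon)|\im\tilde s|})$ --- harmless, since any exponential decay suffices.
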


\begin{proof}
(i) is immediate from the definition of $\nu(x)$. (ii) is a special case of (iii), but also follows more directly from the definition of $\nu(x)$, since for $x=0$ we have
\begin{align*}
\nu(0) & = \int_0^\infty \frac{\omega(u)}{u^{3/2}}\,du 
= \xi(-1) = \xi(2) = \frac12 \cdot 2\cdot 1\cdot \pi^{-1} \Gamma(1) \zeta(2) = \frac{\pi}{6}.
\end{align*}

To prove (iii), first, note that \eqref{eq:nutilde-asym1} follows immediately from \eqref{eq:nutilde-asym2} using the balancedness property of $\nu(x)$. Now, to prove \eqref{eq:nutilde-asym2} we use the technique of Mellin transform asymptotics, described, e.g., in \cite[Appendix~B.7]{flajolet-sedgewick}.
Let $\varphi(s) = \frac{2}{\sqrt{\pi}} \Gamma(s) \Gamma(\frac32-s)\xi(2s-1)$ denote the Mellin transform of $\nu(x)$. From the Mellin inversion formula we have
\begin{equation*}
\nu(x) = \frac{1}{2\pi i} \int_{c-i\infty}^{c+i\infty} \varphi(s) x^{-s}\,ds,
\end{equation*}
where $c$ is an arbitrary number in $(0,3/2)$. We now shift the integration contour to the left to the line $\im(s)=-n-3/2$ and apply the residue theorem to calculate the change in the value of the integral resulting from this contour shift. Note that the contour is unbounded, so one has to justify this use of the residue theorem using a limiting argument involving the rate of decay of the integrand along vertical lines; this is not hard to do using the standard facts that the xi and gamma functions both decay exponentially fast along vertical lines
(see \cite[p.~21]{andrews-askey-roy}, \cite[p.~121]{paris-kaminski}).

With this technicality out of the way, we see that the result of the contour shift is that
for each pole of the integrand $\varphi(s)x^{-s}$ that this contour shift skips over, the integral (including the factor of $\frac{1}{2\pi i}$ in front) changes by an amount equal to its residue. The poles being skipped over in this case are at $s=0, -1,\ldots, -n-1$, and the residue for the pole at $s=-k$ is equal to $x^k$ multiplied by
\begin{align*}
\frac{2}{\sqrt{\pi}} &\frac{(-1)^k}{k!} \Gamma\left(k+\frac32 \right) \xi(-2k-1)
=
\frac{2(-1)^k}{k!}\cdot
\frac{(2k+2)!}{2^{2k+2}(k+1)!}\cdot
\xi(2k+2)
\\ &=
\frac{(-1)^k (2k+2)!}{2^{2k+1} k! (k+1)!} \cdot
\frac12 (2k+2)(2k+1) \pi^{-(k+1)}k! \zeta(2k+2)
\\ &=
\frac{(-1)^k (2k+2)!}{2^{2k+1} k! (k+1)!} \cdot
\frac12 (2k+2)(2k+1) \pi^{-(k+1)}k! \cdot
\frac{(-1)^{k}(2\pi)^{2k+2}}{2(2k+2)!}B_{2k+2}
\\ &=
\frac{(2k+1)\pi^{k+1}}{k!}
B_{2k+2}.
\end{align*}
Thus, following the contour shift we get the alternative expression
\begin{align*}
\nu(x) &= \frac{1}{2\pi i} \int_{-(n+3/2)-i\infty}^{-(n+3/2)+i\infty} \varphi(s) x^{-s}\,ds
+
\sum_{k=0}^{n+1} \frac{(2k+1)\pi^{k+1}}{k!} B_{2k+2} x^k
\\ & =
\sum_{k=0}^{n+1} \frac{(2k+1)\pi^{k+1}}{k!} B_{2k+2} x^k
+ x^{n+3/2} O\left(
\int_{-(n+3/2)-i\infty}^{-(n+3/2)+i\infty} |\varphi(s)|\,ds
\right)
\\ & =
\sum_{k=0}^{n} \frac{(2k+1)\pi^{k+1}}{k!} B_{2k+2} x^k
+ O(x^{n+1}),
\end{align*}
as claimed.
\end{proof}

The next result shows that $\tilde{\nu}$ can be thought of as a generating function for the coefficient sequence $(c_n)_{n=0}^\infty$, providing another illustration of why $\nu(x)$ and $\tilde{\nu}(t)$ are interesting functions to study.

\begin{thm}[The function $\tilde{\nu}(t)$ as a generating function for the sequence $(c_n)_{n=0}^\infty$]
The centered function $\tilde{\nu}(t)$ has the power series expansion
\begin{equation}
\label{eq:nutilde-taylor}
\tilde{\nu}(t) =
\frac{1}{2\sqrt{2}} \sum_{n=0}^\infty 
\frac{(-1)^n (3/2)_n}{n!} c_n t^n
\qquad (|t|<1).
\end{equation}
\end{thm}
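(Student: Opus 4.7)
The plan is to substitute the definitions directly and expand a binomial series inside the integral defining $\nu$. Since $\nu$ is balanced of weight $3/2$, we have $\tilde{\nu}(t) = (1+t)^{-3/2}\nu\bigl(\tfrac{1-t}{1+t}\bigr)$, and using the definition $\nu(x) = \int_0^\infty \omega(u)(u+x)^{-3/2}\,du$ a short manipulation gives
\begin{equation*}
\tilde{\nu}(t) = \int_0^\infty \frac{\omega(u)}{\bigl((u+1)+t(u-1)\bigr)^{3/2}}\,du
= \int_0^\infty \frac{\omega(u)}{(u+1)^{3/2}} \left(1 + t\,\frac{u-1}{u+1}\right)^{-3/2}\,du.
\end{equation*}

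The next step is to expand the factor $(1 + tz)^{-3/2}$ (with $z=(u-1)/(u+1)$) via the binomial series
\begin{equation*}
(1+tz)^{-3/2} = \sum_{n=0}^\infty \frac{(-1)^n (3/2)_n}{n!}\, t^n z^n,
\end{equation*}
substitute it into the integral, and interchange the order of summation and integration. Once this interchange is justified, the $n$th term's integral is exactly $c_n/(2\sqrt{2})$ by the definition \eqref{eq:def-fn-coeffs} of $c_n$, and \eqref{eq:nutilde-taylor} follows immediately.

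The only real obstacle is justifying the interchange. Here I would appeal to Fubini--Tonelli: the absolute version of the interchanged sum is
\begin{equation*}
\int_0^\infty \frac{\omega(u)}{(u+1)^{3/2}} \sum_{n=0}^\infty \frac{(3/2)_n}{n!}\, |t|^n \left|\frac{u-1}{u+1}\right|^n du = \int_0^\infty \frac{\omega(u)}{(u+1)^{3/2}} \left(1 - |t|\,\frac{|u-1|}{u+1}\right)^{-3/2} du.
\end{equation*}
Since $|u-1|/(u+1) \le 1$ for all $u>0$, the bracketed factor is bounded above by $(1-|t|)^{-3/2}$, which is finite for $|t|<1$. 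The remaining integral $\int_0^\infty \omega(u)(u+1)^{-3/2}\,du$ converges by the decay estimates \eqref{eq:omegax-asym-xinfty}--\eqref{eq:omegax-asym-xzero}. Hence the Fubini interchange is valid throughout $|t|<1$, which completes the proof.
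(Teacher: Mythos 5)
Your proof is correct and takes essentially the same route as the paper's: substitute the definition of $\tilde{\nu}$, rewrite the integrand as $\frac{\omega(u)}{(u+1)^{3/2}}\left(1+t\,\frac{u-1}{u+1}\right)^{-3/2}$, expand the binomial series, and interchange sum and integral so that the $n$th coefficient is recognized as $c_n/(2\sqrt{2})$ via \eqref{eq:def-fn-coeffs}. Your explicit Fubini--Tonelli justification of the interchange---using $\left|\frac{u-1}{u+1}\right|\le 1$ to bound the summed absolute values by $(1-|t|)^{-3/2}$ times the convergent integral $\int_0^\infty \omega(u)(u+1)^{-3/2}\,du$---is correct and in fact supplies a detail the paper passes over silently.
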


\begin{proof}
Noting the Taylor expansion
\begin{equation*}
\sum_{n=0}^\infty \frac{(3/2)_n}{n!} z^n = (1-z)^{-3/2},
\end{equation*}
we write
\begin{align*}
\tilde{\nu}(t) & =
\frac{1}{(1+t)^{3/2}} \nu\left(\frac{1-t}{1+t}\right)
=
\frac{1}{(1+t)^{3/2}} \int_0^\infty 
\frac{\omega(u)}{\left(u+\frac{1-t}{1+t}\right)^{3/2}}\,du
\\ & =
\int_0^\infty \omega(u) \left( 
1-t+u+t u
\right)^{-3/2}\,du
=
\int_0^\infty \frac{\omega(u)}{(u+1)^{3/2}}\left( 
1+t\frac{u-1}{u+1}
\right)^{-3/2}\,du
\\ & =
\int_0^\infty \frac{\omega(u)}{(u+1)^{3/2}}\left( 
\sum_{n=0}^\infty \frac{(-1)^n (3/2)_n}{n!} 
\left( \frac{u-1}{u+1} \right)^n t^n
\right)
\,du
\\ & =
\sum_{n=0}^\infty 
\frac{(-1)^n (3/2)_n}{n!} \left(
\int_0^\infty \frac{\omega(u)}{(u+1)^{3/2}}
\left(\frac{u-1}{u+1}\right)^n\,du
\right) t^n
=
\frac{1}{2\sqrt{2}} \sum_{n=0}^\infty 
\frac{(-1)^n (3/2)_n}{n!} c_n t^n,
\end{align*}
as claimed.
\end{proof}

Note that the power series \eqref{eq:nutilde-taylor} also converges for $t=1$ (as is immediately apparent from the asymptotic rate of decay of the coefficients $c_{2n}$), and, by Proposition~\ref{eq:nuofx-properties}(ii), its value there is given explicitly by
\begin{align*}
\tilde{\nu}(1) = \frac{1}{(1+1)^{3/2}} \nu\left(\frac{1-1}{1+1}\right) = \frac{1}{2\sqrt{2}} \nu(0) = \frac{\pi}{12\sqrt{2}}.
\end{align*}
That is, we have the summation identity
\begin{equation*}
\sum_{n=0}^\infty \frac{(3/2)_{2n}}{(2n)!} c_{2n}
= \frac{\pi}{6}.
\end{equation*}

Next, we mention another curious integral representation expressing $\nu(x)$ in terms of $A(r)$.

\begin{prop}
\label{prop:stieltjes-from-laplace}
The function $\nu(x)$ can be expressed in terms of $A(r)$ as
\begin{equation*}
\nu(x) = 4\pi \int_0^\infty A(r)r^2
e^{-\pi x r^2}
\,dr.
\end{equation*}
\end{prop}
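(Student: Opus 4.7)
The plan is to substitute the Laplace-type definition of $A(r)$ into the claimed formula, swap the order of integration via Fubini, and reduce the inner integral to a standard gamma integral that collapses to $(u+x)^{-3/2}$ up to a constant that exactly cancels the prefactor $4\pi$.

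More concretely, I would start from $A(r) = \int_0^\infty \omega(u) e^{-\pi u r^2}\, du$ and write
\begin{equation*}
4\pi \int_0^\infty A(r) r^2 e^{-\pi x r^2}\, dr
= 4\pi \int_0^\infty \int_0^\infty \omega(u)\, r^2\, e^{-\pi(u+x) r^2}\, du\, dr.
\end{equation*}
The exchange of integration order is easy to justify: by \eqref{eq:omegax-asym-xinfty}--\eqref{eq:omegax-asym-xzero}, $\omega(u)$ decays rapidly at both ends, and $r^2 e^{-\pi(u+x)r^2}$ is positive and integrable for any $x>0$, so Tonelli applies to the nonnegative integrand. After swapping, the inner integral is
\begin{equation*}
\int_0^\infty r^2 e^{-\pi(u+x) r^2}\, dr,
\end{equation*}
which, after the substitution $s = \pi(u+x) r^2$, reduces to $\tfrac{1}{2(\pi(u+x))^{3/2}} \int_0^\infty s^{1/2} e^{-s}\, ds = \tfrac{\Gamma(3/2)}{2(\pi(u+x))^{3/2}} = \tfrac{1}{4\pi(u+x)^{3/2}}$.

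Substituting this back, the factor $4\pi$ cancels precisely, leaving
\begin{equation*}
\int_0^\infty \frac{\omega(u)}{(u+x)^{3/2}}\, du = \nu(x),
\end{equation*}
which is the defining formula for $\nu(x)$ from \secref{sec:rad-bofr}. There is no real obstacle here; the computation is entirely routine once one recognizes that the identity is essentially the classical representation $(u+x)^{-\alpha} = \Gamma(\alpha)^{-1} \int_0^\infty t^{\alpha-1} e^{-(u+x)t}\, dt$ (here with $\alpha=3/2$ and $t=\pi r^2$) applied inside the defining integral for $\nu(x)$, then rearranged so that the $t$-integral is recognized as $4\pi \int_0^\infty (\cdot)\, r^2\, dr$ after the Laplace kernel has been absorbed into $A$.
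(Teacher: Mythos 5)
Your proof is correct and takes essentially the same route as the paper: substitute the Laplace-transform definition of $A(r)$, swap the order of integration, and evaluate the inner integral $\int_0^\infty r^2 e^{-\pi(u+x)r^2}\,dr = \tfrac12\Gamma\left(\tfrac32\right)\pi^{-3/2}(u+x)^{-3/2} = \tfrac{1}{4\pi(u+x)^{3/2}}$, which cancels the prefactor $4\pi$. Your explicit Tonelli justification (which the paper leaves implicit) is a minor improvement in rigor but does not change the argument.
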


\begin{proof}
As pointed out to us by Jim Pitman, this is a special case of a standard relation expressing the generalized Stieltjes transform of a function in terms of its Laplace transform. In our notation, we have that
\begin{align*}
\int_0^\infty A(r) r^2 e^{-\pi x r^2}\,dr &=
\int_0^\infty \left(\int_0^\infty \omega(t) e^{-\pi t r^2} \,dt
\right) r^2  e^{-\pi xr^2}\,dr
\\ & =
\int_0^\infty \omega(t) \left(
\int_0^\infty r^2 e^{-\pi(x+t)r^2}\,dr
\right) \,dt
\\ & = \int_0^\infty \omega(t) \left(\frac{1}{2} \Gamma\left(\frac32\right) 
\frac{1}{\pi^{3/2}(x+t)^{3/2}}\right) \,dt = \frac{1}{4\pi} \nu(x).
\end{align*}
\end{proof}

Next, we turn to examining the function $B(r)$. As it turns out, it can be evaluated explicitly in terms of the digamma function.

\begin{prop}
\label{prop:bofr-explicit}
The function $B(r)$ is given explicitly by
\begin{equation}
\label{eq:bofr-explicit}
B(r) = 1-2r \psi'(r) - r^2 \psi''(r),\end{equation}
where $\psi(x) = \frac{\Gamma'(x)}{\Gamma(x)}$ is the digamma function.
\end{prop}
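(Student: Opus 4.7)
The plan is to express $B(r)$ as an integral against the kernel $s/(e^{2\pi s}-1)$ and then match the result with the digamma side via Binet's second integral formula.

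First, I would substitute the representation $\nu(x) = 4\pi\int_0^\infty A(s) s^2 e^{-\pi x s^2}\,ds$ from Proposition~\ref{prop:stieltjes-from-laplace} into $B(r) = \int_0^\infty \nu(x) e^{-\pi x r^2}\,dx$, swap the order of integration (justified by absolute convergence for $r>0$), and evaluate the inner integral $\int_0^\infty e^{-\pi x(s^2+r^2)}\,dx = \pi^{-1}(s^2+r^2)^{-1}$ to obtain
\[
B(r) = 4\int_0^\infty \frac{s^2 A(s)}{s^2+r^2}\,ds.
\]
Using Proposition~\ref{prop:aofr-explicit} together with the identity $\coth(\pi s) = 1 + 2/(e^{2\pi s}-1)$, the linear piece $s/4$ in $\tfrac{s}{4}\coth(\pi s)$ vanishes under two derivatives, so $A(s) = \tfrac12 g''(s)$ where $g(s) = s/(e^{2\pi s}-1)$. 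Two integrations by parts in $s$---with all boundary terms vanishing thanks to the double zero of $s^2/(s^2+r^2)$ at $s=0$ and the exponential decay of $g$ and its derivatives at infinity---combined with the calculations $(s^2/(s^2+r^2))' = 2sr^2/(s^2+r^2)^2$ and $(s/(s^2+r^2)^2)' = (r^2-3s^2)/(s^2+r^2)^3$ then give
\[
B(r) = 4 r^2 \int_0^\infty \frac{s(r^2-3s^2)}{(s^2+r^2)^3(e^{2\pi s}-1)}\,ds = 16 r^4 K(r) - 12 r^2 J(r),
\]
where $J(r) := \int_0^\infty s\,ds/((s^2+r^2)^2(e^{2\pi s}-1))$ and $K(r) := \int_0^\infty s\,ds/((s^2+r^2)^3(e^{2\pi s}-1))$; the final equality uses $s^3 = s(s^2+r^2) - sr^2$.

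Second, I would invoke Binet's second formula
\[
\psi(r) = \log r - \frac{1}{2r} - 2\int_0^\infty \frac{s\,ds}{(s^2+r^2)(e^{2\pi s}-1)}.
\]
Differentiating under the integral sign twice (again justified by the exponential damping factor) yields $\psi'(r) = 1/r + 1/(2r^2) + 4r J(r)$ and $\psi''(r) = -1/r^2 - 1/r^3 + 4J(r) - 16 r^2 K(r)$; the $K$-integral emerges from the identity $\tfrac{d}{dr}(s^2+r^2)^{-2} = -4r(s^2+r^2)^{-3}$. Forming the combination $1 - 2r\psi'(r) - r^2\psi''(r)$, the elementary terms $\pm 1$ and $\pm 1/r$ cancel in pairs, and what remains is exactly $-12r^2 J(r) + 16 r^4 K(r)$, which matches the expression obtained above for $B(r)$. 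This completes the identification.

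The principal obstacle is simply careful bookkeeping: the two integrations by parts in the first step and the parallel double differentiation of Binet's formula in the second each produce several terms, and sign errors are the main hazard. All convergence and Fubini justifications are routine, controlled by the rapid decay of $e^{-2\pi s}$ combined with the fact that $g$ remains bounded at $s=0$ with $g(0) = 1/(2\pi)$.
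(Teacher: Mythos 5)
Your proof is correct, and it takes a genuinely different route from the paper. The paper gives two proofs, both passing through Mellin transform theory: the first computes the Mellin transform of $\beta(r)=1-2r\psi'(r)-r^2\psi''(r)$ from the partial-fraction expansions of the polygamma functions, and---because that transform is naturally available only in the strip $-1<\re(s)<0$ after subtracting the constant $1$, while the Mellin transform of $B(r)$ lives in $0<\re(s)<2$---requires an integration-by-parts-plus-analytic-continuation trick before invoking Mellin uniqueness; the second uses Mellin inversion and a residue computation to recover the Taylor series of $B(r)$ on $(0,1)$ and matches coefficients against $\beta(r)$. Your argument bypasses Mellin theory altogether: combining Proposition~\ref{prop:stieltjes-from-laplace} with the definition of $B(r)$ and Tonelli (everything is nonnegative, since $A(s)=\int_0^\infty \omega(x)e^{-\pi x s^2}\,dx>0$) gives $B(r)=4\int_0^\infty s^2A(s)(s^2+r^2)^{-1}\,ds$, two integrations by parts move the derivatives off $A=\tfrac12\big(s/(e^{2\pi s}-1)\big)''$, and Binet's second formula supplies the matching integrals on the digamma side. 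I verified the bookkeeping: the derivative computations, the decomposition $sr^2-3s^3=4sr^2-3s(s^2+r^2)$ yielding $B(r)=16r^4K(r)-12r^2J(r)$, the formulas $\psi'(r)=\tfrac1r+\tfrac{1}{2r^2}+4rJ(r)$ and $\psi''(r)=-\tfrac{1}{r^2}-\tfrac{1}{r^3}+4J(r)-16r^2K(r)$, and the final cancellation $1-2r\psi'(r)-r^2\psi''(r)=-12r^2J(r)+16r^4K(r)$ all check out; the boundary terms vanish as you say, with the minor caveat that the second boundary term $s\,g(s)/(s^2+r^2)^2$ has only a simple (not double) zero at $s=0$, rescued by $g(0)=1/(2\pi)$ being finite---a cosmetic point. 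What your approach buys is a self-contained real-variable proof whose only external input is the classical Binet formula, with no analytic continuation gymnastics; what the paper's approach buys is that its route establishes the identification at the level of Mellin transforms, tying $B(r)$ directly to the zeta function and its functional equation via \eqref{eq:bofr-mellin}, which is thematically central to the chapter.
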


Combining this with Lemma~\ref{lem:recbal-selftrans}, we obtain the following result, which seems to be new (compare with \cite[Sec.~5.3]{berndt}, \cite[Sec.~9.12]{titschmarsh-fourier}, \cite{mathoverflow-fourier} where some results with a similar flavor are discussed).

\begin{corollary}
The function $1-2r \psi'(r)-r^2 \psi''(r)$ is a Fourier self-transform, considered as a radial function on $\R$. That is, it satisfies the integral equation
\begin{equation*}
F(\rho)  =
2 \int_0^\infty F(r) \cos(2\pi r\rho)\,dr \qquad (\rho\ge0).
\end{equation*}

\end{corollary}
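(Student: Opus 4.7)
The plan is that the corollary is an essentially immediate consequence of the framework built up in Sections~\ref{sec:rad-selftransforms}--\ref{sec:rad-bofr}. Proposition~\ref{prop:bofr-explicit} identifies the elementary function $1 - 2r\psi'(r) - r^2\psi''(r)$ as $B(r)$, and $B(r)$ was defined in Section~\ref{sec:rad-bofr} as the Gauss-type integral
\begin{equation*}
B(r) = \int_0^\infty \nu(x) e^{-\pi x r^2}\,dx,
\end{equation*}
where $\nu(x)$ arises from $\omega(x)$ via the generalized Stieltjes transform of Lemma~\ref{lem:new-from-old}. Since $\omega(x)$ is balanced of weight $1/2$, that lemma produces a balanced function $\nu$ of weight $2 - 1/2 = 3/2$.

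The key numerological point is that $3/2 = 2 - d/2$ precisely when $d = 1$. Lemma~\ref{lem:recbal-selftrans} therefore applies in dimension $d = 1$ to the above representation of $B(r)$ and gives that $B$, regarded as a radial (that is, even) function on $\R$, is a Fourier self-transform. In one dimension the radial Fourier transform reduces to the cosine transform \eqref{eq:radial-fourier-1d}, so the equation $\mathcal{F}_1(B) = B$ becomes exactly the integral identity asserted in the corollary, with $F = B$.

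The only loose end is the parenthetical hypothesis in Lemma~\ref{lem:recbal-selftrans} that the Fourier transform of $B$ is well defined, which amounts to justifying the Fubini step inside the proof of that lemma. By Proposition~\ref{eq:nuofx-properties} one has $\nu(0) = \pi/6$ and $\nu(x) = O(x^{-3/2})$ as $x \to \infty$, so $\nu \in L^1(0,\infty)$, which is more than enough. Alternatively, the explicit formula \eqref{eq:bofr-explicit}, combined with the standard asymptotic expansions $\psi'(r) = r^{-1} - \tfrac12 r^{-2} + O(r^{-4})$ and $\psi''(r) = -r^{-2} + r^{-3} + O(r^{-5})$ as $r \to \infty$, shows that $B(r) = O(r^{-3})$ at infinity; together with the boundedness of $B$ near $0$ (the apparent poles of $\psi'$ and $\psi''$ at $r=0$ cancel in the combination $1 - 2r\psi'(r) - r^2\psi''(r)$) this puts $B \in L^1(\R) \cap L^2(\R)$.

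I do not foresee any substantive obstacle: the whole corollary is a direct invocation of the self-transform construction of Lemma~\ref{lem:recbal-selftrans}, and the only step requiring genuine work---the explicit identification of $B(r)$ in closed form---has already been dispatched in Proposition~\ref{prop:bofr-explicit}. If anything deserves to be flagged as the ``hard'' part, it is the verification that one may interchange the $x$- and $r$-integrals without qualm, which as noted above follows at once from the integrability of $\nu$.
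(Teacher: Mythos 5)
Your proposal is correct and coincides with the paper's own proof, which obtains the corollary precisely by combining Proposition~\ref{prop:bofr-explicit} with Lemma~\ref{lem:recbal-selftrans} in dimension $d=1$ (where $\nu$ has weight $3/2=2-d/2$ and the radial transform \eqref{eq:radial-fourier-1d} is the cosine transform), your verification of the Fubini hypothesis via $\nu(0)=\pi/6$ and $\nu(x)=O(x^{-3/2})$ being a welcome explicit touch. One minor slip in your alternative remark: by \eqref{eq:betaofr-asym2} the true decay is $B(r) = \frac{1}{6r^2} + O(r^{-4})$ rather than $O(r^{-3})$ (your expansions are those of $\psi'(r+1)$, $\psi''(r+1)$, not $\psi'(r)$, $\psi''(r)$), but $O(r^{-2})$ still gives $B\in L^1(\R)\cap L^2(\R)$, so nothing of substance is affected.
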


\begin{proof}[First proof of Proposition~\ref{prop:bofr-explicit}]
Denote $\beta(r) = 1-2r \psi'(r) - r^2 \psi''(r)$. The proof that $B(r)=\beta(r)$ is based on computing the Mellin transform of $\beta(r)$. If we succeed in showing that this Mellin transform is defined for $0<\re(s)<2$ and is equal to the function given in \eqref{eq:bofr-mellin}, the equality $B(r)=\beta(r)$ will follow from the standard uniqueness theorem for the Mellin transform.

We recall some useful facts about the digamma function and its derivatives \cite[p.~260]{abramowitz-stegun}. Start with the well-known partial fraction decomposition
\begin{equation*}
\psi(r+1) = -\gamma + \sum_{n=1}^\infty \left( \frac1n - \frac{1}{r+n}\right),
\end{equation*}
where $\gamma$ denotes the Euler-Mascheroni constant.
Repeated differentiation gives the also-standard expansion
\begin{equation}
\label{eq:polygamma-partial-fractions}
\psi^{(m)}(r+1) =  (-1)^{m+1}\sum_{n=1}^\infty \frac{m!}{(r+n)^{m+1}} \qquad (m\ge 1).
\end{equation}
The Taylor expansion of $\psi^{(m)}(r+1)$ around $r=0$ is given by
\begin{equation}
\label{eq:polygamma-taylor}
\psi^{(m)}(r+1) = \sum_{k=0}^\infty (-1)^{m+k+1}  (k+1)(k+2)\cdots (k+m) \zeta(k+m+1) r^k
\quad\ \  (m\ge1),
\end{equation}
and its asymptotic expansion as $r\to\infty$ (with $m\ge 1$, $N\ge0$ fixed) is
\begin{equation}
\label{eq:polygamma-asym}
\psi^{(m)}(r+1) =
(-1)^{m+1} \sum_{k=0}^N \frac{(k+m-1)!}{k!} \frac{B_k}{(r+1)^{k+m}} 
+ O\left(\frac{1}{r^{N+m+1}}\right)
\qquad \textrm{($r\to\infty$).}
\end{equation}

With this preparation, the Mellin transform of derivatives of $\psi(r+1)$ can be evaluated through termwise integration of the terms of \eqref{eq:polygamma-partial-fractions}.
After a short computation using~\eqref{eq:mellin-textbook-ex} and the reflection formula 
$\Gamma(z)\Gamma(1-z) =\frac{\pi}{\sin(\pi z)}$, 
we get that
\begin{align}
\label{eq:polygamma-mellin}
\int_0^\infty \psi^{(m)}&(r+1) r^{s-1}\,dr 
=
-\frac{\pi (s-1)(s-2)\cdots (s-m)}{\sin(\pi s)} \zeta(m+1-s) 
\qquad 
(0 < \re(s) < m)
\end{align}
for $m\ge 1$
(a variant of this formula is mentioned in \cite[p.~659, eq.~6.473]{gradshteyn-ryzhik}; see also \cite[p.~325, eq.~(7)]{erdelyi}). Note that the strip of convergence for each of the individual Mellin transforms being summed is $0 < \re(s) < m+1$, and the requirement of absolute summability imposes the further restriction $\re(s) < m$ (apparent in the zeta-term $\zeta(m+1-s)$, which blows up as $\re(s)$ approaches $m$ from the left).

Building on the facts discussed above, we can approach the computation of the Mellin transform of $\beta(r)$. First, note that, because of the standard identity $\psi(x+1) = \psi(x) + \frac{1}{x}$, $\beta(r)$ can be rewritten as
\begin{equation*}
\beta(r) = 1-2r \psi'(r+1) - r^2 \psi''(r+1).
\end{equation*}
By \eqref{eq:polygamma-taylor}--\eqref{eq:polygamma-asym}, the asymptotic behavior of $\beta(r)$ for $r$ near $0$ and $\infty$ is given by
\begin{align}
\label{eq:betaofr-asym1}
\beta(r) & = 1 - \frac{\pi^2}{3} r + O(r^2)
\qquad \textrm{as }r\to 0,
\\
\label{eq:betaofr-asym2}
\beta(r) & = \frac{1}{6r^2} + O\left(r^{-4}\right)
\ \ \qquad \textrm{as }r\to \infty.
\end{align}
This implies that the Mellin transform of $\beta(r)$ is defined for $0 < \re(s) < 2$.
Also of some interest is the function $\beta(r)-1$ and its own Mellin transform. Again by \eqref{eq:betaofr-asym1}--\eqref{eq:betaofr-asym2},
it follows that the Mellin transform of $\beta(r)-1$ is defined for $-1 < \re(s) < 0$. Using \eqref{eq:polygamma-mellin}, it is readily evaluated for such $s$ to be
\begin{align}
\label{eq:bofr-minusone-mellin}
\int_0^\infty (\beta(r)-1) r^{s-1} \, dr & = 
\int_0^\infty \left(-2 \psi'(r+1)r^s - \psi''(r+1)r^{s+1}\right) \, dr 
\\ \nonumber & 
= 
\frac{\pi s(s-1)}{\sin(\pi s)} \zeta(1-s) 
\qquad\qquad \qquad
(-1 < \re(s) < 0).
\end{align}
Now, this isn't exactly what we want, both because of the irksome ``$-1$'' term in the integrand and because the range of validity of the formula is disjoint from the range $0<\re(s)<2$ we are interested in. We can nonetheless exploit this identity to get what we need through a trick involving analytic continuation. Namely, for $s$ satisfying $0<\re(s)<2$, we can try to evaluate the Mellin transform of $\beta(r)$ by performing an integration by parts, which gives that (under the stated assumptions on $s$)
\begin{align*}
\int_0^\infty \beta(r) r^{s-1}\,dr
& =
\frac{1}{s} \beta(r) r^s\Big|_{r=0}^{r=\infty}
- \frac{1}{s} \int_0^\infty \beta'(r)r^{s+1}\,dr 
= 
- \frac{1}{s} \int_0^\infty \beta'(r)r^{s+1}\,dr 
=: K(s).
\end{align*}
Moreover, examining the asymptotic behavior of $\beta'(r)$ near $r=0$ and $r=\infty$ (which, the reader can confirm using \eqref{eq:polygamma-taylor}--\eqref{eq:polygamma-asym}, is simply that obtained by differentiating the terms in \eqref{eq:betaofr-asym1}--\eqref{eq:betaofr-asym2} termwise, including the big-$O$ term),
we see that the Mellin transform of $\beta'(r)$ converges (and is an analytic function) in the strip $0 < \re(s) < 3$, which implies that
$K(s)$ is an analytic function in the strip $-1 < \re(s) < 2$.
But for $s$ satisfying $-1<\re(s)<0$, performing a similar integration by parts as the one above starting with the integral in \eqref{eq:bofr-minusone-mellin} gives
\begin{align*}
\int_0^\infty (\beta(r)-1) r^{s-1}\,dr
& =
\frac{1}{s} (\beta(r)-1) r^s\Big|_{r=0}^{r=\infty}
- \frac{1}{s} \int_0^\infty \frac{d}{dr}(\beta(r)-1) r^{s+1}\,dr 
\\ & =
- \frac{1}{s} \int_0^\infty \frac{d}{dr}(\beta(r)-1) r^{s+1}\,dr 
= 
- \frac{1}{s} \int_0^\infty \beta'(r)r^{s+1}\,dr = K(s),
\end{align*}
the \emph{same analytic function} evaluated on a different part of its domain of definition. Since $K(s)$ is analytic and given by the formula found in \eqref{eq:bofr-minusone-mellin} for $-1<\re(s)<0$, by the principle of analytic continuation it is also equal to the same expression for $0<\re(s)<2$. That is, we have shown that
\begin{equation}
\label{eq:betaofr-mellin-proof1}
\int_0^\infty \beta(r) x^{s-1} \, dx = 
\frac{\pi s(s-1)}{\sin(\pi s)} \zeta(1-s) 
\qquad
(0 < \re(s) < 2).
\end{equation}
Finally, using the functional equation of the Riemann zeta function it is easy to check that the function on the right-hand side of \eqref{eq:betaofr-mellin-proof1} is equal to the one on the right-hand side of \eqref{eq:bofr-mellin}. This shows that the Mellin transforms of $\beta(r)$ and $B(r)$ coincide (in their ``natural'' region of definition where the Mellin transform of both converges), and finishes the proof.
\end{proof}

\begin{proof}[Second proof of Proposition~\ref{prop:bofr-explicit}]
A second method is to use Mellin inversion to represent $B(r)$ in terms of its Mellin transform found in \eqref{eq:bofr-mellin}, namely as
\begin{equation*}
B(r) = \frac{1}{2\pi i} \int_{c-i\infty}^{c+i\infty} \frac{(2\pi)^{1-s}}{2\sin(\pi s/2)}
 s(s-1) \Gamma(s) \zeta(s)
 r^{-s}\,ds,
\end{equation*}
where $c$ is an arbitrary number in $(0,2)$. Fix an integer $n\ge1$, and shift the integration contour to the left to the line $\im(s)=-n-1/2$. As in the proof of Proposition~\ref{eq:nuofx-properties}, we can use the residue theorem (with the same arguments to justify its application in this setting involving unbounded contours) to calculate the change in the value of the integral by looking at the poles of the integrand being skipped over and their residues. The relevant poles in this case are at $s=0, -1,\ldots, -n$. We leave to the reader to verify that the residue at $s=0$ is equal to $1$, and that for any $k\ge 1$, the pole at $-k$ has residue $(-1)^k k (k+1) \zeta(k+1) r^k$. We therefore get that
\begin{align*}
B(r) = 
1 + \sum_{k=1}^n &(-1)^k k (k+1) \zeta(k+1) r^k
+\frac{1}{2\pi i} \int_{-(n+1/2)-i\infty}^{(n+1/2)+i\infty} \frac{(2\pi)^{1-s}}{2\sin(\pi s/2)}
 s(s-1) \Gamma(s) \zeta(s)
 r^{-s}\,ds.
\end{align*}
Assume that $0<r<1$. In this case it can be shown without much difficulty that the integral converges to $0$ as $n\to\infty$ (this becomes easier to do if one first replaces the Mellin transform of $B(r)$ in the integrand by the simpler expression on the right-hand side of \eqref{eq:betaofr-mellin-proof1}, which as we commented above in fact represents the same function). The conclusion is that $B(r)$ is represented for $0<r<1$ by a convergent Taylor series
\begin{equation*}
B(r) = 
1 + \sum_{k=1}^\infty (-1)^k k (k+1) \zeta(k+1) r^k.
\end{equation*}
But this is consistent with (and implies) \eqref{eq:bofr-explicit}: using \eqref{eq:polygamma-taylor}, one can check easily that the function $\beta(r) = 1-2r \psi'(r+1)-r^2 \psi''(r+1)$ has the same Taylor expansion.
This proves \eqref{eq:bofr-explicit} for $0<r<1$, and the claim follows for general $r$ by analytic continuation.
\end{proof}

\chapter{Expansion of $\Xi(t)$ in the polynomials $g_n$}

\label{ch:gn-expansion}

In this chapter we continue to build on the tools developed in Chapters~\ref{ch:fn-expansion} and~\ref{ch:radial}, in order to derive an infinite series expansion for the Riemann xi function in yet another family of orthogonal polynomials, the family $(g_n(x))_{n=0}^\infty$, and study its properties. As we discussed briefly in the Introduction, the polynomials $g_n$ are defined by
\begin{equation*}
g_n(x) = p_n\left(x; \frac34,\frac34,\frac34,\frac34\right)
=
i^n (n+1) \ {}_3F_2\left(-n,n+2,\frac34+ix;\frac32,\frac32; 1\right),
\end{equation*}
where $p_n(x;a,b,c,d)$ denotes the continuous Hahn polynomial with parameters $a,b,c,d$.
They form a family of orthogonal polynomials with respect to the weight function $\left|\Gamma\left(\frac34+ix\right)\right|^4$ on $\R$. Their main properties are summarized in \secref{sec:orth-gn}.

\section{Main results}

\label{sec:gnexp-mainresults}

As in Chapters~\ref{ch:hermite} and~\ref{ch:fn-expansion}, we start by defining a sequence of numbers $(d_n)_{n=0}^\infty$ that will play the role of the coefficients associated with the new expansion. Define
\begin{align}
\label{eq:def-gn-coeffs}
d_n & = \frac{(3/2)_n}{2^{n-3/2} n!}\int_0^\infty \frac{\omega(x)}{(x+1)^{3/2}} \left(\frac{x-1}{x+1}\right)^n 
\gausshyper\left(\frac{n}{2}+\frac34,
\frac{n}{2}+\frac54; n+2; \left(\frac{x-1}{x+1}\right)^2 \right)
 \,dx.
\end{align}

As a first step towards demistifying this somewhat obscure definition, we expand the $\gausshyper$ term in an infinite series. Momentarily ignoring issues of convergence, we have that
\begin{align}
\label{eq:dn-gausshyper-expansion}
d_n & = \frac{(3/2)_n}{2^{n-3/2} n!}\int_0^\infty \frac{\omega(x)}{(x+1)^{3/2}} \left(\frac{x-1}{x+1}\right)^n 
\left[\sum_{m=0}^\infty 
\frac{\left(\frac{n}{2}+\frac34\right)_m
\left(\frac{n}{2}+\frac54\right)_m 
}{m! (n+2)_m} 
\left( \frac{x-1}{x+1}\right)^{2m} \right]
 \,dx
\\ \nonumber &
= 
\frac{(3/2)_n}{2^{n-3/2} n!}
\sum_{m=0}^\infty 
\frac{\left(\frac{n}{2}+\frac34\right)_m \left(\frac{n}{2}+\frac54\right)_m }{m! (n+2)_m}
\int_0^\infty 
\frac{\omega(x)}{(x+1)^{3/2}} \left(\frac{x-1}{x+1}\right)^{n+2m} 
\,dx
\\ \nonumber &
= \frac{(3/2)_n}{2^n n!}
\sum_{m=0}^\infty 
\frac{\left(\frac{n}{2}+\frac34\right)_m \left(\frac{n}{2}+\frac54\right)_m }{m! (n+2)_m}
c_{n+2m}
=
\frac{n+1}{2^n} \sum_{m=0}^\infty \frac{(3/2)_{n+2m}}{4^m m! (n+m+1)!} c_{n+2m},
\end{align}
where in the last step we use the relation
$\left(\frac{n}{2}+\frac34\right)_m \left(\frac{n}{2}+\frac54\right)_m
=
\frac{(3/2)_{n+2m}}{2^{2m} (3/2)_n}
$. In addition to being an interesting way to express $d_n$ in terms of the coefficients $c_k$, this suggests a relatively simple way to see that the integral \eqref{eq:def-gn-coeffs} converges absolutely (which would also justify the above formal computation); namely, letting
\begin{equation*}
c_n' = \int_0^\infty  \frac{\omega(x)}{(x+1)^{3/2}} \left|\frac{x-1}{x+1}\right|^n \,dx,
\end{equation*}
we have the simple relations $c_r' \ge c_{r+1}'$, $c_{2r}'=c_{2r}$, and therefore (using \eqref{eq:fn-coeff-asym}, or some easy corollary of Lemma~\ref{lem:fn-easybound2}) get that $c_r' \le K e^{-M \sqrt{r}}$ for all $r\ge0$ and some constants $K,M>0$. This then gives that
\begin{align}
\int_0^\infty
&
\left|
\frac{\omega(x)}{(x+1)^{3/2}} \left(\frac{x-1}{x+1}\right)^n 
\gausshyper\left(\frac{n}{2}+\frac34,
\frac{n}{2}+\frac54; n+2; \left(\frac{x-1}{x+1}\right)^2 \right)
\right| \,dx 
\label{eq:finiteness-bound}
\\ & \leq
\sum_{m=0}^\infty 
\frac{\left(\frac{n}{2}+\frac34\right)_m \left(\frac{n}{2}+\frac54\right)_m }{m! (n+2)_m}
\int_0^\infty 
\left|
\frac{\omega(x)}{(x+1)^{3/2}} \left(\frac{x-1}{x+1}\right)^{n+2m} 
\right|
\,dx
\nonumber \\ &=
\sum_{m=0}^\infty 
\frac{\left(\frac{n}{2}+\frac34\right)_m \left(\frac{n}{2}+\frac54\right)_m }{m! (n+2)_m} c_{n+2m}'
=
\frac{(n+1)!}{(3/2)_n} \sum_{m=0}^\infty 
\frac{(3/2)_{n+2m}}{4^m m!(n+m+1)!}
 c_{n+2m}'
\nonumber \\ &\leq
K \frac{(n+1)!}{(3/2)_n} \sum_{m=0}^\infty 
\frac{(3/2)_{n+2m}}{4^m m!(n+m+1)!}
e^{-M \sqrt{n+2m}}
\nonumber \\ &\leq
2K \frac{(n+1)!}{(3/2)_n} \sum_{m=0}^\infty 
\frac{(2)_{n+2m}}{2^{2m+1} m!(n+m+1)!}
e^{-M \sqrt{n+2m}}
\nonumber \\ &=
2K \frac{(n+1)!}{(3/2)_n} \sum_{m=0}^\infty 
\frac{1}{2^{2m+1}} \binom{n+2m+1}{m}
e^{-M \sqrt{n+2m}}
\nonumber \\ &\leq
2^{n+1} K \frac{(n+1)!}{(3/2)_n} \sum_{m=0}^\infty 
e^{-M \sqrt{n+2m}} < \infty,
\nonumber
\end{align}
establishing the absolute convergence.

We summarize the above observations as a proposition.

\begin{prop}
(i) The integral defining $d_n$ converges absolutely for all \mbox{$n\ge 0$.}

(ii) We have $d_{2n+1} = 0$ for all $n\ge 0$.

(iii) We have $d_{2n} > 0$ for all $n\ge 0$.

(iv) $d_n$ can be expressed alternatively in terms of the coefficients $c_k$ as
\begin{equation}
\label{eq:dn-cn-expansion}
d_n = \frac{n+1}{2^n} \sum_{m=0}^\infty \frac{(3/2)_{n+2m}}{4^m m!(n+m+1)!} c_{n+2m}.
\end{equation}
\end{prop}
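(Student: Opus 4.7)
The plan is to prove part (iv) first, as parts (ii) and (iii) will follow easily from the representation of $d_n$ in terms of the $c_k$'s, and part (i) is essentially a byproduct of the estimate that justifies the termwise integration used in deriving (iv).

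First, to establish (i) and (iv) simultaneously, I would take the Taylor series expansion of the hypergeometric factor $\gausshyper\left(\frac{n}{2}+\frac34, \frac{n}{2}+\frac54; n+2; \left(\frac{x-1}{x+1}\right)^2\right)$ and apply Fubini–Tonelli to justify exchanging the sum with the integral. The absolute convergence required is precisely the computation carried out in the chain of inequalities culminating in the displayed bound for \eqref{eq:finiteness-bound}: replacing $c_{n+2m}$ with its absolute-value companion $c_{n+2m}'$, using the monotonicity $c_r' \ge c_{r+1}'$ together with the sub-Gaussian decay estimate $c_r' \le K e^{-M\sqrt{r}}$ (which follows from applying Lemma~\ref{lem:fn-easybound2} to the definition of $c_r'$, exactly as was done for $c_{2n}$ in the proof of Theorem~\ref{THM:FN-COEFF-ASYM}), one obtains a finite majorant. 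With absolute convergence in hand, the formal manipulation in \eqref{eq:dn-gausshyper-expansion} becomes rigorous, yielding the identity \eqref{eq:dn-cn-expansion}, which is (iv). The simplification of the Pochhammer ratio uses the elementary identity $\left(\frac{n}{2}+\frac34\right)_m \left(\frac{n}{2}+\frac54\right)_m = \frac{(3/2)_{n+2m}}{2^{2m}(3/2)_n}$, which can be verified by writing both sides as products and applying the Legendre duplication formula to the Pochhammer symbols.

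With (iv) established, parts (ii) and (iii) are immediate consequences combined with what we already know about the sequence $(c_n)$ from Chapter~\ref{ch:fn-expansion}. Specifically, in \eqref{eq:dn-cn-expansion} the index $n+2m$ has the same parity as $n$ for every $m\ge 0$. Thus if $n=2k+1$ is odd, every summand $c_{n+2m}=c_{2(k+m)+1}$ vanishes (recalling that $c_{2j+1}=0$ for all $j$, as shown following \eqref{eq:omega-int-powersn-symmetry}), giving $d_{2k+1}=0$. If $n=2k$ is even, every summand $c_{n+2m}=c_{2(k+m)}$ is strictly positive (recalling that $c_{2j}>0$ for all $j$, as shown after \eqref{eq:c2n-formula}), and since the prefactor $\frac{n+1}{2^n}$ and each coefficient $\frac{(3/2)_{n+2m}}{4^m m!(n+m+1)!}$ is strictly positive, the sum is strictly positive, giving $d_{2k}>0$.

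The main (and really only) obstacle is the absolute convergence estimate underlying the interchange of sum and integral; everything else is bookkeeping. That estimate is not entirely trivial because the Pochhammer ratios $\frac{(3/2)_{n+2m}}{m!(n+m+1)!}$ grow like $2^{n+2m}$ (as the binomial manipulation in the excerpt shows), so one genuinely needs the sub-Gaussian decay of $c_{n+2m}'$ in $\sqrt{n+2m}$ to overwhelm this growth and secure the summability of $\sum_m e^{-M\sqrt{n+2m}}$. Once this bound is in place, the remainder of the proof is straightforward.
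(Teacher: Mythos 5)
Your proposal is correct and takes essentially the same approach as the paper: the paper likewise proves (i) and (iv) together by expanding the $\gausshyper$ factor termwise and justifying the interchange via the majorant computation of \eqref{eq:finiteness-bound}, using $c_r' \ge c_{r+1}'$, $c_{2r}' = c_{2r}$, and the bound $c_r' \le K e^{-M\sqrt{r}}$ from Lemma~\ref{lem:fn-easybound2} to overcome the $2^{n+2m}$-type growth of the Pochhammer ratios, after which (ii) and (iii) follow exactly as you say from the parity and positivity of the $c_k$. (One terminological nit: the decay $e^{-M\sqrt{r}}$ is stretched-exponential rather than ``sub-Gaussian,'' but this does not affect the argument.)
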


We are ready to formulate the main results concerning the expansion of $\Xi(t)$ in the polynomials $g_n$, which are precise analogues of 
Theorem~\ref{THM:HERMITE-EXPANSION} and~\ref{thm:hermite-coeff-asym} in Chapter~\ref{ch:hermite} and 
Theorem~\ref{THM:FN-EXPANSION}~and~\ref{THM:FN-COEFF-ASYM} in Chapter~\ref{ch:fn-expansion}.

\begin{thm}[Infinite series expansion for $\Xi(t)$ in the polynomials $g_n$]
\label{THM:GN-EXPANSION}
The Riemann xi function has the infinite series representation
\begin{equation}
\label{eq:gn-expansion}
\Xi(t) = \sum_{n=0}^\infty (-1)^n d_{2n} g_{2n}\left(\frac{t}{2}\right)
\end{equation}
which converges uniformly on compacts for all $t\in\C$. More precisely, for any compact set $K\subset \C$ there exist constants $C_1, C_2>0$ depending on $K$ such that
\begin{equation}
\label{eq:gn-expansion-errorbound}
\left|\Xi(t) - \sum_{n=0}^N (-1)^n d_{2n} g_{2n}\left(\frac{t}{2}\right) \right| \leq C_1 e^{-C_2 N^{2/3}}
\end{equation}
holds for all $N\ge0$ and $t\in K$.
\end{thm}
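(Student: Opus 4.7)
The plan is to parallel the derivation of the $f_n$-expansion in Chapter~\ref{ch:fn-expansion}, starting with an identity that expresses the integration kernel $x^{-3/4+it/2}$ of the Mellin representation \eqref{eq:riemannxi-mellintrans} as a series in the polynomials $g_n(t/2)$. Reading off from the definition \eqref{eq:def-gn-coeffs}, the natural candidate identity is
\begin{equation*}
x^{-3/4+it/2} = \frac{2\sqrt{2}}{(x+1)^{3/2}} \sum_{n=0}^\infty \frac{i^n(3/2)_n}{2^n n!}\, g_n\!\left(\tfrac{t}{2}\right) \left(\frac{x-1}{x+1}\right)^n \gausshyper\!\left(\tfrac{n}{2}+\tfrac34, \tfrac{n}{2}+\tfrac54; n+2; \left(\tfrac{x-1}{x+1}\right)^2 \right),
\end{equation*}
which should hold pointwise for $x>0$ and uniformly in $t$ on compacts. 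Multiplying by $\omega(x)$ and integrating termwise would then produce \eqref{eq:gn-expansion} directly. Alternatively, since \eqref{eq:dn-cn-expansion} already expresses $d_n$ as a certain linear combination of the $c_m$'s, one may hope to bypass proving the kernel identity from scratch by instead substituting one of the $f_n$-to-$g_n$ summation identities proved in \secref{sec:orth-fngn-relation} into the $f_n$-expansion \eqref{eq:fn-expansion} and carefully interchanging the order of summation.

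The first technical input needed is a pointwise bound on the polynomials $g_n(x)$ in the spirit of Lemma~\ref{lem:fn-easybound}: using the three-term recurrence for continuous Hahn polynomials (recorded in Appendix~\ref{appendix:orthogonal}), a straightforward induction should yield an estimate of the form $|g_n(x)| \le C_1 e^{C_2 n^{\alpha}}$ for some $\alpha<2/3$, uniformly on any compact $K\subset \C$. Any such sub-$n^{2/3}$ bound will be swamped by the coefficient decay $d_{2n} \sim \exp(-3(4\pi)^{1/3} n^{2/3})$ from Theorem~\ref{thm:gn-coeff-asym} and is therefore sufficient to obtain the error bound $e^{-C_2 N^{2/3}}$.

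The second technical input is a tail estimate on integrals of the form
\begin{equation*}
\int_1^\infty e^{-Bx}\left(\frac{x-1}{x+1}\right)^n \gausshyper\!\left(\tfrac{n}{2}+\tfrac34, \tfrac{n}{2}+\tfrac54; n+2; \left(\tfrac{x-1}{x+1}\right)^2 \right) dx,
\end{equation*}
bounding them by $C e^{-\kappa n^{2/3}}$ for an explicit $\kappa>0$. The most expedient route is to expand the $\gausshyper$ factor as a series as in \eqref{eq:dn-gausshyper-expansion} and estimate each summand using Lemma~\ref{lem:fn-easybound2}, producing a bound of shape $\sum_m \binom{n+2m+1}{m} 4^{-m} e^{-2\sqrt{B(n+2m)}}$; optimizing in $m$ balances the binomial growth against the exponential decay at $m \asymp n^{1/3}$, giving a total bound of order $\exp(-\kappa n^{2/3})$ rather than the $\exp(-\kappa n^{1/2})$ of Lemma~\ref{lem:fn-easybound2}. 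This in turn yields the precise decay rate for $d_{2n}$ up to polynomial factors.

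The main obstacle is establishing the kernel identity rigorously as an equality of analytic functions, uniformly convergent on compacts in $t$: unlike in the $f_n$ case, where \eqref{eq:int-kernel-expansion} follows transparently from the generating function \eqref{eq:fn-genfun} for Meixner-Pollaczek polynomials, here the series involves a Gauss $\gausshyper$ in $((x-1)/(x+1))^2$ whose provenance from a generating function for continuous Hahn polynomials is less standard. Once this identity (or equivalently, the summation identity of \secref{sec:orth-fngn-relation}) is secured, the rest follows a chain of inequalities exactly analogous to the one in Section~\ref{sec:fnexp-proofexpansion}: truncate the series at $N$, use $\Xi(t) = \int \omega(x) x^{-3/4+it/2}dx$ to rewrite the truncation error as an integral of the tail of the kernel series, exchange sum and integral, and bound each tail term by the product of the polynomial bound on $g_n$ with the integral bound from the previous paragraph. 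Summing geometrically then delivers \eqref{eq:gn-expansion-errorbound} and completes the proof.
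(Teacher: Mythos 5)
Your proposal is correct and follows essentially the same route as the paper: the paper proves exactly your candidate kernel identity (Lemma~\ref{lem:fn-genfun-gnexpansion}) by the very mechanism you suggest for bypassing a from-scratch derivation---substituting the summation identity \eqref{eq:fn-intermsof-gn} of \secref{sec:orth-fngn-relation} into the $f_n$ kernel expansion \eqref{eq:int-kernel-expansion} and rearranging the double sum, with absolute convergence supplied by the bound $|g_n(x)|\le C_1 e^{C_2 n^{1/3}}$ proved inductively from the recurrence \eqref{eq:gn-recurrence}---and then runs your truncate-and-integrate chain with a tail bound $J_1 e^{-J_2 n^{2/3}}$ obtained, as you propose, by expanding the $\gausshyper$ factor and applying Lemma~\ref{lem:fn-easybound2} termwise. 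One quantitative slip worth fixing: in the resulting sum $\sum_m \binom{n+2m+1}{m}4^{-m}e^{-2\sqrt{B(n+2m)}}$ the balance occurs at $m\asymp n^{4/3}$, not $m\asymp n^{1/3}$; the ratio of consecutive terms is roughly
\begin{equation*}
\left(1+\frac{n^2}{4m(n+m)}\right)\left(1-\frac{c}{\sqrt{n+2m}}\right),
\end{equation*}
which crosses $1$ when $m^{3/2}\asymp n^2$, and it is precisely because $\sqrt{n+2m}\asymp n^{2/3}$ at that point that your (correct) final bound $\exp(-\kappa n^{2/3})$ emerges---a maximum at $m\asymp n^{1/3}$ would instead leave the decay stuck at $e^{-c\sqrt{n}}$, contradicting your own conclusion.
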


\begin{thm}[Asymptotic formula for the coefficients $d_{2n}$]
\label{thm:gn-coeff-asym}
The asymptotic behavior of $d_{2n}$ for large $n$ is given by
\begin{equation}
\label{eq:gn-coeff-asym}
d_{2n} = 
\left(1+O\left(n^{-1/10}\right)\right) D n^{4/3} \exp\left(-E n^{2/3}\right)
\end{equation}
as $n\to\infty$, where $D,E$ are constants given by
\begin{equation}
\label{eq:gncoeffs-asym-consts}
D = \frac{128\times 2^{1/3} \pi^{2/3}e^{-2\pi/3}}{\sqrt{3}}, \qquad
E = 3(4\pi)^{1/3}.
\end{equation}

\end{thm}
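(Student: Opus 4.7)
The plan is to bootstrap from the already-established Theorem~\ref{THM:FN-COEFF-ASYM}, substituting the asymptotic for $c_{2k}$ into the identity \eqref{eq:dn-cn-expansion} and applying Laplace's method to the resulting infinite sum, in a manner parallel to (but analytically more delicate than) the proof of Theorem~\ref{THM:FN-COEFF-ASYM} itself. First, I would rewrite \eqref{eq:dn-cn-expansion} in a cleaner form: with $k=n+m$, the Legendre duplication formula gives $(3/2)_{2k} = \tfrac{2^{2k+3/2}}{\pi}\Gamma(k+\tfrac34)\Gamma(k+\tfrac54)$, whence
\[
d_{2n} \;=\; \frac{2\sqrt{2}\,(2n+1)}{\pi}\sum_{k=n}^\infty \frac{\Gamma(k+\tfrac34)\,\Gamma(k+\tfrac54)}{(k-n)!\,(k+n+1)!}\,c_{2k}.
\]
Inserting $c_{2k}=(1+O(k^{-1/10}))K\sqrt{k}\exp(-4\sqrt{\pi k})$ with $K=16\sqrt{2}\pi^{3/2}$ and applying Stirling's formula to the gamma/factorial ratio, the summand (in the variable $s:=(k-n)/n$) becomes, up to relative error $O(1/n)$,
\[
\frac{K}{\sqrt{n}}\,\exp\bigl(\tilde h_1(s)+\phi(s;n)\bigr),\qquad \tilde h_1(s):=\tfrac32\log(1+s)-\tfrac12\log s-\tfrac32\log(2+s),
\]
\[
\phi(s;n) := n\,g(s)-4\sqrt{\pi n(1+s)},\qquad g(s):=2(1+s)\log(1+s)-s\log s-(2+s)\log(2+s).
\]

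I would then apply Laplace's method to the approximating integral obtained by Riemann-sum conversion (step $1/n$ in $s$). The saddle equation $\phi_s=0$ reduces to $g'(s)=\log\tfrac{(1+s)^2}{s(s+2)}=2\sqrt{\pi/(n(1+s))}$. Since $g'(s)\sim 1/s^2$ as $s\to\infty$, the saddle $s_*$ must itself tend to infinity, with leading behavior $s_*\sim s_0:=(n/(4\pi))^{1/3}$. The hardest step, and the one I expect to be the main obstacle, is to extract the constant $e^{-2\pi/3}$ present in $D$: this does \emph{not} come from the leading saddle alone but requires tracking subleading terms through the constant order. Concretely, I would expand
\[
g(s)=-\tfrac{1}{s}+\tfrac{1}{s^2}-\tfrac{7}{6s^3}+O(s^{-4}),\qquad \sqrt{1+s}=\sqrt{s}\bigl(1+\tfrac{1}{2s}-\tfrac{1}{8s^2}+O(s^{-3})\bigr),
\]
and evaluate $\phi(s_0)$ through constant order. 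A careful (but elementary) calculation shows that the $n^{2/3}$-contributions combine to $-En^{2/3}$ with $E=3(4\pi)^{1/3}$, the $n^{1/3}$-contributions cancel exactly, and the residual constant is $\phi(s_0)=-En^{2/3}-\tfrac{11\pi}{3}+O(n^{-1/3})$. Direct computation yields $\phi'(s_0)=-6\pi+O(n^{-1/3})$ and $\phi''(s_0)=-6\pi+O(n^{-1/3})$, hence $s_*=s_0-\phi'(s_0)/\phi''(s_0)+o(1)=s_0-1+o(1)$; the saddle-shift identity $\phi(s_*)=\phi(s_0)-\tfrac12\phi'(s_0)^2/\phi''(s_0)$ then contributes an extra $+3\pi$, producing the clean constant
\[
\phi(s_*)\;=\;-En^{2/3}-\tfrac{2\pi}{3}+O(n^{-1/3}).
\]

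With $|\phi''(s_*)|\to 6\pi$, the Gaussian integral gives $\sqrt{2\pi/(6\pi)}=1/\sqrt{3}$. Combining this with the slowly-varying prefactor $e^{\tilde h_1(s_*)}\sim s_*^{-1/2}=(4\pi)^{1/6}n^{-1/6}$ and the $1/\sqrt{n}$ from Stirling, Laplace's method returns $\sum_{k\ge n}\cdots \sim \tfrac{K(4\pi)^{1/6}}{\sqrt{3}}e^{-2\pi/3}n^{1/3}e^{-En^{2/3}}$. Multiplying by $\tfrac{2\sqrt{2}(2n+1)}{\pi}$ and simplifying with $K=16\sqrt{2}\pi^{3/2}$ and $\pi^{1/2}(4\pi)^{1/6}=2^{1/3}\pi^{2/3}$ collapses to $Dn^{4/3}\exp(-En^{2/3})$ with $D=\tfrac{128\cdot 2^{1/3}\pi^{2/3}e^{-2\pi/3}}{\sqrt{3}}$, matching \eqref{eq:gncoeffs-asym-consts}.

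To obtain the error bound $O(n^{-1/10})$, I would localize the sum to a neighborhood $|s-s_*|\le n^{-1/6}$ using concavity of $\phi$ (the tails are then bounded by arguments analogous to those used for $Z_n^{(1)}$ and $Z_n^{(3)}$ in the proof of Theorem~\ref{THM:FN-COEFF-ASYM}), control the sum-vs-integral discrepancy by standard Euler--Maclaurin estimates (the effective window contains $\sim n$ lattice points in $m$, so this error is $O(1/n)$), and propagate the $O(k^{-1/10})$ relative error from Theorem~\ref{THM:FN-COEFF-ASYM}. Since the dominant $k$ satisfies $k_*\sim n^{4/3}$, this last source contributes $O(n^{-2/15})$, which together with the $O(n^{-1/3})$ corrections arising from the expansion of $\phi(s_*)$ is safely absorbed into the stated $O(n^{-1/10})$.
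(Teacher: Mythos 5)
Your proposal follows essentially the same route as the paper's first proof of Theorem~\ref{thm:gn-coeff-asym}: bootstrap from \eqref{eq:dn-cn-expansion}, insert the asymptotics \eqref{eq:fn-coeff-asym} for $c_{2k}$, apply Stirling to the factorial/Gamma ratio, and run a discrete Laplace argument on the resulting sum. The difference is purely one of coordinates and bookkeeping: the paper scales $t=k/n^{4/3}$, writes the exponent as $n^{2/3}\phi_n(t)$ with $\phi_n(t)=F(t)-\tfrac{1}{6t^3}n^{-2/3}+O(n^{-4/3})$, and reads off the constant $e^{-2\pi/3}$ from the correction term at the maximizer $\alpha_0=(4\pi)^{-1/3}$ of $F$ (namely $-\tfrac{1}{6\alpha_0^3}=-\tfrac{2\pi}{3}$); you scale $s=(k-n)/n$, expand around $s_0=(n/(4\pi))^{1/3}$, and recover the same constant via the split $-\tfrac{11\pi}{3}+3\pi=-\tfrac{2\pi}{3}$ from a Newton saddle shift $s_*=s_0-1+o(1)$. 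I checked your expansions: $g(s)=-s^{-1}+s^{-2}-\tfrac{7}{6}s^{-3}+O(s^{-4})$, the cancellation of the $n^{1/3}$ terms, $\phi'(s_0),\phi''(s_0)\to-6\pi$, and the final constant assembly $4\sqrt{2}\cdot 16\sqrt{2}\,\pi^{1/2}(4\pi)^{1/6}/\sqrt{3}=128\cdot 2^{1/3}\pi^{2/3}/\sqrt{3}$ all agree with the paper's values (your shift $s_*=s_0-1$ corresponds exactly to the paper's maximizer $k_*=\alpha_0 n^{4/3}$, since $s_*=(k_*-n)/n$); the third-derivative bound $\phi'''=O(n^{-1/3})$ justifies the quadratic saddle-shift identity to the accuracy claimed. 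Note the paper also gives a second, independent proof via a double-integral representation of $d_{2n}$ and a two-dimensional Laplace analysis, which your proposal does not attempt; that is not a defect.

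One concrete slip in your error-control paragraph: the localization window $|s-s_*|\le n^{-1/6}$ is \emph{too narrow}. Since $\phi''(s_*)\to -6\pi$, the Gaussian has width $O(1)$ in the $s$ variable (equivalently $O(n)$ in $k$), so a window of size $n^{-1/6}$ captures only an $O(n^{-1/6})$ sliver of the mass, contradicting your own main-term computation, which integrates $e^{-3\pi(s-s_*)^2}$ over (effectively) the whole line to get the factor $1/\sqrt{3}$. The window must grow in $s$-units: any choice that is $\to\infty$ but $o(n^{1/3})$ works, e.g.\ $|s-s_*|\le n^{1/18}$, which matches the paper's truncation $|k-\alpha_0 n^{4/3}|\le n^{19/18}$ used in bounding the analogues of your tails (the sums $S_n^{(1)},S_n^{(3)},S_n^{(4)}$ around \eqref{eq:d2n-largedev-sum}). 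With that correction your lattice-point count in the effective window remains $O(n)$ and the sum-versus-integral discrepancy estimate, the $O(n^{-2/15})$ propagation of the $c_{2k}$ error (since the dominant $k\sim\alpha_0 n^{4/3}$), and the $O(n^{-1/3})$ corrections all fit inside the stated $O(n^{-1/10})$, exactly as in the paper.
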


As in Chapters~\ref{ch:hermite} and~\ref{ch:fn-expansion}, we note the fact that the coefficients $d_n$ can be computed as inner products in the $L^2$-space $L^2(\R,\left|\Gamma\left(\frac34+\frac{it}{2}\right)\right|^4)$.

\begin{corollary}
The coefficients $d_n$ can be alternatively expressed as
\begin{equation}
\label{eq:gn-coeff-innerproduct}
d_n = \frac{8}{\pi^3}(-i)^n \int_{-\infty}^\infty \Xi(t) g_n\left(\frac{t}{2}\right) \left|\Gamma\left(\frac34+\frac{it}{2}\right)\right|^4\,dt.
\end{equation}
\end{corollary}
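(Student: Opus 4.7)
The plan is to mimic the proof of Corollary~\ref{cor:hermite-coeff-innerproduct}, exploiting the orthogonality of $(g_n)_{n=0}^\infty$ with respect to $\left|\Gamma(3/4+ix)\right|^4$ on $\R$ to extract $d_n$ as a Fourier coefficient in the Hilbert space $L^2(\R,\left|\Gamma(3/4+it/2)\right|^4\,dt)$. The work splits into a normalization computation, an $L^2$-convergence argument, and identification of the $L^2$-limit with the pointwise limit $\Xi(t)$ already established in Theorem~\ref{THM:GN-EXPANSION}.

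The key ingredient is the squared norm of $g_n(t/2)$ in that Hilbert space. I would invoke the standard orthogonality relation for the continuous Hahn polynomials recorded in \secref{sec:orth-gn}, specialized to $a=b=c=d=3/4$, which yields $\|p_n(\,\cdot\,;3/4,3/4,3/4,3/4)\|^2 = \frac{\pi\,\Gamma(n+3/2)^4}{n!(n+1)(n+1)!}$. Converting to the $g_n$ normalization using $g_n = \frac{(n+1)\,n!}{(3/2)_n^2}\,p_n$ and simplifying with $(3/2)_n=\frac{2}{\sqrt{\pi}}\Gamma(n+3/2)$, every dependence on $n$ cancels, leaving the striking identity
\[
\int_{-\infty}^\infty g_n(x)^2\,\left|\Gamma(3/4+ix)\right|^4\,dx \;=\; \frac{\pi^3}{16}.
\]
A change of variables $t=2x$ then gives $\|g_n(t/2)\|^2 = \pi^3/8$ in the rescaled weighted space.

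With this normalization I would then show that the series \eqref{eq:gn-expansion}, equivalently $\Xi(t) = \sum_{n=0}^\infty i^n d_n\,g_n(t/2)$ (the odd terms vanishing because $d_{2n+1}=0$), converges in the Hilbert space $L^2(\R,\left|\Gamma(3/4+it/2)\right|^4\,dt)$. This is immediate from the super-polynomial decay of $d_{2n}$ given by Theorem~\ref{thm:gn-coeff-asym} together with the uniform-in-$n$ bound $\pi^3/8$ on the squared norms, so the partial sums form a Cauchy sequence. Since $L^2$-convergence implies convergence almost everywhere along a subsequence, and Theorem~\ref{THM:GN-EXPANSION} provides pointwise convergence of the same series to $\Xi(t)$, the $L^2$-limit must coincide with $\Xi(t)$. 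Extracting the $n$th Fourier coefficient in the orthogonal basis $(g_n(t/2))_{n=0}^\infty$ then yields
\[
i^n d_n \;=\; \frac{8}{\pi^3} \int_{-\infty}^\infty \Xi(t)\,g_n(t/2)\,\bigl|\Gamma(3/4+it/2)\bigr|^4\,dt,
\]
and multiplying by $i^{-n}=(-i)^n$ gives \eqref{eq:gn-coeff-innerproduct}.

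The only step requiring genuine vigilance is the verification that $\|g_n\|^2$ is independent of $n$; this is an algebraic simplification specific to the symmetric parameter choice $a=b=c=d=3/4$ rather than a general phenomenon, but the cancellation is mechanical once the pieces are assembled. For odd $n$, the vanishing of the right-hand side of \eqref{eq:gn-coeff-innerproduct} is automatic from the evenness of $\Xi(t)$ and $|\Gamma(3/4+it/2)|^4$ together with the parity $g_n(-t/2) = (-1)^n g_n(t/2)$, consistent with $d_n=0$ for odd $n$.
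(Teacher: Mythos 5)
Your proposal is correct and takes essentially the same approach as the paper, whose proof is simply to repeat the $L^2$-extraction argument of Corollaries~\ref{cor:hermite-coeff-innerproduct} and~\ref{cor:fn-coeff-innerproduct}: $L^2$-convergence of the series from the decay of $d_{2n}$ and the constant squared norms, identification of the $L^2$-limit with the pointwise limit $\Xi(t)$ via an almost-everywhere subsequence, and coefficient extraction by orthogonality. The only addition is your re-derivation of the $n$-independent norm $\pi^3/16$ from the general continuous Hahn orthogonality, which the paper already records as \eqref{eq:gn-orthogonality} in \secref{sec:orth-gn}; your computation and the resulting constant $8/\pi^3$ after the change of variables $t=2x$ are both correct.
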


\begin{proof}
Repeat the arguments in the proofs of 
Corollaries~\ref{cor:hermite-coeff-innerproduct}
and~\ref{cor:fn-coeff-innerproduct}.
\end{proof}

\section{Proof of Theorem~\ref{THM:GN-EXPANSION}}

\label{sec:gnexp-proofexpansion}

The next two lemmas are analogues of Lemmas~\ref{lem:hermite-easybound} and~\ref{lem:hermite-easybound2} in Chapter~\ref{ch:hermite} and Lemmas~\ref{lem:fn-easybound} and~\ref{lem:fn-easybound2} in Chapter~\ref{ch:fn-expansion}.

\begin{lem} 
\label{lem:gn-easybound}
The polynomials $g_n(x)$ satisfy the bound
\begin{equation}
\label{eq:gn-easybound}
|g_n(x)|\leq C_1 e^{C_2 n^{1/3}}
\end{equation}
for all $n\ge 0$, uniformly as $x$ ranges over any compact set $K\subset \C$, with $C_1,C_2>0$ being constants that depend on $K$ but not on $n$.
\end{lem}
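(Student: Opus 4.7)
The plan is to mirror the proof of Lemma~\ref{lem:fn-easybound}, using in place of \eqref{eq:fn-recurrence} the three-term recurrence relation satisfied by the continuous Hahn polynomials $g_n$ (recorded in Appendix~\ref{appendix:orthogonal}). Solving the standard continuous Hahn recurrence for $g_{n+1}(x)$ in terms of $g_n(x)$ and $g_{n-1}(x)$, with the parameter specialization $a=b=c=d=3/4$, yields an identity of the schematic form
\begin{equation*}
g_{n+1}(x) = \alpha_n(x)\, g_n(x) + \beta_n\, g_{n-1}(x),
\end{equation*}
where $\alpha_n(x) = (-ix + A_n + C_n)/A_n$ and $\beta_n = -C_n/A_n$ with the recurrence coefficients $A_n$, $C_n$ satisfying $A_n \sim -n/4$ and $C_n \sim n/4$ as $n\to\infty$ (this can be read off from the explicit formulas $A_n = -(n+2)(n+3/2)^2/((2n+2)(2n+3))$ and $C_n = n(n+1/2)^2/((2n+1)(2n+2))$). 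Consequently $\beta_n = 1 + O(1/n)$ and $\alpha_n(x) = O(1/n)$ uniformly for $x$ in the compact set $K$.

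First I would fix the compact set $K\subset \C$, put $M = 2\max_{x\in K}|x|$, choose an integer $N_0$ (depending on $M$) large enough that the asymptotics above give concrete numerical bounds $|\alpha_n(x)| \le M'/n$ and $|\beta_n| \le 1 + M''/n$ for $n\ge N_0$ and $x\in K$, and then pick constants $C_1, C_2 > 0$ with $C_2 \ge 1$ such that \eqref{eq:gn-easybound} holds for all $x\in K$ and $0\le n\le N_0$. The bound is then extended to all $n$ by induction. Using the elementary inequality (already exploited in the proof of Lemma~\ref{lem:fn-easybound})
\begin{equation*}
e^{-C_2\bigl(n^{1/3}-(n-1)^{1/3}\bigr)} \;\le\; 1 - \frac{c}{n^{2/3}}
\end{equation*}
for a suitable constant $c>0$ and $n\ge N_0$, and assuming the bound through index $n$, the recurrence yields
\begin{equation*}
|g_{n+1}(x)| \;\le\; C_1 e^{C_2 n^{1/3}} \Bigl( |\alpha_n(x)| + |\beta_n|\, e^{-C_2(n^{1/3}-(n-1)^{1/3})} \Bigr) \;\le\; C_1 e^{C_2 n^{1/3}} \Bigl( \tfrac{M'}{n} + \bigl(1+\tfrac{M''}{n}\bigr)\bigl(1-\tfrac{c}{n^{2/3}}\bigr) \Bigr).
\end{equation*}
Since $M'/n$ and $M''/n$ are both $o(n^{-2/3})$, the bracket is bounded above by $1 - c'n^{-2/3} \le e^{C_2((n+1)^{1/3} - n^{1/3})}$ for $n$ sufficiently large, after possibly enlarging $N_0$. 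This closes the induction and gives the claimed bound.

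The main obstacle, such as it is, is purely bookkeeping: one must verify that the absorbing contribution $|\alpha_n(x)| = O(1/n)$ coming from the middle term of the recurrence is negligible compared to the $cn^{-2/3}$ margin gained from the $\beta_n$ term. Because $1/n = o(n^{-2/3})$, this margin is comfortable and the induction goes through with constants depending only on $K$, exactly paralleling the structure of the $f_n$ argument.
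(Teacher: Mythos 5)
Your proof is correct and takes essentially the same route as the paper, which proves the lemma by rerunning the induction of Lemma~\ref{lem:fn-easybound} with the recurrence \eqref{eq:fn-recurrence} replaced by the three-term recurrence \eqref{eq:gn-recurrence} for $g_n$, adjusting constants. One minor bookkeeping caveat: the recurrence with your $A_n$, $C_n$ governs the ${}_3F_2$-normalized polynomials $g_n(x)/\bigl(i^n(n+1)\bigr)$ rather than $g_n$ itself---the paper's \eqref{eq:gn-recurrence} gives directly $\alpha_n(x)=8x/(2n+3)$ and $|\beta_n|=(2n+1)/(2n+3)\le 1$---but since your coefficients have the same orders of magnitude and the discrepancy is a polynomial factor $n+1$ that is absorbed into $C_1 e^{C_2 n^{1/3}}$, the argument is unaffected.
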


\begin{proof}
This is identical to the proof of Lemma~\ref{lem:fn-easybound}, except that the use of the recurrence relation \eqref{eq:fn-recurrence} is replaced by the analogous relation \eqref{eq:gn-recurrence} for the sequence $g_n(x)$, with the result that some small modifications need to be made to the constants in the proof. We leave the details as an exercise.
\end{proof}

\begin{lem}
There exist constants $J_1, J_2>0$ such that for all $n\ge0$, the bound
\begin{align}
\label{eq:gn-easybound2}
\frac{1}{2^n}\int_0^\infty \frac{\omega(x)}{(x+1)^{3/2}}
& \left|
\left( \frac{x-1}{x+1}\right)^n
\gausshyper\left(\frac{n}{2}+\frac34,
\frac{n}{2}+\frac54; n+2; \left(\frac{x-1}{x+1}\right)^2 \right)
\right|
\,dx 
\leq J_1 e^{-J_2 n^{2/3}}
\end{align}
holds.
\end{lem}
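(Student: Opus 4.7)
My plan is to simplify the hypergeometric function in the integrand via the Pfaff transformation, then derive a uniform upper bound on the simplified hypergeometric, and finally reduce to a one-variable Laplace problem similar in spirit to Lemma~\ref{lem:fn-easybound2}. To begin, I apply the Pfaff identity $\gausshyper(a, b; c; z) = (1-z)^{-a}\gausshyper(a, c-b; c; z/(z-1))$ with $a = \frac{n}{2}+\frac34$, $b = \frac{n}{2}+\frac54$, and $c = n+2$, noting that $c - b = a$ in our case, so the transformed hypergeometric has both upper parameters equal to $\frac{n}{2}+\frac34$. Writing $u = (x-1)/(x+1)$ and $y = (x-1)^2/(4x)$, and using $1-u^2 = 4x/(x+1)^2$ together with the algebraic identity $(x+1)^{-3/2}|u|^n(1-u^2)^{-(n/2+3/4)} = |x-1|^n/(2^{n+3/2}x^{n/2+3/4})$, a short simplification reduces the left-hand side of \eqref{eq:gn-easybound2} to
\begin{equation*}
\frac{1}{2^{n+3/2}}\int_0^\infty \omega(x)\, x^{-3/4}\, y^{n/2}\, \gausshyper\!\left(\tfrac{n}{2}+\tfrac34,\tfrac{n}{2}+\tfrac34; n+2; -y\right)dx.
\end{equation*}

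The main obstacle of the proof is to establish the uniform bound
\begin{equation*}
\gausshyper\!\left(\tfrac{n}{2}+\tfrac34,\tfrac{n}{2}+\tfrac34; n+2; -y\right) \leq C\sqrt{n}\cdot \frac{2^n}{(1+\sqrt{1+y})^{n-1/2}} \qquad (y \geq 0,\, n\geq 1)
\end{equation*}
for some absolute constant $C>0$. This would follow from the Euler integral representation $\gausshyper(a, a; c; -y) = \frac{\Gamma(c)}{\Gamma(a)\Gamma(c-a)}\int_0^1 s^{a-1}(1-s)^{c-a-1}(1+ys)^{-a}\,ds$, combined with the elementary inequality $s(1-s)/(1+ys) \leq 1/(1+\sqrt{1+y})^2$ for all $s\in(0,1)$ and $y \geq 0$ (proved directly by locating the maximum $s_* = 1/(1+\sqrt{1+y})$ of the left-hand side), and the fact that $c-2a = \tfrac12$, so that after applying the inequality to the factor $(s(1-s))^{a-1}(1+ys)^{-(a-1)}$ the residual integral $\int_0^1 (1-s)^{1/2}(1+ys)^{-1}ds$ is bounded by $\tfrac{2}{3}$. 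The gamma-function prefactor produces the $O(2^n\sqrt{n})$ factor after applying the duplication formula $\Gamma(\tfrac{n}{2}+\tfrac34)\Gamma(\tfrac{n}{2}+\tfrac54) = \sqrt{\pi}\,2^{-n-1/2}\,\Gamma(n+\tfrac32)$ and Stirling's asymptotic $\Gamma(n+2)/\Gamma(n+\tfrac32) = O(\sqrt{n})$.

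Substituting this bound back into the integral, and using the algebraic identities $1+\sqrt{1+y} = (\sqrt{x}+1)^2/(2\sqrt{x})$ and $\sqrt{y}/(1+\sqrt{1+y}) = |\sqrt{x}-1|/(\sqrt{x}+1) =: v(x)$, the factors of $2^n$ cancel exactly, leaving an upper bound of the form $O(\sqrt{n})\int_0^\infty \omega(x)(\sqrt{x}+1)/x\cdot v(x)^n\,dx$. The change of variables $t=\sqrt{x}$ recasts this as $O(\sqrt{n})\int_0^\infty \omega(t^2)(t+1)/t\cdot v(t)^n\,dt$ with $v(t) = |t-1|/(t+1)$, and one checks directly that the new integrand is invariant under $t \mapsto 1/t$ (using $\omega(1/t^2) = t\,\omega(t^2)$ and $v(1/t) = v(t)$ up to sign), so the integration range may be halved to $2\int_1^\infty \omega(t^2)(t+1)/t\cdot v(t)^n\,dt$.

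The final step is to apply Laplace's method to this integral, following the template used in the proof of Theorem~\ref{THM:FN-COEFF-ASYM} in Section~\ref{sec:fnexp-proofasym}. Using the asymptotics $\omega(t^2) \sim 2\pi^2 t^4 e^{-\pi t^2}$ and $\log v(t) \sim -2/t$ as $t \to \infty$, the relevant phase function is $\psi_n(t) = -\pi t^2 - 2n/t$, whose unique maximum on $[1,\infty)$ occurs at $t_*(n) = (n/\pi)^{1/3}$ with value $\psi_n(t_*) = -3\pi^{1/3} n^{2/3}$. Splitting the interval $[1,\infty)$ into a central Gaussian region around $t_*$ and two tails controlled by the concavity of $\psi_n$, exactly as in the proof of Theorem~\ref{THM:FN-COEFF-ASYM}, yields the desired bound \eqref{eq:gn-easybound2} with any constant $J_2$ strictly less than $3\pi^{1/3}$.
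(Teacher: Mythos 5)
Your proof is correct, and it takes a genuinely different route from the paper's. The paper proves \eqref{eq:gn-easybound2} by expanding the $\gausshyper$ factor as a series, integrating termwise against $\omega$, and reducing to the binomial sum $\sum_m 2^{-(n+2m+1)}\binom{n+2m+1}{m}e^{-M\sqrt{n+2m}}$ (inherited from \eqref{eq:finiteness-bound} and the decay $c_r'\le Ke^{-M\sqrt{r}}$ supplied by Lemma~\ref{lem:fn-easybound2}), which it controls by splitting at $m\asymp n^{4/3}$ and showing the summands are increasing up to the cutoff. You instead exploit the parameter coincidence $c-b=a$ to apply Pfaff and bound $\gausshyper(a,a;c;-y)$ pointwise via the Euler integral: I verified the key computations --- the maximum of $s(1-s)/(1+ys)$ is indeed $(1+\sqrt{1+y})^{-2}$ at $s_*=1/(1+\sqrt{1+y})$, the condition $c-2a=\tfrac12$ leaves a residual integral bounded by $\tfrac23$, duplication plus Stirling gives the $O(2^n\sqrt{n})$ prefactor, and after substituting $\sqrt{y}/(1+\sqrt{1+y})=|\sqrt{x}-1|/(\sqrt{x}+1)$ the powers of $2$ cancel exactly as you claim. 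The $t\mapsto 1/t$ invariance follows from \eqref{eq:theta-functional-equation} as you say, and since $\log\frac{t-1}{t+1}\le -2/t$ for $t>1$ (not merely asymptotically) and $\psi_n$ is concave on $(1,\infty)$, the final Laplace step is routine; the lemma only demands some $J_2>0$, so even a crude split at $2t_*$ suffices. What your approach buys: it is self-contained (no appeal to the $c_r'$ decay) and yields the essentially sharp constant --- any $J_2<3\pi^{1/3}$, and note $3\pi^{1/3}(2n)^{2/3}=3(4\pi)^{1/3}n^{2/3}$, exactly the exponent $E$ of Theorem~\ref{thm:gn-coeff-asym} --- whereas the paper's argument produces an unspecified $J_2$ of order $M^{2/3}$. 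Conceptually, your method is a one-dimensional collapse of the paper's second proof of Theorem~\ref{thm:gn-coeff-asym}, which also invokes Euler's representation but keeps the double integral and runs two-dimensional Laplace; maximizing over $s$ in closed form first avoids the 2D machinery at the cost of an $O(\sqrt{n})$ loss, which is immaterial here. The only items to write out in full are the trivial $n=0$ case and the positivity of the hypergeometric factor (immediate from the Euler integral), which justifies dropping the absolute values.
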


\begin{proof}
Note that this is a stronger version of the finiteness bound \eqref{eq:finiteness-bound} that makes explicit the dependence of the bound on $n$. To prove it, we refer back to the penultimate line of \eqref{eq:finiteness-bound} and proceed from there a bit more economically than before. Multiplying by $\frac{1}{2^n}$ and using the trivial fact that $\frac{(n+1)!}{(3/2)_n}\leq 2n$, we get that the integral in \eqref{eq:gn-easybound2} (together with the leading factor of $\frac{1}{2^n}$) is bounded from above by
\begin{align*}
& 4 K n\sum_{m=0}^\infty 
\frac{1}{2^{n+2m+1}} \binom{n+2m+1}{m}
e^{-M \sqrt{n+2m}}
\\ &=
4 K  n \Bigg[\sum_{m\le \frac{1}{8M^{2/3}} n^{4/3}}
\frac{1}{2^{n+2m+1}} \binom{n+2m+1}{m}
e^{-M \sqrt{n+2m}}
+ \sum_{m> \frac{1}{8M^{2/3}} n^{4/3}}
e^{-M \sqrt{n+2m}} \Bigg]
\end{align*}
(with the same constants $K,M$ appearing in \eqref{eq:finiteness-bound}).
We will show that each of the two sums in this last expression satisfies a bound of the sort we need.
For the second sum, observe that it is bounded by the integral
\begin{equation*}
\int_{\frac{1}{8M^{2/3}} n^{4/3}-1}^\infty
e^{-M\sqrt{n+2x}}\,dx,
\end{equation*}
and this integral is $O\left(\exp\left(-\frac{M^{2/3}}{2} n^{2/3}\right)\right)$, by the relation
\begin{equation*}
\int_{A}^\infty e^{-M \sqrt{n+2x}}
= \frac{1}{M^2} (M\sqrt{n+2A}+1) e^{-M\sqrt{n+2A}}.
\end{equation*}
To estimate the first sum, we claim that the terms in that sum are increasing as a function of $m$ for all large enough (but fixed) $n$; this would imply that the sum is bounded for large $n$ by $\frac{1}{8M^{2/3}} n^{4/3}$ times the last term, which in turn is at most $O\left(\exp\left(-\frac{M^{2/3}}{2} n^{2/3}\right)\right)$, and hence, when combined with the estimates above, would imply the claim of the lemma.

To prove the claim, observe that the ratio of successive terms in the sum is
\begin{align}
\label{eq:ratio-successive}
\frac{
\frac{1}{2^{n+2m+3}} \binom{n+2m+3}{m+1}
e^{-M\sqrt{n+2m+2}}
}{
\frac{1}{2^{n+2m+1}} \binom{n+2m+1}{m}
e^{-M\sqrt{n+2m}}
}
& =
\frac{(n+2m+2)(n+2m+3)}{4(m+1)(n+m+2)}
e^{M(\sqrt{n+2m}-\sqrt{n+2m+2})}
\\ \nonumber & \geq
\frac{\left(m+\frac{n}{2}\right)^2}{(m+1)(m+n+2)}
\left(1-\frac{M}{\sqrt{m+\frac{n}{2}}}\right)
\\ \nonumber & =
\frac{\left(m+\frac{n}{2}\right)^2}{\left(m+\frac{n}{2}\right)^2 -\left(\frac{n^2}{4}-3m-n-2\right)}
\left(1-\frac{M}{\sqrt{m+\frac{n}{2}}}\right).
\end{align}
Our claim is equivalent to the statement that, under the assumption $m\le \frac{1}{8M^{2/3}} n^{4/3}$, the last expression in \eqref{eq:ratio-successive} is $\ge 1$. Equivalently, we need to show that the inequality
\begin{equation}
\label{eq:ineq2verify1}
1- \frac{\frac{n^2}{4}-3m-n-2}{\left(m+\frac{n}{2}\right)^2} \leq 1-\frac{M}{\sqrt{m+\frac{n}{2}}}
\end{equation}
holds for those values of $m$. This reduces after some further simple algebra to verifying the inequality
\begin{equation}
\label{eq:ineq2verify2}
m+\frac{n}{2} \leq \frac{1}{M^{2/3}}\left(\frac{n^2}{4}-3m-n-2\right)^{2/3}.
\end{equation}
To check this, assume that $n$ is large enough so that the inequalities
\begin{equation}
\label{eq:largen-assumption}
3\left(\frac{1}{8M^{2/3}}\right) n^{4/3}+n + 2 \le \frac{n^2}{8},
\qquad
\frac{n}{2} \le \frac{1}{8 M^{2/3}}n^{4/3}
\end{equation}
are satisfied. Then, together with our assumption on $m$, that also implies that
\begin{equation*}
\frac{n^2}{8} \leq \frac{n^2}{4}-3m-n-2,
\end{equation*}
and therefore also that
\begin{align*}
m + \frac{n}{2} 
&\leq
\frac{1}{8M^{2/3}} n^{4/3} + \frac{1}{8M^{2/3}} n^{4/3}
=
\frac{1}{4M^{2/3}} n^{4/3}
=
\frac{1}{M^{2/3}} \left( \frac{n^2}{8} \right)^{2/3}
\leq
\frac{1}{M^{2/3}} \left( \frac{n^2}{4}-3m-n-2 \right)^{2/3}.
\end{align*}
This verifies~\eqref{eq:ineq2verify2}, hence also \eqref{eq:ineq2verify1}, for all $n$ satisfying \eqref{eq:largen-assumption} (which clearly includes all values of $n$ larger than some fixed $N_0$), and therefore finishes the proof of the claim and also of the lemma.
\end{proof}

We need one final bit of preparation before proving Theorem~\ref{THM:GN-EXPANSION}. Recall that in the proof of Theorem~\ref{THM:FN-EXPANSION}, a key idea was the observation that the integration kernel $x^{s/2-1}$ can be related to the generating function of the polynomials $f_n(t/2)$. The next lemma shows a way of representing the same generating function as an infinite series involving the polynomials $g_n(t/2)$.

\begin{lem}
\label{lem:fn-genfun-gnexpansion}
For $w\in \C$ and $|z|<1$, we have the identity
\begin{equation*}
\sum_{n=0}^\infty f_n(w) z^n 
= \sum_{n=0}^\infty \frac{(3/2)_n}{2^n n!} g_n(w) 
\,\gausshyper\left(\frac{n}{2}+\frac34,\frac{n}{2}+\frac54;
n+2;-z^2\right) z^n .
\end{equation*}
\end{lem}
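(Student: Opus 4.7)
The plan is to compare Taylor coefficients in $z$ on both sides. The left-hand side already has a clean closed form: the Meixner--Pollaczek generating function (used implicitly in the derivation of \eqref{eq:int-kernel-expansion}) gives
\begin{equation*}
\sum_{n=0}^\infty f_n(w)\, z^n \;=\; (1-iz)^{-3/4+iw}(1+iz)^{-3/4-iw} \qquad (|z|<1),
\end{equation*}
so the coefficient of $z^N$ on the left is simply $f_N(w)$.

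For the right-hand side, I would first expand the hypergeometric factor as its defining power series,
\begin{equation*}
\gausshyper\!\left(\tfrac{n}{2}{+}\tfrac34,\;\tfrac{n}{2}{+}\tfrac54;\; n+2;\; -z^2\right) \;=\; \sum_{k=0}^\infty \frac{(n/2+3/4)_k\,(n/2+5/4)_k}{(n+2)_k\, k!}\,(-1)^k z^{2k},
\end{equation*}
and then simplify the numerator using the Pochhammer duplication identity $(a)_k(a+\tfrac12)_k=(2a)_{2k}/4^k$ with $a=n/2+3/4$, which gives $(n/2+3/4)_k(n/2+5/4)_k = (3/2)_{n+2k}/\bigl((3/2)_n\cdot 4^k\bigr)$. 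Substituting this into the right-hand side of the lemma and collecting terms by the total power $N=n+2k$ (after interchanging the order of summation, absolute convergence being straightforward for $|z|<1$ with $w$ in a compact set, using Lemma~\ref{lem:gn-easybound} to control $g_n(w)$), the right-hand side becomes
\begin{equation*}
\sum_{N=0}^\infty \frac{(3/2)_N}{2^N}\left[\sum_{k=0}^{\lfloor N/2\rfloor}\frac{(-1)^k (N-2k+1)}{(N-k+1)!\,k!}\,g_{N-2k}(w)\right] z^N.
\end{equation*}

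Matching the coefficient of $z^N$ on both sides reduces the lemma to the finite polynomial identity
\begin{equation*}
f_N(w) \;=\; \frac{(3/2)_N}{2^N}\sum_{k=0}^{\lfloor N/2\rfloor}\frac{(-1)^k (N-2k+1)}{(N-k+1)!\,k!}\,g_{N-2k}(w) \qquad (N\ge 0),
\end{equation*}
which I would invoke as one of the two summation identities proved in \secref{sec:orth-fngn-relation} of the appendix. As a sanity check, both sides are polynomials in $w$ of degree $N$ with leading coefficient $2^N/N!$: using $(3/2)_N = (2N+1)!/(4^N N!)$ together with the fact that $g_N$ has leading coefficient $16^N N!/(2N+1)!$, only the $k=0$ term contributes to the top-degree coefficient on the right, and its contribution evaluates to $2^N/N!$.

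The main obstacle, and the heart of the argument, is the finite identity above; all the analytic work is routine series rearrangement and coefficient-matching. The appendix establishes it by direct hypergeometric manipulation of the ${}_3F_2$ representation for $g_{N-2k}$ and the ${}_2F_1$ representation for $f_N$, and once it is in hand the lemma follows immediately.
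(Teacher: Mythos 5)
Your proposal is correct and is essentially the paper's own argument run in the opposite direction: the paper substitutes the appendix identity \eqref{eq:fn-intermsof-gn} into $\sum_n f_n(w)z^n$ and rearranges (with absolute convergence via Lemma~\ref{lem:gn-easybound} and the same Pochhammer duplication $(a)_k(a+\tfrac12)_k = (2a)_{2k}/4^k$) to produce the $\gausshyper$ series, whereas you expand the $\gausshyper$ factor and match coefficients of $z^N$, reducing to that identical finite identity. The reordering of the double sum, the convergence justification, and the key appendix lemma are all the same, so this is the same proof up to direction of reading.
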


\begin{proof}
Using the relation \eqref{eq:fn-intermsof-gn} expressing the polynomial $f_n$ in terms of the $g_k$'s, we can write
\begin{equation}
\label{eq:fn-genfun-intermsof-gn}
\sum_{n=0}^\infty f_n(w) z^n 
=
\sum_{n=0}^\infty
\left( \frac{(3/2)_n}{2^n (n+1)!} 
\sum_{m=0}^{\lfloor n/2 \rfloor} (-1)^m (n-2m+1)
\binom{n+1}{m} g_{n-2m}(w)
\right) z^n.
\end{equation}
The claim will follow by suitably rearranging the terms in this double summation. First, let us check that this is permitted by showing that the sum is in fact absolutely convergent. Indeed, using Lemma~\ref{lem:gn-easybound} to bound $|g_{n-2m}(w)|$ (with $w$ being fixed and the resulting constants $C_1,C_2$ depending on $w$ but not on $n,m$), we see that
\begin{align*}
& \sum_{n=0}^\infty 
\sum_{m=0}^{\lfloor n/2 \rfloor}
 \frac{(3/2)_n}{2^n (n+1)!} 
\left| (-1)^m (n-2m+1)
\binom{n+1}{m} g_{n-2m}(w) \right|
\cdot |z|^n
\\& \ \leq
\sum_{n=0}^\infty
\frac{(3/2)_n}{2^n (n+1)!} (n+1) 2^{n+1} \times C_1 e^{C_2 n^{1/3}}
|z|^n
= 
2C_1 \sum_{n=0}^\infty (n+1) e^{C_2 n^{1/3}} |z|^n
< \infty.
\end{align*}
With absolute convergence established, we can rewrite the double sum in \eqref{eq:fn-genfun-intermsof-gn}, introducing a new summation index $k=n-2m$ in place of the index $n$, as
\begin{align*}
\sum_{k=0}^\infty \sum_{m=0}^\infty &
\frac{(3/2)_{k+2m}}{2^{k+2m}(k+2m+1)!} 
(-1)^m
(k+1)
\binom{k+2m+1}{m} g_k(w)
z^{k+2m}
\\ & \qquad=
\sum_{k=0}^\infty \frac{(3/2)_k}{2^k k!} g_k(w) z^k
\left(
\sum_{m=0}^\infty \frac{(-z^2)^m}{m!} 
\frac{(3/2)_{k+2m}}{(3/2)_k 2^{2m}} \frac{(k+1)!}{(k+m+1)!}
\right)
\\ & \qquad=
\sum_{k=0}^\infty \frac{(3/2)_k}{2^k k!} g_k(w) z^k
\left(
\sum_{m=0}^\infty 
\frac{\left(\frac{k}{2}+\frac34\right)_m
\left(\frac{k}{2}+\frac54\right)_m}{m!(k+2)_m}
\left(-z^2\right)^m
\right)
\\ & \qquad=
\sum_{k=0}^\infty \frac{(3/2)_k}{2^k k!} g_k(w) z^k
\gausshyper\left( \frac{k}{2}+\frac34, 
\frac{k}{2}+\frac54 ; k+2 ; -z^2 \right),
\end{align*}
as was the claim to prove.
\end{proof}

We are ready to prove \eqref{eq:gn-expansion-errorbound}. The calculation parallels that in the proofs of Theorems~\ref{THM:HERMITE-EXPANSION} and~\ref{THM:FN-EXPANSION}. Namely, start by estimating in a fairly simple-minded way that
\begin{align}
\label{eq:gnexp-ineq-chain}
\Bigg|\Xi(t) - & \sum_{n=0}^N (-1)^{2n}d_{2n} g_{2n}\left(\frac{t}{2}\right)\Bigg|
=
\left|\Xi(t) - \sum_{n=0}^{2N} i^n d_n g_n\left(\frac{t}{2}\right)\right|
\\ \nonumber &=
\Bigg|\int_0^\infty
\omega(x)
\Bigg(x^{-\frac34+\frac{it}{2}} - 
\sum_{n=0}^{2N} i^n
\frac{(3/2)_n}{2^{n-3/2} n!} \frac{1}{(x+1)^{3/2}} \left(\frac{x-1}{x+1}\right)^n 
\\ \nonumber & \qquad\qquad\qquad\qquad\qquad\qquad \times 
\gausshyper\left(\frac{n}{2}+\frac34,
\frac{n}{2}+\frac54; n+2; \left(\frac{x-1}{x+1}\right)^2 \right)
g_n\left(\frac{t}{2}\right)\Bigg)
\,dx\Bigg|
\\ \nonumber &\leq
\int_0^\infty
\omega(x)
\Bigg|
x^{-\frac34+\frac{it}{2}} - 
\sum_{n=0}^{2N} i^n
\frac{(3/2)_n}{2^{n-3/2} n!} \frac{1}{(x+1)^{3/2}} \left(\frac{x-1}{x+1}\right)^n 
\\ \nonumber & \qquad\qquad\qquad\qquad\qquad\qquad \times 
\gausshyper\left(\frac{n}{2}+\frac34,
\frac{n}{2}+\frac54; n+2; \left(\frac{x-1}{x+1}\right)^2 \right)
g_n\left(\frac{t}{2}\right)\Bigg|
\,dx.
\end{align}
By \eqref{eq:int-kernel-expansion} and Lemma~\ref{lem:fn-genfun-gnexpansion} (with $z=i(x-1)/(x+1)$), the kernel $x^{-\frac34+\frac{it}{2}}$ can be expanded as
\begin{align*}
x^{-\frac34+\frac{it}{2}}
& =
\sum_{n=0}^\infty i^n
\frac{(3/2)_n}{2^{n-3/2} n!} \frac{1}{(x+1)^{3/2}} \left(\frac{x-1}{x+1}\right)^n 
\gausshyper\left(\frac{n}{2}+\frac34,
\frac{n}{2}+\frac54; n+2; \left(\frac{x-1}{x+1}\right)^2 
\right) g_n\left(\frac{t}{2}\right). 
\end{align*}
Continuing the chain of inequalities \eqref{eq:gnexp-ineq-chain}, we therefore get that
\begin{align*}
& \left|\Xi(t) - \sum_{n=0}^N (-1)^{2n}d_{2n} g_{2n}
\right|
\\ &\leq
\int_0^\infty
\frac{\omega(x)}{(x+1)^{3/2}}
\Bigg|
\sum_{n=2N+1}^\infty
i^n\frac{(3/2)_n}{2^{n-3/2} n!} \left(\frac{x-1}{x+1}\right)^n 
\gausshyper\left(\frac{n}{2}+\frac34,
\frac{n}{2}+\frac54; n+2; \left(\frac{x-1}{x+1}\right)^2 \right)
g_n\left(\frac{t}{2}\right) \Bigg|
\,dx
\\ &\leq
\int_0^\infty
\frac{\omega(x)}{(x+1)^{3/2}}
\sum_{n=2N+1}^\infty
\frac{(3/2)_n}{2^{n-3/2} n!} \left|\frac{x-1}{x+1}\right|^n 
\left|\gausshyper\left(\frac{n}{2}+\frac34,
\frac{n}{2}+\frac54; n+2; \left(\frac{x-1}{x+1}\right)^2 \right)
\right|\cdot
\left|g_n\left(\frac{t}{2}\right)\right|
\,dx
\\ &=
\sum_{n=2N+1}^\infty
\frac{(3/2)_n}{2^{n-3/2} n!} 
\left|g_n\left(\frac{t}{2}\right)\right|
\int_0^\infty
\frac{\omega(x)}{(x+1)^{3/2}}
\left|\frac{x-1}{x+1}\right|^n 
\left|\gausshyper\left(\frac{n}{2}+\frac34,
\frac{n}{2}+\frac54; n+2; \left(\frac{x-1}{x+1}\right)^2 \right)
\right|
\,dx.
\end{align*}
Appealing to \eqref{eq:gn-easybound} (with a fixed compact set $K$ on which we are allowing $t$ to range) and finally to \eqref{eq:gn-easybound2}, we see that this last expression is bounded by
\begin{align*}
\sum_{n=2N+1}^\infty
&
\frac{(3/2)_n}{2^{n-3/2} n!} 
C_1 e^{C_2 n^{1/3}}
\int_0^\infty
\frac{\omega(x)}{(x+1)^{3/2}}
\left|\frac{x-1}{x+1}\right|^n 
\left|\gausshyper\left(\frac{n}{2}+\frac34,
\frac{n}{2}+\frac54; n+2; \left(\frac{x-1}{x+1}\right)^2 \right)
\right|
\,dx
\\ &\leq
\sum_{n=2N+1}^\infty
\frac{(3/2)_n}{2^{n-3/2} n!} 
C_1 e^{C_2 n^{1/3}}
\times J_1 e^{-J_2 n^{2/3}}
= O(e^{-\frac{J_2}{2} n^{2/3}})
\end{align*}
as $n\to\infty$;
this gives \eqref{eq:gn-expansion-errorbound} and finishes the proof.
\qed

\section{Asymptotic analysis of the coefficients $d_{2n}$}

\label{sec:gnexp-proofasym}

In this section we prove Theorem~\ref{thm:gn-coeff-asym}. We will give two independent proofs of this result, one relying on the representation \eqref{eq:dn-cn-expansion} of the coefficients $d_{2n}$ in terms of the coefficients $c_{2k}$---whose asymptotic behavior we already analyzed---and another relying on a separate representation of $d_{2n}$ as a double integral, which seems of independent interest.

\begin{proof}[First proof of Theorem~\ref{thm:gn-coeff-asym}]
Our starting point is the formula \eqref{eq:dn-cn-expansion}. We start by rewriting this relation in a form that's slightly more convenient for asymptotics, namely as
\begin{align*}
d_{2n} 
&=
\frac{2n+1}{2^{2n}} \sum_{m=0}^\infty \frac{(3/2)_{2n+2m}}{2^{2m}m!(2n+m+1)!} c_{2n+2m}
\\ & =
\frac{2n+1}{2^{2n}} \sum_{m=0}^\infty \frac{(4n+4m+2)!}{2^{4n+6m+1}m!(2n+m+1)!(2n+2m+1)!} c_{2n+2m}
\\ & =
\frac12(2n+1) \sum_{k=n}^\infty \frac{1}{2^{6k}} \frac{(4k+2)!}{(k-n)!(k+n+1)!(2k+1)!} c_{2k},
\end{align*}
substituting $k=n+m$ in the last step.
Making use of \eqref{eq:fn-coeff-asym}, we get that
\begin{align*}
d_{2n} 
&= 
\left(1+O\left(n^{-1/10}\right)\right) 128 \sqrt{2} \pi^{3/2} n \sum_{k=n}^\infty 
\frac{k^{3/2}}{k+n} \cdot
\frac{(4k)!}{2^{6k} (k-n)!(k+n)!(2k)!}
e^{-4\sqrt{\pi k}}
\\ &= 
\left(1+O\left(n^{-1/10}\right)\right) 128 \sqrt{2} \pi^{3/2} n \sum_{k=n}^\infty 
\frac{k^{3/2}}{k+n} \cdot
\frac{1}{2^{4k}} \binom{4k}{2k}\times \frac{1}{2^{2k}} \binom{2k}{k-n}
e^{-4\sqrt{\pi k}},
\end{align*}
where for convenience the terms have been simplified slightly by making use of trivial approximations such as $4k+2 = (1+O(n^{-1}))4k$, etc.; the errors in these approximations are absorbed into the leading $\left(1+O\left(n^{-1/10}\right)\right)$ factor. By Stirling's approximation, the binomial coefficients in the summand have asymptotic behavior
\begin{align*}
\frac{1}{2^{4k}}\binom{4k}{2k} & = \left(1+O\left(n^{-1}\right)\right) \frac{1}{\sqrt{2\pi k}}
\ \qquad\qquad \qquad \qquad \qquad \qquad \ (k\ge n, n\to\infty),
\\
\frac{1}{2^{2k}}\binom{2k}{k-n} & = 
\left(1+O\left(\frac{1}{k-n}\right)\right)
\frac{\sqrt{k}}{\sqrt{\pi(k-n)(k+n)}}
\\ & \qquad \qquad \times
\left(
\left( \frac{k-n}{k}\right)^{-(k-n)}
\left( \frac{k+n}{k}\right)^{-(k+n)}
\right)
\qquad (k\ge n, n\to\infty),
\end{align*}
Here, the error term $\left(1+O(k-n)^{-1}\right)$ is slightly bothersome as it makes it necessary to separately bound the summands for values of $k$ near $n$, but this is easy enough to do: observe that if $n\le k\le 2n$ then $k-n \le k/2$, and in this case we have for some constant $C>0$ independent of $n$ that
\begin{equation*}
\binom{2k}{k-n} \leq \binom{2k}{\lceil k/2 \rceil} \leq C(1.8)^{2k},
\end{equation*}
using Stirling's formula or a well-known bound such as \cite[p.~113, Eq.~(4.7.1)]{ash}. Thus, combining the latest estimates we obtain the expression
\begin{align}
\label{eq:d2n-largedev-sum}
d_{2n} &=
\left(1+O\left(n^{-1/10}\right)\right) 128 \sqrt{\pi} n 
\\ \nonumber & \quad\times
\Bigg[
\sum_{k=2n}^\infty 
\frac{k^{3/2}}{(k+n)^{3/2}(k-n)^{1/2}} 
\left(
\left( \frac{k-n}{k}\right)^{-(k-n)}
\left( \frac{k+n}{k}\right)^{-(k+n)}
\right)
e^{-4\sqrt{\pi k}}
+ O\left((0.9)^{2n}\right) \Bigg]
\\ \nonumber & =
\left(1+O\left(n^{-1/10}\right)\right) 128 \sqrt{\pi} n 
\sum_{k=2n}^\infty 
\frac{k^{3/2}}{(k+n)^{3/2}(k-n)^{1/2}} \exp\left(n^{2/3}\phi_n\left(\frac{k}{n^{4/3}}\right)\right),
\end{align}
where we denote
\begin{equation*}
\phi_n(t) = 
\frac{- (n^{4/3}t-n)\log\left(1-\frac{n^{-1/3}}{t}\right) - (n^{4/3}t+n)\log\left(1+\frac{n^{-1/3}}{t}\right)}{n^{2/3}}
- 4\sqrt{\pi t}.
\end{equation*}
We are now in a position to apply what is essentially a variant of Laplace's method in the setting of a discrete sum.
The following claims about the functions $\phi_n(t)$ are clearly relevant.
\begin{lem}
(i) The inequality
\begin{equation}
\label{eq:phin-upperbound}
\phi_n(t) \leq F(t) := -\frac{1}{t}-4\sqrt{\pi t}
\end{equation}
holds for all $n\ge1$ and $t\ge 2 n^{-1/3}$.

(ii) We have the asymptotic relation
\begin{equation}
\label{eq:phin-approximation}
\phi_n(t) = 
F(t) - \frac{1}{6t^3} n^{-2/3}
+ O\left( \frac{1}{n^{4/3}}
\right)
\qquad \textrm{as }n\to\infty \ \ \left(\textrm{with }\frac{1}{10}\le t \le 10\right),
\end{equation}
where the constant implicit in the big-$O$ is independent of $n$ and $t$, subject to the specified constraint.
\end{lem}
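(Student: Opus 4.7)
The plan is to expose $\phi_n(t)$ as a single power series in the small parameter $x:=n^{-1/3}/t$, from which both claims will be read off immediately. Setting $a=n^{4/3}t$ and $b=n$, one has $x=b/a$, $n^{4/3}t-n=a(1-x)$, $n^{4/3}t+n=a(1+x)$, so
\begin{equation*}
\phi_n(t)+4\sqrt{\pi t}
=-\frac{a}{n^{2/3}}\Bigl[(1-x)\log(1-x)+(1+x)\log(1+x)\Bigr].
\end{equation*}
The key identity is the well-known Taylor expansion
\begin{equation*}
(1-x)\log(1-x)+(1+x)\log(1+x)=\sum_{k=1}^{\infty}\frac{x^{2k}}{k(2k-1)},\qquad |x|<1,
\end{equation*}
which can be verified directly by term-by-term multiplication of the series for $\log(1\pm x)$ with $1\mp x$, or recognized as twice the usual expansion of $\int_0^x\operatorname{arctanh}(u)\,du$.

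Substituting this, and using $ax^{2k}=n^{(4-2k)/3}t^{-(2k-1)}$, the $k=1$ term supplies precisely $n^{2/3}/t$, which after division by $n^{2/3}$ gives the $-1/t$ summand of $F(t)$. The remaining terms produce the tail
\begin{equation}\label{eq:phin-series-proposal}
\phi_n(t)-F(t)=-\sum_{k=2}^{\infty}\frac{1}{k(2k-1)\,t^{2k-1}}\,n^{-2(k-1)/3}.
\end{equation}

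For part (i), observe that the hypothesis $t\ge 2n^{-1/3}$ gives $x\le 1/2$, so the series in \eqref{eq:phin-series-proposal} converges absolutely and has \emph{every} term nonpositive; hence $\phi_n(t)\le F(t)$. For part (ii), isolate the $k=2$ contribution, which equals $-\frac{1}{6t^3}n^{-2/3}$, and estimate the remainder $\sum_{k\ge 3}\frac{1}{k(2k-1)t^{2k-1}}n^{-2(k-1)/3}$: for $1/10\le t\le 10$ and $n$ large enough so that $n^{-1/3}/t\le 1/2$, the tail is bounded in absolute value by $\frac{1}{15t^5}n^{-4/3}$ times a convergent geometric factor in $x^2$, yielding the uniform bound $O(n^{-4/3})$ claimed.

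There is no real obstacle here; the only mildly delicate point is making sure the series identity is invoked only on its interval of validity $|x|<1$, which is why both parts of the lemma restrict $t$ away from $n^{-1/3}$. Everything else is a bookkeeping step of matching powers of $n^{-1/3}$ to coefficients $1/(k(2k-1))$.
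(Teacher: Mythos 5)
Your proposal is correct and is essentially the paper's own argument: the paper introduces $p(x) = -\left(\frac{1}{x}-1\right)\log(1-x)-\left(\frac{1}{x}+1\right)\log(1+x)$ with expansion $-\sum_{m\ge1}\frac{x^{2m-1}}{m(2m-1)}$ and substitutes $x=1/(n^{1/3}t)$, which is exactly your series identity multiplied through by $-1/x$. Your reading off part (i) from termwise nonpositivity and part (ii) from the $k=2$ term plus a geometrically summable tail matches the paper's use of $p(x)\le -x$ and $p(x)=-x-\frac{x^3}{6}+O(x^5)$.
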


\begin{proof}
Consider the function of a real variable $0<x<1$ given by
\begin{equation*}
p(x) = -\left(\frac{1}{x}-1\right) \log(1-x) - \left(\frac{1}{x}+1\right)\log(1+x).
\end{equation*}
It is easy to verify that $p(x)$ has the Taylor expansion
\begin{equation*}
p(x) = 
-x - \frac{x^3}{6} - \frac{x^5}{3\times 5}
- \frac{x^7}{4\times 7}
- \frac{x^9}{5\times 9} - \ldots
= -\sum_{m=1}^\infty \frac{x^{2m-1}}{m(2m-1)}.
\end{equation*}
In particular, $p(x)\leq -x$ for all $0\le x<1$.
Substituting $x=1/(n^{1/3} t)$ gives the first claim of the lemma, and the second claim is obtained from the same substitution applied to the fact that $p(x) = -x-\frac{1}{6}x^3 + O(x^5)$ for $0\le x\le 1/2$.
\end{proof}
Some additional easy facts to note are that the function $F(t)$ has a unique global maximum at $t = \alpha_0:=(4\pi)^{-1/3}$; that $F(t)$ is increasing on $(0,\alpha_0)$ and decreasing on $(\alpha_0,\infty)$; that $F(\alpha_0) = -E$ (where $E$ is defined in \eqref{eq:gncoeffs-asym-consts}), and that $F''(\alpha_0) = -6\pi$. In particular, we have the Taylor expansion
\begin{equation}
\label{eq:foft-taylor}
F(t) = -E - 3\pi (t-\alpha_0)^2 + O\left(|t-\alpha_0|^3\right)
\qquad \left(\frac{1}{10} \le t\le 10\right).
\end{equation}

Now, split up the sum in \eqref{eq:d2n-largedev-sum} (without the leading numerical constant) into four parts, representing it as $S_n^{(1)}+S_n^{(2)}+S_n^{(3)}+S_n^{(4)}$, where
\begin{align*}
S_n^{(1)} & = 
\sum_{k\,:\,2n\le k < \alpha_0 n^{4/3}- n^{19/18}}
\frac{k^{3/2}}{(k+n)^{3/2}(k-n)^{1/2}} 
\exp\left(n^{2/3}\phi_n\left(\frac{k}{n^{4/3}}\right)\right),
\\
S_n^{(2)} & = 
\sum_{k\,:\,|k- \alpha_0 n^{4/3}|\le n^{19/18}}
\frac{k^{3/2}}{(k+n)^{3/2}(k-n)^{1/2}} 
\exp\left(n^{2/3}\phi_n\left(\frac{k}{n^{4/3}}\right)\right),
\\
S_n^{(3)} & = 
\sum_{k\,:\,\alpha_0 n^{4/3}+ n^{19/18} < k \le 2n^{4/3}}
\frac{k^{3/2}}{(k+n)^{3/2}(k-n)^{1/2}} 
\exp\left(n^{2/3}\phi_n\left(\frac{k}{n^{4/3}}\right)\right),
\\
S_n^{(4)} & = 
\sum_{k\,:\,k > 2n^{4/3}}
\frac{k^{3/2}}{(k+n)^{3/2}(k-n)^{1/2}} 
\exp\left(n^{2/3}\phi_n\left(\frac{k}{n^{4/3}}\right)\right).
\end{align*}
Of these four sums, it is $S_n^{(2)}$ that makes the asymptotically most significant contribution. Making use of \eqref{eq:phin-approximation} and \eqref{eq:foft-taylor}, we can estimate it for large $n$ as
\begin{align*}
S_n^{(2)} & =
\sum_{|k- \alpha_0 n^{4/3}|\le n^{19/18}}
\frac{k^{3/2}}{(k+n)^{3/2}(k-n)^{1/2}} 
\exp\left[ n^{2/3}\left( F\left(\frac{k}{n^{4/3}}\right)
- \frac{n^{10/3}}{6 k^3} + O\left(n^{-4/3}\right)
\right)\right]
\\ & =
\left(1+O\left(n^{-2/3}\right)\right)
\sum_{|k- \alpha_0 n^{4/3}|\le n^{19/18}}
\frac{k^{3/2}}{(k+n)^{3/2}(k-n)^{1/2}} 
\\ & \qquad\qquad\qquad\qquad\qquad\qquad
\times
\exp\left[ n^{2/3} F\left(\frac{k}{n^{4/3}}\right)
- \frac16 \alpha_0^{-3}\left(1+O\left(n^{-5/18}\right)\right)
\right]
\\ & =
\left(1+O\left(n^{-5/18}\right)\right)e^{-2\pi/3}
\sum_{|k- \alpha_0 n^{4/3}|\le n^{19/18}}
\frac{k^{3/2}}{(k+n)^{3/2}(k-n)^{1/2}} 
\exp\left( n^{2/3} F\left(\frac{k}{n^{4/3}}\right)
\right)
\\ & =
\left(1+O\left(n^{-5/18}\right)\right)e^{-2\pi/3}
\left(\alpha_0 n^{4/3}\right)^{-1/2}
\\ &
\qquad\qquad\qquad\qquad\quad\times
\sum_{|k- \alpha_0 n^{4/3}|\le n^{19/18}}
\exp\left( -E n^{2/3} 
-3\pi n^{2/3} \left(\frac{k}{n^{4/3}}-\alpha_0\right)^2
+O\left( n^{-1/6} \right)
\right)
\\ & =
\left(1+O\left(n^{-1/6}\right)\right)e^{-E n^{2/3}-2\pi/3}
\alpha_0^{-1/2}
n^{-2/3}
\sum_{|k- \alpha_0 n^{4/3}|\le n^{19/18}}
\exp\left(
-3\pi \left(\frac{k-\alpha_0 n^{4/3}}{n}\right)^2
\right).
\end{align*}
The sum in this last expression can be regarded in the usual way as a Riemann sum for a Gaussian integral; specifically, it is asymptotically equal to
\begin{align*}
\left(1+O\left(n^{-1}\right)\right) n \int_{-n^{1/18}}^{n^{1/18}} e^{-3\pi u^2}\,du
& =
\left(1+O\left(n^{-1}\right)\right) n \left(
\frac{1}{\sqrt{3}}-O\left(e^{-n^{1/9}}\right)
\right)
=
\left(1+O\left(n^{-1}\right)\right) \frac{n}{\sqrt{3}}
\end{align*}
as $n\to\infty$ (again making use of \eqref{eq:gaussian-tail-bound} to justify the first transition).
Thus, we have obtained the relation
\begin{equation*}
S_n^{(2)} =
\left(1+O\left(n^{-1/6}\right)\right)
\frac{\alpha_0^{-1/2}}{\sqrt{3}}
n^{1/3} e^{-E n^{2/3}-2\pi/3}
\qquad (n\to\infty).
\end{equation*}
Next, we bound the sum $S_n^{(1)}$ to show that its contribution is negligible compared to that of $S_n^{(2)}$. The polynomial-order factor appearing in front of the exponential term in the sum is bounded from above by $1$. Thus, by \eqref{eq:phin-upperbound} and the fact that $F(t)$ is increasing on $(0,\alpha_0)$ we have that, as $n\to\infty$,
\begin{align*}
0\leq S_n^{(1)} & \leq
\sum_{2n\le k < \alpha_0 n^{4/3}- n^{19/18}}
\exp\left(n^{2/3}\phi_n\left(\frac{k}{n^{4/3}}\right)\right)
\\ & \leq
\sum_{2n \le k< \alpha_0 n^{4/3}- n^{19/18}}
\exp\left(n^{2/3}F\left(\frac{k}{n^{4/3}}\right)\right)
\\ & \leq
\alpha_0 n^{4/3} \exp\left(
n^{2/3}F\left(\alpha_0-n^{-5/18}\right)
\right)
\\ &\leq
\alpha_0 n^{4/3}  \exp\left(-E n^{2/3}
-3\pi n^{1/9}
+ O\left(n^{-1/6}
\right)
\right)
=
O\left(e^{-E n^{2/3}-n^{1/9}}\right).
\end{align*}
The third sum $S_n^{(3)}$ can be bounded in a completely analogous fashion, resulting (the reader can easily check) in the same bound
\begin{equation*}
S_n^{(3)} =
O\left(e^{-E n^{2/3}-n^{1/9}}\right).
\end{equation*}
Finally, to bound $S_n^{(4)}$, we use the fact that $F(t)\leq -4\sqrt{\pi t}$ to write
\begin{align*}
0\leq S_n^{(4)} &\leq
\sum_{k> 2 n^{4/3}} \frac{10}{k^{1/2}}
\exp\left(n^{2/3} F\left(\frac{k}{n^{4/3}}\right)\right)
\leq
\sum_{k> 2 n^{4/3}} \frac{10}{k^{1/2}}
\exp\left(-4\sqrt{\pi k}\right)
\\ &\leq
10\int_{2n^{4/3}}^\infty e^{-4\sqrt{\pi x}}\,dx
=
\frac{10}{8\pi} \left( 4 \sqrt{2\pi} n^{2/3}+1 \right)
\exp\left(-4\sqrt{2\pi} n^{2/3}\right)
=
O\left(e^{-E n^{2/3}-n^{1/9}}\right).
\end{align*}
Combining the above estimates for $S_n^{(1)}$, $S_n^{(2)}$, $S_n^{(3)}$ and $S_n^{(4)}$, we have finally from \eqref{eq:d2n-largedev-sum} that
\begin{align*}
d_{2n}
& = 
\left(1+O\left(n^{-1/10}\right)\right)
\left(128 \sqrt{\pi} n \right)
\left(\frac{\alpha_0^{-1/2}}{\sqrt{3}}
n^{1/3} e^{-E n^{2/3}-2\pi/3}\right) \quad \textrm{as }n\to\infty,
\end{align*}
which, after a trivial reshuffling of the terms, is exactly 
\eqref{eq:gn-coeff-asym}.
\end{proof}

\begin{proof}[Second method for proving Theorem~\ref{thm:gn-coeff-asym}]
We give most of the details of a second proof of Theorem~\ref{thm:gn-coeff-asym}, except for the rate of convergence result, which we weaken to a less explicit $1+o(1)$ multiplicative error term. This seems of independent interest as it highlights yet another way of approaching the study of the coefficients $d_{2n}$. This proof requires some calculations that would be tedious to perform by hand, but are easily done using a computer algebra system (we used Mathematica). We omit the details of these calculations and a few other details needed to make the proof watertight, which may be filled in by an enthusiastic reader.

We start by deriving a new representation of $d_{2n}$ suitable for asymptotic analysis. Start with the formula \eqref{eq:def-gn-coeffs} for $d_{2n}$ in a slightly modified form  
\begin{align*}
d_{2n} & = \frac{(3/2)_{2n}}{2^{2n-5/2} (2n)!}\int_1^\infty \frac{\omega(x)}{(x+1)^{3/2}} \left(\frac{x-1}{x+1}\right)^{2n} 
\gausshyper\left(n+\frac34,
n+\frac54; 2n+2; \left(\frac{x-1}{x+1}\right)^2 \right)
 \,dx
\end{align*}
in which the integration is performed on $(1,\infty)$ (this follows from \eqref{eq:def-gn-coeffs} by the same symmetry under the change of variables $u=1/x$ as in \eqref{eq:omega-int-powersn-symmetry}, a consequence of the functional equation \eqref{eq:theta-functional-equation}). Now use Euler's integral representation
\begin{equation*}
\gausshyper(a,b;c;z) = \frac{\Gamma(c)}{\Gamma(b)\Gamma(c-b)}
\int_0^1 t^{b-1} (1-t)^{c-b-1} \frac{1}{(1-zt)^a}\,dt
\end{equation*}
for the Gauss hypergeometric function (see \cite[p.~65]{andrews-askey-roy}) to represent the $\gausshyper$ term inside the integral. This gives
\begin{align*}
d_{2n} &= 
\frac{(3/2)_{2n}}{2^{2n-5/2} (2n)!}
\frac{\Gamma(2n+2)}{\Gamma(n+3/4)\Gamma(n+5/4)}
\\ & \quad \times
\int_1^\infty \int_0^1 \frac{\omega(x)}{(x+1)^{3/2}} \left(\frac{x-1}{x+1}\right)^{2n} 
 t^{n+1/4}(1-t)^{n-1/4} 
\left(1-
\left(\frac{x-1}{x+1}\right)^2 t\right)^{-(n+3/4)} \,dt
\,dx.
\end{align*}
As the reader can check, the constant in front of the integral simplifies to
\begin{equation*}
\frac{(2n+1)(3/2)_{2n}}{2^{2n-5/2}\Gamma(n+3/4)\Gamma(n+5/4)}
= \frac{16}{\pi}(2n+1).
\end{equation*}
Thus, after some further trivial algebraic manipulations we arrive at the representation
\begin{align}
\label{eq:d2n-simplerep-doubleint}
d_{2n} &= 
\frac{16}{\pi} (2n+1)
\int_1^\infty \int_0^1 
\frac{\omega(x)}{((x+1)^2-t(x-1)^2)^{3/4}}
\left(\frac{t}{1-t}\right)^{1/4}
\left(
\frac{t(1-t)(x-1)^2}{(x+1)^2-t(x-1)^2}
\right)^n
\,dt\,dx.
\end{align}
Recalling 
\eqref{eq:omegaxdiff-asym-xinfty},
we see that it makes sense to write
\begin{equation}
\label{eq:d2n-etan-mun}
d_{2n} = \frac{16}{\pi}(2n+1)(R_n + \mu_n),
\end{equation}
where we define the quantities $R_n, \mu_n$ by 
\begin{align}
\label{eq:def-etan-doubleint}
R_n &=
\int_1^\infty
\int_0^1 \frac{\pi x(2\pi x-3)}{((x+1)^2-t(x-1)^2)^{3/4}}
\left(\frac{t}{1-t}\right)^{1/4}
e^{-\pi x}
\left(
\frac{t(1-t)(x-1)^2}{(x+1)^2-t(x-1)^2}
\right)^n
\,dt\,dx,
\\
\mu_n &=
\int_1^\infty
\int_0^1 \frac{\omega(x) - \pi x(2\pi x-3)e^{-\pi x}}{((x+1)^2-t(x-1)^2)^{3/4}}
\left(\frac{t}{1-t}\right)^{1/4}
\left(
\frac{t(1-t)(x-1)^2}{(x+1)^2-t(x-1)^2}
\right)^n
\,dt\,dx.
\end{align}
It will be enough to obtain the asymptotic behavior of $R_n$ as $n\to\infty$, and separately to show that $\mu_n$ is asymptotically negligible compared to $R_n$.

\paragraph{\textbf{Part 1: deriving asymptotics for $R_n$.}}
Define functions
\begin{align*}
g(t,x) &= \frac{\pi x(2\pi x-3)}{((x+1)^2-t(x-1)^2)^{3/4}}
\left(\frac{t}{1-t}\right)^{1/4}, \\
h_n(t,x) &= \frac{n}{\pi} \log\left( \frac{t(1-t)(x-1)^2}{(x+1)^2-t(x-1)^2}\right)-x
= M \log\left( \frac{t(1-t)(x-1)^2}{(x+1)^2-t(x-1)^2}\right) - x,
\end{align*}
where for convenience throughout the proof we denote $M= \frac{n}{\pi}$.
Then $R_n$ can be rewritten in the form
\begin{equation}
\label{eq:etan-doubleint-largedev}
R_n = \int_1^\infty \int_0^1 g(t,x)\exp\left(\pi h_n(t,x)\right) \,dt\,dx.
\end{equation}
This form is suitable for applying a two-dimensional version of Laplace's method. The method consists of identifying the global minimum point of $h_n(\cdot,\cdot)$ and analyzing the second-order Taylor expansion of $h_n$ around the minimum point. We will need the partial derivatives of $h_n(\cdot,\cdot)$ up to second order, which after some calculation are found to be
\begin{align}
\label{eq:hn-partial-x}
\frac{\partial h_n}{\partial x} &= 
\frac{4M(x+1)-(x-1)((x+1)^2-t(x-1)^2)}{(x-1)((x+1)^2-t(x-1)^2)},
\\
\label{eq:hn-partial-t}
\frac{\partial h_n}{\partial t} &=
-M\cdot \frac{(2t-1)(x+1)^2 - t^2(x-1)^2}{t(1-t)((x+1)^2-t(x-1)^2},
\\
\label{eq:hn-partial-tt}
\frac{\partial^2 h_n}{\partial t^2} &=
{\scriptstyle -M \frac{t^4(x-1)^4 + (x+1)^4 - 4t^3(x^2-1)^2-4t(x+1)^2(x^2+1)+2t^2(x+1)^2(3x^2-2x+3)}{t^2(1-t)^2((x+1)^2-t(x-1)^2)^2},}
\\
\label{eq:hn-partial-xx}
\frac{\partial^2 h_n}{\partial x^2} &=
-8M \frac{x(x+1)^2-t(x^3-3x+2)}{(x-1)^2((x+1)^2-t(x-1)^2)},
\\
\label{eq:hn-partial-tx}
\frac{\partial^2 h_n}{\partial t \partial x} &=
4 M \frac{x^2-1}{((x+1)^2-t(x-1)^2)^2}.
\end{align}
To find the minimum point, we solve the equations $\frac{\partial h_n}{\partial t}=\frac{\partial h_n}{\partial t}=0$. By \eqref{eq:hn-partial-x}--\eqref{eq:hn-partial-t}, this gives the system of two equations
\begin{align}
\label{eq:hn-saddlepoint1}
4M(x+1)-(x-1)((x+1)^2-t(x-1)^2) &= 0, \\
\label{eq:hn-saddlepoint2}
(2t-1)(x+1)^2 - t^2 (x-1)^2 &= 0.
\end{align}
Solving \eqref{eq:hn-saddlepoint1} (a linear equation in $t$) for $t$ gives the relation
\begin{equation}
\label{eq:hn-saddlepoint1-sol}
t = \frac{(x+1)(x^2-4M-1)}{(x-1)^3} .
\end{equation}
Substituting this value back into \eqref{eq:hn-saddlepoint2} gives the equation
\begin{equation*}
\frac{4(x+1)^2}{(x-1)^4}(x(x-1)^2-4M^2) = 0.
\end{equation*}
That is, $x$ has to satisfy the cubic equation
\begin{equation*}
x(x-1)^2-4M^2 = 0.
\end{equation*}
For $M\ge 1$, one can check that the cubic has a single real solution, given by
\begin{equation}
\label{eq:hn-saddlepoint-solx}
x = \frac{\left(\left(54M^2-1+6M\sqrt{3(27M^2-1)}\right)^{1/3}+1\right)^2}{3\left(54M^2-1+6M\sqrt{3(27M^2-1)}\right)^{1/3}}.
\end{equation}
The corresponding $t$ value is given by \eqref{eq:hn-saddlepoint1-sol}, which, for $x$ given by \eqref{eq:hn-saddlepoint-solx}, can be brought to the slightly simpler form
\begin{equation*}
t = \frac{1}{2M^3}\left[ (2M-1)x^2 + (-2M^2+1)x+2M^2(M-2) \right].
\end{equation*}
Summarizing the above remarks, define quantities
\begin{align}
\alpha_n &= 54M^2-1+6M\sqrt{3(27M^2-1)}, \\
\label{eq:hn-saddlept-xin}
\xi_n &= \frac{(\alpha_n^{1/3}+1)^2}{3\alpha_n^{1/3}},
\\
\tau_n &= \frac{1}{2M^3}\left((2M-1)\xi_n^2+(-2M^2+1)\xi_n + 2M^2(M-2) \right).
\end{align}
Then $(\tau_n,\xi_n)$ is the unique solution of the equations
\begin{equation*}
\frac{\partial h_n}{\partial x}(\tau_n,\xi_n) = 0, \qquad
\frac{\partial h_n}{\partial t}(\tau_n,\xi_n) = 0.
\end{equation*}
Using these formulas one can now also find the asymptotic behavior of $\xi_n$ and $\tau_n$ as $M\to\infty$, which is given by
\begin{align*}
\tau_n &= 1- 2^{2/3} M^{-1/3} + 2^{4/3} M^{-2/3} - \frac83 M^{-1} + O(M^{-4/3}), \\
\xi_n &= 2^{2/3} M^{2/3} + \frac23 + \frac{1}{9\times 2^{2/3}} M^{-2/3} + O(M^{-4/3}).
\end{align*}
In particular, note that for large $M$ (that is, for large $n$) we have $\xi_n > 1$, $0 < \tau_n < 1$. That is, the point $(\xi_n,\tau_n)$ lies in the (interior of) the region of integration in the expression \eqref{eq:etan-doubleint-largedev} for $R_n$.

Next, having found the values $(\tau_n,\xi_n)$, we want to understand the values $h_n(\tau_n,\xi_n)$, $\frac{\partial^2 h_n}{\partial t^2}(\tau_n,\xi_n)$, $\frac{\partial^2 h_n}{\partial x^2}(\tau_n,\xi_n)$, $\frac{\partial^2 h_n}{\partial t \partial x}(\tau_n,\xi_n)$. These are somewhat complicated numbers, but can be brought to simpler forms by taking the relevant rational functions in $\tau_n, \xi_n$, expressing them as rational functions of $\xi_n$ only using \eqref{eq:hn-saddlepoint1-sol}, and then performing polynomial reduction modulo the polynomial $\xi_n(\xi_n-1)^2-4M^2$ (the cubic polynomial of which $\xi_n$ is a root). Using Mathematica to perform the reduction, we arrived at the following simplified formulas:
\begin{align*}
& \hspace{-50pt} \frac{\tau_n(1-\tau_n)(\xi_n-1)^2}{(\xi_n+1)^2-\tau_n(\xi_n-1)^2} \\ & 
= \frac{1}{M^3}\left[(2M-1)\xi_n^2+(-2M^2+1)\xi_n+M^2(M-4)\right] = 2\tau_n-1,
\\
h_n(\tau_n,\xi_n) &=
M \log\left(
\frac{\tau_n(1-\tau_n)(\xi_n-1)^2}{(\xi_n+1)^2-\tau_n(\xi_n-1)^2}
\right)-\xi_n 
= \xi_n-M\log(2\tau_n-1) - \xi_n,
\\
\frac{\partial^2 h_n}{\partial t^2}(\tau_n,\xi_n)
&=
-\frac{1}{2(M^2+1)}\left[ (M^2+3)\xi_n^2 + 2(2M^2-1)\xi_n
+ (8M^3-9M^2+8M-1) \right],
\\
\frac{\partial^2 h_n}{\partial x^2}(\tau_n,\xi_n)
&=
-\frac{1}{4M^2(M^2+1)}\left(
(2M^2+3)\xi_n^2-3\xi_n-4M(M^2+M+1)
\right),
\\
\frac{\partial^2 h_n}{\partial t \partial x}(\tau_n,\xi_n)
&=
\frac{1}{4M(M^2+1)}\left(
(M^2-1)\xi_n^2-2(2M^2-1)\xi_n+(7M^2-1)
\right).
\end{align*}
Finally, the Hessian
\begin{equation*}
\Delta_n := 
\frac{\partial^2 h_n}{\partial t^2}(\tau_n,\xi_n)
\frac{\partial^2 h_n}{\partial x^2}(\tau_n,\xi_n)
-
\left(\frac{\partial^2 h_n}{\partial t \partial x}(\tau_n,\xi_n)\right)^2
\end{equation*}
can be found to be expressible by the (still ungainly) formula
\begin{align*}
\Delta_n &= {\textstyle \frac{
(24M^3-17M^2+24M-1)\xi_n^2
+2(6M^4-16M^3+31M^2-16M+1)
+(56M^4+8M^3-9M^2+8M-1)
}{16M^2(M^2+1)}.}
\end{align*}
From these expressions and \eqref{eq:hn-saddlept-xin}, we derive some additional useful asymptotic expansions:
\begin{align}
\label{eq:hn-partialxx-asym}
\frac{\partial^2 h_n}{\partial x^2}(\tau_n,\xi_n)
&= -2^{1/3} M^{-2/3} + O(M^{-1}), \\
\label{eq:hn-deltan-asym}
\Delta_n &= \frac{3}{2^{4/3}} M^{2/3} + O(M^{-1/3}), \\
\label{eq:hn-deltan-minushalf}
\frac{1}{\sqrt{\Delta_n}} &= 
\frac{2^{2/3}}{\sqrt{3}} M^{-1/3} - \frac{2^{4/3}}{\sqrt{3}} M^{-2/3} + \frac{10}{3\sqrt{3}} M^{-1} + O(M^{-4/3}),
\\
\label{eq:hn-taun-xin}
h_n(\tau_n,\xi_n) &=
-3\times 2^{2/3} M^{2/3} - \frac23 - \frac{1}{15\times 2^{2/3}} M^{-2/3} + O(M^{-4/3}).
\end{align}
One additional quantity we need to understand is
\begin{equation}
\label{eq:g-taun-xin}
g(\tau_n,\xi_n) = \frac{\pi \xi_n(2\pi \xi_n-3)}{((\xi_n+1)^2-\tau_n(\xi_n-1)^2)^{3/4}} \left(\frac{\tau_n}{1-\tau_n}\right)^{1/4}.
\end{equation}
This can be written as
\begin{equation*}
g(\tau_n,\xi_n) = \pi \xi_n(2\pi\xi_n-3)X_n^{1/4} Y_n^{3/4}
\end{equation*}
where we define
\begin{equation*}
X_n = \frac{\tau_n}{1-\tau_n}, \qquad 
Y_n = \frac{1}{(\xi_n+1)^2-\tau_n(\xi_n-1)^2}.
\end{equation*}
Some more algebraic simplification then shows that
\begin{equation*}
X_n = \frac{1}{4M}(\xi_n^2-1),
\qquad
Y_n = \frac{1}{8M(M^2+1)}(-\xi_n^2+3\xi_n + 2(M^2-1)).
\end{equation*}
Using these relations, we then get the asymptotic expansion
\begin{equation*}
g(\tau_n,\xi_n) = 2^{2/3} \pi^2 M^{2/3} - \frac16 \pi (9-\pi) + O(M^{-2/3}).
\end{equation*}
Now note that \eqref{eq:hn-partialxx-asym} and \eqref{eq:hn-deltan-asym} imply that (for large $n$) the Hessian matrix of $h_n$ at $(\tau_n,\xi_n)$ is negative-definite. Thus, $(\tau_n,\xi_n)$ is indeed a local maximum point of $h_n$. We leave to the reader to check that it is in fact a \emph{global} maximum.

Now recall that the two-dimensional version of Laplace's method gives the asymptotic formula
\begin{equation*}
(1+o(1)) \frac{2}{\sqrt{\Delta_n}} g(\tau_n,\xi_n)
\exp\left(\pi h_n(\tau_n,\xi_n) \right),
\end{equation*}
for the integral on the right-hand side of \eqref{eq:etan-doubleint-largedev}. This arises by making a suitable change of variables in the integral to center it around the point $(\xi_n,\tau_n)$ and introduce scaling that turns the integral to an approximate Gaussian integral---see \cite[Ch.~VIII]{wong} for details; we omit the derivation of bounds needed to rigorously justify the approximation.
Substituting the asymptotic values found in 
\eqref{eq:hn-deltan-minushalf}--\eqref{eq:g-taun-xin} 
therefore gives that
\begin{align}
\label{eq:etan-asym}
R_n &= (1+o(1))2\times \left(\frac{2^{2/3}}{\sqrt{3}} M^{-1/3}\right) 2^{2/3} \pi^2 M^{2/3}
\exp\left(
- 3 \times 2^{2/3} \pi M^{2/3}-\frac{2\pi}{3}
\right)
\\ \nonumber &=
(1+o(1))\left(
2\times \frac{2^{2/3}}{\sqrt{3}}\pi^{1/3}\times
2^{2/3}\pi^2 \frac{1}{\pi^{2/3}} e^{-2\pi/3}
\right)
n^{1/3} 
\exp\left(-3\times 2^{2/3} \pi^{1/3} n^{2/3} \right)
\\ \nonumber &=
(1+o(1))\left( \frac{4\times 2^{1/3}}{\sqrt{3}} \pi^{5/3}
e^{-2\pi/3}
\right) n^{1/3}
\exp\left(-3 (4\pi)^{1/3} n^{2/3} \right).
\end{align}

\paragraph{\textbf{Part 2: bounding $\mu_n$.}}
The next step is to prove that the contribution of $\mu_n$ is asymptotically negligible relative to $R_n$. 
This relies as usual on \eqref{eq:omegaxdiff-asym-xinfty}.
We sketch the argument but leave the details to the interested reader to develop. 
Observe that by \eqref{eq:omegaxdiff-asym-xinfty}, $\mu_n$ satisfies a bound of the form
\begin{align*}
|\mu_n| &\leq 
C
\int_1^\infty
\int_0^1 g(t,x)\exp\left(\pi h_n(t,x)-2\pi x\right)
\,dt\,dx
=
C
\int_1^\infty
\int_0^1 g(t,x)\exp\left(\pi k_n(t,x)\right)
\,dt\,dx,
\end{align*}
for some constant $C>0$,
where we denote $k_n(t,x) = h_n(t,x)-2x$. But now $k_n(t,x)$ can be analyzed in a similar fashion to our analysis of $h_n(t,x)$ above. In particular, it can be shown that for $n$ large enough, $k_n(t,x)$ has a unique global maximum point $(t_n,x_n)\in (0,1)\times (1,\infty)$, and that the maximum value 
\begin{equation*}
K_n^* := k_n(t_n,x_n)
\end{equation*}
behaves asymptotically as
\begin{equation*}
K_n^* =  c_0 M^{2/3} + o\left(M^{2/3}\right)
\end{equation*}
for some constant $c_0$, where, significantly, $c_0 < -3\times 2^{2/3}$ (the leading constant in the analogous asymptotic expression \eqref{eq:hn-taun-xin} for the maximum value of $h_n(t,x)$). By deriving some auxiliary technical bounds for the decay of $h_n(t,x)$ away from its maximum point and near the boundaries of the integration region, one can then show that for any $\epsilon>0$, $\mu_n$ satisfies a bound of the form
\begin{equation*}
|\mu_n| = O\left(\exp\left(\pi^{1/3} (c_0+\epsilon) n^{2/3}\right)\right).
\end{equation*}
Taking $\epsilon < 3\times 2^{2/3}-c_0$ then gives a rate of growth that is smaller than the exponential rate of growth of $R_n$, establishing that $\mu_n \ll R_n$.

\paragraph{\textbf{Putting everything together.}} Combining the above discussion regarding $\mu_n$ with \eqref{eq:d2n-etan-mun} and~\eqref{eq:etan-asym}, we find that
\begin{align*}
d_{2n} & =
(1+o(1))
\left(\frac{16}{\pi} (2n+1)\right)\times 
\left( \frac{4\times 2^{1/3}}{\sqrt{3}} \pi^{5/3}
e^{-2\pi/3}
\right) n^{1/3}
\exp\left(-3 (4\pi)^{1/3} n^{2/3} \right),
\\ & =
(1+o(1))
\left( \frac{128\times 2^{1/3}}{\sqrt{3}} \pi^{2/3}
e^{-2\pi/3}
\right) n^{4/3}
\exp\left(-3 (4\pi)^{1/3} n^{2/3} \right),
\end{align*}
which is the same (except for the weaker rate of convergence estimate) as \eqref{eq:gn-coeff-asym}.

\end{proof}

\section[Connection to the function $\tilde{\nu}(t)$ and the Chebyshev polynomials]{Connection to the function $\tilde{\nu}(t)$ and the Chebyshev polynomials of the second kind}

\label{sec:gnexp-chebyshev}

We now prove yet another formula for $d_n$, tying it in a surprising way to the function $\tilde{\nu}(t)$ (discussed in
\secref{sec:rad-properties-nu}) and its expansion in yet another family of orthogonal polynomials, the Chebyshev polynomials of the second kind. The properties of these very classical polynomials, denoted $U_n(t)$, are summarized in \secref{sec:orth-chebyshev}.

\begin{prop}
The coefficients $d_n$ can be alternatively expressed as
\begin{equation}
\label{eq:dn-chebyshev-int}
d_n = (-1)^n \frac{4\sqrt{2}}{\pi} \int_{-1}^1 \tilde{\nu}(t) U_n(t)\sqrt{1-t^2}\,dt.
\end{equation}
\end{prop}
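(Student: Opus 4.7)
The plan is to start from the right-hand side of \eqref{eq:dn-chebyshev-int}, substitute the Taylor expansion of $\tilde{\nu}(t)$ given in \eqref{eq:nutilde-taylor}, namely
$$\tilde{\nu}(t) = \frac{1}{2\sqrt{2}}\sum_{m=0}^\infty \frac{(-1)^m (3/2)_m}{m!}\, c_m\, t^m,$$
and integrate termwise against $U_n(t)\sqrt{1-t^2}$. By Theorem~\ref{THM:FN-COEFF-ASYM} the coefficients $c_{2k}$ decay like $\sqrt{k}\,e^{-4\sqrt{\pi k}}$, while $(3/2)_{2k}/(2k)! = O(\sqrt{k})$ by elementary Stirling asymptotics, so the series converges absolutely and uniformly on the closed interval $[-1,1]$; dominated convergence will then justify the interchange of sum and integral.

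With that reduction in place, the task collapses to computing the monomial moments
$$I_{n,m} := \int_{-1}^1 t^m U_n(t) \sqrt{1-t^2}\,dt.$$
I will first note that $I_{n,m}=0$ whenever $m<n$ (orthogonality of the $U_n$) and whenever $m\not\equiv n \pmod 2$ (from $U_n(-t)=(-1)^n U_n(t)$). For the surviving cases $m=n+2j$ with $j\ge 0$, the substitution $t=\cos\theta$ combined with the product-to-sum identity $\sin((n+1)\theta)\sin\theta = \tfrac12[\cos(n\theta)-\cos((n+2)\theta)]$ and the classical formula $\int_0^\pi \cos^m\theta\,\cos(k\theta)\,d\theta = \frac{\pi}{2^m}\binom{m}{(m-k)/2}$ (valid for $0\le k\le m$ with $k\equiv m\pmod 2$) will give
$$I_{n,n+2j} = \frac{\pi}{2^{n+2j+1}}\left[\binom{n+2j}{j}-\binom{n+2j}{j-1}\right] = \frac{\pi(n+1)}{2^{n+2j+1}(n+j+1)}\binom{n+2j}{j},$$
where the last equality is the standard ballot-type identity $\binom{n+2j}{j}-\binom{n+2j}{j-1} = \frac{n+1}{n+j+1}\binom{n+2j}{j}$.

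Substituting back, using $(-1)^{n+2j}=(-1)^n$ and the simplification $\binom{n+2j}{j}/(n+2j)! = 1/(j!(n+j)!)$, the right-hand side of \eqref{eq:dn-chebyshev-int} (before the prefactor) reduces to
$$\int_{-1}^1 \tilde{\nu}(t)\,U_n(t)\sqrt{1-t^2}\,dt = \frac{(-1)^n \pi (n+1)}{4\sqrt{2}\cdot 2^n}\sum_{j=0}^\infty \frac{(3/2)_{n+2j}}{4^j\, j!\,(n+j+1)!}\, c_{n+2j}.$$
The sum on the right is exactly $2^n d_n/(n+1)$ by the representation \eqref{eq:dn-cn-expansion}, so multiplying by $(-1)^n\,\tfrac{4\sqrt{2}}{\pi}$ recovers $d_n$ and completes the identification.

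There is no serious obstacle here: the only nontrivial ingredient is the elementary binomial/trigonometric evaluation of $I_{n,n+2j}$, and the only analytic subtlety is the termwise integration, which is immediate given the super-exponential decay of the Taylor coefficients of $\tilde{\nu}$. The remainder is bookkeeping of Pochhammer symbols, factorials, and powers of $2$.
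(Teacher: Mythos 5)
Your proof is correct and follows essentially the same route as the paper's: substitute the Taylor expansion \eqref{eq:nutilde-taylor} of $\tilde{\nu}(t)$, integrate termwise, evaluate the Chebyshev moments $\int_{-1}^1 t^m U_n(t)\sqrt{1-t^2}\,dt$, and resum via \eqref{eq:dn-cn-expansion}. The only (cosmetic) difference is that you compute the moments directly through the substitution $t=\cos\theta$ and a ballot-type binomial identity, whereas the paper obtains them from the monomial expansion \eqref{eq:chebyshev-monomial-expansion} combined with the orthogonality relation \eqref{eq:chebyshev-orthogonality}; the two evaluations agree, since $\binom{m+1}{k}/(m+1)=\binom{m}{k}/(n+k+1)$ for $m=n+2k$, and your explicit justification of the interchange of sum and integral is if anything more careful than the paper's.
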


\begin{proof}
By the identity \eqref{eq:nutilde-taylor} expressing $\tilde{\nu}(t)$ as a power series with coefficients related to $c_n$, we have that
\begin{align}
\label{eq:nutilde-un-innerprod}
\int_{-1}^1 \tilde{\nu}(t) U_n(t)\sqrt{1-t^2}\,dt
& =
\frac{1}{2\sqrt{2}} 
\int_{-1}^1 
\left(\sum_{m=0}^\infty 
\frac{(-1)^m (3/2)_m}{m!} c_m 
t^m \right) U_n(t) \sqrt{1-t^2}\,dt
\\ \nonumber & =
\frac{1}{2\sqrt{2}} \sum_{m=0}^\infty 
\frac{(-1)^m (3/2)_m}{m!} c_m 
\int_{-1}^1 t^m U_n(t) \sqrt{1-t^2}\,dt.
\end{align}
The integrals in this last expression can be interpreted as inner products in the space $L^2((-1,1),\sqrt{1-t^2}\,dt)$ of the monomial $t^m$ with the Chebyshev polynomial $U_n(t)$, so they can be evaluated by using the relation \eqref{eq:chebyshev-monomial-expansion} to expand the monomial $t^m$ in the polynomials $U_j(t)$ and then using of the orthogonality relation \eqref{eq:chebyshev-orthogonality}. Together these relations imply that
\begin{equation*}
\int_{-1}^1 t^m U_n(t) \sqrt{1-t^2}\,dt
=
\begin{cases}
\frac{\pi}{2} \frac{1}{(m+1)2^m} (n+1)\binom{m+1}{k}
& \textrm{if $n=m-2k$ for some $k\ge0$,}
\\
0 & \textrm{otherwise}.
\end{cases}
\end{equation*}
Thus, we can rewrite the series in \eqref{eq:nutilde-un-innerprod} as
\begin{align*}
\frac{1}{2\sqrt{2}} \sum_{k=0}^\infty &
\frac{(-1)^{n+2k} (3/2)_{n+2k}}{(n+2k)!} c_{n+2k}
\times \frac{\pi}{2}
\frac{1}{(n+2k+1)2^{n+2k}}(n+1)\binom{n+2k+1}{k}
\\ & =
\frac{(-1)^n \pi}{4\sqrt{2}}\cdot \frac{n+1}{2^n} \sum_{k=0}^\infty 
\frac{(3/2)_{n+2k}}{2^{2k} k! (n+k+1)!} c_{n+2k}.
\end{align*}
By \eqref{eq:dn-cn-expansion} this gives precisely $\frac{(-1)^n \pi}{4\sqrt{2}} d_n$, so we are done.
\end{proof}

The last proposition leads naturally to another central result of this chapter, which, in a manner analogous to Theorem~\ref{thm:aofr-selftrans-expansion}, gives a thought-provoking alternative point of view regarding the significance of the coefficients $d_{2n}$.

\begin{thm}[Expansion of $\tilde{\nu}(t)$ in the Chebyshev polynomials of the second kind]
The function $\tilde{\nu}(t)$ has the series expansion
\begin{equation}
\label{eq:nutilde-chebyshev-exp}
\tilde{\nu}(t) = \frac{1}{2\sqrt{2}}\sum_{n=0}^\infty d_{2n} U_{2n}(t),
\end{equation}
The series in \eqref{eq:nutilde-chebyshev-exp} converges pointwise for all $t\in (-1,1)$ and \ in\  the sense of the \, function space $L^2((-1,1),\sqrt{1-t^2}\,dt)$.
\end{thm}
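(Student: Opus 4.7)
The strategy is essentially to read this expansion off from the preceding proposition together with the completeness of the Chebyshev polynomials of the second kind in $L^2((-1,1),\sqrt{1-t^2}\,dt)$. First I would note that the classical orthogonality relation $\int_{-1}^1 U_m(t) U_n(t)\sqrt{1-t^2}\,dt = \frac{\pi}{2}\delta_{m,n}$ (a standard fact, presumably recorded in \secref{sec:orth-chebyshev}) makes $(U_n)_{n=0}^\infty$ an orthogonal basis of this Hilbert space, so the Fourier coefficients of any $L^2$ function $h$ are given by $\hat h_n = \frac{2}{\pi}\int_{-1}^1 h(t) U_n(t)\sqrt{1-t^2}\,dt$.

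Next I would verify that $\tilde\nu\in L^2((-1,1),\sqrt{1-t^2}\,dt)$. The function $\tilde\nu$ is smooth on $(-1,1)$, and the boundary behavior is easy: at $t=1$, $\tilde\nu(1) = \frac{1}{2\sqrt 2}\nu(0) = \frac{\pi}{12\sqrt 2}$ by Proposition~\ref{eq:nuofx-properties}(ii), while evenness of $\tilde\nu$ forces the same value at $t=-1$ (this can also be checked directly using the leading term of the asymptotic expansion \eqref{eq:nutilde-asym2}). Thus $\tilde\nu$ extends continuously to $[-1,1]$, hence is bounded, hence square-integrable against $\sqrt{1-t^2}$.

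Now the identity \eqref{eq:dn-chebyshev-int} immediately computes the Fourier coefficients of $\tilde\nu$ in the $U_n$-basis:
\begin{equation*}
\hat{\tilde\nu}_n = \frac{2}{\pi}\int_{-1}^1 \tilde\nu(t) U_n(t)\sqrt{1-t^2}\,dt = \frac{2}{\pi}\cdot\frac{(-1)^n \pi}{4\sqrt 2} d_n = \frac{(-1)^n}{2\sqrt 2} d_n.
\end{equation*}
Since $d_{2n+1}=0$ (as shown earlier in the chapter) and $(-1)^{2n}=1$, only even-indexed terms survive, and the completeness of the basis yields the $L^2$-convergent expansion \eqref{eq:nutilde-chebyshev-exp}. (This is also consistent with $\tilde\nu$ being even, a fact which alone already forces all odd Fourier coefficients to vanish.)

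For the pointwise convergence claim on $(-1,1)$, I would appeal to the asymptotic estimate of Theorem~\ref{thm:gn-coeff-asym}, which gives the super-exponential decay $d_{2n}=O(n^{4/3}\exp(-En^{2/3}))$, together with the elementary bound $|U_n(\cos\theta)|\le 1/|\sin\theta|$ for $\theta\in(0,\pi)$. These combine to show that $\sum_n d_{2n}U_{2n}(t)$ converges absolutely and uniformly on every compact subset of $(-1,1)$, so its sum is continuous on $(-1,1)$; since this sum also equals $\tilde\nu$ in the $L^2$-sense, and $\tilde\nu$ is continuous on $(-1,1)$, the two continuous functions must agree pointwise there. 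There is no real obstacle in this argument; the only mildly delicate step is confirming that $\tilde\nu$ has finite limits at $\pm 1$, which the asymptotic expansions of $\nu(x)$ in Proposition~\ref{eq:nuofx-properties} handle cleanly.
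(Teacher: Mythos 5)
Your proof is correct, and its first half coincides with the paper's: the paper likewise regards \eqref{eq:nutilde-chebyshev-exp} as the orthogonal expansion of $\tilde{\nu}$ in the basis $(U_n)$ and identifies the coefficients via \eqref{eq:chebyshev-orthogonality} and \eqref{eq:dn-chebyshev-int}, exactly as you do (your added verification that $\tilde{\nu}$ extends continuously to $[-1,1]$, using Proposition~\ref{eq:nuofx-properties} and evenness, is a detail the paper leaves implicit in the phrase ``continuous and bounded''). Where you genuinely diverge is the pointwise-convergence claim: the paper disposes of it by citing the general theory of orthogonal polynomial expansions of continuous functions (Szeg\H{o}, Ch.~IX), whereas you derive it directly from the decay $d_{2n}=O\left(n^{4/3}e^{-En^{2/3}}\right)$ of Theorem~\ref{thm:gn-coeff-asym} combined with the bound $|U_n(\cos\theta)|\le 1/\sin\theta$, then match the resulting continuous sum with $\tilde{\nu}$ via the $L^2$ identification. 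Your route is self-contained and actually proves more---absolute and uniform convergence on compact subsets of $(-1,1)$ (indeed, since $|U_n(t)|\le n+1$ on $[-1,1]$, the same decay gives uniform convergence on all of $[-1,1]$)---at the price of invoking the asymptotic theorem, which is heavier machinery but available at this point in the paper and not circular, since both of its proofs rest on \eqref{eq:dn-cn-expansion} or the double-integral representation rather than on the Chebyshev expansion. The paper's citation-based argument is shorter and needs only continuity of $\tilde{\nu}$, but is correspondingly less explicit about the mode of convergence.
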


\begin{proof} 
From the general theory of orthogonal polynomial expansions, the function $\tilde{\nu}(t)$, being a continuous and bounded function on $(-1,1)$, has an expansion of the form
\begin{equation*}
\tilde{\nu}(t) = \sum_{n=0}^\infty \sigma_n U_n(t)
\end{equation*}
in the polynomials $U_n(t)$. The expansion converges in $L^2((-1,1),\sqrt{1-t^2}\,dt)$ and for all $t\in(-1,1)$, see \cite[Ch.~IX]{szego}. Using the orthogonality relation \eqref{eq:chebyshev-orthogonality}, the coefficients $\sigma_n$ can be extracted as $L^2$ inner products, namely
\begin{equation*}
\sigma_n = \frac{2}{\pi} \int_{-1}^1 \tilde{\nu}(t)U_n(t)\sqrt{1-t^2}\,dt,
\end{equation*}
and this is equal to $\frac{(-1)^n}{2\sqrt{2}} d_n$ by \eqref{eq:dn-chebyshev-int}.
\end{proof}

\section[Alternative interpretation for the $g_n$-expansion]{Mellin transform representation for $g_n(x)$ and an alternative interpretation for the $g_n$-expansion}

\label{sec:gnexp-mellin}

The next result gives a formula representing the polynomials $g_n(x)$ in terms of Mellin transforms involving the Chebyshev polynomials of the second kind evaluated at $\frac{x-1}{x+1}$. This representation, which we have not found described explicitly in the literature but is a special case of a more general result \cite[eq.~(3.4)]{koelink}, stands as an interesting parallel to the integral representation for $f_n(x)$ given in Proposition~\ref{prop:fn-mellintrans3}.

\begin{prop}[Mellin transform representation of $g_n$]
\label{prop:gn-mellintrans}
We have the relation
\begin{align}
\label{eq:gn-mellintrans-scoord}
\int_0^\infty & \frac{1}{(x+1)^{3/2}} U_n\left(\frac{x-1}{x+1}\right) x^{s-1}\,dx
\\ \nonumber & 
=
i^n \frac{2}{\sqrt{\pi}}
\Gamma(s)\Gamma\left(\frac32-s\right)
g_n\left(\frac{1}{i}\left(s-\frac34\right)\right)
\qquad \left(0 < \re(s) < \frac32\right),
\end{align}
or, equivalently,
\begin{equation}
\label{eq:gn-mellintrans-tcoord}
\int_0^\infty \frac{1}{(x+1)^{3/2}} U_n\left(\frac{x-1}{x+1}\right) x^{-\frac14+it}\,dx
=
i^n \frac{2}{\sqrt{\pi}}
\Gamma\left(\frac34+it\right)\Gamma\left(\frac34-it\right)
g_n(t).
\end{equation}
\end{prop}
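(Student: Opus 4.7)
The approach will closely follow that of Proposition~\ref{prop:fn-mellintrans3}, exploiting the structural parallel between the two statements. First, I would use the classical explicit formula for the Chebyshev polynomials of the second kind,
\begin{equation*}
U_n(y) = \sum_{k=0}^{\lfloor n/2 \rfloor} (-1)^k \binom{n-k}{k}(2y)^{n-2k},
\end{equation*}
to expand $U_n\!\left(\frac{x-1}{x+1}\right)$ as a finite sum of terms proportional to $\left(\frac{x-1}{x+1}\right)^{n-2k}$. Substituting this expansion into the left-hand side of \eqref{eq:gn-mellintrans-scoord} reduces the problem, by linearity, to evaluating the Mellin transforms of the individual summands of the form $(x+1)^{-3/2}\left(\frac{x-1}{x+1}\right)^{n-2k} x^{s-1}$, each of which is handled directly by Proposition~\ref{prop:fn-mellintrans3}.

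The bookkeeping simplifies cleanly: the factor $i^{n-2k}$ produced by \eqref{eq:fn-mellintrans-scoord} combines with the $(-1)^k$ coming from the $U_n$ expansion (using $i^{-2k}=(-1)^k$) to yield an overall $i^n$ out front. After pulling out the common prefactor $\frac{2}{\sqrt{\pi}}\Gamma(s)\Gamma\!\left(\frac{3}{2}-s\right)$, the claim \eqref{eq:gn-mellintrans-scoord} reduces to the polynomial identity
\begin{equation*}
g_n(x) \;=\; \sum_{k=0}^{\lfloor n/2 \rfloor} \binom{n-k}{k}\, 2^{n-2k}\, \frac{(n-2k)!}{(3/2)_{n-2k}}\, f_{n-2k}(x) \qquad (n\ge 0).
\end{equation*}
This expresses each $g_n$ as an explicit linear combination of the $f_{n-2k}$, and is precisely the sort of relation established in \secref{sec:orth-fngn-relation} of Appendix~\ref{appendix:orthogonal}; indeed, \eqref{eq:fn-intermsof-gn} already provides the inverse expansion of $f_n$ in terms of the $g_k$'s, and the required formula is either the companion identity proved there or follows from inverting \eqref{eq:fn-intermsof-gn} by a standard combinatorial argument. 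Small cases ($n=0,1$) are readily verified by hand as a sanity check.

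The main obstacle is the verification of this combinatorial identity. If a direct derivation from the identities of \secref{sec:orth-fngn-relation} is not immediate, a fallback is to expand both sides via the hypergeometric formulas \eqref{eq:fn-def-intro} and \eqref{eq:gn-def-intro} and match coefficients of the Pochhammer basis $\left(\tfrac{3}{4}+ix\right)_j$; this is a finite computation. A conceptually different route that avoids the identity altogether is to sum both sides of \eqref{eq:gn-mellintrans-scoord} against $z^n$: the generating function $\sum_n U_n\!\left(\frac{x-1}{x+1}\right) z^n = \frac{x+1}{x(1-z)^2+(1+z)^2}$ together with Euler's integral for $\gausshyper$ reduces the full proposition to the single identity
\begin{equation*}
\frac{1}{(1+z)^2}\,\gausshyper\!\left(1,\, s;\, \tfrac{3}{2};\, \tfrac{4z}{(1+z)^2}\right) \;=\; \sum_{n=0}^\infty (-z)^n (n+1)\, {}_3F_2\!\left(-n,\, n+2,\, s;\, \tfrac{3}{2},\, \tfrac{3}{2};\, 1\right),
\end{equation*}
in which the quadratic argument $\frac{4z}{(1+z)^2}$ strongly suggests that a standard quadratic transformation formula for $\gausshyper$ will supply the remaining step. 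Either route leads to \eqref{eq:gn-mellintrans-scoord}, with \eqref{eq:gn-mellintrans-tcoord} then following by the substitution $s = \tfrac{3}{4}+it$.
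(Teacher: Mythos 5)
Your proposal is correct and follows essentially the same route as the paper's proof: expand $U_n\left(\frac{x-1}{x+1}\right)$ via \eqref{eq:chebyshev-explicit2}, apply Proposition~\ref{prop:fn-mellintrans3} termwise, and combine $(-1)^k i^{n-2k} = i^n$. The combinatorial identity you reduce to is (after simplifying $\binom{n-k}{k}(n-2k)! = (n-k)!/k!$) exactly \eqref{eq:gn-intermsof-fn}, which is the companion identity proved in \secref{sec:orth-fngn-relation}, so no gap remains and your hedging there is unnecessary.
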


\begin{proof}
We prove this in the equivalent form \eqref{eq:gn-mellintrans-tcoord}.
Use the expansion \eqref{eq:chebyshev-explicit2} of $U_n(t)$ in monomials and then the Mellin transform representation \eqref{eq:fn-mellintrans-tcoord} for $f_n(t)$, to get that
\begin{align*}
\int_0^\infty & \frac{1}{(x+1)^{3/2}} U_n\left(\frac{x-1}{x+1}\right) x^{-\frac14+it}\,dx
\\ & =
\sum_{k=0}^{\lfloor \frac{n}{2}\rfloor}
(-1)^k \binom{n-k}{k} 2^{n-2k}
\int_0^\infty \frac{1}{(x+1)^{3/2}} 
\left( \frac{x-1}{x+1}\right)^{n-2k}
x^{-\frac14+it}\,dx
\\ & =
\sum_{k=0}^{\lfloor \frac{n}{2}\rfloor}
(-1)^k \binom{n-k}{k} 2^{n-2k}
i^{n-2k} \frac{2(n-2k)!}{\sqrt{\pi} (3/2)_{n-2k}}
\Gamma\left(\frac34+it\right)\Gamma\left(\frac34-it\right)
f_{n-2k}
\\ & =
i^n \Gamma\left(\frac34+it\right)\Gamma\left(\frac34-it\right)
\frac{2}{\sqrt{\pi}}
\sum_{k=0}^{\lfloor \frac{n}{2}\rfloor}
\frac{2^{n-2k} (n-k)!}{k! (3/2)_{n-2k}} f_{n-2k}(t).
\end{align*}
By the relation \eqref{eq:gn-intermsof-fn} expressing $g_n(t)$ in terms of the $f_k$'s, this is equal to the expression on the right-hand side of \eqref{eq:gn-mellintrans-tcoord}.
\end{proof}

Recall that in \secref{sec:rad-alt-fnexp}
we showed how the $f_n$-expansion of the Riemann xi function can be thought of as arising from the expansion of the radial function $A(r)$ in the orthogonal basis $(G_n^{(3)}(r))_{n=0}^\infty$, by taking Mellin transforms. In a completely analogous manner, the above Mellin transform representation of $g_n(t)$ makes a similar reinterpretation possible for the $g_n$-expansion of $\Xi(t)$ as originating in the expansion \eqref{eq:nutilde-chebyshev-exp} of $\tilde{\nu}(x)$ in the Chebyshev polynomials of the second kind. To see this, first recall the Mellin transform representation \eqref{eq:nu-mellin-trans}, in which we make the substitution $s=\frac32+it$ to bring it to the form
\begin{equation}
\label{eq:nu-mellintrans-alternative}
\int_0^\infty \nu(x) x^{-\frac14 + \frac{it}{2}}\,dx
= \frac{2}{\sqrt{\pi}} \Gamma\left(\frac34+\frac{it}{2}\right) \Gamma\left(\frac34-\frac{it}{2}\right) \Xi(t).
\end{equation}
Note however that $\nu(x)$ can be expressed in terms of $\tilde{\nu}(t)$ as
\begin{equation*}
\nu(x) = \frac{2\sqrt{2}}{(x+1)^{3/2}} \tilde{\nu}\left(\frac{1-x}{1+x}\right)
= \frac{2\sqrt{2}}{(x+1)^{3/2}} \tilde{\nu}\left(\frac{x-1}{x+1}\right)
\end{equation*}
by inverting the defining relation \eqref{eq:balanced-centered-def} for centered functions and using the fact that $\tilde{\nu}(t)$ is an even function. This implies, using \eqref{eq:nutilde-chebyshev-exp}, that $\nu(x)$ has the series expansion
\begin{align*}
\nu(x) = \frac{1}{(x+1)^{3/2}}  \sum_{n=0}^\infty d_{2n} U_{2n}\left(\frac{x-1}{x+1}\right).
\end{align*}
We can now use this together with \eqref{eq:gn-mellintrans-tcoord} to evaluate the Mellin transform on the left-hand side of \eqref{eq:nu-mellintrans-alternative} in a different way as
\begin{align*}
\int_0^\infty \nu(x) x^{-\frac14 + \frac{it}{2}}\,dx
& =
\int_0^\infty \frac{1}{(x+1)^{3/2}}
\sum_{n=0}^\infty d_{2n} 
U_{2n}\left(\frac{x-1}{x+1}\right)x^{-\frac14+\frac{it}{2}}\,dx
\\& =
\sum_{n=0}^\infty d_{2n} \int_0^\infty \frac{1}{(x+1)^{3/2}}U_{2n}\left(\frac{x-1}{x+1}\right)x^{-\frac14+\frac{it}{2}}\,dx
\\& = 
\sum_{n=0}^\infty d_{2n} (-1)^n \frac{2}{\sqrt{\pi}} 
\Gamma\left(\frac34+\frac{it}{2}\right) \Gamma\left(\frac34-\frac{it}{2}\right) g_n\left(\frac{t}{2}\right).
\end{align*}
Equating this last expression to the right-hand side of \eqref{eq:nu-mellintrans-alternative} and canceling common terms recovers the $g_n$-expansion \eqref{eq:gn-expansion}, as we predicted.

\chapter{Additional results}

\label{ch:misc}

In the previous chapters we developed the main parts of the theory associated with the expansions of the Riemann xi function in the Hermite, $(f_n)_{n=0}^\infty$ and $(g_n)_{n=0}^\infty$ polynomial families. In this chapter we include a few additional results that continue to shed light on the themes we explored.

\section{An asymptotic formula for the Taylor coefficients of $\Xi(t)$}

\label{sec:xi-taylor-asym}

The method we used in Chapter~\ref{ch:hermite} to analyze the asymptotic behavior of the Hermite expansion coefficients $b_{2n}$ has the added benefit of enabling us to also prove an analogous asymptotic formula for the Taylor coefficients $a_{2n}$ in the Taylor expansion \eqref{eq:riemannxi-taylor} of the Riemann xi function. The reason for this is a pleasing similarity between the formulas for $a_{2n}$ and $b_{2n}$. It was noted by the authors of \cite{coffey} and \cite{csordas-norfolk-varga} (and probably others before them) that the formula for $a_{2n}$ can be written in the form
\begin{equation}
\label{eq:a2n-qn-rnprime}
a_{2n} = 
\frac{2}{(2n)!} \int_0^\infty \Phi(x)x^{2n}\,dx
= 
\frac{1}{(2n)!} \int_{-\infty}^\infty \Phi(x)x^{2n}\,dx,
\end{equation}
as can be seen by performing the usual change of variables $x= e^{2u}$ in \eqref{eq:riemannxi-taylorcoeff-int} (or by differentiating $2n$ times under the integral sign in \eqref{eq:riemannxi-fouriertrans} and setting $t=0$). The striking resemblence of this formula to \eqref{eq:turan-coeff-formula} seems however to have gone unremarked in the literature.

\begin{thm}[Asymptotic formula for the coefficients $a_{2n}$]
\label{thm:riemannxi-taylorcoeff-asym}
The coefficients $a_{2n}$ satisfy the asymptotic formula
\begin{align}
\label{eq:taylorxi-coeff-asym}
a_{2n} & =
\left(1+O\left(\frac{\loglog n}{\log n}\right)\right)
\frac{\pi^{1/4}}{2^{2n-\frac52} (2n)!} \left(\frac{2n}{\log (2n)}\right)^{7/4}
\exp\left[
2n\left(\log \left(\frac{2n}{\pi}\right) - W\left(\frac{2n}{\pi}\right) - \frac{1}{W\left(\frac{2n}{\pi}\right)} \right)
\right]
\end{align}
as $n\to\infty$, where $W(\cdot)$ denotes as in Chapter~\ref{ch:hermite} the Lambert $W$ function.
\end{thm}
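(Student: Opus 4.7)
The plan is to adapt the argument from the proof of Theorem~\ref{thm:hermite-coeff-asym} essentially verbatim, exploiting the structural similarity between the integral representations \eqref{eq:turan-coeff-formula} for $b_{2n}$ and \eqref{eq:a2n-qn-rnprime} for $a_{2n}$ that was highlighted in the discussion above. The only substantive difference is the absence of the Gaussian damping factor $e^{-x^2/4}$ inside the integral, which will propagate through the calculation to erase the corresponding $e^{-x_{2n}^2/16}$ contribution and adjust the overall power of~$2$ in the prefactor. Concretely, I would begin by using \eqref{eq:a2n-qn-rnprime} together with the decomposition of $\Phi(x)$ into its leading $n=1$ summand and the $n\ge2$ tail, writing
\begin{equation*}
a_{2n} = \frac{8\pi^2}{(2n)!}\bigl(Q_n' + r_n'\bigr),
\end{equation*}
where $Q_n'$ and $r_n'$ are the direct analogues of the quantities $Q_n$ and $r_n$ defined in \eqref{eq:qn-int-def}--\eqref{eq:rn-int-def}, but with the factor $e^{-x^2/4}$ suppressed.

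Next, I would apply Laplace's method to $Q_n'$ exactly along the lines of Part~1 of the proof of Theorem~\ref{thm:hermite-coeff-asym}. Writing $Q_n' = \frac{1}{2^{2n}}\int_0^\infty f^{(a)}(x)\exp(\psi_{2n}(2x))\,dx$ with $f^{(a)}(x) = e^{5x/2}(e^{2x}-\tfrac{3}{2\pi})$ and the same $\psi_{2n}$ from \eqref{eq:hermite-psindef}, the global maximum point $x_{2n} = W(2n/\pi)$ and the quantities $\alpha_n, \beta_n$ are unchanged. The analogue of $\gamma_n$ becomes
\begin{equation*}
\gamma_n^{(a)} := f^{(a)}(x_{2n}/2) = \bigl(1+O(\log n/n)\bigr)\Bigl(\frac{2n}{\pi x_{2n}}\Bigr)^{9/4},
\end{equation*}
the $e^{-x_{2n}^2/16}$ factor from \eqref{eq:gamman-asym} now being absent. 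The splitting $Q_n' = \frac{1}{2^{2n}}(Q_n'^{(1)} + Q_n'^{(2)} + Q_n'^{(3)})$ and the calculus observations~(1)--(5) from the Hermite proof go through with only cosmetic changes; the heuristic prediction \eqref{eq:laplacemethod-heuristic} then yields
\begin{equation*}
Q_n' = \bigl(1+O(\tfrac{1}{\log n})\bigr)\,\frac{1}{2^{2n+1/2}}\Bigl(\frac{2n}{\pi x_{2n}}\Bigr)^{7/4}\exp\!\Bigl[2n\bigl(\log(2n)-\log\pi - x_{2n} - \tfrac{1}{x_{2n}}\bigr)\Bigr].
\end{equation*}
Finally, an entirely parallel application of Lemma~\ref{lem:hermite-easybound2} with $B=3\pi$ (using \eqref{eq:phixdiff-asym-xinfty}) shows that $|r_n'|$ is exponentially smaller than $Q_n'$ on a scale of $n/\log n$, so is absorbed into the error term. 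Substituting $x_{2n}=W(2n/\pi) = (1+O(\loglog n/\log n))\log(2n)$ into the prefactor reproduces \eqref{eq:taylorxi-coeff-asym} after the bookkeeping: the leading $\frac{8\pi^2}{(2n)!}\cdot 2^{-(2n+1/2)}\pi^{-7/4}$ collapses to $\frac{\pi^{1/4}}{2^{2n-5/2}(2n)!}$.

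The main obstacle, and essentially the only point where care is needed beyond rote repetition, is in the bound for $Q_n'^{(3)}$. In the Hermite setting, the Gaussian damping made $f$ globally bounded by an explicit numerical constant $K_0$, which was pulled out of the integral for $x > \tfrac12 x_{2n}+\mu_n$. Here $f^{(a)}(x)$ grows like $e^{9x/2}$, so one cannot simply pull out a constant. However, the super-exponential decay $\exp(\psi_{2n}(2x)) \le \exp(-\pi e^{2x} + 2n\log(2x))$ easily dominates this growth, and the tangent-line bound used in \eqref{eq:qn3-longcalc} can be repeated after first absorbing the $e^{9x/2}$ prefactor into the dominant $-\pi e^{2x}$ term (using $e^{9x/2} \le e^{-\pi e^{2x}/2}\cdot e^{9x/2+\pi e^{2x}/2}$ and noting that the latter factor is bounded on $[x_{2n}/2+\mu_n,\infty)$ uniformly in $n$ once $x_{2n}$ is large). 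With this single modification in place, the argument concludes as described.
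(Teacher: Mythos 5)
Your overall route is exactly the paper's: reduce to the Gaussian-free analogues $Q_n'$, $r_n'$ of \eqref{eq:qn-int-def}--\eqref{eq:rn-int-def}, rerun Laplace's method with the same $\psi_{2n}$ from \eqref{eq:hermite-psindef}, the same maximizer $x_{2n}=W(2n/\pi)$ and the same $\alpha_n,\beta_n$, replace $\gamma_n$ by $\varphi(x_{2n}/2)$ with the $e^{-x_{2n}^2/16}$ factor gone, and dispose of $r_n'$ via Lemma~\ref{lem:hermite-easybound2} with $B=3\pi$; your prefactor bookkeeping ($8\cdot 2^{-1/2}=2^{5/2}$) is also correct, and you rightly spotted the one genuine subtlety, which the paper's own proof passes over in silence: $\varphi(x)=e^{5x/2}(e^{2x}-\tfrac{3}{2\pi})$ is unbounded, so the constant $K_0$ can no longer be pulled out of the tail integrals. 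But your repair of the tail term $Q_n^{\prime(3)}$ fails. As literally written it is false: $e^{9x/2+\pi e^{2x}/2}$ is unbounded on any half-line. Under the intended reading---sacrifice half the decay, i.e.\ bound $\varphi(x)e^{\psi_{2n}(2x)}\le C\exp\left(2n\log(2x)-\tfrac{\pi}{2}e^{2x}\right)$---the estimate is not merely technically awkward but quantitatively ruined: the comparison exponent is $\psi_{2n}$ with $B=\pi/2$ in place of $\pi$, whose maximizer is $\tfrac12 W(4n/\pi)\approx\tfrac12 x_{2n}+\tfrac{\log 2}{2}$, strictly to the \emph{right} of $\tfrac12 x_{2n}+\mu_n$ since $\mu_n=n^{-2/5}\to0$. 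Consequently the tangent line at $\tfrac12 x_{2n}+\mu_n$ has \emph{positive} slope $\approx 2n/x_{2n}$, so the argument of \eqref{eq:qn3-longcalc} cannot be repeated; worse, the maximum of the comparison exponent exceeds $\alpha_n$ by $(1+o(1))\,2n\log 2/\log(2n)$, so any bound routed through it is larger than the main term $\frac{\sqrt{\pi}}{\sqrt{2\beta_n}}\gamma_n' e^{\alpha_n}$ by a factor $e^{cn/\log n}$. (Contrast your treatment of $r_n'$, where giving up a full $\pi e^{2x}$ of decay is harmless because $4\pi$ drops only to $3\pi>\pi$; here $B$ drops below $\pi$ and the comparison collapses.)

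The correct fix is cheaper than what you attempted: absorb only the \emph{linear} growth, never a positive multiple of $e^{2x}$. On the tail bound $\varphi(x)\le 2e^{9x/2}$ and apply the tangent-line argument to the combined exponent $h_n(x)=\psi_{2n}(2x)+\tfrac92 x$, which is still concave, satisfies
$h_n'\!\left(\tfrac12 x_{2n}+\mu_n\right)\le -\tfrac{8n\mu_n}{x_{2n}}+\tfrac92\le -1$ for large $n$ (since $n\mu_n/x_{2n}\asymp n^{3/5}/\log n$), and by \eqref{eq:psi2n-taylor-jn} has
$h_n\!\left(\tfrac12 x_{2n}+\mu_n\right)\le \alpha_n-2\beta_n\mu_n^2+O(n\mu_n^3)+\tfrac94 x_{2n}+\tfrac92\mu_n$;
the extra factor $e^{9x_{2n}/4}$ is only polynomial in $n$ (it is essentially $\gamma_n'$), while $e^{-2\beta_n\mu_n^2}=e^{-cn^{1/5}/\log n}$, so the tail is again $O(e^{-n^{1/10}})$ relative to the main term. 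The left piece $Q_n^{\prime(1)}$ is even easier: $\varphi$ is increasing, so bound it by $\varphi(x_{2n}/2)=\gamma_n'$ on $[0,\tfrac12 x_{2n}-\mu_n]$ and proceed as before. With these substitutions your argument goes through and agrees with the paper's proof.
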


\begin{proof}
The idea is to repeat the analysis in the proof of Theorem~\ref{thm:hermite-coeff-asym}, but with the numbers $Q_{2n}$ and $r_{2n}$ in \eqref{eq:qn-int-def}--\eqref{eq:rn-int-def} being replaced by
\begin{align}
\label{eq:qn-int-def-mod}
Q_n' & =
\int_0^\infty x^{2n} e^{\frac{5x}{2}}\left(e^{2x}-\frac{3}{2\pi}\right) \exp\left(-\pi e^{2x}\right)\,dx, \\
\label{eq:rn-int-def-mod}
r_n' & =
\int_0^\infty x^{2n} 
e^{\frac{5x}{2}} \sum_{m=2}^\infty \left(m^4 e^{2x}-\frac{3m^2}{2\pi}\right) \exp\left(-\pi m^2 e^{2x}\right)\,dx,
\end{align}
for which, by~\eqref{eq:phix-def} and~\eqref{eq:a2n-qn-rnprime}, we then have that \begin{equation}
\label{eq:a2n-q2n-r2n-relation}
a_{2n} = \frac{8\pi^2}{(2n)!}(Q_{2n}' + r_{2n}').
\end{equation}
Note that the only difference from the original definitions of $Q_{2n}$ and $r_{2n}$ is the absence of the factor $e^{-x^2/4}$. Thus, the analysis carries over essentially verbatim to our current case, except that we replace the function $f(x)$ in the reformulated equation \eqref{eq:qnint-rewritten} for $Q_n$ with
\begin{equation*}
\varphi(x) = e^{\frac{5x}{2}}\left(e^{2x}-\frac{3}{2\pi}\right),
\end{equation*}
to get the analogous representation
\begin{equation}
\label{eq:qnint-rewritten-analogue}
Q_n' = \frac{1}{2^{2n}}\int_0^\infty \varphi(x) \exp\left( \psi_{2n}(2x)\right)\,dx
\end{equation}
for $Q_n'$.
The effect of this change on the subsequent formulas is that the factor $\gamma_n$ in \eqref{eq:gamman-hermite-def} then also gets replaced by the simpler factor
$\gamma_n' = \varphi(x_{2n}/2)$ 
in the asymptotic formula
\begin{equation}
\label{eq:qn-asym-preformula-analogue}
Q_n' =
\left(1+O\left(\frac{1}{n^{1/5}}\right)\right)
\frac{\sqrt{\pi}}{2^{2n}\sqrt{2\beta_n}} \gamma_n' e^{\alpha_n}.
\end{equation}
that is the analogue of \eqref{eq:qn-asym-preformula}---the factors $\beta_n$ and $\alpha_n$ (and, importantly, the maximum point value $x_{2n}$ from which they are derived) remain the same.

Now, $\gamma_n'$ has the asymptotic behavior (the counterpart to \eqref{eq:gamman-asym})
\begin{equation}
\label{eq:gamman-prime-asym}
\gamma_n' = 
\left(1+ O\left(
e^{-x_{2n}}\right)\right)\exp\left(\frac94 x_{2n} \right) 
=
\left(1+ O\left( \frac{\log n}{n}\right)\right)
\left( \frac{2n}{\pi x_{2n}} \right)^{9/4}
\end{equation}
as $n\to\infty$. With these facts in mind, 
it is now a simple matter to go through the calculations and various bounds in the proof of Theorem~\ref{thm:hermite-coeff-asym} and verify that they remain valid in the current setting (including the bound \eqref{eq:rn-asym-bound} with $Q_n'$ and $r_n'$ replacing $Q_n$ and $r_n$, respectively), with the final result being that the relation \eqref{eq:qn-asym-formula} is now replaced by
\begin{align*}
Q_n' &= 
\left(1+O\left(\frac{\loglog n}{\log n}\right)\right)
\frac{1}{2^{2n+\frac12}} \left(\frac{2n}{\pi x_{2n}}\right)^{7/4}
\exp\left[
2n\left(\log (2n) - \log \pi - x_{2n} - \frac{1}{x_{2n}} \right)
\right].
\end{align*}
Inserting this into \eqref{eq:a2n-q2n-r2n-relation} gives \eqref{eq:taylorxi-coeff-asym}.
\end{proof}

It is interesting to compare our formula \eqref{eq:taylorxi-coeff-asym} to other asymptotic formulas for the coefficients $a_{2n}$ which have appeared in the literature. At the time we completed the first version of this paper, the strongest result of this type we were aware of was the one due to Coffey \cite[Prop.~1]{coffey}. Coffey's formula is more explicit, since it contains only elementary functions, but is less accurate, since (if expressed in our notation as a formula for $a_{2n}$ rather than in Coffey's logarithmic notation) it has a multiplicative error term of $\exp(O(1)) = \Theta(1)$, compared to our $1+O\left(\frac{\log n}{n}\right)$.

After we finished the initial version of this paper, we learned of another recent asymptotic formula for the coefficients $a_{2n}$ that was proved by Griffin, Ono, Rolen and Zagier in a 2018 paper \cite[Th.~7]{griffin-etal} (see also equations (1) and (13) in their paper). Griffin et al's result is more accurate than our Theorem~\ref{eq:taylorxi-coeff-asym}, as it gives a full asymptotic expansion for $a_{2n}$ whereby the relative error term can be made smaller than $o(n^{-K})$ for any fixed $K$ by truncating the expansion after sufficiently many terms. Their formula is expressed in terms of an implicitly-defined quantity $L(n)$ that solves the equation
$$ n = L\left(\pi e^L + \frac34 \right). $$
This equation (a slightly more exotic variant of our equation for $x_{2n}$ involving Lambert's $W$-function) arises out of an an application of Laplace's method in a manner quite similar to
our own analysis. It is interesting to ask whether our approach can be similarly extended to obtain a full asymptotic expansion for $a_{2n}$ that is expressed in terms of the (arguably simpler) quantities $x_{2n} = W\left(\frac{2n}{\pi}\right)$.

\section{The function $\tilde{\omega}(t)$}

\label{sec:omega-tilde-properties}

In \secref{sec:rad-centered-recbal} we defined the centered version of a balanced function, and applied that concept to the study of the function $\nu(x)$ and its centered version $\tilde{\nu}(t)$, which has turned out to be quite significant in the developments of Chapters~\ref{ch:radial} and~\ref{ch:gn-expansion}. We now consider the function $\tilde{\omega}(t)$, the centered version of $\omega(x)$, which is not only a more fundamental object than $\tilde{\nu}(t)$ (in the sense that the latter is computed from the former), but turns out to also be significant and interesting in several distinct (and seemingly unrelated) ways.

As an initial and rather trivial observation, we already noted in Proposition~\ref{prop:cn-as-moments} that the coefficients $c_n$ can be interpreted as moments of $\tilde{\omega}(t)$, except for a trivial scaling factor.

The next observation, which is also trivial as it is essentially a restatement of the above result, is that the Fourier transform of $\tilde{\omega}(t)$ can be interpreted as a generating function for the coefficient sequence $c_n$ (that is different from $\tilde{\nu}(t)$, which in Theorem~\ref{eq:nutilde-taylor} we also interpreted as a generating function for the $c_n$'s). Namely, we have the relation
\begin{equation}
\label{eq:capital-omega-def}
\int_{-1}^1 \tilde{\omega}(u)e^{itu}\,du = \frac12 \sum_{n=0}^\infty \frac{i^n c_n t^n}{n!}.
\end{equation}

Next, we arrive at a somewhat more surprising fact, which is that $\tilde{\omega}(u)$ also arises in a different way as a scaling limit of the Fourier spectrum of the Poisson flow associated with the $f_n$-expansion. To make this precise, recall that in Theorem~\ref{thm:fn-poisson-mellin} we derived a Mellin transform representation for the Poisson flow $X_r^{\mathcal{F}}(t)$. We will consider a limit of this representation as $r\to 0$, but scale the $t$ variable by a factor of $r$ since, as the formula \eqref{eq:fn-omega-compressed} shows, the Mellin spectrum without scaling gets compressed into the interval $[(1-r)/(1+r),(1+r)/(1-r)]$, which shrinks to a point as $r\to 0$. We also rewrite the Mellin transform as an ordinary ``additive'' Fourier transform, in other words expressing the rescaled Poisson flow as
\begin{equation*}
X_r^{\mathcal{F}}\left(\frac{t}{r}\right)
= \int_{-\infty}^\infty \Psi_r(v) e^{ivt}\,dv.
\end{equation*}
This representation is obtained from \eqref{eq:fn-poisson-mellin} by a standard exponential change of variables ($x=e^{2r v}$ in the particular scaling we use), and it is straightforward to check that $\Psi_r(v)$ is given by
\begin{align*}
\Psi_r(v) &=
2 r e^{rv/2} \omega_r\left(e^{2rv}\right)
=
\begin{cases}
2 r \frac{1+\eta}{\sqrt{1-\eta}} 
\frac{1}{\sqrt{1-\eta e^{2rv}}} e^{rv/2} 
\omega\left( \frac{e^{2rv}-\eta}{1-\eta e^{2rv}}
\right)
& \textrm{if }|v| < \frac{1}{2r}\log\left(\frac{1}{\eta}\right),
\\
0 & \textrm{otherwise}
\end{cases}
\end{align*}
(refer to \eqref{eq:fn-omega-compressed} for the second equality, and recall the notation \eqref{eq:compression-eta-notation}).

\begin{prop}[The centered function $\tilde{\omega}(u)$ as a scaling limit of the Poisson flow frequency spectrum]
We have the pointwise limits
\begin{equation*}
\lim_{r\to 0+} \Psi_r(v) =
\begin{cases}
2\tilde{\omega}(v) & \textrm{if }|v|<1,
\\
0 & \textrm{otherwise}.
\end{cases}
\end{equation*}
\end{prop}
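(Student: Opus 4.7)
The plan is to carry out a direct pointwise asymptotic analysis of the explicit formula for $\Psi_r(v)$ as $r\to 0^+$, treating the two cases $|v|>1$ and $|v|<1$ separately, and using the functional equation for $\omega$ at the end to match the limit to $2\tilde{\omega}(v)$.

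First, for the vanishing on $|v|>1$: note that $\eta=\tfrac{1-r}{1+r}$, so
\[
\tfrac{1}{2r}\log(1/\eta) \;=\; \tfrac{1}{2r}\log\tfrac{1+r}{1-r} \;=\; 1+\tfrac{r^2}{3}+O(r^4).
\]
Hence for any fixed $v$ with $|v|>1$ and all $r$ sufficiently small we have $|v|>\tfrac{1}{2r}\log(1/\eta)$, so $\Psi_r(v)=0$ by definition. This handles the easy case.

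For $|v|<1$, I would expand each factor of
\[
\Psi_r(v)\;=\;2r\,\frac{1+\eta}{\sqrt{1-\eta}}\,\frac{1}{\sqrt{1-\eta e^{2rv}}}\,e^{rv/2}\,\omega\!\left(\frac{e^{2rv}-\eta}{1-\eta e^{2rv}}\right)
\]
as $r\to 0$. The key algebraic identities are $1-\eta = \tfrac{2r}{1+r}$ and $1+\eta = \tfrac{2}{1+r}$, which combine to give the exact relation
\[
2r\cdot\frac{1+\eta}{\sqrt{1-\eta}} \;=\; \frac{2\sqrt{2r}}{\sqrt{1+r}}.
\]
A short computation with the expansions $e^{2rv}=1+2rv+O(r^2)$ and $\eta=1-2r+O(r^2)$ yields
\[
e^{2rv}-\eta \;=\; 2r(1+v)+O(r^2), \qquad 1-\eta e^{2rv} \;=\; \tfrac{1}{1+r}\bigl(2r(1-v)+O(r^2)\bigr),
\]
from which one reads off that the argument $\tfrac{e^{2rv}-\eta}{1-\eta e^{2rv}}$ converges to $\tfrac{1+v}{1-v}$ (a positive real number, lying strictly in $(0,\infty)$), and that $\tfrac{1}{\sqrt{1-\eta e^{2rv}}} = \tfrac{\sqrt{1+r}}{\sqrt{2r(1-v)}}(1+O(r))$. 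Multiplying everything together, the two factors of $\sqrt{2r}$ cancel and the factors $\sqrt{1+r}$ also cancel, leaving
\[
\Psi_r(v) \;\longrightarrow\; \frac{2}{\sqrt{1-v}}\,\omega\!\left(\frac{1+v}{1-v}\right) \qquad (r\to 0^+).
\]
Continuity of $\omega$ on $(0,\infty)$ justifies passing the limit through $\omega$.

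Finally, I would apply the functional equation $\omega(1/x)=\sqrt{x}\,\omega(x)$ in \eqref{eq:theta-functional-equation} with $x=\tfrac{1-v}{1+v}$ to rewrite
\[
\omega\!\left(\tfrac{1+v}{1-v}\right)\;=\;\sqrt{\tfrac{1-v}{1+v}}\,\omega\!\left(\tfrac{1-v}{1+v}\right),
\]
so that
\[
\frac{2}{\sqrt{1-v}}\,\omega\!\left(\tfrac{1+v}{1-v}\right)\;=\;\frac{2}{\sqrt{1+v}}\,\omega\!\left(\tfrac{1-v}{1+v}\right)\;=\;2\tilde{\omega}(v),
\]
by the definition \eqref{eq:balanced-centered-def} of the centered function $\tilde{\omega}$ with weight $\alpha=1/2$. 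This matches the desired limit. There is no serious obstacle here, as the argument is a routine bookkeeping of asymptotics; the only point that requires a bit of care is ensuring that the powers of $r$ in the prefactor $2r\tfrac{1+\eta}{\sqrt{1-\eta}}$ exactly cancel the divergent factor $\tfrac{1}{\sqrt{1-\eta e^{2rv}}}$, which is what produces the finite, non-zero limit.
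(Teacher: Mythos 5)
Your proposal is correct and follows essentially the same route as the paper's proof: expand the support boundary $\frac{1}{2r}\log\frac{1+r}{1-r}=1+O(r^2)$ to dispose of $|v|>1$, Taylor-expand the prefactors and the argument of $\omega$ to obtain $\Psi_r(v)\to \frac{2}{\sqrt{1-v}}\,\omega\left(\frac{1+v}{1-v}\right)$ for $|v|<1$, and then identify this with $2\tilde{\omega}(v)$. The only cosmetic difference is in the last step: the paper reads off $\frac{2}{\sqrt{1-v}}\,\omega\left(\frac{1+v}{1-v}\right)=2\tilde{\omega}(-v)$ directly from the definition \eqref{eq:balanced-centered-def} and invokes evenness of $\tilde{\omega}$, while you invoke the functional equation $\omega(1/x)=\sqrt{x}\,\omega(x)$ — the same fact, since evenness of the centered version is exactly equivalent to that functional equation.
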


\begin{proof}
This is a somewhat mundane verification involving Taylor series approximations. Specifically, one finds that, as $r\to 0$, 
\begin{align*}
\frac{1}{2r}\log \left(\frac{1+r}{1-r} \right) &
= 1 + O(r^2), \\
2 r \frac{1+\eta}{\sqrt{1-\eta}} 
\frac{1}{\sqrt{1-\eta e^{2rv}}} e^{rv/2} 
& = \frac{2}{\sqrt{1-v}} + O(r^2),
\\
\frac{e^{2rv}-\eta}{1-\eta e^{2rv}}
& = \frac{1+v}{1-v} + O(r^2).
\end{align*}
The first of these three limits substantiates the claim that the Fourier spectrum $\Psi_r(v)$ is supported in the limit on the interval $(-1,1)$; the second and third limits show that for $v\in (-1,1)$ we have
\begin{equation*}
\lim_{r\to 0+} \Psi_r(v) =
\frac{2}{\sqrt{1-v}} \omega\left(\frac{1+r}{1-r}\right)
= 2 \tilde{\omega}(-v)
= 2 \tilde{\omega}(v)
\end{equation*}
(since $\tilde{\omega}(v)$ is an even function), as claimed.
\end{proof}

Our final result on $\tilde{\omega}(u)$ will show that not just its Fourier transform, but also $\tilde{\omega}(u)$ itself, is a generating function for an interesting sequence, which can be given explicitly in terms of a recently studied sequence of integers. For this, we first recall our recent results \cite{romik} on the Taylor expansion of the Jacobi theta series $\theta(x)$ (defined in \eqref{eq:jactheta-def}) and its centered version, which as usual is related to $\theta(x)$ by
\begin{equation}
\label{eq:theta-thetatilde-relation}
\tilde{\theta}(u) = \frac{1}{\sqrt{1+u}}
\theta\left( \frac{1-u}{1+u} \right) \qquad (|u|<1).
\end{equation}
In \cite{romik} (where $\theta(x)$ was denoted $\theta_3(x)$ and $\tilde{\theta}(u)$ was denoted $\sigma_3(u)$) we proved that $\tilde{\theta}(u)$ has the Taylor expansion
\begin{equation}
\label{eq:tilde-theta-taylorexp}
\tilde{\theta}(u) = W \sum_{n=0}^\infty \frac{\delta(n)}{(2n)!} \Phi^n u^{2n} \qquad (|u|<1),
\end{equation}
where $W$ and $\Phi$ are two special constants, given by
\begin{equation*}
W = \theta(1) = 
\frac{\Gamma\left(\frac14\right)}{\sqrt{2}\pi^{3/4}},
\qquad
\Omega = 
\frac{\Gamma\left(\frac14\right)^8}{128 \pi^4},
\end{equation*}
respectively,
and where the sequence
$(\delta(n))_{n=0}^\infty = 1,1,-1,51,849,-26199, 
1341999, \ldots$
(denoted as $d(n)$ in \cite{romik}---for the current discussion we changed the notation in order to avoid a potential confusion with the coefficient sequence $d_n$ in the expansion \eqref{eq:gn-expansion}) is a sequence of integers first introduced and studied in \cite{romik} (see also \cite{jac3-oeis}).

With this preparation, we can formulate a result identifying the coefficients in the Taylor expansion of $\tilde{\omega}(u)$.

\begin{thm}[Taylor expansion of $\tilde{\omega}(u)$]
\label{thm:omega-tilde-taylor}
The Taylor expansion of $\tilde{\omega}(u)$ is given by
\begin{equation*}
\tilde{\omega}(u)
= W \sum_{n=0}^\infty \frac{\rho(n)}{(2n)!} \Omega^n u^{2n} \qquad (|u|<1),
\end{equation*}
where $\rho(n)$ are numbers defined in terms of the sequence $(\delta(n))_{n=0}^\infty$ as
\begin{align*}
\rho(n) &=
\frac{1}{16} \Big(
4 \Omega \delta(n+1) - (32n^2 + 8n+3) \delta(n)
+ (2n-1)(2n)(4n-1)(4n-3) \Omega^{-1} \delta(n-1)
\Big).
\end{align*}
\end{thm}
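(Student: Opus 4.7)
The plan is to reduce the Taylor expansion of $\tilde{\omega}(u)$ to the known Taylor expansion \eqref{eq:tilde-theta-taylorexp} of $\tilde{\theta}(u)$. The crucial bridge is an explicit identity expressing $\tilde\omega$ as a linear combination of $\tilde\theta$ and its first two derivatives with polynomial coefficients in $u$. I expect the form
\begin{equation*}
\tilde{\omega}(u) = -\frac{3(1-u^2)}{16}\,\tilde{\theta}(u) - \frac{3u(1-u^2)}{4}\,\tilde{\theta}'(u) + \frac{(1-u^2)^2}{4}\,\tilde{\theta}''(u).
\end{equation*}
To derive this, I would start from $\omega(x) = x^2\theta''(x) + \tfrac{3}{2}x\theta'(x)$ together with $\theta(x) = (1+u)^{1/2}\tilde\theta(u)$ under $x=(1-u)/(1+u)$, compute $\theta'(x)$ and $\theta''(x)$ via the chain rule using $du/dx = -(1+u)^2/2$, multiply by $x$ and $x^2$ (noting $x=(1-u)/(1+u)$), then multiply by the overall prefactor $(1+u)^{-1/2}$ that turns $\omega$ into $\tilde\omega$. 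All half-integer powers of $1+u$ cancel cleanly, leaving a polynomial-coefficient expression. Consistency check: the right-hand side must be even in $u$, and indeed each summand is even because $\tilde\theta$ is even, $\tilde\theta'$ is odd, and the polynomial factors $(1-u^2)$, $u(1-u^2)$, $(1-u^2)^2$ have the matching parities.

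With this identity in hand, I would insert the expansion $\tilde{\theta}(u) = W\sum_{n\ge 0}\frac{\delta(n)}{(2n)!}\Omega^n u^{2n}$ from \eqref{eq:tilde-theta-taylorexp}, differentiate termwise to obtain the series for $\tilde{\theta}'$ and $\tilde\theta''$, and then extract the coefficient of $u^{2n}$ in each of the three resulting terms. Multiplication by the factors $(1-u^2)$, $u(1-u^2)$, and $(1-u^2)^2$ contributes summands involving $\delta(n+1)$, $\delta(n)$, and $\delta(n-1)$. Using the elementary identities $\frac{1}{(2n-1)!} = \frac{2n}{(2n)!}$, $\frac{1}{(2n-2)!} = \frac{(2n)(2n-1)}{(2n)!}$, $\frac{1}{(2n-3)!} = \frac{(2n)(2n-1)(2n-2)}{(2n)!}$, and $\frac{1}{(2n-4)!} = \frac{(2n)(2n-1)(2n-2)(2n-3)}{(2n)!}$, everything normalizes to the common factor $\frac{W\Omega^n}{(2n)!}$.

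The final step is to collect the coefficients of $\delta(n+1)$, $\delta(n)$, and $\delta(n-1)$. The $\delta(n+1)$ contribution comes only from the $\tilde\theta''$ term and gives $\tfrac{1}{4}\Omega\delta(n+1)$. The $\delta(n)$ coefficient is the sum of three terms and simplifies to $-\tfrac{1}{16}(32n^2+8n+3)\delta(n)$ after expanding $-3 - 24n - 8(2n)(2n-1)$. For the $\delta(n-1)$ coefficient, factoring $(2n)(2n-1)$ from the three contributions leaves $3 + 12(2n-2) + 4(2n-2)(2n-3) = 16n^2-16n+3 = (4n-1)(4n-3)$, producing exactly $\tfrac{1}{16}(2n-1)(2n)(4n-1)(4n-3)\Omega^{-1}\delta(n-1)$. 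Combining yields the claimed formula for $\rho(n)$.

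The only genuine obstacle is bookkeeping; I anticipate no conceptual difficulty. The ``hard'' step, in the sense of being the most error-prone, is verifying the derivation of the three-term identity for $\tilde\omega$ above, since that computation mixes a nontrivial Möbius change of variables with a second-order differential operator; careful tracking of the powers of $1+u$ and the sign from $du/dx$ is essential. Once that identity is established, the rest is a routine (if lengthy) expansion and simplification.
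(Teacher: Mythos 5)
Your proposal is correct and follows essentially the same route as the paper: your bridge identity is exactly the paper's relation $\tilde{\omega}(u)=\frac{1}{16}\left[3(u^2-1)\tilde{\theta}(u)+12u(u^2-1)\tilde{\theta}'(u)+4(u^2-1)^2\tilde{\theta}''(u)\right]$ (just written with $(1-u^2)$ factors), derived by the same differentiation of the inverse centering relation for $\theta$, followed by the same termwise substitution of the $\tilde{\theta}$ expansion. Your coefficient bookkeeping checks out: $-3-24n-8(2n)(2n-1)=-(32n^2+8n+3)$ and $3+12(2n-2)+4(2n-2)(2n-3)=(4n-1)(4n-3)$ are both correct, reproducing the stated $\rho(n)$.
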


\begin{proof}
The idea is to first of all find a way to express $\tilde{\omega}(u)$ in terms of $\tilde{\theta}(u)$, and then use~\eqref{eq:tilde-theta-taylorexp}.
Start with the relation 
\begin{equation*}
\theta(x) = 
\frac{\sqrt{2}}{\sqrt{1+x}} \tilde{\theta}\left(\frac{1-x}{1+x}\right)
\end{equation*}
that is inverse to \eqref{eq:theta-thetatilde-relation}. Differentiating twice, we get
\begin{align*}
\theta'(x) &= 
-\frac12 \sqrt{2} \frac{1}{(1+x)^{3/2}} 
\tilde{\theta}\left(\frac{1-x}{1+x}\right)
+
\frac{-2\sqrt{2}}{(1+x)^{5/2}}
\tilde{\theta}'\left(\frac{1-x}{1+x}\right),
\\
\theta''(x) &= 
\sqrt{2}
\Bigg(
\frac{3}{4(1+x)^{5/2}}
\tilde{\theta}\left(\frac{1-x}{1+x}\right)
+ \frac{6}{(1+x)^{7/2}}
\tilde{\theta}'\left(\frac{1-x}{1+x}\right)
+\frac{4}{(1+x)^{9/2}}
\tilde{\theta}''\left(\frac{1-x}{1+x}\right)
\Bigg).
\end{align*}
It then follows that
\begin{align*}
\tilde{\omega}(u)
& =
\frac{1}{\sqrt{1+u}} \omega\left(\frac{1-u}{1+u} \right)
=
\frac{1}{2\sqrt{1+u}}
\Bigg(
2\left(\frac{1-u}{1+u}\right)^2
\theta''\left(\frac{1-u}{1+u}\right)
+3\left(\frac{1-u}{1+u}\right)
\theta'\left(\frac{1-u}{1+u}\right)
\Bigg)
\\ &=
\frac{1}{\sqrt{1+u}}
\Bigg[
  \sqrt{2} \left(\frac{1-u}{1+u}\right)^2
\Bigg(
\frac34 \frac{(1+u)^{5/2}}{2^{5/2}}
\tilde{\theta}(u)
+ \frac{6(1+u)^{7/2}}{2^{7/2}}
\tilde{\theta}'(u)
+\frac{4(1+u)^{9/2}}{2^{9/2}}
\tilde{\theta}''(u)
\Bigg)
\\ & \qquad\qquad\qquad\qquad\qquad\qquad\qquad +
\frac{3\sqrt{2}}{2} \left(\frac{1-u}{1+u}\right)
\Bigg(
-\frac12 \frac{(1+u)^{3/2}}{2^{3/2}} 
\tilde{\theta}(u)
-2 \frac{(1+u)^{5/2}}{2^{5/2}} \tilde{\theta}'(u)
\Bigg)
\Bigg].
\end{align*}
From here, a trivial algebraic simplification, which we omit, leads to the identity
\begin{equation}
\label{eq:omegatilde-thetatilde-exp}
\tilde{\omega}(u) =
\frac{1}{16}
\left[
3(u^2-1) \tilde{\theta}(u)
+ 12 u (u^2-1) \tilde{\theta}'(u)
+ 4 (u^2-1)^2 \tilde{\theta}''(u)
\right].
\end{equation}
But now observe that from \eqref{eq:tilde-theta-taylorexp} we have
\begin{align*}
\tilde{\theta}'(u) &= W \sum_{n=1}^\infty \frac{\delta(n)}{(2n-1)!} \Omega^n u^{2n-1},
\qquad\quad 
\tilde{\theta}''(u) 
= W \sum_{n=2}^\infty \frac{\delta(n)}{(2n-2)!} \Omega^n u^{2n-2}.
\end{align*}
Inserting \eqref{eq:tilde-theta-taylorexp} and these last two expansions into \eqref{eq:omegatilde-thetatilde-exp} and simplifying gives the claim.
\end{proof}

\chapter{Final remarks}

\label{ch:summary}

This work has seen the introduction of a curious menagerie of previously unnoticed (or, at the very least, under-appreciated) special functions that are tied in an interesting way to the theory of the Riemann zeta function. This collection includes the orthogonal polynomial families $U_n, H_n, L_n^{1/2}, f_n$, and $g_n$; the elementary (though esoteric) radial functions $A(r)$, $B(r)$; the well-known functions $\theta(x)$ and $\omega(x)$, originating in the world of modular forms and theta series; and the function $\nu(x)$ and its centered version $\tilde{\nu}(t)$, which do not seem to have been previously studied. 

These functions and their many subtle interconnections add a new set of tools to the arsenal of methods available to attack central open problems in the theory of the zeta function, the Riemann hypothesis foremost among them. Most significantly, one is left with the impression that the theory of orthogonal polynomials may have a more central role to play in the study of the zeta function, and perhaps a greater potential to lead to new insights, than had been previously suspected.

We conclude with a few open problems and suggestions for future research.

\begin{enumerate}

\item There has been much discussion in the literature of sufficient conditions guaranteeing that a polynomial $p(z)$ has only real zeros based on knowledge of its coefficients in the expansion $p(z)=\sum_{k=0}^n \alpha_k \phi_k(z)$, where $(\phi_k)_{k=0}^\infty$ is some given family of orthogonal polynomials. We note Tur\'an's many results in \cite{turan1952, turan1954, turan1959}, particularly his observation (Lem.~II in \cite{turan1959}, a result he discovered independently but attributes to an earlier paper by P\'olya \cite{polya1915}) that if the zeros of $\sum_{k=0}^n a_k z^k$ are all real then that is also the case for the corresponding Hermite expansion $\sum_{k=0}^n a_k H_k(z)$; and the many analogous theorems of Iserles and Saff \cite{iserles-saff}, among them the result (a special case of Prop.~6 in their paper)
that if the zeros of the polynomial $\sum_{k=0}^n a_k z^k$
are all real then that is also the case for the polynomial $\sum_{k=0}^n \frac{k!}{(3/2)_k} a_k f_k(z)$.
See also \cite{bates, bleecker-csordas, iserles-norsett1987, iserles-norsett1988, iserles-norsett1990, piotrowski} and the survey \cite{schmeisser} for further developments along these lines.

One question that now arises naturally is: to what extent do these developments inform the attempts to prove the reality of the zeros of the Riemann xi function, in view of our new results?

\item One rather striking fact is that the four different series expansions we have considered for the Riemann xi function, namely
\begin{equation*}
\begin{array}{rclcrcl}
\Xi(t) & \!\!\!=\!\!\!& \displaystyle \sum_{n=0}^\infty (-1)^n a_{2n} t^{2n},
& &
\Xi(t) & \!\!\!=\!\!\!& \displaystyle \sum_{n=0}^\infty (-1)^n c_{2n} f_{2n}(t),
\\[14pt]
\Xi(t) & \!\!\!=\!\!\!& \displaystyle \sum_{n=0}^\infty (-1)^n b_{2n} H_{2n}(t),
& &
\Xi(t) & \!\!\!=\!\!\!& \displaystyle \sum_{n=0}^\infty (-1)^n d_{2n} g_{2n}(t),
\end{array}
\end{equation*}
exhibit remarkably similar structural similarities: namely, in all four expansions the coefficients appear with alternating signs (and their asymptotics can be understood to a good level of accuracy, as our analysis shows).

It is intriguing to wonder about the significance of this structural property of $\Xi(t)$. Can this information be exploited somehow to derive information about the location of the zeros of $\Xi(t)$?

By way of comparison, one can consider ``toy'' expansions of the above forms involving more elementary coefficient sequences. For example, we have the trivial expansions
(the latter two of which being easy consequences of \eqref{eq:hermite-genfun} and \eqref{eq:fn-genfun}, respectively)
\begin{align*}
\qquad\qquad 
\sum_{n=0}^\infty (-1)^n \frac{\alpha^{2n}}{(2n)!} t^{2n}
\negphantom{t^{2n}}\phantom{H_{2n}(t)}
&= \cos(\alpha t)
& (\alpha>0),
\\
\sum_{n=0}^\infty (-1)^n \frac{\alpha^{2n}}{(2n)!} H_{2n}(t) &= e^{\alpha^2} \cos(2\alpha t)
& (\alpha>0),
\\
\sum_{n=0}^\infty (-1)^n \alpha^{2n} 
f_{2n}(t)\negphantom{f_{2n}(t)}\phantom{H_{2n}(t)}
&= 
\frac{2}{(1-\alpha^2)^{3/4}}
\cos\left(t \log\left(\frac{1+\alpha}{1-\alpha} \right) \right)
& (0<\alpha<1),
\end{align*}
which are entire functions of $t$ that---needless to say---all have only real zeros. On the other hand, we do not know for which values of $\alpha\in (0,1)$ the expansion (whose explicit form is evaluated using \eqref{eq:gn-genfun})
\begin{align*}
\qquad \ \ \ \ \sum_{n=0}^\infty (-1)^n \alpha^{2n}g_{2n}(t) 
&= 
\frac{1}{(1-\alpha)^2} \,\gausshyper\left(1,\frac34-it;\frac32;\frac{-4\alpha}{(1-\alpha)^2}\right)
\\ & \qquad\ \  +
\frac{1}{(1+\alpha)^2} \,\gausshyper\left(1,\frac34-it;\frac32;\frac{4\alpha}{(1+\alpha)^2}\right)
\end{align*}
has only real zeros.

\item
The notion of Poisson flows we introduced seems worth exploring further. The Poisson flow associated with the polynomial family $(f_n)_{n=0}^\infty$ has interesting properties, and while it does not preserve hyperbolicity in the sense of ``continuous time'' as we discussed in \secref{sec:zeros-evolution},
it seems not inconceivable that a weaker form of preservation of reality of the zeros for discrete time parameter values might still hold. For example, does there exist a constant $0<r_0<1$ such that if the polynomial $\sum_{k=0}^n a_k r_0^k f_k(t)$ has only real zeros then the same is guaranteed to be true for the polynomial 
$\sum_{k=0}^n a_k f_k(t)$? It appears like it may be possible to approach this question using the biorthogonality techniques developed in the papers by Iserles and coauthors~\cite{iserles-norsett1987, iserles-norsett1988, iserles-norsett1990, iserles-saff}.
And what can be said about the Poisson flow associated with the orthogonal polynomial family $(g_n)_{n=0}^\infty$?

\item Does the function in \eqref{eq:capital-omega-def}, the Fourier transform of $\tilde{\omega}(u)$ (which as we have seen can be thought of as a scaling limit of the Poisson flow), have only real zeros? Is this question related to the Riemann hypothesis?

\end{enumerate}

\appendix

\chapter{Orthogonal polynomials}

\label{appendix:orthogonal}

In this appendix we summarize some background facts we will need on several families of orthogonal polynomials, and prove a few additional auxiliary results. We assume the reader is familiar with the basic theory of orthogonal polynomials, as described, e.g., in Chapter 2--3 of Szeg\H{o}'s classical book \cite{szego}.

\section{Chebyshev polynomials of the second kind}

\label{sec:orth-chebyshev}

The Chebyshev polynomials, denoted $U_n(x)$, are a sequence of orthogonal polynomials with respect to the weight function $\sqrt{1-x^2}$ on $(-1,1)$, and are one of the most classical families of orthogonal polynomials. A few of their main properties are given below; see \cite[pp.~225--229]{koekoek-etal} for more details.

\begin{enumerate}
\item Definition:
\begin{align}
\label{eq:chebyshev-explicit1}
U_n(x) &= \frac{\sin((n+1)\arccos(x))}{\sin(\arccos(x))}
\\ 
\label{eq:chebyshev-explicit2}
& =
\sum_{k=0}^{\lfloor n/2\rfloor} (-1)^k \binom{n-k}{k} (2x)^{n-2k}
\end{align}

\item Inverse relationship with monomial basis:
\begin{equation}
\label{eq:chebyshev-monomial-expansion}
x^n = \frac{1}{(n+1)2^n} \sum_{k=0}^{\lfloor n/2 \rfloor}
(n-2k+1)\binom{n+1}{k} U_{n-2k}(x).
\end{equation}

\item Orthogonality relation:
\begin{equation}
\label{eq:chebyshev-orthogonality}
\int_0^1 U_m(x) U_n(x) \sqrt{1-x^2}\,dx
=
\frac{\pi}{2} \delta_{m,n}
\end{equation}

\item Recurrence relation:
\begin{equation}
U_{n+1}(x)-2x U_n(x) + U_{n-1}(x) 
= 0
\end{equation}

\item Differential equation:
\begin{equation}
(1-x^2)U_n''(x) - 3x U_n'(x) + n(n+2) U_n(x) = 0
\end{equation}

\item Generating function:
\begin{equation}
\sum_{n=0}^\infty U_n(x) z^n =
\frac{1}{1-2xz+z^2}
\end{equation}

\item Poisson kernel:
\begin{equation}
\label{eq:chebyshev-poissonker}
\frac{2}{\pi} \sum_{n=0}^\infty U_n(x)U_n(y) z^n 
= \frac{2}{\pi}\cdot \frac{1-z^2}{1-4xyz(z^2+1)+2(2x^2+2y^2-1)z^2+z^4}
\end{equation}

\item 
Symmetry: $U_n(-x) = (-1)^n U_n(x)$.

\end{enumerate}

\textbf{Notes.} To derive \eqref{eq:chebyshev-monomial-expansion}, take derivatives of both sides of the relation (found in \cite[p.~22]{mason-handscomb}) $\cos^{n+1} \theta = \sum_{k=0}^{\lfloor n/2\rfloor } \binom{n}{k}\cos(n-2k)\theta - \frac12 \chi_{\{n \textrm{ even}\}} \binom{n}{n/2}$, and use \eqref{eq:chebyshev-explicit1}. Formula~\eqref{eq:chebyshev-poissonker} is derived in \cite{millar}, where it appears as equation (15), except that the formula there contains a typo (the term $-4axy$ in the denominator needs to be changed to $-2axy$), which we corrected.

\section{Hermite polynomials}

\label{sec:orth-hermite}

The Hermite polynomials are the well-known sequence $H_n(x)$ of polynomials that are orthogonal with respect to the Gaussian weight function $e^{-x^2}$ on $\R$. A few of their main properties are given below; see \cite[Sec.~6.1]{andrews-askey-roy} for proofs.

\begin{enumerate}
\item Definition:
\begin{equation}
H_n(x) = (-1)^n e^{x^2} \frac{d^n}{dx^n}\left( e^{-x^2}\right)
\end{equation}

\item Orthogonality relation:
\begin{equation}
\label{eq:hermite-orthogonality}
\int_{-\infty}^\infty H_m(x) H_n(x) e^{-x^2}\,dx
=
2^n n! \sqrt{\pi} \delta_{m,n}
\end{equation}

\item Recurrence relation:
\begin{equation}
\label{eq:hermite-recurrence}
H_{n+1}(x)-2x H_n(x) + 2n H_{n-1}(x) 
= 0
\end{equation}

\item Differential equation:
\begin{equation}
\label{eq:hermite-ode}
H_n''(x) - 2x H_n'(x) + 2n H_n(x) = 0
\end{equation}

\item Generating function:
\begin{equation}
\label{eq:hermite-genfun}
\sum_{n=0}^\infty \frac{H_n(x)}{n!} z^n =
\exp(2xz-z^2)
\end{equation}

\item Poisson kernel:
\begin{equation}
\frac{1}{\sqrt{\pi}} \sum_{n=0}^\infty \frac{H_n(x)H_n(y)}{2^n n!} z^n 
= \frac{1}{\sqrt{\pi}\sqrt{1-z^2}} \exp\left( \frac{2xyz-(x^2+y^2)z^2}{1-z^2}\right)
\end{equation}

\item 
Symmetry: $H_n(-x) = (-1)^n H_n(x)$.

\end{enumerate}

\section{Laguerre polynomials}

\label{sec:orth-laguerre}

The (generalized) Laguerre polynomials form a sequence $L_n^\alpha(x)$ of polynomials, dependent on a parameter $\alpha> -1$, that are orthogonal with respect to the weight function $e^{-x}x^\alpha$ on $(0,\infty)$. Of particular interest to us will be the case $\alpha=1/2$; in this case the polynomials are essentially rescalings of the odd-indexed Hermite polynomials. For convenience, we summarize below a few of the main formulas associated with the polymomials $L_n^{\alpha}(x)$; proofs can be found in \cite[Sec~6.2]{andrews-askey-roy}.

\begin{enumerate}
\item Definition:
\begin{equation}
\label{eq:laguerre-explicit}
L_n^\alpha(x) = \sum_{k=0}^n \frac{(-1)^k}{k!} \binom{n+\alpha}{n-k} x^k.
\end{equation}

\item Orthogonality relation:
\begin{equation}
\label{eq:laguerre-orthogonality}
\int_{-\infty}^\infty L_m(x) L_n(x) e^{-x}x^\alpha\,dx = \frac{\Gamma(n+\alpha+1)}{n!} \delta_{m,n}
\end{equation}

\item Recurrence relation:
\begin{equation}
\label{eq:laguerre-recurrence}
(n+1)L_{n+1}^\alpha(x)+(x-2n-\alpha-1)L_n^\alpha(x)+(n+\alpha)L_{n-1}^\alpha(x) = 0
\end{equation}

\item Differential equation:
\begin{equation}
\label{eq:laguerre-diffeq}
x (L_n^\alpha)''(x)+(\alpha+1-x)(L_n^\alpha)'(x)+n L_n^\alpha(x) = 0
\end{equation}

\item Generating function:
\begin{equation}
\label{eq:laguerre-genfun}
\sum_{n=0}^\infty L_n^\alpha(x) z^n = \frac{1}{(1-z)^{\alpha+1}} \exp\left(-\frac{xz}{1-z}\right)
\end{equation}

\item Poisson kernel:
\begin{equation}
\sum_{n=0}^\infty \frac{n!}{\Gamma(n+\alpha+1)} L_n^\alpha(x) L_n^\alpha(y) z^n =
\frac{1}{1-z} \exp\left(-\frac{z(x+y)}{1-z}\right) (xyz)^{-\alpha/2} I_\alpha\left(\frac{2\sqrt{xyz}}{1-z}\right),
\end{equation}
where $I_\alpha(\cdot)$ denotes the modified Bessel function of the first kind.

\end{enumerate}

\section[The symmetric Meixner-Pollaczek polynomials]{The symmetric Meixner-Pollaczek polynomials $f_n(x)=P_n^{(3/4)}(x;\pi/2)$}

\label{sec:orth-fn}

The \textbf{Meixner-Pollaczek polynomials} are a two-parameter family of orthogonal polynomial sequences $P_n^{(\lambda)}(x;\phi)$. The parameters satisfy $\lambda>0, 0\le \phi<\pi$. In the special case $\phi=\pi/2$, the polynomials are sometimes referred to as the \textbf{symmetric Meixner-Pollaczek polynomials} (see \cite{araaya}). In this paper we make use of the special case $\lambda=3/4$ of the symmetric case, namely the polynomials, which we denote $f_n(x)$ for simplicity, given by
\begin{equation}
f_n(x) = P_n^{(3/4)}(x;\pi/2).
\end{equation}

The key property of the polynomials $f_n(x)$ is that they are an orthonormal family for the weight function $\left|\Gamma\left(\frac34+ix\right)\right|^2$, $x\in\R$. Additional properties we will need are given in the list below. Bibliographic notes and a few more details regarding proofs are given at the end of this section. See also \secref{sec:orth-fngn-relation} where we prove additional results relating the polynomial family $f_n$ to another family $g_n$ of orthogonal polynomials, discussed in \secref{sec:orth-gn}.

\begin{enumerate}
\item Definition and explicit formulas:
\begin{align}
f_n(x) & = 
\frac{(3/2)_n}{n!} 
i^n \gausshyper\left(-n, \frac34+ix; \frac32; 2 \right)
\label{eq:fn-explicit1}
\\ & =
(-i)^n \sum_{k=0}^n 2^k \binom{n+\frac12}{n-k} \binom{-\frac34+i x}{k}
\label{eq:fn-explicit2}
\\ & =
i^n \sum_{k=0}^n (-1)^k 2^k \binom{n+\frac12}{n-k} \binom{-\frac14+i x+k}{k}
\label{eq:fn-explicit3}
\\ & = i^n \sum_{k=0}^n (-1)^k \binom{-\frac34+ix}{k}
\binom{-\frac34-ix}{n-k}.
\label{eq:fn-explicit4}
\end{align}

\item Orthogonality relation:
\begin{equation}
\label{eq:fn-orthogonality}
\int_{-\infty}^\infty f_m(x) f_n(x) \left|\Gamma\left(\frac34+ix\right)\right|^2\,dx = \frac{\pi^{3/2}(3/2)_n}{2\sqrt{2} n!}\delta_{m,n}.
\end{equation}

\item Recurrence relation:
\begin{equation}
\label{eq:fn-recurrence}
(n+1)f_{n+1}(x)-2x f_n(x) + \left(n+\frac12\right) f_{n-1}(x) = 0.
\end{equation}

\item Difference equation:
\begin{equation}
\label{eq:fn-diffeq}
2\left(n+\frac34\right)f_n(x)-\left(\frac34-ix\right) f_n(x+i)
-\left(\frac34+ix\right) f_n(x-i) = 0.
\end{equation}

\item Generating function:
\begin{equation}
\label{eq:fn-genfun}
\sum_{n=0}^\infty f_n(x) z^n = (1-iz)^{-\frac34+ix}(1+iz)^{-\frac34-ix}
= \frac{1}{(1+z^2)^{\frac34}}\left(\frac{1-iz}{1+iz}\right)^{ix}.
\end{equation}

\item Poisson kernel:
\begin{align}
\label{eq:fn-poisson}
\frac{2\sqrt{2}}{\pi^{3/2}}\sum_{n=0}^\infty \frac{n!}{(3/2)_n} f_n(x) f_n(y) z^n
& = 
\frac{2\sqrt{2}}{\pi^{3/2}}
\frac{1}{(1-z)^{3/2}} 
\left(\frac{1+z}{1-z}\right)^{i(x+y)}
\gausshyper\left(\frac34+ix, \frac34+iy; \frac32; \frac{-4z}{(1-z)^2}
\right)
\end{align}

\item Mellin transform representations:
\begin{align}
\label{prop:fn-mellintrans1-appendix}
f_n(x) &= (-i)^n \frac{\sqrt{\pi} (3/2)_n}{2 n!}
\left( \Gamma\left(\frac34-ix\right)\Gamma\left(\frac34+ix\right) \right)^{-1} 
\int_0^\infty
\frac{1}{(u+1)^{3/2}} \left(\frac{u-1}{u+1}\right)^n
u^{-\frac14+ix}\,du
\\
\label{prop:fn-mellintrans2-appendix}
&=
2 i^n \pi^{\frac34+ix} \Gamma\left(\frac34+ix\right)^{-1}
\int_0^\infty e^{-\pi r^2} L_n^{1/2}(2\pi r^2) r^{\frac12+2ix} \,dr 
\end{align}

\item 
Symmetry: $f_n(-x) = (-1)^n f_n(x)$.

\end{enumerate}

\textbf{Notes.} The above list is based on the general list of properties of the Meixner-Pollaczek polynomials $P_n^{(\lambda)}(x;\phi)$ provided in \cite[pp.~213--216]{koekoek-etal}, except for \eqref{eq:fn-poisson}, which is a special case of \cite[Eq.~(2.25)]{ismail-stanton}, and the Mellin transform representations \eqref{prop:fn-mellintrans1-appendix}--\eqref{prop:fn-mellintrans2-appendix}, which are proved in our Propositions~\ref{prop:fn-mellintrans1} and~\ref{prop:fn-mellintrans3}.

In the formulas \eqref{eq:fn-explicit1}--\eqref{eq:fn-explicit4}, the first formula is the definition as given in \cite{koekoek-etal}; formula \eqref{eq:fn-explicit3} is an explicit rewriting of \eqref{eq:fn-explicit1} as a sum, and formula \eqref{eq:fn-explicit2} follows from \eqref{eq:fn-explicit3} by applying the symmetry property $f_n(-x) = (-1)^n f_n(x)$ (which in turn is an easy consequence of either the recurrence relation \eqref{eq:fn-recurrence} or the generating function \eqref{eq:fn-genfun}). Formula \eqref{eq:fn-explicit4} appears to be new, and follows by evaluating the sequence of coefficients of $z^n$ in the generating function \eqref{eq:fn-genfun} as a convolution of the coefficient sequences for the functions $(1-iz)^{-3/4+ix}$ and $(1+iz)^{-3/4-ix}$. Note that \eqref{eq:fn-explicit4} has the benefit of making the odd/even symmetry of $f_n(x)$ readily apparent, which the other explicit formulas do not.

\begin{table}
\caption{The first few polynomials $f_n(x)$}
\begin{equation*}
\begin{array}{r|l}
n & f_n(x) \\
\hline
 0 & 1 \\[3pt]
 1 & 2 x \\[3pt]
 2 & 2 x^2-\frac{3}{4} \\[3pt]
 3 & \frac{4 }{3}x^3-\frac{13 }{6} x \\[3pt]
 4 & \frac{2 }{3}x^4-\frac{17 }{6}x^2+\frac{21}{32} \\[3pt]
 5 & \frac{4 }{15}x^5-\frac{7 }{3}x^3+\frac{177 }{80}x \\[3pt]
 6 & \frac{4 }{45}x^6-\frac{25 }{18}x^4+\frac{2401 }{720}x^2-\frac{77}{128} \\[3pt]
 7 & \frac{8 }{315}x^7-\frac{29 }{45}x^5+\frac{1123 }{360}x^3-\frac{4987 }{2240}x \\[3pt]
 8 & \frac{2 }{315}x^8-\frac{11 }{45}x^6+\frac{1499 }{720}x^4-\frac{24749 }{6720}x^2+\frac{1155}{2048}
\end{array}
\end{equation*}
\end{table}

\section[The continuous Hahn polynomials]{The continuous Hahn polynomials $g_n(x) = p_n\left(x; \frac34,\frac34,\frac34,\frac34\right)$}

\label{sec:orth-gn}

The \textbf{continuous Hahn polynomials} are a four-parameter family $p_n(x;a,b,c,d)$ of orthogonal polynomial sequences. They were introduced in increasing degrees of generality by Askey and Wilson \cite{askey-wilson} and later Atakishiyev and Suslov \cite{atakishiyev-suslov} as continuous-weight analogues of the Hahn polynomials; earlier special cases appeared in the work of Bateman \cite{bateman} and later Pasternack \cite{pasternack} (see also \cite{koelink} for a chronology of these discoveries and related discussion).

For our purposes, a special role will be played by the special case $a=b=c=d=3/4$ of the continuous Hahn polynomials, that is, the polynomial sequence
\begin{equation}
g_n(x) = p_n\left(x; \frac34,\frac34,\frac34,\frac34\right).
\end{equation}
A few of the main properties of these polynomials we will need are listed below. The notes at the end of the section provide references and additional details.

\begin{enumerate}
\item Definition and explicit formulas:
\begin{align}
\label{eq:gn-explicit1}
g_n(x) & =
i^n (n+1) \ {}_3F_2\left(-n,n+2,\frac34+ix;\frac32,\frac32; 1\right)
\\ 
\label{eq:gn-explicit2}
&=
(-i)^n \sum_{k=0}^n \frac{(n+k+1)!}{(n-k)!(3/2)_k^2}
\binom{-\frac34+i x}{k}
\\ 
\label{eq:gn-explicit3}
& = i^n 
\sum_{k=0}^n (-1)^k 
\frac{(n+1)!}{(3/2)_k(3/2)_{n-k}}
\binom{-\frac34+ix}{k}
\binom{-\frac34-ix}{n-k}.
\end{align}

\item Orthogonality relation:
\begin{equation}
\label{eq:gn-orthogonality}
\int_{-\infty}^\infty g_m(x) g_n(x) \left|\Gamma\left(\frac34+ix)\right)\right|^4\,dx = \frac{\pi^3}{16} \delta_{m,n}.
\end{equation}

\item Recurrence relation:
\begin{equation}
\label{eq:gn-recurrence}
(2n+3) g_{n+1}(x) - 8x g_n(x) + (2n+1)g_{n-1}(x)
= 0.
\end{equation}

\item Difference equation:
\begin{equation}
\label{eq:gn-diffeq}
\left((n+1)^2-2x^2+\frac18\right)g_n(x) - \left(\frac34-ix\right)^2g_n(x+i) - \left(\frac34+ix\right)^2 g_n(x-i) = 0.
\end{equation}

\item Generating functions:
\begin{align}
\label{eq:gn-genfun}
\sum_{n=0}^\infty g_n(x) z^n &= \frac{1}{(1+iz)^2} \,\gausshyper\left(1,\frac34-ix;\frac32;\frac{4iz}{(1+iz)^2}\right)
\\
\label{eq:gn-genfun2}
\sum_{n=0}^\infty \frac{g_n(x)}{(n+1)!}z^n
& =
\onefone\left(\frac34+ix;\frac32;-iz\right)
\onefone\left(\frac34-ix;\frac32;iz\right)
\end{align}

\item 
Symmetry: $g_n(-x) = (-1)^n g_n(x)$.

\item Mellin transform representation:
\begin{align}
g_n(x) &=
(-i)^n \frac{\sqrt{\pi}}{2}
\left(
  \Gamma\left(\frac34-ix\right)
  \Gamma\left(\frac34+ix\right)
\right)^{-1}
\int_0^\infty \frac{1}{(u+1)^{3/2}} U_n\left(\frac{u-1}{u+1}\right)\,du
\end{align}

\end{enumerate}

\textbf{Notes.} This list is based on the list of properties of the continuous Hahn polynomials $p_n(x;a,b,c,d)$ given in \cite[pp.~200--204]{koekoek-etal}, except for the Mellin transform representation, which is proved in our Proposition~\ref{prop:gn-mellintrans}.

In the formulas \eqref{eq:gn-explicit1}--\eqref{eq:gn-explicit3}, the first formula is the definition as given in \cite{koekoek-etal}, and formula \eqref{eq:gn-explicit2} is the explicit rewriting of \eqref{eq:gn-explicit1} as a sum. Formula \eqref{eq:gn-explicit3}, which (like \eqref{eq:fn-explicit4} discussed in the previous section) has the benefit of highlighting the odd/even symmetry of $g_n(x)$, seems new, and is proved by evaluating the coefficient of $z^n$ in \eqref{eq:gn-genfun2}
as 
\begin{equation*}
\frac{g_n(x)}{(n+1)!} =
\sum_{k=0}^n [z^k]
\Big( \onefone\left(\frac34+ix;\frac32;-iz\right) \Big)
\times
[z^{n-k}]
\Big( \onefone\left(\frac34-ix;\frac32;iz\right) \Big)
\end{equation*}
and simplifying.

\begin{table}
\caption{The first few polynomials $g_n(x)$}
\begin{equation*}
\begin{array}{r|l}
n & g_n(x) \\[3pt]
\hline
 0 & 1 \\[3pt]
 1 & \frac{8 }{3}x \\[3pt]
 2 & \frac{64 }{15}x^2-\frac{3}{5} \\[3pt]
 3 & \frac{512 }{105}x^3-\frac{272 }{105}x \\[3pt]
 4 & \frac{4096 }{945}x^4-\frac{5312 }{945}x^2+\frac{7}{15} \\[3pt]
 5 & \frac{32768 }{10395}x^5-\frac{83968 }{10395}x^3+\frac{568 }{231}x \\[3pt]
 6 & \frac{262144 }{135135}x^6-\frac{77824 }{9009}x^4+\frac{847232 }{135135}x^2-\frac{77}{195} \\[3pt]
 7 & \frac{2097152 }{2027025}x^7-\frac{14876672 }{2027025}x^5+\frac{20968448 }{2027025}x^3-\frac{527392 }{225225}x \\[3pt]
 8 & \frac{16777216 }{34459425}x^8-\frac{25427968 }{4922775}x^6+\frac{33107968 }{2650725}x^4-\frac{25399936 }{3828825}x^2+\frac{77}{221} 
\end{array}
\end{equation*}
\end{table}

\section{The relationship between the polynomial sequences $f_n$ and $g_n$}

\label{sec:orth-fngn-relation}

The goal of this section is to prove the following pair of identities, which seem new, relating the two orthogonal polynomial families $(f_n)_{n=0}^\infty$ and $(g_n)_{n=0}^\infty$

\begin{prop} The polynomial families $f_n(x)$ and $g_n(x)$ are related by the equations
\begin{align}
\label{eq:gn-intermsof-fn}
g_n(x) & =
\sum_{k=0}^{\lfloor n/2 \rfloor} \frac{2^{n-2k} (n-k)!}{(3/2)_{n-2k} k!} f_{n-2k}(x), \\
\label{eq:fn-intermsof-gn}
f_n(x) & =
\frac{(3/2)_n}{2^n(n+1)!}
\sum_{k=0}^{\lfloor n/2 \rfloor} (-1)^k (n-2k+1)\binom{n+1}{k}
g_{n-2k}(x).
\end{align}
\end{prop}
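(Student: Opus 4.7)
My plan is to derive both identities by combining the two Mellin transform representations of $f_n$ and $g_n$ (Propositions~\ref{prop:fn-mellintrans3} and~\ref{prop:gn-mellintrans}) with the fact that the polynomials $U_n(y)$ and the monomials $y^n$ are related to each other by the two explicit expansion formulas \eqref{eq:chebyshev-explicit2} and \eqref{eq:chebyshev-monomial-expansion} from the summary of Chebyshev polynomials of the second kind. The substitution $y = (u-1)/(u+1)$ turns these two inverse-related families of functions of $y$ into the two families of integration kernels that appear under the Mellin integrals defining $f_n(t)$ and $g_n(t)$; so once we expand one kernel in terms of the other, integrate term-by-term against $\frac{1}{(u+1)^{3/2}} u^{-1/4+it}\,du$, and apply the relevant Mellin transform formula on each side, we will be able to read off the desired identities directly after canceling the common gamma-function factor $\Gamma(3/4+it)\Gamma(3/4-it)$.

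Concretely, for \eqref{eq:gn-intermsof-fn} I will start from \eqref{eq:chebyshev-explicit2}, substitute $y=(u-1)/(u+1)$, and integrate against the kernel. The left-hand side becomes (by \eqref{eq:gn-mellintrans-tcoord}) $i^n \frac{2}{\sqrt{\pi}} \Gamma(3/4+it)\Gamma(3/4-it) g_n(t)$, while each summand on the right becomes, by \eqref{eq:fn-mellintrans-tcoord}, $(-1)^k \binom{n-k}{k} 2^{n-2k} \cdot i^{n-2k}\frac{2(n-2k)!}{\sqrt{\pi}(3/2)_{n-2k}} \Gamma(3/4+it)\Gamma(3/4-it) f_{n-2k}(t)$. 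Using $i^{n}/i^{n-2k} = (-1)^{k}$ cancels the $(-1)^k$ from \eqref{eq:chebyshev-explicit2}, and then $\binom{n-k}{k}(n-2k)! = (n-k)!/k!$ collapses the coefficient to the desired $\frac{2^{n-2k}(n-k)!}{(3/2)_{n-2k} k!}$, giving exactly \eqref{eq:gn-intermsof-fn}. For \eqref{eq:fn-intermsof-gn} I will proceed in the same way but starting from \eqref{eq:chebyshev-monomial-expansion}; the factor $\frac{1}{(n+1)2^n}$ combines with $\frac{(3/2)_n}{n!}$ after the gamma-function cancellation to give exactly the prefactor $\frac{(3/2)_n}{2^n(n+1)!}$ in \eqref{eq:fn-intermsof-gn}, with the signs $(-1)^k$ arising this time from $i^{n-2k}/i^n$.

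There is essentially no real obstacle here: all Mellin integrals involved converge absolutely for $t \in \R$ on the open strip $0 < \re(s) < 3/2$ (i.e.\ for $s = 3/4+it$), so term-by-term integration over the \emph{finite} sums coming from \eqref{eq:chebyshev-explicit2} and \eqref{eq:chebyshev-monomial-expansion} is automatically justified. Since the resulting identities are equalities of polynomials in $t$, holding for all real $t$, they hold identically in $x \in \mathbb{C}$. The only thing to double-check is the bookkeeping of powers of $i$ and the simplification $\binom{n-k}{k}(n-2k)! = (n-k)!/k!$ (and, for the second identity, $(3/2)_n/(n!(n+1)) = (3/2)_n/(n+1)!$), which are both elementary. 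Alternatively, one could observe that \eqref{eq:fn-intermsof-gn} and \eqref{eq:gn-intermsof-fn} must be inverse linear transformations (on the level of the triangular change-of-basis matrices between $\{f_n\}$ and $\{g_n\}$), so only one of the two identities need be proved in detail and the other could be obtained by matrix inversion; but the Mellin-transform route delivers both simultaneously with no extra work.
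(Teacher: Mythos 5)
Your computations are all correct---the powers of $i$, the simplification $\binom{n-k}{k}(n-2k)! = (n-k)!/k!$, the prefactor bookkeeping, and the convergence justification all check out---but there is a circularity problem that makes the argument invalid as it stands relative to the paper's logical structure. The Mellin transform representation of $g_n$ (Proposition~\ref{prop:gn-mellintrans}, equation \eqref{eq:gn-mellintrans-tcoord}), which is the load-bearing ingredient of your derivation of \eqref{eq:gn-intermsof-fn}, is itself proved in \secref{sec:gnexp-mellin} \emph{by invoking the identity \eqref{eq:gn-intermsof-fn}}: the paper's proof expands $U_n$ in monomials via \eqref{eq:chebyshev-explicit2}, applies the Mellin representation \eqref{eq:fn-mellintrans-tcoord} of $f_n$ termwise, and then uses \eqref{eq:gn-intermsof-fn} to recognize the resulting combination of $f_{n-2k}$'s as $g_n$. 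That last recognition step is precisely the content of the identity you are trying to prove, so running the same chain of equalities in reverse proves nothing. (Your second identity \eqref{eq:fn-intermsof-gn} inherits the same defect, since it also routes through Proposition~\ref{prop:gn-mellintrans}; and note that your fallback remark about matrix inversion does not rescue it for free, since verifying that the coefficients in \eqref{eq:fn-intermsof-gn} constitute the inverse of the transition matrix in \eqref{eq:gn-intermsof-fn} is itself a nontrivial binomial identity.)

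The approach is salvageable, but only by supplying an \emph{independent} proof of \eqref{eq:gn-mellintrans-tcoord}---for instance by citing the more general result of Koelink mentioned in the paper (\cite[eq.~(3.4)]{koelink}), or by computing the Mellin transform of $(u+1)^{-3/2}\,U_n\!\left(\frac{u-1}{u+1}\right)$ directly from scratch and matching it against the explicit formula \eqref{eq:gn-explicit3} for $g_n$, in the style of the proof of Proposition~\ref{prop:fn-mellintrans3}. Either route requires real additional work that your proposal does not acknowledge. By contrast, the paper's own proof avoids the Mellin transform entirely: it works with the explicit finite-sum formulas \eqref{eq:fn-explicit2} and \eqref{eq:gn-explicit2}, matches coefficients of $\binom{-\frac34+ix}{m}$ on both sides, and reduces each identity to a binomial summation identity---\eqref{eq:fn-gn-binomialiden1}, proved by a generating-function computation, and \eqref{eq:fn-gn-binomialiden2}, proved by the Wilf--Zeilberger method. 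This is why the identities can then be safely used as input to Proposition~\ref{prop:gn-mellintrans}, rather than the other way around.
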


The proofs relies on two binomial summation identities, given in the next two lemmas.

\begin{lem}
For integers $p,q\ge 0$ we have the summation identity
\begin{equation}
\label{eq:fn-gn-binomialiden1}
\sum_{k=0}^{\lfloor p/2 \rfloor}
\frac{(-1)^k (p+q-k)!}{2^{2k} k! (p-2k)!}
= \frac{1}{2^p} q! \binom{p+2q+1}{p}.
\end{equation}
\end{lem}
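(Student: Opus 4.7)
My plan is to establish the identity by forming the generating function in $p$ (with $q$ regarded as a parameter) and evaluating it in closed form. Specifically, I would define
\begin{equation*}
G(z) := \sum_{p\ge 0} \left(\sum_{k=0}^{\lfloor p/2\rfloor} \frac{(-1)^k (p+q-k)!}{2^{2k} k! (p-2k)!}\right) z^p
\end{equation*}
and aim to show $G(z) = q!\,(1-z/2)^{-(2q+2)}$. Extracting $[z^p] G(z)$ using the standard binomial series $(1-z/2)^{-(2q+2)} = \sum_{p\ge 0} \binom{p+2q+1}{p}(z/2)^p$ would then give the right-hand side of \eqref{eq:fn-gn-binomialiden1} immediately.

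The first key step is to interchange the order of summation (legitimate as a formal power series manipulation since the double sum is non-negative once $(-1)^k$ is absorbed into the sign of $z^{2k}$) and make the substitution $n = p-2k$, which turns $G(z)$ into
\begin{equation*}
G(z) = \sum_{k\ge 0} \frac{(-z^2/4)^k}{k!} \sum_{n\ge 0} \frac{(n+k+q)!}{n!} z^n.
\end{equation*}
The inner sum is exactly the standard expansion $\sum_{n\ge 0} \binom{n+k+q}{n} z^n = (1-z)^{-(k+q+1)}$ multiplied by $(k+q)!$, so this reduces $G(z)$ to
\begin{equation*}
G(z) = \frac{q!}{(1-z)^{q+1}} \sum_{k\ge 0} \binom{k+q}{k} \left(\frac{-z^2/4}{1-z}\right)^k,
\end{equation*}
and another application of the binomial series collapses the remaining sum to $\left(1+\frac{z^2/4}{1-z}\right)^{-(q+1)}$.

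Putting these pieces together yields $G(z) = q!/(1-z+z^2/4)^{q+1}$, after which the one substantive observation needed is the elementary factorization $1 - z + z^2/4 = (1-z/2)^2$. This is really the only nontrivial ingredient---everything else is routine binomial-series manipulation---so it is the step on which the identity hinges, and the reason the right-hand side has the ``doubled'' parameter $2q+2$ rather than $q+1$. Once this factorization is applied, $G(z) = q!(1-z/2)^{-(2q+2)}$, and comparing coefficients of $z^p$ completes the proof. I expect no real obstacle beyond noticing this factorization.
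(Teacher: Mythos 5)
Your proposal is correct and follows essentially the same route as the paper: both form the generating function in $p$ with $q$ as a parameter, reduce it to $q!\,(1-z+z^2/4)^{-(q+1)}$, and hinge on the factorization $1-z+z^2/4=(1-z/2)^2$ before extracting coefficients from $q!\,(1-z/2)^{-(2q+2)}$. The only cosmetic difference is the order of intermediate summations---the paper sums over $k$ first to get $\bigl(x(1-x/4)\bigr)^m$ and then identifies the sum over $m$ as the $q$-th derivative of the geometric series, while you sum over $n$ first and apply the negative binomial series twice---but this does not change the substance of the argument.
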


\begin{proof}
Consider, for fixed $q\ge 0$, the generating function in an indeterminate $x$ of the sequence of numbers (indexed by the parameter $p\ge 0$) on the left-hand side of \eqref{eq:fn-gn-binomialiden1}. This generating function can be evaluated as
\begin{align*}
\sum_{p\ge 0} &
\left( \sum_{k=0}^{\lfloor p/2 \rfloor}
\frac{(-1)^k (p+q-k)!}{2^{2k} k! (p-2k)!} \right) x^p
\\ & =
\sum_{p\ge 0}
\left( \sum_k \left(-\frac14\right)^k \binom{p-k}{k} (p-k+1)\cdots (p-k+q)
\right) x^p
\\ & 
=
\sum_{m\ge 0}
\sum_k \left(-\frac14\right)^k \binom{m}{k} (m+1)\cdots (m+q)
x^{m+k}
=
\sum_{m\ge 0} (m+1)\cdots (m+q) x^m \left(\sum_k \binom{m}{k} \left(-\frac{x}{4}\right)^k\right)
\\ & =
\sum_{m\ge 0} (m+1)\cdots (m+q) \bigg(x \left(1-\frac{x}{4}\right)\bigg)^m
= 
\frac{d^q}{d y^q}_{\raisebox{2pt}{$\big|$} y=x(1-x/4)} \left( \sum_{m=0}^\infty y^m \right)
\\ & = \frac{d^q}{d y^q}_{\raisebox{2pt}{$\big|$} y=x(1-x/4)} \left( \frac{1}{1-y} \right)
 = 
\frac{q!}{(1-y)^{q+1}}_{\raisebox{2pt}{$\big|$} y=x(1-x/4)}
\\ & 
= 
\frac{q!}{\left(1-x\left(1-\frac{x}{4}\right)\right)^{q+1}}
= 
\frac{q!}{\left(1-\frac{x}{2}\right)^{2q+2}}
=
\sum_{p=0}^\infty \frac{q!}{2^p} \binom{p+2q+1}{p} x^p,
\end{align*}
which is the generating function for the sequence on the right-hand side of \eqref{eq:fn-gn-binomialiden1}.
\end{proof}

\begin{lem}
The summation identity
\begin{equation}
\label{eq:fn-gn-binomialiden2}
\sum_{k=0}^N (N-2k)\binom{N}{k} \binom{N+m-2k}{2m+1} 
=
N \binom{N-1}{m} 2^{N-m}
\end{equation}
holds for integers $N,m\ge0$
\end{lem}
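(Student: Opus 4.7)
My plan is to convert the identity to a coefficient extraction problem, apply a rational change of variables, and reduce it to a trivial convolution.

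First, writing $\binom{N+m-2k}{2m+1} = [z^{2m+1}](1+z)^{N+m-2k}$ moves the whole sum inside a coefficient extraction. The inner sum $\sum_k(N-2k)\binom{N}{k}(1+z)^{-2k}$ is easily evaluated by differentiating the binomial theorem: for any $w$,
\begin{equation*}
\sum_k (N-2k)\binom{N}{k}w^k = N(1-w)(1+w)^{N-1},
\end{equation*}
as one checks by splitting $(N-2k)\binom{N}{k} = N\binom{N}{k} - 2N\binom{N-1}{k-1}$ and summing two binomial series. Setting $w=(1+z)^{-2}$ and simplifying gives the compact form
\begin{equation*}
\mathrm{LHS} = N\,[z^{2m+1}]\,z(z+2)(z^2+2z+2)^{N-1}(1+z)^{m-N}.
\end{equation*}

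Second, I would apply the Möbius substitution $z = 2t/(1-t)$ (equivalently $1+z = (1+t)/(1-t)$), under which all relevant factors become clean rational functions of $t$: $z+2 = 2/(1-t)$, $z^2+2z+2 = 2(1+t^2)/(1-t)^2$, and $dz = 2\,dt/(1-t)^2$. Writing $[z^{2m+1}]$ as the residue at $z=0$ and transforming to the $t$-variable, the various powers of $(1-t)$ cancel telescopically, yielding
\begin{equation*}
\mathrm{LHS} = N\cdot 2^{N-2m}\,[t^{2m}]\,\frac{(1+t^2)^{N-1}}{(1-t^2)^{N-m}} = N\cdot 2^{N-2m}\,[w^m]\,\frac{(1+w)^{N-1}}{(1-w)^{N-m}},
\end{equation*}
where the last equality sets $w = t^2$, legitimate because the integrand is an even function of $t$.

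Third, expanding the two factors as power series and convolving gives
\begin{equation*}
[w^m]\,\frac{(1+w)^{N-1}}{(1-w)^{N-m}} = \sum_{i=0}^m \binom{N-1}{i}\binom{N-1-i}{m-i},
\end{equation*}
and the classical ``subset of a subset'' re-expression $\binom{N-1}{i}\binom{N-1-i}{m-i} = \binom{N-1}{m}\binom{m}{i}$ collapses this to $2^m\binom{N-1}{m}$. Combining everything gives $\mathrm{LHS} = N\binom{N-1}{m}2^{N-m}$, which is the desired right-hand side.

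The main obstacle is guessing the right change of variables: the Möbius substitution $z = 2t/(1-t)$ is the one that simultaneously rationalizes $z(z+2)$, $z^2+2z+2$, and $1+z$ in a way that makes the accumulated powers of $(1-t)$ balance perfectly, leaving behind an even rational function of $t$ that trivially reduces to a one-variable binomial convolution. Once this substitution is found, the remaining work is purely mechanical binomial bookkeeping.
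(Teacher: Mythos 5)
Your proof is correct, and it takes a genuinely different route from the paper's. The paper disposes of this lemma by the Wilf--Zeilberger method: it normalizes the summand to $F_m(N,k)$ and exhibits the rational certificate $R_m(N,k)=\frac{k(m+N+1-2k)(m+N+2-2k)}{2(N-2k)(N+1-k)(N-m-2k)}$, found with Zeilberger's algorithm (the \texttt{fastZeil} package), so the verification is a mechanical telescoping check. You instead give a self-contained generating-function argument, and I have checked its bookkeeping: the inner-sum evaluation $\sum_k(N-2k)\binom{N}{k}w^k=N(1-w)(1+w)^{N-1}$ is right; with $w=(1+z)^{-2}$ one indeed gets $\mathrm{LHS}=N\,[z^{2m+1}]\,z(z+2)(z^2+2z+2)^{N-1}(1+z)^{m-N}$; under $z=2t/(1-t)$ the power of $2$ accumulates to $1+(N-1)-(2m+1)+1=N-2m$ and the power of $(1-t)$ to $-1-(2N-2)-(m-N)+(2m+1)-2=m-N$, confirming the reduction to $N\,2^{N-2m}[w^m](1+w)^{N-1}(1-w)^{-(N-m)}$; and the convolution $\sum_{i}\binom{N-1}{i}\binom{N-1-i}{m-i}=\binom{N-1}{m}2^m$ closes the computation. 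Two points deserve a sentence each in a final write-up. First, for $k>(N+m)/2$ the upper index $N+m-2k$ is negative, so the lemma must be read with the generalized (falling-factorial) binomial convention; your extraction $\binom{N+m-2k}{2m+1}=[z^{2m+1}](1+z)^{N+m-2k}$, interpreted as a power series at $z=0$, reproduces exactly that convention---which is also the one the paper needs, since its subsequent $k\mapsto N-k$ symmetry argument relies on $\binom{m-j}{2m+1}=-\binom{m+j}{2m+1}$. Second, the residue substitution should be justified explicitly: $z=2t/(1-t)$ is conformal at the origin with $z=2t+O(t^2)$, the residue of a $1$-form is invariant under such a coordinate change (equivalently, this is a legitimate formal power series substitution), and the remaining singularities $t=\pm 1,\pm i$ stay off a small contour around $t=0$. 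As for what each approach buys: the paper's WZ proof is automatic and comes with a machine-checkable certificate, but is opaque and software-dependent; yours is elementary and human-verifiable, and the intermediate closed form $N\,[z^{2m+1}]\,z(z+2)(z^2+2z+2)^{N-1}(1+z)^{m-N}$ is a genuine bonus, since it packages the whole family of identities into one rational generating function.
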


\begin{proof}
Denote 
\begin{equation*}
F_m(N,k) = \frac{(N-2k)\binom{N}{k} \binom{N+m-2k}{2m+1}}{N \binom{N-1}{m} 2^{N-m}},
\end{equation*}
so that the identity to prove becomes the statement that $\sum_{k=0}^N F_m(N,k)=1$.
This claim in turn follows by applying the method of Wilf-Zeilberger pairs \cite[Ch.~7]{aequalsb}, \cite{wilf-zeilberger} to the rational certificate function (in which $m$ is regarded as a parameter)
\begin{equation*}
R_m(N,k) = 
\frac{k (m+N+1-2k) (m+N+2-2k)}{2(N-2k)(N+1-k)(N-m-2k)}.
\end{equation*}
The certificate was found using the Mathematica package \texttt{fastZeil} \cite{paule-schorn1, paule-schorn2}, a software implementation of Zeilberger's algorithm.
\end{proof}

\begin{proof}[Proof of~\eqref{eq:gn-intermsof-fn}]
An immediate consequence of \eqref{eq:fn-gn-binomialiden1} is the identity
\begin{equation}
\label{eq:fngn-binomialiden2-variant}
\sum_{k=0}^{\lfloor (n-m)/2 \rfloor}
\frac{(-1)^k (n-k)!}{2^{2k} k! (3/2)_{n-2k}} \binom{n-2k+\frac12}{n-2k-m}
=
\frac{(n+m+1)!}{2^{n+m} (n-m)! (3/2)_m^2},
\end{equation}
which holds for integers $n\ge m\ge 0$---indeed, this relation reduces to \eqref{eq:fn-gn-binomialiden1} after a short simplification on taking $p=n-m$, $q=m$ and using the facts that $(3/2)_m = \frac{(2m+2)!}{2^{2m+1} (m+1)!}$ and $\binom{n-2k+1/2}{n-2k-m} = \frac{(3/2)_{n-2k}}{(n-2k-m)! (3/2)_m}$. Now \eqref{eq:fngn-binomialiden2-variant} is the key to proving \eqref{eq:gn-intermsof-fn}: making use of the explicit formulas \eqref{eq:fn-explicit2} and \eqref{eq:gn-explicit2} for $f_n(x)$ and $g_n(x)$, respectively, we write
\begin{align*}
\sum_{k=0}^{\lfloor n/2 \rfloor} \frac{2^{n-2k} (n-k)!}{(3/2)_{n-2k} k!} f_{n-2k}(x)
& =
\sum_{k=0}^{\lfloor n/2 \rfloor} \frac{2^{n-2k} (n-k)!}{(3/2)_{n-2k} k!} \left(
(-i)^{n-2k} \sum_{m=0}^{n-2k} 2^m \binom{n-2k+\frac12}{n-2k-m}
\binom{-\frac34+ix}{m} 
\right)
\\ & = 
(-i)^n \sum_{m=0}^n
\left(
2^m \sum_{k=0}^{\lfloor (n-m)/2 \rfloor}
\frac{(-1)^k 2^{n-2k} (n-k)!}{(3/2)_{n-2k}k!} \binom{n-2k+\frac12}{n-2k-m}
\right) \binom{-\frac34+ix}{m}
\\ & =
(-i)^n \sum_{m=0}^n \frac{(n+m+1)!}{(n-m)! (3/2)_m^2} \binom{-\frac34+ix}{m} = g_n(x),
\end{align*}
giving the result.
\end{proof}

\begin{proof}[Proof of~\eqref{eq:fn-intermsof-gn}]
Using \eqref{eq:fn-gn-binomialiden2}, we can deduce the slightly more messy identity
\begin{equation}
\label{eq:fngn-binomialiden2-newvariant}
\frac{(3/2)_n}{2^n (3/2)_m^2} \sum_{k=0}^{\lfloor (n-m)/2 \rfloor}
\frac{(n-2k+1)(n-2k+m+1)!}{k!(n-k+1)!(n-2k-m)!}
= 2^m \binom{n+1/2}{n-m} \qquad (n\ge m\ge 0).
\end{equation}
The way to see this is to first massage the left-hand side of \eqref{eq:fn-gn-binomialiden2} a bit by rewriting it as
\begin{align*}
\sum_{k=0}^N (N-2k)\binom{N}{k} \binom{N+m-2k}{2m+1} 
& =
2 \sum_{k=0}^{\lfloor N/2\rfloor} (N-2k)\binom{N}{k} \binom{N+m-2k}{2m+1} 
\\
& =
2 \sum_{k=0}^{\lfloor \frac{N-m}{2} \rfloor} (N-2k)\binom{N}{k} \binom{N+m-2k}{2m+1}, 
\end{align*}
where the first equality follows from the symmetry of the summand under the relabeling $k\mapsto N-k$, and the second equality follows on noticing that the summands actually vanish for values of $k$ for which $\frac{N-m}{2} < k < \frac{N+m}{2}$. Thus, we obtain another variant of \eqref{eq:fn-gn-binomialiden2}, namely
\begin{equation}
\label{eq:fngn-binomialiden2-anothervariant2}
\sum_{k=0}^{\lfloor \frac{N-m}{2}\rfloor} (N-2k)\binom{N}{k} \binom{N+m-2k}{2m+1} 
=
N \binom{N-1}{m} 2^{N-m+1}.
\end{equation}
We leave to the reader to verify (using similar simple substitutions as in the proof of \eqref{eq:gn-intermsof-fn} above) that setting $N=n+1$ in this new identity gives a relation that is equivalent to \eqref{eq:fngn-binomialiden2-newvariant}.

Finally, from \eqref{eq:fngn-binomialiden2-newvariant} we can prove the relation \eqref{eq:fn-intermsof-gn} in a manner analogous to the proof of \eqref{eq:gn-intermsof-fn}, again making use of the expansions \eqref{eq:fn-explicit2}, \eqref{eq:gn-explicit2} but working in the opposite direction. We have
\begin{align*}
& \frac{(3/2)_n}{2^n (n+1)!} \sum_{k=0}^{\lfloor n/2 \rfloor} (-1)^k (n-2k+1) \binom{n+1}{k} g_{n-2k}(x)
\\ & =
\frac{(3/2)_n}{2^n (n+1)!} \sum_{k=0}^{\lfloor n/2 \rfloor} 
(-1)^k (n-2k+1) \binom{n+1}{k}
(-i)^{n-2k} \left(\sum_{m=0}^{n-2k} 
\frac{(n-2k+m+1)!}{(n-2k-m)!(3/2)_m^2} \binom{-\frac34+ix}{m}
\right)
\\ & =
(-i)^n
\sum_{m=0}^n
\left(
\frac{(3/2)_n}{2^n} \sum_{k=0}^{\lfloor (n-m)/2 \rfloor}
\frac{(n-2k+1)(n-2k+m+1)!}{k!(n-k+1)!(n-2k-m)!(3/2)_m^2}
\right)
\binom{-\frac34+ix}{m}
= f_n(x),
\end{align*}
as claimed.
\end{proof}

\chapter{Summary of main formulas}

\noindent
\textbf{Series expansions for the Riemann xi function}
\begin{align*}
\Xi(t) & = \sum_{n=0}^\infty (-1)^n a_{2n} t^{2n}
& \textrm{(p.~\pageref{eq:riemannxi-taylor})}
\\
\Xi(t) & = \sum_{n=0}^\infty (-1)^n b_{2n} H_{2n}(t)
& \textrm{(p.~\pageref{eq:hermite-expansion})}
\\
\Xi(t) & = \sum_{n=0}^\infty (-1)^n c_{2n} f_{2n}\left(\frac{t}{2}\right)
& \textrm{(p.~\pageref{eq:fn-expansion})}
\\
\Xi(t) & = \sum_{n=0}^\infty (-1)^n d_{2n} g_{2n}\left(\frac{t}{2}\right)
& \textrm{(p.~\pageref{eq:gn-expansion})}
\end{align*}

\noindent
\textbf{Series expansions for related functions}
\begin{align*}
A(r) &= \sum_{n=0}^\infty c_{2n} G_{2n}^{(3)}(r)
& \textrm{(p.~\pageref{eq:aofr-selftrans-expansion})}
\\[10pt]
\tilde{\nu}(t) &=
\frac{1}{2\sqrt{2}} \sum_{n=0}^\infty 
\frac{(3/2)_{2n}}{(2n)!} c_{2n} t^{2n}
& \textrm{(p.~\pageref{eq:nutilde-taylor})}
\\
\tilde{\nu}(t) &= \frac{1}{2\sqrt{2}}\sum_{n=0}^\infty d_{2n} U_{2n}(t)
& \textrm{(p.~\pageref{eq:nutilde-chebyshev-exp})}
\end{align*}

\noindent
\textbf{Formulas for the coefficients}
\begin{align*}
a_{2n} & = \frac{1}{2^{2n}(2n)!} \int_0^\infty \omega(x)x^{-3/4}(\log x)^{2n}\,dx
& \textrm{(p.~\pageref{eq:riemannxi-taylorcoeff-int})}
\\
& = \frac{1}{(2n)!} \int_{-\infty}^\infty x^{2n} \Phi(x)\,dx
& \textrm{(p.~\pageref{eq:a2n-qn-rnprime})}
\\[13pt]
b_{2n} & = \frac{1}{2^{2n} (2n)!} \int_{-\infty}^\infty x^{2n} e^{-\frac{x^2}{4}} \Phi(x)\,dx
& \textrm{(p.~\pageref{eq:hermite-coeffs-def})}
\\ & = 
\frac{(-1)^n}{\sqrt{\pi}2^{2n}(2n)!}
\int_{-\infty}^\infty \Xi(t) e^{-t^2} H_{2n}(t)\,dt
& \textrm{(p.~\pageref{eq:hermite-coeff-innerproduct})}
\end{align*}
\begin{align*}
c_{2n} & =
2\sqrt{2}\int_0^\infty \frac{\omega(x)}{(x+1)^{3/2}} \left(\frac{x-1}{x+1}\right)^{2n}\,dx
& \textrm{(p.~\pageref{eq:def-fn-coeffs})}
\\ & =
(-1)^n \frac{\sqrt{2} \, (2n)!}{\pi^{3/2} (3/2)_{2n}}
\int_{-\infty}^\infty \Xi(t) f_{2n}\left(\frac{t}{2}\right)\left|\Gamma\left(\frac34+\frac{it}{2}\right)\right|^2\,dt
& \textrm{(p.~\pageref{eq:fn-coeff-innerproduct})}
\\ & =
\frac{8\sqrt{2} \pi (2n)!}{(3/2)_{2n}}
\int_0^\infty A(r) r^2 G_{2n}^{(3)}(r)\,dr
& \textrm{(p.~\pageref{eq:fn-coeffs-radialform})}
\\ & =
2\int_{-1}^1 t^{2n} \tilde{\omega}(t) \,dt
& \textrm{(p.~\pageref{eq:fn-coeffs-asmoments})}
\\[4pt]
d_{2n} & = 
\frac{(3/2)_{2n}}{2^{2n-3/2} (2n)!}\int_0^\infty \frac{\omega(x)}{(x+1)^{3/2}} \left(\frac{x-1}{x+1}\right)^{2n} 
\gausshyper\left(n+\frac34,n+\frac54; 2n+2; \left(\frac{x-1}{x+1}\right)^2 \right)
 \,dx
& \textrm{(p.~\pageref{eq:def-gn-coeffs})}
\\ & =
\frac{8}{\pi^3}(-1)^n \int_{-\infty}^\infty \Xi(t) g_{2n}\left(\frac{t}{2}\right) \left|\Gamma\left(\frac34+\frac{it}{2}\right)\right|^4\,dt
& \textrm{(p.~\pageref{eq:gn-coeff-innerproduct})}
\\ & =
\frac{n+1}{2^n} \sum_{m=0}^\infty \frac{(3/2)_{n+2m}}{4^m m!(n+m+1)!} c_{n+2m}
& \textrm{(p.~\pageref{eq:dn-cn-expansion})}
\\ & =
\frac{4\sqrt{2}}{\pi} \int_{-1}^1 \tilde{\nu}(t) U_{2n}(t)\sqrt{1-t^2}\,dt
& \textrm{(p.~\pageref{eq:dn-chebyshev-int})}
\\ & =
\frac{16}{\pi} (2n+1)
\int_1^\infty \int_0^1 
\frac{\omega(x)}{((x+1)^2-t(x-1)^2)^{3/4}}
\left(\frac{t}{1-t}\right)^{1/4}
\left(
\frac{t(1-t)(x-1)^2}{(x+1)^2-t(x-1)^2}
\right)^n
\,dt\,dx
& \textrm{(p.~\pageref{eq:d2n-simplerep-doubleint})}
\end{align*}

\medskip
\noindent
\textbf{Asymptotic formulas for the coefficients}

\begin{align*}
a_{2n} & =
\left(1+O\left(\frac{\loglog n}{\log n}\right)\right)
\frac{\pi^{1/4}}{2^{2n-\frac52} (2n)!} \left(\frac{2n}{\log (2n)}\right)^{7/4}
\\ & \qquad\qquad \times 
\exp\left[
2n\left(\log \left(\frac{2n}{\pi}\right) - W\left(\frac{2n}{\pi}\right) - \frac{1}{W\left(\frac{2n}{\pi}\right)} \right)
\right]
& \textrm{(p.~\pageref{eq:taylorxi-coeff-asym})}
\\[4pt]
b_{2n} & =
\left(1+O\left(\frac{\loglog n}{\log n}\right)\right)
\frac{\pi^{1/4}}{2^{4n-\frac52} (2n)!} \left(\frac{2n}{\log (2n)}\right)^{7/4}
\\ \nonumber &\qquad \qquad \times \exp\left[
2n\left(\log \left(\frac{2n}{\pi}\right) - W\left(\frac{2n}{\pi}\right) - \frac{1}{W\left(\frac{2n}{\pi}\right)} \right)
-\frac{1}{16} W\left(\frac{2n}{\pi}\right)^2\right]
& \textrm{(p.~\pageref{eq:hermite-coeff-asym})}
\\[4pt]
c_{2n} & = \left(1+O\left(n^{-1/10}\right)\right) 16 \sqrt{2} \pi^{3/2}\, \sqrt{n} \,\exp\left(-4\sqrt{\pi n}\right)
& \textrm{(p.~\pageref{eq:fn-coeff-asym})}
\\[4pt]
d_{2n} & = \left(1+O\left(n^{-1/10}\right)\right) 
\left(\frac{128\times 2^{1/3} \pi^{2/3}e^{-2\pi/3}}{\sqrt{3}}\right)
n^{4/3} 
\exp\left(-3(4\pi)^{1/3} n^{2/3}\right)
& \textrm{(p.~\pageref{eq:gn-coeff-asym})}
\end{align*}

\backmatter
\bibliographystyle{plain}
\bibliography{riemannxi}

%

\end{document}